\documentclass[11pt]{article}

\usepackage{amsmath}
%Shen add
\renewcommand{\a}{\alpha}
\usepackage[hidelinks]{hyperref}

\renewcommand{\Re}{\mathrm{Re}\,}
\renewcommand{\Im}{\mathrm{Im}\,}

\makeatletter
\renewcommand\@dotsep{10000}
\makeatother
%\usepackage{refcheck}

% standard packages
\usepackage{amssymb}
\usepackage{mathtools}
\usepackage[numbers]{natbib}
\usepackage[margin=1in]{geometry}
\usepackage[title]{appendix}
%\usepackage{enumerate}

%\usepackage{showkeys}

% format headings
%\usepackage{titlesec}
%\titleformat{\section}
%{\centering\large\scshape}{\thesection.}{.5em}{}
%\titleformat{\subsection}
%{\large}{\thesubsection.}{.5em}{\bfseries}

% theorem styles
\usepackage{amsthm}
\newtheorem{thm}{Theorem}[section]
\newtheorem{lem}[thm]{Lemma}
\newtheorem{prop}[thm]{Proposition}
\newtheorem{cor}[thm]{Corollary}

% New environment
\newenvironment{propC}[1]{

\propalt
}{\endpropalt}

\theoremstyle{definition}

% standard styles for my documents
\usepackage{stucky}

% change counter style
\renewcommand{\theequation}{\arabic{section}.\arabic{equation}}
\usepackage{chngcntr}
\counterwithin*{equation}{section}

\allowdisplaybreaks

\let\LaTeXStandardTableOfContents\tableofcontents
\renewcommand{\tableofcontents}{%
\begingroup%
\renewcommand{\bfseries}{\relax}%
\LaTeXStandardTableOfContents%
\endgroup%
}%

\let\svthefootnote\thefootnote
\newcommand\freefootnote[1]{%
  \let\thefootnote\relax%
  \footnotetext{#1}%
  \let\thefootnote\svthefootnote%
}

% Redefinition of ToC command to get centered heading
\makeatletter
\renewcommand\tableofcontents{%
	\null\hfill\textbf{\Large\contentsname}\hfill\null\par
	\@mkboth{\MakeUppercase\contentsname}{\MakeUppercase\contentsname}%
	\@starttoc{toc}%
}

\begin{document}
\bibliographystyle{abbrv}

%\author[Q. Shen]{Quanli Shen}
%\address{SDU-ANU Joint Science College \\ Shandong University \\ Weihai 264209, China}
%\email{qlshen@sdu.edu.cn}

%\author[J. Stucky]{Joshua Stucky}
%\address{Mathematics Department \\ University of Georgia \\ Athens, GA 30605}
%\email{joshua.stucky@uga.edu}

\author{Quanli Shen and Joshua Stucky}

%\title{The fourth moment of quadratic Dirichlet $L$-functions II}

\title{\scshape The fourth moment of quadratic Dirichlet $L$-functions II}

%\thanks{
%Research for this article is partially supported by ***.
%}

%\subjclass[2010]{11M06, 11M50}

%\keywords{\noindent Moments of $L$-functions, quadratic Dirichlet $L$-functions, large sieve inequality}

\date{}
\maketitle
%\begin{center}
%{\large\bf THE FOURTH MOMENT OF QUADRATIC DIRICHLET $L$-FUNCTIONS}\\[1.5em]
%{\scshape Quanli Shen and Joshua Stucky}
%\end{center}
%\vspace{.5em}

\begin{abstract}
We prove an asymptotic formula with four main terms for the fourth moment of quadratic Dirichlet $L$-functions unconditionally. Our proof is based on the work of Li \cite{XiannanFourth}, Soundararajan \cite{SoundNonvanishing}, and Soundararajan-Young \cite{SoundYoungSecond}. Our proof requires several new ingredients. These include a modified large sieve estimate for quadratic characters where we consider a fourth moment, rather than a second, as well as observing cross cancellations between diagonal and off-diagonal terms, which involve somewhat delicate combinatorial arguments.
%Previously the first author \cite{ShenFourth} obtained the leading main term for the fourth moment under the generalized Riemann hypothesis.
\end{abstract}

\freefootnote{2020 \textit{Mathematics Subject Classification}. 11M06, 11M50}
\freefootnote{Keywords: Moments of $L$-functions, quadratic Dirichlet $L$-functions}

\clearpage
\tableofcontents
\clearpage

\section{Introduction and Statement of Results}\label{sec:Intro}
The theory of moments of $L$-functions is one of the central problems in analytic number theory. It has attracted much research attention, both for its own sake and because of many fruitful applications. %One celebrated result is Selberg's proof that a positive proportion of zeros of the Riemann zeta function lie in the critical line $\Re(s)=\half$. 
Here we study moments of the family of quadratic Dirichlet $L$-functions. This problem is closely related to Chowla's conjecture, which states that all quadratic Dirichlet $L$-functions do not vanish at the central point $s=\half$. A breakthrough was made by Soundararajan \cite{SoundNonvanishing}, who showed that the conjecture is true for $87.5\%$ of quadratic Dirichlet $L$-functions. This was accomplished by establishing asymptotic formulas for the first and second moments of quadratic Dirichlet $L$-functions. In the same paper, Soundararajan also obtained an asymptotic formula for the third moment of quadratic Dirichlet $L$-functions. The aim of this article is to establish unconditionally an asymptotic formula for the fourth moment of quadratic Dirichlet $L$-functions. This was previously known under the generalized Riemann hypothesis (GRH) by work of the first author \cite{ShenFourth}.

To better define our problem, we need some notation. For $m$ a fundamental discriminant, let $\chi_{m}(\cdot) = \fracp{m}{\cdot}$ be a real primitive Dirichlet character with modulus $\abs{m}$, where $\fracp{m}{\cdot}$ denotes the Kronecker symbol. The Dirichlet $L$-function associated to $\chi_{m}$ is
\[
L(s,\chi_{m}) := \sum_{n} \frac{\chi_{m}(n)}{n^s} = \prod_{p} \pth{1-\frac{\chi_{m}(p)}{p^s}}^{-1}
\]
for $\Re(s) > 1$, and this can be analytically continued to the entire complex plane. The completed $L$-function is given by
\begin{align}
\Lambda(s,\chi_{m}) := \fracp{\abs{m}}{\pi}^{\frac{s+\ida}{2}} \Gamma\pth{\frac{s+\ida}{2}}L(s,\chi_{m}) = \Lambda(1-s,\chi_{m}),
\label{eq:FunctionalEquation}
\end{align}
where $\ida = 0$ or 1 depending as $m > 0$ or $m < 0$.
We are interested in the family of quadratic Dirichlet $L$-functions,
\[
\mathscr{F}(X) = \set{L(s,\chi_m): \abs{m} \leq X,\ \text{$m$ a fundamental discriminant}}.
\]
The philosophy of Katz and Sarnak  \cite{K-S}  predicts that this family of $L$-functions has symplectic symmetry type. We will be concerned with the moments of this family at the central point $s=\frac{1}{2}$. These take the form
\begin{equation}
\sumflat_{\abs{m} \leq X} L\pth{\half,\chi_{m}}^k.
\label{equ:original4th}
\end{equation}
Here and throughout the paper, $\sumflat$ denotes a sum over fundamental discriminants. By using a random matrix model, Keating and Snaith \cite{KeatingSnaithRMT} conjectured that, for any positive real number $k$,
\begin{equation}
\sumflat_{\abs{m} \leq X} L\pth{\half,\chi_{m}}^k \sim C_k X(\log X)^\frac{k(k+1)}{2},
\label{conj-keating}
\end{equation}
where $C_k$ is an explicit constant.  Conrey {\it et al.} \cite{Conrey-Farmer-Keating-Rubinstein-Snaith} later established a more precise conjecture with lower-order main terms by using the ``recipe" method,
\begin{equation}
\sumflat_{\abs{m} \leq X} L\pth{\half,\chi_{m}}^k \sim XP_{\frac{k(k+1)}{2}}(\log X) ,
\label{conj-Conrey}
\end{equation}
where $P_{n}(x)$ is an explicit polynomial of degree $n$. Recently, Diaconu and Twiss \cite{Diaconu-Twiss} formulated a conjecture for further lower-order terms in the asymptotic formula in the function field setting.

Note that if $d$ is odd and squarefree, then $8d$ is a fundamental discriminant. 
It is also common for researchers to consider moments of the family of quadratic Dirichlet $L$-functions associated to these moduli, namely,
\begin{equation}\label{def:FourthMoment}
\sumstar_{\substack{0 < d \leq X\\ (d,2)=1}} L\pth{\half,\chi_{8d}}^k,
\end{equation}
where $\sumstar$ denotes the sum over squarefree integers. There is no essential difference between \eqref{equ:original4th} and \eqref{def:FourthMoment}, and the advantage of \eqref{def:FourthMoment} is that we may focus on the main method rather than on technical issues. One can also make conjectures for  \eqref{def:FourthMoment} 
similar to  \eqref{conj-keating} and \eqref{conj-Conrey}  by modifying the methods of 
Keating and Snaith \cite{KeatingSnaithRMT} (see Andrade and Keating   \cite[Conjecture 2]{AndradeKeating}) and Conrey {\it et al.} \cite{Conrey-Farmer-Keating-Rubinstein-Snaith}. 

For historical context, we note that Jutila \cite{Jutila1&2} established asymptotic formulas for the first and second moments of quadratic Dirichlet $L$-functions. Soundararajan \cite{SoundNonvanishing} improved Jutila's results and, as mentioned above, also proved an asymptotic formula for the third moment. The error terms for the first, second and third moments of quadratic Dirichlet $L$-functions have been subsequently improved, and we refer the reader to \cite{Diaconu-Goldfeld-Hoffstein, Diaconu-Whitehead, Goldfeld-Hoffstein, Sono, YoungActa, Young,  Zhangqiao} for more information on these problems. 

In the function field setting, Florea  \cite{Florea} obtained an asymptotic formula with lower-order main terms for the fourth moment. Assuming GRH, the first author  \cite{ShenFourth} proved an asymptotic formula with the leading main term for the fourth moment in the number field setting. This was done using the method of Soundararajan and Young \cite{SoundYoungSecond} for investigating the second moment of quadratic twists of modular $L$-functions. In that paper, Soundararajan and Young established an asymptotic formula conditionally under GRH. This dependence on GRH was recently removed by Li \cite{XiannanFourth}.

In this paper, we use the ideas of Li and prove an asymptotic formula for the  shifted  fourth moment of quadratic Dirichlet $L$-functions. This establishes, for the first time, an asymptotic formula for the fourth moment of a symplectic family of $L$-functions.
\begin{thm} \label{thm:Main} 
Let $\Phi: (0,\infty) \rightarrow \mathbb{R}$ be a
smooth, compactly supported function.
Let $\a_i\in \mathbb{C} $ and $\frac{1}{2\log X} \leq |\a_i| \leq \frac{1}{\log X} $ for $i=1,2,3,4$.  Assume 
\begin{equation*}
|\eta_1\a_1 +  \eta_2\a_2  +  \eta_3\a_3 +  \eta_4\a_4| \geq \frac{1}{50\log X}
%\label{equ:bdalpha}
\end{equation*}
for  $\eta_j=\pm 1, 0$  and $\eta_j$ not all zero.  Then we have 
\begin{align*}
&\sumstar_{(d,2)=1} L(\tfrac{1}{2} + \a_1,\chi_{8d})L(\tfrac{1}{2} + \a_2, \chi_{8d} )L(\tfrac{1}{2} + \a_3, \chi_{8d} )L(\tfrac{1}{2} + \a_4, \chi_{8d}) \Phi(\tfrac{d}{X})\nonumber \\
&=\frac{4X}{\pi^2} 
\sum_{\epsilon_1,\epsilon_2,\epsilon_3,\epsilon_4 = \pm 1}    (8X)^{-\sum_{i=1}^4\frac{1-\epsilon_i}{2}\a_i}
\prod_{1\leq i \leq 4}\lambda_i(\a_i)^{\frac{1-\epsilon_i}{2}}
\\
&\quad \times  \int\limits_{-\infty}^\infty  \Phi_{\{\epsilon_1,\epsilon_2,\epsilon_3,\epsilon_4\}}(x) \, dx  \prod_{1\leq i \leq j \leq 4} \zeta_2(1+\epsilon_i\a_i+\epsilon_j\a_j) \mathcal{H}_2(\tfrac{1}{2}+\epsilon_1\a_1 ,\tfrac{1}{2}+\epsilon_2\a_2 , \tfrac{1}{2}+\epsilon_3\a_3 , \tfrac{1}{2}+\epsilon_4\a_4 )\\
&\quad +O(X (\log X)^{6+ \varepsilon}),
\end{align*}
where $\lambda_i$ and $\Phi_{\{\epsilon_1,\epsilon_2,\epsilon_3,\epsilon_4\}}$ are defined in \eqref{def-lambda} and \eqref{tranf-phi}, and $\mathcal{H}_2$ is defined in \eqref{def:H}.
\end{thm}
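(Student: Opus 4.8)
The plan is to follow the now-standard architecture for fourth-moment computations in the symplectic family, combining the approximate functional equation with Poisson summation over the squarefree moduli, and then carrying out a careful analysis of the resulting diagonal and off-diagonal contributions. First I would insert a (shifted) approximate functional equation for the product $L(\tfrac12+\a_1,\chi_{8d})\cdots L(\tfrac12+\a_4,\chi_{8d})$, writing it as a sum over $n_1n_2n_3n_4 = n$ weighted by $\chi_{8d}(n)/\sqrt{n}$ against a smooth cutoff; the functional equation produces the $2^4$ terms indexed by $\epsilon_j = \pm 1$, each with its gamma-factor ratio $\lambda_i(\a_i)^{(1-\epsilon_i)/2}$ and length $(8X)^{\cdots}$ visible in the main term. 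Summing over $d$ squarefree and coprime to $2$ via the Möbius-over-squares device $\sumstar = \sum_{\ell} \mu(\ell) \sum_{d \equiv 0 \ (\ell^2)}$, I would then apply Poisson summation in $d$ modulo $8\ell^2 n$ (following Soundararajan and Soundararajan--Young), splitting the Gauss-sum evaluation into the contribution of $n=\square$ (the ``diagonal'') and $n\neq\square$ (the ``off-diagonal'').

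Next I would extract the main term from the diagonal $n=\square$ piece: this is where the product of $\zeta_2(1+\epsilon_i\a_i+\epsilon_j\a_j)$ over $1\le i\le j\le 4$ and the Euler product $\mathcal{H}_2$ arise, after writing $n = n_1n_2n_3n_4$ with each $n_i$ supported on squares up to the arithmetic factors and resumming; the smooth integral $\int \Phi_{\{\epsilon_1,\dots,\epsilon_4\}}(x)\,dx$ comes from the archimedean piece of the Poisson main term together with the Mellin transform of $\Phi$. The off-diagonal terms $n\neq\square$ are handled by opening the character sum into a Jacobi-type symbol and applying the Poisson-summation/large-sieve machinery; here I would invoke the modified large sieve estimate for quadratic characters with a fourth (rather than second) moment advertised in the abstract to control the resulting sums, and — crucially — identify the secondary main term hidden in the off-diagonal contribution and show that it \emph{cancels} against a corresponding piece peeled off from the diagonal. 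This cross-cancellation between diagonal and off-diagonal pieces is the delicate combinatorial heart of the argument, and managing it for all $2^4$ sign patterns simultaneously, uniformly in the shifts $\a_i$ subject to the separation hypothesis $|\sum \eta_j\a_j| \ge \tfrac{1}{50\log X}$, is what I expect to be the main obstacle.

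Throughout, the shift conditions $\tfrac{1}{2\log X}\le|\a_i|\le\tfrac{1}{\log X}$ and the non-degeneracy of linear combinations are used to keep all the $\zeta_2$-factors away from their pole $s=1$ and to separate the $2^4$ terms of size roughly $X(\log X)^{10}$ from one another and from the error, so that no term is swamped; the final step is to collect the admissible error contributions — from the tails of the approximate functional equation, the $\ell$-sum over the square divisor, the large-sieve bound on the off-diagonal remainder, and the contour shifts in the Mellin inversions — and check they all amount to $O(X(\log X)^{6+\varepsilon})$. I would organize the write-up by first recording the approximate functional equation and Poisson step as lemmas, then devoting one section to the diagonal main term (producing the stated product of $\zeta_2$'s and $\mathcal{H}_2$), one to the off-diagonal analysis and the large sieve input, and a final section to the combinatorial cancellation and the assembly of error terms.
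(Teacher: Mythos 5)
Your outline has the right broad shape --- approximate functional equation, M\"obius unfolding of the squarefree condition, Poisson summation in $d$, a fourth-moment large sieve, cross-cancellation between main terms --- but it misses the central device that makes the unconditional argument close with the stated error: the Dirichlet polynomial shortening by the parameter $U = (\log X)^{-Q}$. The paper does not apply the large sieve to the off-diagonal remainder of the full approximate-functional-equation sum. Instead it introduces truncated polynomials $N^\pm(\a_i)$ of length $\sqrt{8dU}$, writes $E^\pm = M^\pm - N^\pm$, and uses Proposition~\ref{prop:LargeSieve} first to show $\sumstar_{(d,2)=1}\Phi(d/X)\prod_{i}(E^+(\a_i)+E^-(\a_i)) \ll X(\log X)^{6+\varepsilon}$ (Lemma~\ref{lem:4therror}); in the remaining terms the surviving factor $U^{(u_4+s)/2}$ supplies a decisive $U^{1/4}$ savings in the off-diagonal non-square estimate (Lemma~\ref{remainE}). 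If you instead feed the untruncated polynomials into Poisson and invoke the large sieve on the off-diagonal directly, as your step list suggests, that error lands at $\gg XY(\log X)^A$ with $Y\asymp(\log X)^A$, which is far larger than $X(\log X)^{6+\varepsilon}$. So the sieve's role is \emph{first} to shorten, and only \emph{then} to control the off-diagonal of the shortened polynomial.

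There are two further points to reconcile with the paper. First, the split after Poisson is organized by the dual variable $k$, not by whether $n$ is a square: $k=0$ gives the diagonal (which does coincide with your $n_1n_2n_3n_4=\square$ piece), $k$ a nonzero square gives a second genuine main term, and $k\neq 0,\square$ is the error --- the cross-cancellation you anticipate is specifically between the diagonal and the off-diagonal \emph{square} contributions (Section~\ref{sec:cross-cancel}), and rests on a nontrivial Dirichlet-series identity, Lemma~\ref{lem:iden}(ii), linking $Z_3$ to $\mathcal{H}_2$. Second, once $U$ is introduced, both main-term pieces carry powers of $U$, so the $U$-free asymptotic of the theorem only emerges after an additional combinatorial resummation over the subsets $A\subsetneqq\{1,2,3,4\}$ and the $M/N$ assignments (Section~\ref{sec:removeU}), which telescopes the sum into $U$-free quantities $\mathcal{Z}_1,\mathcal{Z}_2$ carrying the main term and $U$-dependent leftovers $\mathcal{Z}_3,\mathcal{Z}_4$ bounded by $X(\log X)^{6+\varepsilon}$ (Lemmas~\ref{lem:secU-main} and~\ref{lem:secU-error}). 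This shorten-then-remove loop is where the paper spends most of its effort and is absent from your plan.
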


\subsection{Background}

Conrey {\it et al.} \cite{Conrey-Farmer-Keating-Rubinstein-Snaith} have formulated conjectures for shifted versions of moments of various families of $L$-functions. They also show how to remove these shifts in order to obtain an asymptotic formula for the original moments. The advantage of using shifts is that, compared to the original moments, the main terms for shifted moments arise with a symmetric pattern, and when computing certain contour integrals, one only needs to deal with simple poles, rather than higher-order poles.  

The heuristic in  \cite{Conrey-Farmer-Keating-Rubinstein-Snaith} is that only the ``diagonal terms'' contribute to the main terms of moments of $L$-functions, while the other terms, including ``off-diagonal'' terms, somehow cancel out (our terminology will be clear later). The structure of the off-diagonal terms is usually much more complicated than that of the diagonal terms, and there is currently no clear explanation for why cancellation should occur among these terms. In our proof of Theorem \ref{thm:Main}, we see cancellations between off-diagonal terms and certain parts of the diagonal terms, and the remaining parts of the diagonal terms give the main term of the fourth moment. This gives evidence for the fundamental idea in \cite{Conrey-Farmer-Keating-Rubinstein-Snaith} described above in the case of the fourth moment of quadratic Dirichlet $L$-functions.

The method described above relies on approximate functional equations for $L$-functions (see Section \ref{sec:AFE} below) and has been widely used to study various moments of $L$-functions.  Another approach, known as multiple Dirichlet series, is also powerful in studying these moments and has yielded surprising results. For example, Diaconu and Whitehead \cite{Diaconu-Whitehead} proved the existence of an extra main term of the form $cX^{\frac{3}{4}}$ for the third moment. More recently, Diaconu, Paşol, and Popa \cite{Diaconu-4th} established an asymptotic formula for a weighted fourth moment in the functional field setting with a power saving error term. In our proof of Theorem \ref{thm:Main}, we use the method of approximate functional equations. Our main technical result leading to the error term in Theorem \ref{thm:Main} is Proposition \ref{prop:LargeSieve} below. From the proof of this result, it seems that one would need additional ideas to improve the error term.

By Theorem \ref{thm:Main} and the method of Conrey {\it et al.} \cite{Conrey-Farmer-Keating-Rubinstein-Snaith},  we can derive an asymptotic formula for the smoothed fourth moment of quadratic Dirichlet $L$-functions.

\begin{cor}
\label{thm:firstmain}
Let $\Phi$ and $\a_i$ be the same as in Theorem \ref{thm:Main}. Then 
\begin{align}
&\sumstar_{(d,2)=1} L(\tfrac{1}{2} + \a_1,\chi_{8d})L(\tfrac{1}{2} + \a_2, \chi_{8d} )L(\tfrac{1}{2} + \a_3, \chi_{8d} )L(\tfrac{1}{2} + \a_4, \chi_{8d}) \Phi(\tfrac{d}{X})\nonumber \\
&=
 X \int\limits_{-\infty}^\infty  {\Phi}(x)
P(\log \tfrac{8X}{\pi};x,\a_1,\a_2,\a_3,\a_4) \, dx +O(X (\log X)^{6+ \varepsilon}),
\label{equ-cor-1}
\end{align}
where 
\begin{align*}
&P(y;x,\a_1,\a_2,\a_3,\a_4)\\
&:= \frac{8}{3\pi^2}  \frac{1}{(2 \pi  i )^4} \oint \cdots \oint 
x^{\frac{1}{2}\sum_{i=1}^4(z_i-\a_i)}
e^{\frac{y}{2}\sum_{i=1}^4(z_i-\a_i)} 
\prod_{i=1}^4\frac{\Gamma(\frac{1/2-\a_i}{2})^{1/2}}{\Gamma(\frac{1/2+\a_i}{2})^{1/2}} 
\frac{\Gamma(\frac{1/2+z_i}{2})^{1/2}}{\Gamma(\frac{1/2-z_i}{2})^{1/2}}\\
& \quad \times \mathcal{H}_2(\tfrac{1}{2}+z_1,\tfrac{1}{2}+z_2,\tfrac{1}{2}+z_3,\tfrac{1}{2}+z_4) \prod_{1\leq i \leq j \leq 4} \zeta_2(1+z_i+z_j)\\
&\quad\times \frac{\Delta(z_1^2, \dots , z_4^2)^2 \prod_{1\leq i \leq 4} z_i}{ \prod_{1\leq i \leq 4} \prod_{1\leq j \leq 4} (z_i-\a_j)(z_i+\a_j) }
\, d z_1 \cdots d z_4 , 
\end{align*}
and
\begin{align*}
\Delta(z_1, \dots, z_4) := \prod_{1\leq i < j \leq 4} (z_j-z_i),
\end{align*}
and the contour of integration encloses the $\pm \a_j$.

As well, we have
\begin{align} 
\sumstar_{ (d,2)=1} L\pth{\half,\chi_{8d}}^4  \Phi(\tfrac{d}{X})= X 
Q_{10}\pth{\log \tfrac{8X}{\pi}} +O(X (\log X)^{6+ \varepsilon}).
\label{equ-cor-2}
\end{align}
Here $Q_{10}(y):= \int_{-\infty}^\infty  {\Phi}(x) P(y;x,0,0,0,0) \, dx$ is a polynomial of degree $10$ whose leading coefficient is
\begin{align*}
\frac{\hat{\Phi}(0)}{2^{18} \cdot 3^3\cdot  5^2\cdot 7 \cdot \pi^2} \prod_{(p,2)=1} \frac{(1-\frac{1}{p})^{10}}{1+\frac{1}{p}} \pth{\frac{(1+\frac{1}{\sqrt{p}})^{-4} + (1-\frac{1}{\sqrt{p}})^{-4}}{2} + \frac{1}{p}},
\end{align*}
and $\hat{\Phi}$ is the Fourier transform defined in \eqref{equ:fourier-trans}.
\end{cor}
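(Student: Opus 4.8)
The plan is to derive Corollary~\ref{thm:firstmain} from Theorem~\ref{thm:Main} by the shift-removal procedure of Conrey, Farmer, Keating, Rubinstein, and Snaith~\cite{Conrey-Farmer-Keating-Rubinstein-Snaith}; since the analytic work is already contained in Theorem~\ref{thm:Main}, what remains is essentially bookkeeping. First I would view the main term of Theorem~\ref{thm:Main} as a sum over the $16$ sign patterns $\epsilon=(\epsilon_1,\epsilon_2,\epsilon_3,\epsilon_4)\in\{\pm1\}^4$ and rewrite it as a multiple contour integral. Unwinding \eqref{def-lambda} and \eqref{tranf-phi} shows that, up to the overall factor $\tfrac{4X}{\pi^2}$, the $\epsilon$-th summand is the value at $(z_1,z_2,z_3,z_4)=(\epsilon_1\a_1,\epsilon_2\a_2,\epsilon_3\a_3,\epsilon_4\a_4)$ of
\begin{gather*}
B(z_1,\dots,z_4):=\int_{-\infty}^{\infty}\Phi(x)\,x^{\frac12\sum_i(z_i-\a_i)}\,dx\cdot e^{\frac{y}{2}\sum_i(z_i-\a_i)}\cdot\mathcal{H}_2\bigl(\tfrac12+z_1,\dots,\tfrac12+z_4\bigr)\\
\times\prod_{i=1}^{4}\frac{\Gamma(\frac{1/2-\a_i}{2})^{1/2}\,\Gamma(\frac{1/2+z_i}{2})^{1/2}}{\Gamma(\frac{1/2+\a_i}{2})^{1/2}\,\Gamma(\frac{1/2-z_i}{2})^{1/2}}\prod_{1\le i\le j\le4}\zeta_2(1+z_i+z_j),\qquad y:=\log\tfrac{8X}{\pi};
\end{gather*}
here $(8X)^{-\sum\frac{1-\epsilon_i}{2}\a_i}$ together with the $\lambda_i(\a_i)^{(1-\epsilon_i)/2}$ produces the exponential and $\Gamma$-ratio factors evaluated at $z_i=\epsilon_i\a_i$, while $\Phi_{\{\epsilon\}}$ produces the $x$-integral. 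In particular $B=F\cdot\prod_{1\le i\le j\le4}\zeta_2(1+z_i+z_j)$ with $F$ holomorphic and symmetric for $z$ in a fixed neighbourhood of the origin (for $\mathcal{H}_2(\tfrac12+z_1,\dots)$ this is the content of \eqref{def:H}; the $\Gamma$-factors are holomorphic and nonvanishing near $z_i=0$).

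This is exactly the structure required by the combinatorial lemma for symplectic families of~\cite{Conrey-Farmer-Keating-Rubinstein-Snaith}, applicable to kernels of the form $F\prod_{i\le j}\zeta_2(1+z_i+z_j)$, which expresses the sign-sum as
\[
\sum_{\epsilon\in\{\pm1\}^4}B(\epsilon_1\a_1,\dots,\epsilon_4\a_4)=\frac{2^4}{4!}\cdot\frac{1}{(2\pi i)^4}\oint\cdots\oint B(z_1,\dots,z_4)\,\frac{\Delta(z_1^2,\dots,z_4^2)^2\prod_{1\le i\le4}z_i}{\prod_{1\le i,j\le4}(z_i-\a_j)(z_i+\a_j)}\,dz_1\cdots dz_4,
\]
the contour a small curve around the points $\pm\a_j$ (the hypotheses of Theorem~\ref{thm:Main} keeping these, and the poles $z_i=-z_j$ of $B$, separated), the $16$ residues at the tuples with $z_i\in\{\pm\a_j\}$ and the $z_i^2$ distinct reproducing the $16$ summands. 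The constants match: $\tfrac{4X}{\pi^2}\cdot\tfrac{2^4}{4!}=\tfrac{8X}{3\pi^2}$ is the prefactor of $P$, and the $\Gamma$-, exponential-, and $x$-factors of $B$ coincide with those in the integrand of $P$. Interchanging the $x$- and $z$-integrations then gives \eqref{equ-cor-1}.

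To pass from \eqref{equ-cor-1} to \eqref{equ-cor-2} I would fix four distinct radii $r_1,\dots,r_4$ in $[\tfrac{1}{2\log X},\tfrac{1}{\log X}]$ — for instance equally spaced in $[\tfrac{3}{5\log X},\tfrac{1}{\log X}]$, so that $|r_i-r_j|\ge\tfrac{2}{15\log X}$ and $r_j+r_k-r_i\ge\tfrac{1}{5\log X}$ for distinct indices — chosen so that every nonzero combination $\eta_1r_1+\cdots+\eta_4r_4$ with $\eta_j\in\{0,\pm1\}$ has modulus at least $\tfrac{1}{50\log X}$; then Theorem~\ref{thm:Main} applies uniformly for $\a_j$ on the circle $|\a_j|=r_j$. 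Writing $L(\tfrac12,\chi_{8d})^4=\tfrac{1}{(2\pi i)^4}\oint_{|\a_1|=r_1}\!\!\cdots\oint_{|\a_4|=r_4}L(\tfrac12+\a_1,\chi_{8d})\cdots L(\tfrac12+\a_4,\chi_{8d})\,\tfrac{d\a_1\cdots d\a_4}{\a_1\a_2\a_3\a_4}$, multiplying by $\Phi(\tfrac dX)$, summing over $d$, and inserting \eqref{equ-cor-1}, the main term becomes $X\int\Phi(x)P(\log\tfrac{8X}{\pi};x,\a_1,\dots,\a_4)\,dx$ integrated against $(\a_1\a_2\a_3\a_4)^{-1}$ over the four circles. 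Since $P$ is holomorphic in $(\a_1,\dots,\a_4)$ near the origin — the apparent poles of its integrand at $z_i=\pm\a_j$ lie inside the $z$-contour and are integrated away — this reduces by Cauchy's formula to $X\int\Phi(x)P(\log\tfrac{8X}{\pi};x,0,0,0,0)\,dx=XQ_{10}(\log\tfrac{8X}{\pi})$, while the error term is preserved since each $\tfrac{1}{2\pi i}\oint_{|\a_j|=r_j}d\a_j/\a_j$ is $O(1)$.

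Finally, to read off the properties of $Q_{10}$, set $\a_i=0$ in $P$. The only $y$-dependence is through $e^{\frac{y}{2}\sum z_i}$, so $Q_{10}$ is a polynomial in $y$; its degree is pinned down by homogeneity near $z=0$, where $\prod_{i,j}(z_i-\a_j)(z_i+\a_j)$ becomes $\prod_iz_i^{8}$ (homogeneous of degree $32$), $\prod_{1\le i\le j\le4}\zeta_2(1+z_i+z_j)\sim2^{-10}\prod_{i\le j}(z_i+z_j)^{-1}$ (degree $-10$, using $\operatorname{Res}_{s=1}\zeta_2(s)=\tfrac12$), and the numerator $\Delta(z_1^2,\dots,z_4^2)^2\prod_iz_i$ has degree $28$; thus the rational part of the integrand is homogeneous of degree $28-32-10=-14$, and the residue at $z=0$ selects the $\tfrac{1}{10!}(\tfrac{y}{2}\sum z_i)^{10}$ term from the exponential, giving $\deg Q_{10}=10$ in accordance with \eqref{conj-Conrey} for $k=4$. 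Extracting the $y^{10}$-coefficient — in which $x^{\frac12\sum z_i}\to1$ (so $\int\Phi(x)\,dx=\hat\Phi(0)$ by \eqref{equ:fourier-trans}), the $\Gamma$-ratios $\to1$, and $\mathcal{H}_2\to\mathcal{H}_2(\tfrac12,\tfrac12,\tfrac12,\tfrac12)$ — leaves
\begin{gather*}
\hat\Phi(0)\cdot\frac{8}{3\pi^2}\cdot\frac{1}{10!}\cdot\Bigl(\frac12\Bigr)^{20}\cdot\mathcal{H}_2(\tfrac12,\tfrac12,\tfrac12,\tfrac12)\\
\times\frac{1}{(2\pi i)^4}\oint\cdots\oint\frac{(z_1+\cdots+z_4)^{10}\,\Delta(z_1^2,\dots,z_4^2)^2\prod_{1\le i\le4}z_i}{\prod_{1\le i\le4}z_i^{8}\,\prod_{1\le i\le j\le4}(z_i+z_j)}\,dz,
\end{gather*}
and evaluating this last integral by residues (a positive rational; one computes it to be $2^7\cdot3^2$) while identifying $\mathcal{H}_2(\tfrac12,\tfrac12,\tfrac12,\tfrac12)$ with $\prod_{(p,2)=1}\frac{(1-1/p)^{10}}{1+1/p}\bigl(\frac{(1+1/\sqrt p)^{-4}+(1-1/\sqrt p)^{-4}}{2}+\frac1p\bigr)$ from \eqref{def:H} recovers the stated leading coefficient. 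The main difficulty is organizational rather than analytic: verifying the factorization $B=F\prod\zeta_2$ needed to apply the combinatorial lemma, choosing the radii $r_j$ so that Theorem~\ref{thm:Main} holds on all four circles without degrading the error term, and tracking the combinatorial constants through to the final residue.
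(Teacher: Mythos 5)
Your derivation of \eqref{equ-cor-1} via Lemma 2.5.2 of \cite{Conrey-Farmer-Keating-Rubinstein-Snaith} matches the paper's proof step for step: the paper performs the same unwinding of $\lambda_i$ and $\Phi_{\{\epsilon\}}$ and the same application of the CFKRS residue lemma. You also go further than the paper and verify the degree and the leading coefficient of $Q_{10}$ — the paper states these in the corollary but does not rederive them in the proof — and your arithmetic there is consistent with the stated answer (your $(1/2)^{20}$ from the exponential and the $\zeta_2$ expansions, together with $10!=2^8\cdot 3^4\cdot 5^2\cdot 7$ and the claimed residue $2^7\cdot 3^2$, reproduces $2^{18}\cdot 3^3\cdot 5^2\cdot 7\cdot\pi^2$).

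However, your passage from \eqref{equ-cor-1} to \eqref{equ-cor-2} via Cauchy's integral formula over a torus $|\a_j|=r_j$ has a genuine gap: no choice of radii $r_j\in[\frac{1}{2\log X},\frac{1}{\log X}]$ makes the hypotheses of Theorem~\ref{thm:Main} hold at every point of the torus. Distinct radii do control one- and two-term combinations via $|\a_i\pm\a_j|\geq|r_i-r_j|$, but the three-term combination $\a_i+\a_j-\a_k$ vanishes for suitable arguments precisely when $r_k\in[|r_i-r_j|,\,r_i+r_j]$; since all $r_i\geq\tfrac{1}{2\log X}$ and all $r_i\leq\tfrac{1}{\log X}$, one always has $|r_i-r_j|\leq\tfrac{1}{2\log X}\leq r_k$ and $r_k\leq\tfrac{1}{\log X}\leq r_i+r_j$, so the vanishing locus always intersects the torus. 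Hence Theorem~\ref{thm:Main} cannot be invoked uniformly on your contours, and the step ``the error term is preserved'' does not follow as written. The paper avoids fixing contours and instead observes that $g(\boldsymbol{\a})$ and the left-hand side are both analytic in a polydisc $|\a_i|<\eta$, so the error is analytic there, and then appeals to the maximum modulus principle to transport the bound from the region where Theorem~\ref{thm:Main} applies to $\boldsymbol{\a}=0$. To repair your argument you could either adopt that analytic-continuation route, or perturb the torus off the (codimension-one) hypersurfaces $\sum_j\eta_j\a_j=0$ — for instance by giving the $\a_j$ small, generic imaginary shifts — before applying Theorem~\ref{thm:Main} and Cauchy's theorem; with circles alone the argument is incomplete.
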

The main term in \eqref{equ-cor-1}  is equivalent to the one in Conjecture 1.5.3 of \cite{Conrey-Farmer-Keating-Rubinstein-Snaith} in the case $k=4$ after applying Mellin inversion for $\Phi$ and converting fundamental discriminants $d$ into $8d$ with  $d$ squarefree in the argument of \cite{Conrey-Farmer-Keating-Rubinstein-Snaith}. The coefficients for the lower order terms of $Q(y)$ may be computed using an argument similar to that of Goulden, Huynh, Rishikesh and Rubinstein \cite{Goulden-etc}.  We reiterate that Theorem \ref{thm:Main} and Corollary \ref{thm:firstmain} are unconditional in that they do not depend on GRH. The proof of Theorem \ref{thm:Main} follows the method of Li \cite{XiannanFourth} recently used to establish unconditionally an asymptotic formula for the second moment of quadratic twists of modular $L$-functions. There, the key step is to establish the large sieve inequality
\begin{equation}\label{Li-prop}
\sumflat_{M <\abs{m} \leq 2M} \sumabs{\sum_{n} \frac{\lambda_f(n) \chi_m(n) }{n^{\frac{1}{2}+ i  t}}G\fracp{n}{N}}^2 \leq \coll^\frac{2}{3}(1+\abs{t})^2\pth{M + N\log(2+N/M)}.
\end{equation}
for some constant $\coll>0$. Here $\lambda_f(n)$ denote the Fourier coefficients of the modular form $f$. In our case, we establish the following estimate.

\begin{prop} \label{prop:LargeSieve}
For $M,N\geq 1$, there exists an absolute constant $\coll > 0$ such that
\begin{align}
\sumflat_{M < \abs{m} \leq 2M} \sumabs{\sum_{n} \frac{\chi_m(n)}{n^{\frac{1}{2}+ i  t}}G\fracp{n}{N}}^4 \leq \coll^\frac{2}{3}(1+\abs{t})^2\pth{M + N^2\log(2+N^2/M)}\log^6(2+MN).
\label{Large-sie}
\end{align}
Here $G$ is a smooth, compactly supported function in $[3/4, 2]$ defined in Lemma \ref{lem:ConvenientTestFandG}(ii).
\end{prop}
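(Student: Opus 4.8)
The plan is to reduce this fourth moment to a \emph{second} moment of a longer Dirichlet polynomial and then prove a large sieve estimate for that by Poisson summation, following the GL(2) argument of Li \cite{XiannanFourth} adapted to the family of real characters. Write $D_t(m):=\sum_n n^{-1/2-it}\chi_m(n)G(n/N)$. Since $\chi_m$ is real-valued and completely multiplicative and $G$ is real,
\[
|D_t(m)|^2=\sum_{k}b_k\,\chi_m(k),\qquad b_k:=\sum_{n_1n_2=k}(n_1n_2)^{-1/2}\Bigl(\tfrac{n_2}{n_1}\Bigr)^{it}G\Bigl(\tfrac{n_1}{N}\Bigr)G\Bigl(\tfrac{n_2}{N}\Bigr),
\]
a Dirichlet polynomial in $\chi_m$ with \emph{real} coefficients (by the symmetry $n_1\leftrightarrow n_2$), supported on $k\asymp N^2$, with $|b_k|\ll r(k)/N$ where $r(k):=\#\{(n_1,n_2):n_1n_2=k,\ n_i\in\operatorname{supp}G(\cdot/N)\}$; in particular $\sum_k|b_k|^2\ll N^{-2}\#\{n_1n_2=n_3n_4:n_i\asymp N\}\ll\log(2+N)$. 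Thus the left side of \eqref{Large-sie} equals $\sumflat_{M<|m|\le 2M}\bigl(\sum_k b_k\chi_m(k)\bigr)^2$; opening this square, or equivalently expanding $|D_t(m)|^4$ fully, it becomes $\sum_{n_1,\dots,n_4}w(n_1,\dots,n_4)\sumflat_{M<|m|\le 2M}\chi_m(n_1n_2n_3n_4)$ with $w$ an explicit weight of size $\asymp N^{-2}$ supported on $n_i\asymp N$.

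To treat the character sum I would first dispose of the fundamental-discriminant condition in the standard way (separate $m>0$ and $m<0$, detect squarefreeness of $d$ by $\sum_{a^2\mid d}\mu(a)$, and smooth the dyadic cutoff), reducing to estimating $\sum_d f(d)\bigl(\tfrac{\ell}{d}\bigr)$ with $f$ smooth at scale $M$ and Kronecker symbol of conductor essentially $\ell=n_1n_2n_3n_4$. Poisson summation modulo $8\ell$ (as used by Soundararajan \cite{SoundNonvanishing}) rewrites this as $\tfrac1{8\ell}\sum_h\tau_\ell(h)\,\widehat f\bigl(\tfrac h{8\ell}\bigr)$ with $\tau_\ell(h)$ a Gauss-type sum; the decay of $\widehat f$ truncates the $h$-sum at $|h|\ll\ell(1+|t|)/M$, the factor $1+|t|$ arising because the phase $n^{-it}$ (equivalently $(n_2/n_1)^{it}$) shifts the support of the relevant oscillatory integral, and handling this on each ``half'' of the four variables is what yields the $(1+|t|)^2$.

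The zero frequency $h=0$ survives only when $\ell=n_1n_2n_3n_4$ is a perfect square, with an accompanying main term of size $O(M)$; hence the diagonal contributes $\ll \frac{M}{N^2}\,\#\{n_1,\dots,n_4\asymp N:\ n_1n_2n_3n_4=\square\}$. This combinatorial count is the source of the exponent $6$: the naive bound, via $\sum_{q\asymp N^2}d_4(q^2)\asymp N^2(\log N)^9$, is too weak, but the constraint that \emph{each} $n_i$ lie in a fixed dyadic interval forces the factorization $q^2=n_1n_2n_3n_4$ to be balanced in size, and a careful count (writing $n_1n_2=su^2$, $n_3n_4=sv^2$ with $s$ squarefree, so that $u\asymp v$ and $s\ll N^2$, and then summing divisor functions with care) brings it down to $\ll N^2(\log 2N)^{6}$, giving a diagonal of $\ll M(\log 2MN)^{6}$. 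For the frequencies $h\neq0$ one evaluates $\tau_\ell(h)$ explicitly and uses quadratic reciprocity to recast the sum over $n_1,\dots,n_4$ as an incomplete character sum in the modulus, to which P\'olya--Vinogradov-type bounds apply; summing over $|h|\ll N^2(1+|t|)/M$ then produces the term $N^2\log(2+N^2/M)$ with only logarithmic losses. Combining the diagonal and off-diagonal estimates yields \eqref{Large-sie}.

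The two main difficulties, I expect, are: (i) the diagonal count — showing that imposing $n_i\asymp N$ reduces the quadruple-square count from $(\log N)^9$ to $(\log N)^{6}$, which is the delicate combinatorial heart of the estimate and is precisely what pins down the exponent appearing in Theorem \ref{thm:Main}; and (ii) running the off-diagonal estimate with only powers of $\log$ lost (rather than $(MN)^\varepsilon$) and uniformly in $t$, which demands a more quantitative handling of Gauss sums and incomplete character sums than Heath-Brown's classical quadratic large sieve supplies, and is the analogue — now for quadratic Dirichlet characters — of the key estimate of Li \cite{XiannanFourth}.
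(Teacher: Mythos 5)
Your reduction to a Dirichlet polynomial in $\chi_m$ and the Poisson step are the right opening moves, and your claim that the diagonal count can be brought from $(\log N)^9$ down to $(\log N)^6$ when all four $n_i$ are constrained to $n_i\asymp N$ is correct (the paper derives this via a fourfold contour shift in a Mellin-transformed version, Section~\ref{sec:LargeSieve-Diagonal}, rather than by the explicit $n_1n_2=su^2$, $n_3n_4=sv^2$ count you sketch, but both routes should give the exponent $6$). However, the proposal has a genuine gap where it matters most: the off-diagonal, non-square contribution.

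You claim that for $h\neq 0$ one can evaluate the Gauss sums, invoke reciprocity, and bound the resulting incomplete character sums by P\'olya--Vinogradov, summing over $|h|$ up to the truncation point with only logarithmic loss. This does not close. After reciprocity, the dual sum over $h$ of $\bigl|\sum_n \chi_h(n) n^{-1/2-it}G(n/N)\bigr|^4$ has \emph{exactly the same shape} as the original expression, with the range of $h$ being roughly $N^4(1+|t|)/M$ (note: $\ell=n_1n_2n_3n_4\asymp N^4$, so your stated truncation $|h|\ll N^2(1+|t|)/M$ is off by a factor of $N^2$). In the critical regime $M\asymp N^2$, the dual sum therefore has length comparable to $M$ itself, and P\'olya--Vinogradov applied in the $n_i$-variables (or in $h$) recovers nothing: one is chasing one's tail. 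This is precisely the obstruction Li's argument is designed to overcome. The paper's proof introduces an ``inflation lemma'' (Lemma~\ref{lem:Inflation}) that replaces $M$ by $X=\coll_2 M$ with $\coll_2\gg 1$ chosen so that $X\geq \coll N^2$; Poisson summation then produces a dual sum of length $N^4/X\leq M/\coll$, strictly shorter than $M$, and the argument closes by a nested double induction on $M$ and $N$ (Lemma~\ref{lem:InductionAllN}, used in Section~\ref{sec:LargeSieve-NonSqare}). Without the inflation and induction, no amount of P\'olya--Vinogradov or elementary Gauss-sum evaluation bridges the critical range.

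A second structural point your proposal misses is the treatment of square contributions. When one applies Poisson in $m$ over the full dyadic interval (which is needed for the Poisson step to be clean), the square values of $m$, for which $\chi_m$ is trivial, contribute a disastrously large $\gg\sqrt{M}N^2$ to the fourth moment, unlike Li's GL(2) setting where the $\lambda_f(n)$ still oscillate. The paper subtracts these terms explicitly (the quantity $\colt$ in \eqref{equ:def-S}) and then shows, via a somewhat delicate Dirichlet-series computation (Section~\ref{sec:LargeSieve-OffDiagonal} and Lemma~\ref{lem:DirichletSeriesOffDiagonal}), that the main term of $\colt$ cancels exactly against the main term of the square dual frequencies $\colm^\square$. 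Your reorganization (Poisson modulo $8\ell$ in $d$ rather than in $m$) avoids the square-$m$ subtraction but does not address the analogous issue from square dual frequencies, which is where the non-trivial secondary term $X\bigl[(\log^*(N^4/X))^6-4(\log^*(N^3/X))^6\bigr]$ of Lemma~\ref{lem:LS-SquareCancellation} comes from and which cannot simply be absorbed into the diagonal.
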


The critical ranges of \eqref{Li-prop} and \eqref{Large-sie} are $M \asymp N$ and $M \asymp N^2$, respectively. In this case, standard tools from harmonic analysis (Poisson summation in the variable $m$ or the functional equations for $L(s,f\otimes \chi)$ and $L(s,\chi)$ for the variable $n$) are of no use. The key innovation of Li is to ``inflate'' the sum over $m$ so that Poisson summation becomes useful again, and \eqref{Li-prop} is then established via a nested induction argument on $M$ and $N$. We discuss this idea more at the beginning of Section \ref{sec:Inflation}.

Let us remark on a few of the major differences between \eqref{Li-prop} and \eqref{Large-sie}. First, we note that the proof of \eqref{Li-prop} relies crucially on the properties of the coefficients $\lambda_f(n)$ in the following way. In order to apply Poisson summation to $m$ in either \eqref{Li-prop} or \eqref{Large-sie}, one essentially needs $m$ to run over the full dyadic range $M < m \leq 2M$, rather than just fundamental discriminants. To obtain an upper bound, one may simply ignore the condition that $m$ is a fundamental discriminant. Critically, the sum over $m$ in the upper bound now includes those $m$ which are squares of integers. For these $m$, the character $\chi_m$ is trivial and so contributes no oscillation to the sum over $n$. In the case of \eqref{Li-prop}, this is no issue because one still has oscillation from the coefficeints $\lambda_f(n)$. In the case of \eqref{Large-sie}, however, we are not so lucky. Indeed, if $m$ is a square, we have
\[
\sum_{n} \frac{\chi_m(n)}{n^{\frac{1}{2}+ i  t}}G\fracp{n}{N} = \sum_{(n,m)=1} \frac{1}{n^{\frac{1}{2}+ i  t}}G\fracp{n}{N} \asymp \frac{\phi(m)}{m} \sqrt{N}.
\]
Summing over the square values of $m$ in the interval $[M,2M)$, we see that these values of $m$ give a contribution that is
\[
\gg \sqrt{M}N^2,
\]
which is disastrously large (see the footnote on page 5 of \cite{XiannanFourth}). To circumvent this issue, we simply note that since the original sum over $m$ included no squares, we may subtract the contribution of these $m$ and still obtain an upper bound. Applying Poisson summation in $m$ in the full dyadic interval $[M,2M)$, we then proceed as in \cite{XiannanFourth}. On the dual side, in the variable $k$, say, one encounters the same situation as in Soundararajan's work \cite{SoundNonvanishing}, in that those $k$ which are perfect squares contribute meaningfully to the dual sum and cannot be ignored. Thankfully, after some careful manipulations with Dirichlet series, one finds that the contribution of square $m$ cancels with the contribution of square $k$ on the dual side up to a pair of sufficiently small error terms. In order to see this cancellation, we require a significantly more detailed evaluation of certain multiple Dirichlet series (compare Lemma \ref{lem:DirichletSeriesOffDiagonal} below with Lemma 2.5 of Li \cite{XiannanFourth} and Lemma 3.3 of Soundararajan and Young \cite{SoundYoungSecond}). 

The second major difference between \eqref{Li-prop} and \eqref{Large-sie} is the differing exponents 2 and 4. The most natural analogue of the expression in \eqref{Li-prop} for quadratic Dirichlet $L$-functions is
\begin{equation}\label{eq:analogue}
\sumflat_{M \leq \abs{m} < 2M} \sumabs{\sum_{n} \frac{\tau(n) \chi_m(n) }{n^{\frac{1}{2}+ i  t}}G\fracp{n}{N}}^2.
\end{equation}
The reason we consider the expression in \eqref{Large-sie} rather than \eqref{eq:analogue} can best be explained by considering the nature of the function $\tau(n)$. Classically, $\tau(n)$ is of size $\log N$ on average over $n$ of size $N$. However, one expects $\tau(n)$ to typically be much smaller; its average value is actually dominated by a few exceptional integers having many small prime factors. The advantage of working with \eqref{Large-sie} over \eqref{eq:analogue} can be seen in the simple estimates
\[
\sum_{n\sim N} \tau(n) \asymp N\log N \mand \sumpth{\sum_{n\sim \sqrt{N}} 1}^2 \asymp N.
\]
By working with \eqref{Large-sie} instead of \eqref{eq:analogue}, we avoid the contribution of those integers with an exceptionally large number of divisors. 

This phenomenon manifests itself via the nature of our harmonic-analytic manipulations. After opening the absolute values and applying Poisson summation in $m$, we express the 2 or 4 sums over $n$ as 2- or 4-fold contour integrals of certain Dirichlet series. In shifting contours to evaluate these integrals, we essentially lose a pole every time we shift contours. As will be seen, using \eqref{Large-sie} leads to a bound of size $M(\log N)^6$, whereas \eqref{eq:analogue} would only lead to a bound of size $M(\log N)^8$.

As in \cite{XiannanFourth}, the crucial role of Proposition \ref{prop:LargeSieve} is to shorten the length of the Dirichlet polynomial corresponding to the fourth moment of quadratic Dirichlet $L$-functions from $X^2$ to $X^2 U$, where $U = (\log X)^{-Q}$ for some large constant $Q>0$ (see \eqref{defU}). Previously, Soundararjan and Young \cite{SoundYoungSecond} used the correlation between twisted $L$-functions, which is established under GRH, to reduce the length of the Dirichlet polynomial, and similar ideas were later employed in \cite{Florea, NSW, ShenFourth}. 

To evaluate the diagonal terms and off-diagonal square terms of the fourth moment of quadratic Dirichlet $L$-functions, similar to the ideas in \cite{Conrey-Farmer-Keating-Rubinstein-Snaith}, we expect cancellations between different terms, especially those contributions coming from small shifts. %In fact, it seems that to prove the main terms of certain lower orders, one always needs to use some types of subtle cancellations (see the evaluation of $\mathcal{P}_2(l)$ below (5.16) in  \cite{SoundNonvanishing}).  
However, in our consideration, the parameter $U$ prevents these cancellations. Morally speaking, we have to deal with the fact that $X-XU\neq 0$ unless $U =1$. Other works have avoided computing these cancellations (and so avoided the issue arising from $U$ if we use this idea) by introducing a smooth function $\mathcal{G}(s)$ in the approximate functional equation (see \cite{Bettin,  YoungActa, Young} and in particular Remark 2.2 of \cite{YoungActa}) that is $1$ when $s=0$ and vanishes when $s$ are combinations of shifts, thus cancelling poles arising from shifts.
%For instance, in our consideration, we may need 
%\[
%\mathcal{G}(s)=  \prod_{\substack{\epsilon_i \in \{\pm 1,0\}\\ \epsilon_1  \epsilon_2  \epsilon_3  \epsilon_4 \neq 0}} \frac{(s+\epsilon_1\a_1 +\epsilon_2\a_2 +\epsilon_3\a_3+\epsilon_4\a_4)^{4}
%(-s+\epsilon_1\a_1 +\epsilon_2\a_2 +\epsilon_3\a_3+\epsilon_4\a_4)^{4}}{(\epsilon_1\a_1 %+\epsilon_2\a_2 +\epsilon_3\a_3+\epsilon_4\a_4)^{8}}.
%\]
Unfortunately, this function contributes many powers of $\log X$ when $s$ is away from $0$. Since our error term is sensitive to powers of $\log X$, we are unable to use this technical device. In Section \ref{sec:removeU}, we essentially change $U$ back to $1$ via some careful combinatorial manipulations with a suitable error of $O\pth{X(\log X)^{6+ \varepsilon}}$, after which we are able to observe the cancellation we need. 

We conclude this section by showing how Corollary \ref{thm:firstmain} follows from Theorem  \ref{thm:Main}.

\begin{proof}[Proof of Corollary \ref{thm:firstmain} by Theorem  \ref{thm:Main}]
We write the main term in  Theorem  \ref{thm:Main} as 
\begin{align}
g(\a_1,\a_2,\a_3,\a_4) &:= \sum_{\epsilon_1,\epsilon_2,\epsilon_3,\epsilon_4 = \pm 1}    (8X)^{-\sum_{i=1}^4\frac{1-\epsilon_i}{2}\a_i}
\prod_{1\leq i \leq 4}\lambda_i(\a_i)^{\frac{1-\epsilon_i}{2}} \frac{4X}{\pi^2}  
\int\limits_{-\infty}^\infty  \Phi_{\{\epsilon_1,\epsilon_2,\epsilon_3,\epsilon_4\}}(x)
\, dx  \nonumber\\
&\quad \times \prod_{1\leq i \leq j \leq 4} \zeta_2(1+\epsilon_i\a_i+\epsilon_j\a_j) \mathcal{H}_2(\tfrac{1}{2}+\epsilon_1\a_1 ,\tfrac{1}{2}+\epsilon_2\a_2 ,\tfrac{1}{2} +\epsilon_3\a_3 , \tfrac{1}{2}+\epsilon_4\a_4 ) .
\label{equ:sec1-01}
\end{align}
By  \eqref{def-lambda} and \eqref{tranf-phi}, we  have
\begin{align*}
& (8X)^{-\sum_{i=1}^4\frac{1-\epsilon_i}{2}\a_i}
\prod_{1\leq i \leq 4}\lambda_i(\a_i)^{\frac{1-\epsilon_i}{2}}  \Phi_{\{\epsilon_1,\epsilon_2,\epsilon_3,\epsilon_4\}}(x)\\
&= \prod_{i=1}^4 \left( \frac{8xX}{\pi}\right)^{\frac{-\a_i}{2}} \frac{\Gamma(\frac{1/2-\a_i}{2})^{1/2}}{\Gamma(\frac{1/2+\a_i}{2})^{1/2}}
\times \prod_{i=1}^4 \left( \frac{8xX}{\pi}\right)^{\frac{\epsilon_i\a_i}{2}} \frac{\Gamma(\frac{1/2+\epsilon_i\a_i}{2})^{1/2}}{\Gamma(\frac{1/2-\epsilon_i\a_i}{2})^{1/2}}  \Phi(x), 
\end{align*}
where we have used the fact that for $\epsilon_i =\pm 1$, 
\[
\frac{\Gamma(\frac{1/2+\a_i}{2})^{\epsilon_i/2}}{\Gamma(\frac{1/2-\a_i}{2})^{\epsilon_i/2}} =
\frac{\Gamma(\frac{1/2+\epsilon_i\a_i}{2})^{1/2}}{\Gamma(\frac{1/2-\epsilon_i\a_i}{2})^{1/2}}.
\]
Combining this with \eqref{equ:sec1-01}, we have
\begin{align*}
g(\a_1,\a_2,\a_3,\a_4) &= \frac{4X}{\pi^2}  \int\limits_{-\infty}^\infty  \Phi(x) \prod_{i=1}^4 \left( \frac{8xX}{\pi}\right)^{\frac{-\a_i}{2}} \frac{\Gamma(\frac{1/2-\a_i}{2})^{1/2}}{\Gamma(\frac{1/2+\a_i}{2})^{1/2}} \nonumber \\
&\quad \times \sum_{\epsilon_1,\epsilon_2,\epsilon_3,\epsilon_4 = \pm 1}  
F(\epsilon_1 \a_1, \epsilon_2 \a_2, \epsilon_3 \a_3, \epsilon_4 \a_4)\prod_{1\leq i \leq j \leq 4} f(\epsilon_i\a_i+\epsilon_j\a_j)\, dx,
\nonumber
\end{align*}
where 
\begin{align*}
F(y_1,y_2,y_3,y_4)&:= \prod_{i=1}^4 \left( \frac{8xX}{\pi}\right)^{\frac{y_i}{2}} \frac{\Gamma(\frac{1/2+y_i}{2})^{1/2}}{\Gamma(\frac{1/2-y_i}{2})^{1/2}} \mathcal{H}_2(\tfrac{1}{2}+y_1,\tfrac{1}{2}+ y_2,\tfrac{1}{2}+ y_3,\tfrac{1}{2}+ y_4) \\
&\quad \times \prod_{1\leq i\leq j\leq 4}\left( 1- \frac{1}{2^{1+y_i+y_j}}\right),\\
f(y) &:= \zeta(1+y).
\end{align*}
By Lemma 2.5.2 of \cite{Conrey-Farmer-Keating-Rubinstein-Snaith}, we know 
\begin{align*}
&g(\a_1,\a_2,\a_3,\a_4) \nonumber\\
&= \frac{4X}{\pi^2}  \int\limits_{-\infty}^\infty  \Phi(x) \prod_{i=1}^4 \left( \frac{8xX}{\pi}\right)^{\frac{-\a_i}{2}} \frac{\Gamma(\frac{1/2-\a_i}{2})^{1/2}}{\Gamma(\frac{1/2+\a_i}{2})^{1/2}} 
\frac{1}{(2 \pi  i )^4} \frac{2}{3} \oint \cdots \oint \prod_{i=1}^4 \left( \frac{8xX}{\pi}\right)^{\frac{z_i}{2}} \frac{\Gamma(\frac{1/2+z_i}{2})^{1/2}}{\Gamma(\frac{1/2-z_i}{2})^{1/2}}\\
&\quad \times \mathcal{H}_2(\tfrac{1}{2}+ z_1,\tfrac{1}{2}+z_2,\tfrac{1}{2}+z_3,\tfrac{1}{2}+z_4) \prod_{1\leq i \leq j \leq 4} \zeta_2(1+z_i+z_j) \nonumber\\
&\quad \times \frac{\Delta(z_1^2, \dots , z_4^2)^2 \prod_{1\leq i \leq 4} z_i}{ \prod_{1\leq i \leq 4} \prod_{1\leq j \leq 4} (z_i-\a_j)(z_i+\a_j) }
\, d z_1 \cdots d z_4 \, dx,
\label{equ:sec1-02}
\end{align*}
where the contours of integration enclose $\pm \a_i$. This gives the main term of  \eqref{equ-cor-1}. Note that in the above, $g(\a_1,\a_2,\a_3,\a_4)$ is analytic when $|\a_i|<\eta$, $i=1,2,3,4$ for sufficiently small $\eta>0$. Moreover, the left-hand side of Theorem \ref{thm:Main} is also analytic in this region. It follows that the error term in Theorem \ref{thm:Main} is an analytic function in the same region, which means that the error term is valid for $|\a_i|<\eta$ by the maximum modulus
principle. Therefore, letting  $\a_i \rightarrow 0$, $i=1,2,3,4$, we obtain 
\begin{align*}
&\sumstar_{(d,2)=1} L(\tfrac{1}{2} + \a_1,\chi_{8d})L(\tfrac{1}{2} + \a_2, \chi_{8d} )L(\tfrac{1}{2} + \a_3, \chi_{8d} )L(\tfrac{1}{2} + \a_4, \chi_{8d}) \Phi(\tfrac{d}{X})\nonumber \\
&=
g(0,0,0,0)
+O(X (\log X)^{6+ \varepsilon}).
\end{align*} 
This completes the proof of \eqref{equ-cor-2}. 
\end{proof}    

\subsection{Outline of the Paper}\label{sec:Outline}

We give here an overview of the proof of Theorem \ref{thm:Main}. The proof is divided into two parts: the proof of Proposition \ref{prop:LargeSieve} and its application in calculating the shifted moment given in Theorem \ref{thm:Main}. The various sections of the paper are as follows.

In Section \ref{sec:Prelims}, we collect a number of preliminary calculations we will need throughout the paper. Most of these are cited from other works and stated without proof. 

In Sections \ref{sec:LargeSieve-InitialSteps} -- \ref{sec:LargeSieve-NonSqare}, we prove Proposition \ref{prop:LargeSieve}. In Section \ref{sec:LargeSieve-InitialSteps}, we perform several initial steps in the proof, such as introduce some notation, state and prove our ``inflation lemma,'' establish the base cases of our induction argument, and perform the initial application of Poisson summation in the form of Lemma \ref{lem:Poisson}. This reduces the proof of  Proposition \ref{prop:LargeSieve} to proving Lemmas \ref{lem:LS-Diagonal}, \ref{lem:LS-SquareCancellation}, and \ref{lem:LS-NonSquare}. In Section \ref{sec:LargeSieve-Diagonal}, we evaluate the diagonal contribution from Poisson summation and prove Lemma \ref{lem:LS-Diagonal}. In Section \ref{sec:LargeSieve-OffDiagonal}, we show how the main terms of the contributions from square values of both the original variable $m$ and the dual variable $k$ cancel exactly. We are left with a pair of contour integrals, and we also handle these in this section. In Section \ref{sec:LargeSieve-NonSqare}, we complete the proof of Proposition \ref{prop:LargeSieve} by bounding the contribution of the non-square values of the dual variable $k$ and establishing Lemma \ref{lem:LS-NonSquare}. It is in the treatment of these off-diagonal non-square values of $k$ that we need to apply the induction hypothesis.

The second half of the paper consists of Sections \ref{sec:Asymptotic-Shortening} -- \ref{sec:secU-error}. In Section \ref{sec:Asymptotic-Shortening}, we apply the approximate functional equation to the shifted moment given in Theorem \ref{thm:Main} and shorten the resulting Dirichlet polynomials using Proposition \ref{prop:LargeSieve}. This involves some delicate combinatorial arguments. In Section \ref{sec:Asymptotic-Diagonal-4th}, we perform some standard reductions and apply Poisson summation in the form of Lemma \ref{lem:Poisson}.  In Section \ref{sec:Asymptotic-Diagonal}, we evaluate the diagonal contribution from Poisson summation, and then in Section \ref{sec:Asymptotic-OffDiagonal}, we evaluate the contribution of square values of the dual variable. In Section \ref{sec:Asymptotic-NonSquare}, we estimate the contribution of non-square values of the dual variable by applying Proposition \ref{prop:LargeSieve}. In Section \ref{sec:removeU}, we show how the variable $U$ may be removed and reduce our problem to proving Lemmas \ref{lem:secU-main} and \ref{lem:secU-error}. This again requires some delicate combinatorial manipulations. The proofs of these two lemmas are postponed to Sections \ref{sec:cross-cancel} and \ref{sec:secU-error}. Using these two lemmas, we conclude the proof of Theorem \ref{thm:Main}. In Section \ref{sec:cross-cancel}, we prove Lemma \ref{lem:secU-main} by showing that there is significant cancellation between the contributions from the diagonal terms and the off-diagonal, square terms. In Section \ref{sec:secU-error}, we prove Lemma \ref{lem:secU-error}, showing that our removal of the variable $U$ comes at the cost of an acceptable error. The proofs in this section are similar to those in Sections \ref{sec:LargeSieve-Diagonal} and \ref{sec:LargeSieve-OffDiagonal}.

We include an appendix at the end of the paper which gives the details of our calculations regarding various Dirichlet series. Specifically, in Appendix \ref{prfDiri}, we prove Lemmas \ref{lem:DirichletSeriesDiagonal}, \ref{lem:DirichletSeriesOffDiagonal}, \ref{lem:DirichletSeriesSuma}, and \ref{lem:iden}. Since we need to understand the Dirichlet series \eqref{def:D1} -- \eqref{def:Z*} is wide regions of their parameters, we need to be rather precise in our calculations, and so our proofs are somewhat lengthy. 

\subsection{Notation}

The Vinogradov-Landau symbols $\ll, \gg, O,o$ have their usual meanings. We use $m\sim M$ to denote the condition $M < m \leq 2M$ and $m\asymp M$ to denote $c_1 M \leq m \leq c_2 M$ for some constants $c_1 < c_2$, not necessarily the same at each occurrence. The symbols $u_i$ and $s$ will always denote complex numbers, $n_i$ is always a non-negative integer, and $t, t_i$ are always real numbers. The domains of other variables will be clear from context.

A bold letter represents a $2k$-tuple of the associated variable, e.g. $\boldu = (u_1,\ldots,u_{2k})$. Most often we will take $k=2$, though occasionally we will work with other values of $k$, such as in Appendix A. For a tuple $\boldu$ and variable $s$, we let
\[
\boldu + s = (u_1+s,\ldots,u_{2k}+s).
\]
The condition $\boldn \equiv a \mod{q}$ means that each coordinate of $\boldn$ satisfies the congruence. 

Throughout, we will specify various regions in which certain Dirichlet series converge absolutely. These Dirichlet series will depend on several complex parameters, such as $u_1,\ldots, u_4$. When a statement such as ``The function $\colh(\boldu)$ is analytic for $\Re(u_j)>\frac{1}{2}$'' appears, it is to be understood that the bound applies to each variable $u_j$. 

When dealing with certain combinatorial calculations, we will have reason to reference the subsets $A\subset \set{1,2,3,4}$. For such sets $A$, we use $\bar{A}$ to denote the complement of $A$ in $\set{1,2,3,4}$. Thus if $A = {1,2}$, then $\bar{A} = \set{3,4}$. We use the symbol $\delta$ exclusively to refer to a function that is either 1 or 0 depending as the argument is true or false, respectively. For example $\delta(m=\square)$ is 1 if $m$ is a square integer and 0 otherwise.

For a Dirichlet $L$-function $L(s,\chi)$ or the Riemann zeta function $\zeta$, we write $L_2(s,\chi)$ to mean the $L(s,\chi)$ with the local factor at $2$ removed.

Because several of our functions are lengthy to write if all arguments are written out in full, we use the notation $\underset{x=x_0}{\operatorname{Value}\,} f(x)$ to denote the value of $f(x)$ at $x=x_0$. For instance, if $G(\boldu) = G(u_1,u_2,u_3,u_4)$, then
\[
\underset{u_2=0}{\operatorname{Value}\,} G(\boldu) = G(u_1,0,u_3,u_4).
\]

\subsection*{Acknowledgements}
The authors would like to thank Xiannan Li and Nathan Ng for helpful comments and conversations.

\section{Preliminaries}\label{sec:Prelims}

In this section, we collect together some standard results from the literature. 

\subsection{Approximate Functional Equation}\label{sec:AFE}

First, we record the shifted version of the approximate functional equation for $L\pth{\half,\chi_{8d}}$ (see Proposition 2.1 of \cite{YoungActa}).
\begin{lem}
Assume $\alpha \in \mathbb{C}$ with $|\Re(\alpha)| \ll \frac{1}{\log X}$ and $|\Im(\alpha)| \ll (\log X)^2$. Then 
\begin{align*}
L(\tfrac{1}{2} + \a,\chi_{8d}) = \sum_{n=1}^\infty \frac{\chi_{8d} (n)}{n^{\frac{1}{2} + \a}} V_\a\left( \frac{n}{\sqrt{8d}}\right)
+
\colx_\a  \sum_{n=1}^\infty \frac{\chi_{8d} (n)}{n^{\frac{1}{2} - \a}} V_{-\a}\left( \frac{n}{\sqrt{8d}}\right),
\end{align*}
where 
\begin{align}
V_\a(x) &= \frac{1}{2\pi  i }  \int\limits_{(1)} \frac{1}{s} g_\a(s) x^{-s} \, ds, \label{VaDef}\\
g_\a(s) & = \pi^{-\frac{s}{2}}\frac{\Gamma(\frac{1/2+\a + s}{2})}{\Gamma(\frac{1/2+\a}{2})},\label{gaDef}\\
\colx_\a &= \left( \frac{8d}{\pi}\right)^{-\a} \frac{\Gamma(\frac{1/2-\a}{2})}{\Gamma(\frac{1/2+\a}{2})}.\label{colxDef}
\end{align}
Here $\int_{(c)}$ stands for $\int_{c-\infty i}^{c+\infty i}$.
\label{afe1}
\end{lem}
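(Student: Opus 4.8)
The statement to prove is Lemma~\ref{afe1}, the shifted approximate functional equation for $L(\tfrac12+\alpha,\chi_{8d})$. Let me sketch the standard contour-shifting argument.

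\medskip

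\textbf{Proof proposal.} The plan is to run the classical approximate functional equation argument, starting from the completed $L$-function and its functional equation \eqref{eq:FunctionalEquation}. Since $8d>0$ with $d$ squarefree and odd, $8d$ is a positive fundamental discriminant, so the gamma factor in \eqref{eq:FunctionalEquation} has $\ida=0$, giving $\Lambda(s,\chi_{8d}) = (8d/\pi)^{s/2}\Gamma(s/2)L(s,\chi_{8d}) = \Lambda(1-s,\chi_{8d})$. First I would introduce the integral
\[
I := \frac{1}{2\pi i}\int_{(1)} \frac{\Lambda(\tfrac12+\alpha+s,\chi_{8d})}{\Lambda(\tfrac12+\alpha,\chi_{8d})}\,\frac{x^{-s}}{s}\,G(s)\,ds
\]
where in our case the auxiliary factor is just $G(s)\equiv 1$ (the paper's $g_\alpha$ already absorbs the ratio of gamma factors and the $\pi$-power, so effectively we are choosing the test function to be trivial and letting the archimedean data live in $g_\alpha$). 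Concretely, I would instead write directly
\[
I := \frac{1}{2\pi i}\int_{(1)} g_\alpha(s)\,\biggl(\frac{n}{\sqrt{8d}}\biggr)^{\!-s}\cdots
\]
no — cleaner to phrase it as follows: consider
\[
J := \frac{1}{2\pi i}\int_{(2)} \frac{\Gamma\!\bigl(\tfrac{1/2+\alpha+s}{2}\bigr)}{\Gamma\!\bigl(\tfrac{1/2+\alpha}{2}\bigr)}\,\pi^{-s/2}\,(8d)^{s/2}\pi^{-s/2}\cdots
\]
Let me restate the plan without the bookkeeping noise.

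\medskip

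\textbf{Step 1 (set up the integral).} Define
\[
J = \frac{1}{2\pi i}\int_{(2)} \frac{\Lambda(\tfrac12+\alpha+s,\chi_{8d})}{(8d/\pi)^{(1/2+\alpha)/2}\Gamma(\tfrac{1/2+\alpha}{2})}\,\frac{ds}{s}.
\]
Expanding $\Lambda(\tfrac12+\alpha+s,\chi_{8d})=(8d/\pi)^{(1/2+\alpha+s)/2}\Gamma(\tfrac{1/2+\alpha+s}{2})L(\tfrac12+\alpha+s,\chi_{8d})$ and using the Dirichlet series for $L$ (valid on $\Re s=2$), interchanging sum and integral gives
\[
J = \sum_{n=1}^\infty \frac{\chi_{8d}(n)}{n^{1/2+\alpha}}\,V_\alpha\!\Bigl(\frac{n}{\sqrt{8d}}\Bigr),
\]
with $V_\alpha$ exactly as in \eqref{VaDef}--\eqref{gaDef}; the interchange is justified by absolute convergence since $g_\alpha(s)$ decays rapidly on vertical lines (Stirling) and $\zeta(1+\alpha+s)^{?}$ — here just $L(\tfrac12+\alpha+s,\chi_{8d})$ — is bounded on $\Re s =2$.

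\textbf{Step 2 (shift the contour).} Move the contour from $\Re s=2$ to $\Re s=-2$. The integrand has a single simple pole in the strip, at $s=0$, coming from the $1/s$ factor; the residue is $\Lambda(\tfrac12+\alpha,\chi_{8d})/\bigl((8d/\pi)^{(1/2+\alpha)/2}\Gamma(\tfrac{1/2+\alpha}{2})\bigr) = L(\tfrac12+\alpha,\chi_{8d})$. One must check there are no poles from the gamma factor $\Gamma(\tfrac{1/2+\alpha+s}{2})$ in $-2<\Re s<2$: its poles are at $\tfrac{1/2+\alpha+s}{2}=-k$, i.e.\ $\Re s = -\tfrac12-\Re\alpha-2k$, which for $k\ge 0$ lies to the left of $\Re s=-2$ given $|\Re\alpha|\ll 1/\log X$ (so $\Re s = -1/2-\Re\alpha > -2$ would be a problem — wait, $k=0$ gives $\Re s \approx -1/2 > -2$). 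So I would shift only to $\Re s = -1/2+\varepsilon$ or handle this pole, OR — the standard fix — shift to $\Re s$ slightly to the right of $-1/2$, say $\Re s = -1/2+\delta$, which already suffices; alternatively move the line of the final integral appropriately. Actually the cleanest route, used in \cite{YoungActa}, is to shift to $\Re s = -2\Re\alpha - \varepsilon$... Let me not over-engineer: one shifts to a line $\Re s = c'$ with $-\tfrac12 < c' < 0$, picks up only the pole at $s=0$, and on the new line applies the functional equation $\Lambda(\tfrac12+\alpha+s,\chi_{8d}) = \Lambda(\tfrac12-\alpha-s,\chi_{8d})$.

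\textbf{Step 3 (apply the functional equation and re-expand).} On the shifted line, substitute $s\mapsto -s$ and use $\Lambda(\tfrac12-\alpha+s,\chi_{8d}) = (8d/\pi)^{(1/2-\alpha+s)/2}\Gamma(\tfrac{1/2-\alpha+s}{2})L(\tfrac12-\alpha+s,\chi_{8d})$. The new integral has $\Re s$ large enough (after the reflection, $\Re s > 1/2$) to expand $L(\tfrac12-\alpha+s,\chi_{8d})$ as its Dirichlet series. Collecting the $d$-dependent and $\alpha$-dependent prefactors produces exactly the factor $\colx_\alpha = (8d/\pi)^{-\alpha}\Gamma(\tfrac{1/2-\alpha}{2})/\Gamma(\tfrac{1/2+\alpha}{2})$ from \eqref{colxDef}, and the remaining $s$-integral, interchanged with the sum, yields $\sum_n \chi_{8d}(n)n^{-(1/2-\alpha)}V_{-\alpha}(n/\sqrt{8d})$ — note $V_{-\alpha}$ appears because $\alpha\mapsto-\alpha$ throughout. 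Combining Steps 2 and 3:
\[
L(\tfrac12+\alpha,\chi_{8d}) = J - (\text{shifted integral}) = \sum_n \frac{\chi_{8d}(n)}{n^{1/2+\alpha}}V_\alpha\!\Bigl(\tfrac{n}{\sqrt{8d}}\Bigr) + \colx_\alpha\sum_n \frac{\chi_{8d}(n)}{n^{1/2-\alpha}}V_{-\alpha}\!\Bigl(\tfrac{n}{\sqrt{8d}}\Bigr),
\]
which is the claim.

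\medskip

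\textbf{Main obstacle.} The only genuine subtlety is the pole of $\Gamma(\tfrac{1/2+\alpha+s}{2})$ near $s=-1/2$, which constrains how far left the contour may be shifted and forces a mild care in choosing the intermediate line; everything else (rapid decay from Stirling, absolute convergence for the sum–integral interchanges, convexity-type bounds for $L(\tfrac12+\alpha+s,\chi_{8d})$ on vertical lines to kill the horizontal segments when shifting) is routine. The hypotheses $|\Re\alpha|\ll 1/\log X$ and $|\Im\alpha|\ll(\log X)^2$ are exactly what is needed to keep $g_\alpha(s)$ and $\colx_\alpha$ well-behaved and to ensure the gamma poles stay clear of the relevant contours; since this is a known result (Proposition~2.1 of \cite{YoungActa}), I would cite that source and include only the brief sketch above rather than reproduce the full estimates.
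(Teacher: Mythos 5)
The paper states this result without proof, citing Proposition~2.1 of \cite{YoungActa}; your sketch is the standard contour-shifting argument from that source, and in outline it is the right proof. However, the ``main obstacle'' you flag is not a genuine obstacle, and the way you hedge against it introduces an internal inconsistency in your Steps~2--3. The integrand of $J$ contains the completed $L$-function $\Lambda(\tfrac12+\alpha+s,\chi_{8d})$, which is \emph{entire} in $s$: since $\chi_{8d}$ is an even primitive nontrivial character, $L(\tfrac12+\alpha+s,\chi_{8d})$ has trivial zeros at $\tfrac12+\alpha+s = 0,-2,-4,\dots$, and these exactly cancel the poles of $\Gamma\bigl(\tfrac{1/2+\alpha+s}{2}\bigr)$ at the same points $s = -\tfrac12-\alpha-2k$. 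So there is no gamma pole in the strip to avoid, and one may shift directly from $\Re s = 2$ to $\Re s = -2$, picking up only the simple pole at $s=0$. Your cautious alternative of stopping at a line $\Re s = c'$ with $-\tfrac12 < c' < 0$ creates a real problem: after the reflection $s\mapsto -s$ that line becomes $\Re s = |c'| < \tfrac12$, which contradicts your assertion in Step~3 that ``the new integral has $\Re s > 1/2$,'' and on that line the Dirichlet series for $L(\tfrac12-\alpha+s,\chi_{8d})$ does not converge absolutely, so the term-by-term interchange in Step~3 would not be justified as written. Shifting all the way to $\Re s = -2$ (legitimate precisely because $\Lambda$ is entire) and reflecting to $\Re s = 2$ removes both issues and completes the argument.
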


\subsection{Poisson Summation Formula for Quadratic Characters}
Define the Gauss-like sum
\begin{equation}\label{def:G_k}
G_k(n) = \pth{\frac{1- i }{2} + \fracp{-1}{n}\frac{1+ i }{2}} \sum_{a \mod{n}} \fracp{a}{n} e\fracp{ak}{n}.
\end{equation}
The next result we need is Lemma 2.3 of \cite{SoundNonvanishing}, which gives an explicit evaluation of the sum $G_k$.

\begin{lem}\label{lem:GkEval}
For $m,n$ relatively prime odd integers, $G_k(mn) = G_k(m) G_k(n)$, and for $p^\alpha  \mid \mid k$ (setting $\alpha = \infty$ for $k=0$), we have
\[
G_k(p^\beta) = \begin{cases}
0, & \text{if $\beta\leq \alpha$ is odd},\\
\phi(p^\beta) &\text{if $\beta\leq \alpha$ is even},\\
-p^\alpha &\text{if $\beta=\alpha+1$ is even},\\
\fracp{kp^{-\alpha}}{p} p^\alpha \sqrt{p} & \text{if $\beta = \alpha+1$ is odd}, \\
0 & \text{if $\beta \geq \alpha+2$}.
\end{cases}
\]
Here $\phi$ is the Euler's totient function.
\end{lem}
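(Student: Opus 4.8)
## Proof Proposal for Lemma 2.2 (Evaluation of $G_k(p^\beta)$)

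The plan is to reduce everything to classical Gauss sum evaluations and then carefully track the sign/phase factor in the definition of $G_k$. First I would establish the multiplicativity $G_k(mn) = G_k(m)G_k(n)$ for coprime odd $m,n$. The inner sum $\sum_{a \bmod{mn}} \fracp{a}{mn} e\fracp{ak}{mn}$ splits by the Chinese Remainder Theorem: writing $a \equiv a_1 n \bar n + a_2 m \bar m \pmod{mn}$ (with $\bar n n \equiv 1 \pmod m$, etc.), one has $\fracp{a}{mn} = \fracp{a}{m}\fracp{a}{n} = \fracp{a_1 n}{m}\fracp{a_2 m}{n}$ and $e\fracp{ak}{mn} = e\fracp{a_1 k \bar n}{m} e\fracp{a_2 k \bar m}{n}$. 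Summing over $a_1 \bmod m$ and $a_2 \bmod n$ factors the sum, and after re-indexing $a_1 \mapsto a_1 \bar{\bar n}$ (i.e.\ absorbing the unit $n$), the $m$-part becomes $\fracp{n}{m}\sum_{a \bmod m}\fracp{a}{m}e\fracp{ak}{m}$; the Jacobi-symbol prefactors $\fracp{n}{m}\fracp{m}{n}$ combine with the Kronecker-symbol prefactor $\frac{1-i}{2} + \fracp{-1}{mn}\frac{1+i}{2}$ via quadratic reciprocity to reproduce the product of the two individual prefactors. This is the one place where the specific form of the factor $\frac{1-i}{2} + \fracp{-1}{n}\frac{1+i}{2}$ is essential—it is exactly the normalization making $G_k$ completely multiplicative in the modulus.

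Next, for the prime-power evaluation I would write $k = p^\alpha k'$ with $(k',p)=1$ (and treat $k=0$ as $\alpha = \infty$), and analyze $S := \sum_{a \bmod{p^\beta}} \fracp{a}{p^\beta} e\fracp{ak}{p^\beta}$ by cases on $\beta$ versus $\alpha$. When $\beta \le \alpha$, the exponential $e\fracp{ak}{p^\beta} = e(a p^{\alpha-\beta}k') $ is identically $1$, so $S = \sum_{a \bmod{p^\beta}}\fracp{a}{p^\beta}$, which is $\phi(p^\beta)$ if $\beta$ is even (the symbol is the principal character mod $p$ restricted to units) and $0$ if $\beta$ is odd (the symbol is the Legendre symbol lifted, summing to zero over a full set of residues coprime to $p$). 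When $\beta \ge \alpha+1$, split $a = b + p^{\beta - 1}c$ with $b \bmod p^{\beta-1}$, $c \bmod p$ in the standard way: for $\beta$ even, $\fracp{a}{p^\beta}=\fracp{a}{p}^{\beta}$ is trivial on units and the sum over $c$ of $e\fracp{(b+p^{\beta-1}c)k}{p^\beta}$ forces $p \mid p^{\beta-1}k/p^{\beta-1}\cdots$—more precisely one gets a Ramanujan-type sum yielding $-p^{\alpha}$ when $\beta = \alpha+1$ and $0$ when $\beta \ge \alpha+2$; for $\beta$ odd one reduces to a genuine Gauss sum $\sum_{b \bmod p}\fracp{b}{p}e\fracp{bk'}{p} = \fracp{k'}{p}\tau\left(\fracp{\cdot}{p}\right)$, and the classical evaluation of the quadratic Gauss sum $\tau = \sqrt{p}$ or $i\sqrt{p}$ (according to $p \equiv 1$ or $3 \bmod 4$) combines with the prefactor $\frac{1-i}{2}+\fracp{-1}{p^\beta}\frac{1+i}{2}$ to produce exactly $\fracp{k'}{p}\sqrt{p}$, times the accumulated power $p^\alpha$ from the lower-level collapsing, giving $\fracp{kp^{-\alpha}}{p}p^{\alpha}\sqrt p$. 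The vanishing for $\beta \ge \alpha + 2$ in the odd case likewise follows since the residual Gauss sum is then over a nontrivial additive character against the principal multiplicative character, or an incomplete sum that telescopes to zero.

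The main obstacle I expect is bookkeeping the phase factor $\frac{1-i}{2} + \fracp{-1}{n}\frac{1+i}{2}$ correctly through both the multiplicativity argument and the final Gauss-sum evaluation: this factor equals $1$ when $n \equiv 1 \bmod 4$ and $-i$ when $n \equiv 3 \bmod 4$, and it is designed precisely so that $\left(\frac{1-i}{2}+\fracp{-1}{n}\frac{1+i}{2}\right)\tau_n$ is real and equal to $\sqrt{n}$ for the relevant Gauss sum $\tau_n$; verifying this compatibility in the odd-$\beta$, $\beta = \alpha+1$ case (and that it is consistent with multiplicativity, where one needs the quadratic-reciprocity sign $\fracp{-1}{m}\fracp{-1}{n} = \fracp{-1}{mn}$-type identities for the prefactors) is the delicate point. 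Everything else is a routine case check. Since this is precisely Lemma 2.3 of Soundararajan \cite{SoundNonvanishing}, I would in practice simply cite it; the sketch above indicates how one reconstructs the proof if needed.
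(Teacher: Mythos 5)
The paper does not prove this lemma at all; it states it and cites it directly as Lemma 2.3 of Soundararajan \cite{SoundNonvanishing}, so there is no proof in this paper to compare against. Your sketch is a correct reconstruction of the standard argument that appears in that reference. In particular, your three case checks on the prefactor in the multiplicativity step do work out: writing the Jacobi reciprocity sign as $(-1)^{\frac{m-1}{2}\frac{n-1}{2}}$, one verifies in each of the cases $m,n\equiv 1$; $m\equiv 1,n\equiv 3$; and $m,n\equiv 3\pmod 4$ that
\[
\pth{\tfrac{1-i}{2}+\fracp{-1}{mn}\tfrac{1+i}{2}}(-1)^{\frac{m-1}{2}\frac{n-1}{2}}
=\pth{\tfrac{1-i}{2}+\fracp{-1}{m}\tfrac{1+i}{2}}\pth{\tfrac{1-i}{2}+\fracp{-1}{n}\tfrac{1+i}{2}},
\]
which is exactly the consistency you flag as the delicate point. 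The prime-power cases are likewise sound: for $\beta\le\alpha$ the exponential is trivial and the character sum is $\phi(p^\beta)$ or $0$ according to parity; for $\beta=\alpha+1$ the sum over $a\bmod p^\beta$ collapses to $p^\alpha$ copies of a Ramanujan sum (even $\beta$) or a classical Gauss sum (odd $\beta$), the latter combining with the prefactor to give $\sqrt{p}$ regardless of $p\bmod 4$; and for $\beta\ge\alpha+2$ the inner sum over the top-level digit vanishes because $(k',p)=1$. The only wrinkles worth tightening in a full write-up are that in the $\beta\le\alpha$ even case the character is the principal character mod $p^\beta$, not mod $p$, and that your treatment of the $\beta\ge\alpha+2$ odd case should be made explicit by a digit-split $a_0=c+p^{\beta-\alpha-1}d$ to exhibit the vanishing geometric sum; but these are cosmetic. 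Given that the paper itself simply cites Soundararajan here, your closing remark that you would do the same is exactly the right call.
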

The function $G_k(n)$ appears when applying Poisson summation as in the following lemma.
\begin{lem}\label{lem:Poisson}
Let $F$ be a Schwartz class function over the real numbers and suppose that $n$ is an odd integer. Then
\begin{equation}\label{eq:PoissonAll}
\sum_{d} \fracp{d}{n} F\fracp{d}{Z} = \frac{Z}{n} \sum_{k\in \Z} G_k(n) \check{F}\fracp{kZ}{n},
\end{equation}
and also
\begin{equation}\label{eq:PoissonOdd}
\sum_{(d,2)=1} \fracp{d}{n} F\fracp{d}{Z} = \frac{Z}{2n}\left( \frac{2}{n}\right) \sum_{k\in \Z} (-1)^k G_k(n) \check{F}\fracp{kZ}{2n},
\end{equation}
where $G_k(n)$ is defined as in \eqref{def:G_k}, and the Fourier-type transform of $F$ is defined to be
\begin{align*}
\check{F}(y) =  \int\limits_{-\infty}^{\infty} (\cos(2\pi xy) + \sin(2\pi xy))F(x)\, dx.
%\label{eq:Fhatfirst-000}
\end{align*}
Further, for $F$ even and $y\neq 0$, 
\begin{align}
\check{F}(y) = \frac{2}{2\pi  i } \int\limits_{(\frac{1}{2})} \tilde{F}(1-s) \Gamma(s) \cos\fracp{\pi s}{2} (2\pi \abs{y})^{-s}\, ds,
\label{eq:Fhatfirst}
\end{align}
where $\tilde{F}$ is the Mellin transform of $F$ defined by 
\[
\tilde{F}(s) =  \int\limits_{0}^{\infty} F(t) t^{s-1} dt.
\]
If instead $F$ is supported on $[0,\infty)$, we have
\begin{equation}\label{eq:Fhatsecond}
\check{F}(y) = \frac{1}{2\pi  i } \int\limits_{(\frac{1}{2})} \tilde{F}(1-s) \Gamma(s) (\cos+\operatorname{sgn}(y)\sin)\fracp{\pi s}{2} (2\pi \abs{y})^{-s}\, ds.
\end{equation}
\end{lem}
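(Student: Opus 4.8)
The plan is to derive \eqref{eq:PoissonAll} and \eqref{eq:PoissonOdd} from the classical Poisson summation formula after separating the sum over $d$ into residue classes modulo $n$ (respectively $2n$), and then to derive the contour-integral representations \eqref{eq:Fhatfirst} and \eqref{eq:Fhatsecond} by Mellin inversion. For \eqref{eq:PoissonAll}, first I would write $\sum_d \fracp{d}{n} F(d/Z) = \sum_{a \bmod n} \fracp{a}{n} \sum_{d \equiv a (n)} F(d/Z)$. Applying the standard Poisson summation formula to the inner sum over the arithmetic progression $d = a + n\ell$ gives $\sum_\ell F((a+n\ell)/Z) = \frac{Z}{n}\sum_{k\in\Z} \widehat{F}(kZ/n) e(ak/n)$, where $\widehat{F}$ is the ordinary Fourier transform. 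Interchanging the two sums and collecting the character sum over $a$, one obtains $\frac{Z}{n}\sum_{k\in\Z} \left(\sum_{a\bmod n}\fracp{a}{n} e(ak/n)\right)\widehat{F}(kZ/n)$. The remaining task is bookkeeping with the Gauss sum: one splits $\widehat F$ into its cosine and sine parts (real and imaginary parts of $e(\cdot)$), uses the fact that the quadratic Gauss sum $\sum_{a\bmod n}\fracp{a}{n}e(ak/n)$ equals $\fracp{-1}{n}$ times its own complex conjugate structure, and verifies that the combination $\left(\frac{1-i}{2} + \fracp{-1}{n}\frac{1+i}{2}\right)$ in the definition \eqref{def:G_k} of $G_k(n)$ is exactly the factor needed to convert $e(ak/n)$ into $\cos(2\pi ak/n)+\sin(2\pi ak/n)$ inside the sum; this is where the definition of $\check F$ with the $(\cos+\sin)$ kernel comes from. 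The proof of \eqref{eq:PoissonOdd} is the same, except one first removes even $d$ by writing $\sum_{(d,2)=1} = \sum_d - \sum_{d \text{ even}}$, or more directly restricts $d$ to odd residues mod $2n$ and tracks the extra factor $\left(\frac{2}{n}\right)$ and the sign $(-1)^k$ that arise; quadratic reciprocity for the Kronecker symbol handles the $\left(\frac 2n\right)$ factor.

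For the contour representations, I would start from $\check F(y) = \int_{-\infty}^\infty (\cos(2\pi xy)+\sin(2\pi xy)) F(x)\,dx$ and, assuming $F$ is even (resp. supported on $[0,\infty)$), reduce this to an integral over $(0,\infty)$. Writing $F(x)$ via Mellin inversion, $F(x) = \frac{1}{2\pi i}\int_{(c)} \tilde F(s) x^{-s}\,ds$ for an appropriate vertical line, and interchanging the order of integration, one is left with evaluating $\int_0^\infty (\cos(2\pi xy)+\sin(2\pi xy)) x^{-s}\,dx$, a standard Mellin transform of trigonometric functions which equals a product of $\Gamma$ and $\cos(\pi s/2)$ or $\sin(\pi s/2)$ factors (with the $\operatorname{sgn}(y)$ appearing precisely because the sine is odd in $y$). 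Shifting the line of integration to $\Re(s) = \tfrac12$ and relabeling $s \mapsto 1-s$ yields \eqref{eq:Fhatfirst} and \eqref{eq:Fhatsecond}. Convergence and the legitimacy of interchanging sums and integrals is guaranteed by $F$ being Schwartz class.

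The only genuinely delicate step is the Gauss-sum bookkeeping in deriving \eqref{eq:PoissonAll}–\eqref{eq:PoissonOdd}: one must carefully match the phase $e(ak/n)$ produced by Poisson summation against the $(\cos + \sin)$ kernel built into $\check F$, and verify that the normalizing factor $\frac{1-i}{2}+\fracp{-1}{n}\frac{1+i}{2}$ in $G_k(n)$ is exactly what makes the identity come out with real coefficients. Since this is Lemma 2.3 / the Poisson step of \cite{SoundNonvanishing}, I would in practice simply cite that reference for \eqref{eq:PoissonAll} and \eqref{eq:PoissonOdd}, and give the short Mellin-transform computation for \eqref{eq:Fhatfirst} and \eqref{eq:Fhatsecond}. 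Everything else is routine.
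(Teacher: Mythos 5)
Your proposal is correct and takes essentially the same route as the paper: the paper likewise offers no proof, citing the proof of Lemma~2.6 of \cite{SoundNonvanishing} for \eqref{eq:PoissonAll}--\eqref{eq:PoissonOdd} and Section~3.3 of \cite{SoundYoungSecond} for the Mellin-transform representations \eqref{eq:Fhatfirst}--\eqref{eq:Fhatsecond}, which is exactly what you propose to do after sketching the (correct) residue-class/Poisson and Mellin-inversion derivations. One small slip: the Poisson formula is in the proof of Lemma~2.6 of \cite{SoundNonvanishing}, not Lemma~2.3 (the latter is the evaluation of $G_k$, restated here as Lemma~\ref{lem:GkEval}).
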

The first assertions in \eqref{eq:PoissonAll} and \eqref{eq:PoissonOdd} are contained in the proof of Lemma 2.6 in \cite{SoundNonvanishing}. The expressions in \eqref{eq:Fhatfirst} and \eqref{eq:Fhatsecond} follow by Mellin inversion (see Section 3.3 of \cite{SoundYoungSecond}). For $F$ a Schwartz class function, we define the usual Fourier transform of $F$ via
\begin{equation}
\hat{F}(y) =  \int\limits_{-\infty}^{\infty} F(x)e(-xy) dx.
\label{equ:fourier-trans}
\end{equation}
Note that $\check{F}(x) = \frac{1+ i }{2} \hat{F}(x) + \frac{1-i}{2} \hat{F}(-x)$.

Applying this lemma, we obtain a dual sum in the variable $k$ (see \eqref{eq:PoissonAll}). This gives rise to the ``diagonal'' contribution corresponding to $k=0$,  an ``off-diagonal square'' contribution corresponding to $k=m^2$, $m\neq 0$, and an ``off-diagonal non-square'' contribution corresponding to the remaining $k$.

\subsection{Dirichlet Series}
After applying Lemmas \ref{lem:Poisson}, we will need to understand several multiple Dirichlet series.  For the diagonal contribution, we need to understand
\begin{equation}\label{def:D1}
\cold_1(\boldu) = \sum_{\substack{(n_1n_2n_3n_4,2)=1\\ n_1n_2n_3n_4=\square}} \frac{1}{n_1^{u_1}n_2^{u_2}n_3^{u_3}n_4^{u_4}} \prod_{p  \mid n_1n_2n_3n_4} \pth{1-\frac{1}{p}}
\end{equation}
and
\begin{equation}\label{def:D2}
\cold_2(\boldu) = \sum_{\substack{(n_1n_2n_3n_4,2)=1\\ n_1n_2n_3n_4=\square}} \frac{1}{n_1^{u_1}n_2^{u_2}n_3^{u_3}n_4^{u_4}} \prod_{p \mid n_1n_2n_3n_4} \pth{1-\frac{1}{p+1}},
\end{equation}
and for the off-diagonal terms, we need to understand
\begin{equation}\label{def:Z}
Z(\boldu,s;k_1,a) = \sum_{k_2 \geq 1} \frac{1}{k_2^{2s}}\sum_{(n_1n_2n_3n_4,2a)=1} \frac{1}{n_1^{u_1}n_2^{u_2}n_3^{u_3}n_4^{u_4}} \frac{G_{k_1k_2^2}(n_1n_2n_3n_4)}{n_1n_2n_3n_4}
\end{equation}
and 
\begin{equation}\label{def:Z*}
Z^*(\boldu,s;k_1,a) = \sum_{k_2 \geq 1} \frac{(-1)^{k_2}}{k_2^{2s}}\sum_{(n_1n_2n_3n_4,2a)=1} \frac{1}{n_1^{u_1}n_2^{u_2}n_3^{u_3}n_4^{u_4}} \frac{G_{k_1k_2^2}(n_1n_2n_3n_4)}{n_1n_2n_3n_4}.
\end{equation}
We give precise evaluations of these in the following lemmas, the proofs of which are postponed to Appendix \ref{prfDiri}.

\begin{lem}\label{lem:DirichletSeriesDiagonal}
Let 
\[
g_1(p) := 1 - \frac{1}{p},\quad 
\quad g_2(p) := 1 - \frac{1}{p+1},
\]
and
\begin{align}
C_p(\boldu) := \prod_{1\leq i < j \leq 4} \pth{1-\frac{1}{p^{u_i+u_j}}} \sum_{\substack{A\subset\set{1,2,3,4}\\ \abs{A} \equiv 0 \mod{2}}} \prod_{i\in A} \frac{1}{p^{u_i}}.
\label{def:Cpu}
\end{align}
With notation as above, for $\ell=1,2$ and  $\Re(u_i)>\frac{1}{2}$, we have
\begin{align*} \label{equ:Hiden}
\cold_\ell(\boldu) = \sumpth{\prod_{1\leq i\leq j\leq {4}} \zeta_2(u_i+u_j)}\colh_\ell(\boldu),
\end{align*}
where
\begin{equation}\label{def:H}
\colh_\ell(\boldu) = \prod_{p\neq 2}  \sumpth{(1-g_\ell(p)) \prod_{1\leq i \leq j \leq 4} \pth{1-\frac{1}{p^{u_i+u_j}}}+ g_\ell(p)C_p(\boldu)}
\end{equation}
The functions $\colh_\ell$ are analytic and uniformly bounded for $\Re(u_i)  \geq \frac{1}{4}+ \varepsilon$. In particular,
\begin{equation*}\label{eq:h1}
\colh_1\pth{\half,\half,\half,\half} = \prod_{p\neq 2} \pth{1-\frac{1}{p}}^{7}\pth{1+\frac{7}{p}-\frac{2}{p^2}+\frac{3}{p^3}-\frac{1}{p^4}}
\end{equation*}
and
\begin{align}
\colh_2\pth{\half,\half,\half,\half} = \prod_{p\neq 2} \frac{(1-\frac{1}{p})^{10}}{1+\frac{1}{p}} \pth{\frac{(1+\frac{1}{\sqrt{p}})^{-4} + (1-\frac{1}{\sqrt{p}})^{-4}}{2} + \frac{1}{p}}.
\label{equ:h2}
\end{align}
\end{lem}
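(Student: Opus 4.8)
The plan is to prove Lemma~\ref{lem:DirichletSeriesDiagonal} by working purely at the level of Euler products and matching local factors, for both $\ell=1$ and $\ell=2$ simultaneously since the only difference is whether $g_\ell(p) = 1-\frac 1p$ or $1-\frac{1}{p+1}$. First I would write $\cold_\ell(\boldu)$ as an Euler product. Since the condition $n_1n_2n_3n_4 = \square$ is multiplicative, the local factor at an odd prime $p$ is
\[
\cold_{\ell,p}(\boldu) = \sum_{\substack{e_1,e_2,e_3,e_4\geq 0\\ e_1+e_2+e_3+e_4\text{ even}}} \frac{1}{p^{e_1u_1+e_2u_2+e_3u_3+e_4u_4}}\, \bigl(g_\ell(p)\bigr)^{\delta(e_1+e_2+e_3+e_4 > 0)},
\]
because $\prod_{p\mid n_1n_2n_3n_4}(1 - \tfrac 1p)$ contributes the factor $g_\ell(p)$ exactly once for each prime actually dividing the product. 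Isolating the all-zero term, this equals $1 - g_\ell(p) + g_\ell(p)\,S_p(\boldu)$, where $S_p(\boldu) = \sum_{e_1+\cdots+e_4\ \mathrm{even}} p^{-\sum e_iu_i}$ is the ``pure'' generating function with no weight.

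The second step is to evaluate $S_p(\boldu)$ in closed form. Using the parity-detecting identity $\frac{1+(-1)^{\sum e_i}}{2}$ and summing each geometric series,
\[
S_p(\boldu) = \frac 12\prod_{i=1}^4 \frac{1}{1-p^{-u_i}} + \frac 12\prod_{i=1}^4 \frac{1}{1+p^{-u_i}},
\]
valid for $\Re(u_i) > 0$. Next I would extract the polar part: multiply through by $\prod_{1\leq i\leq j\leq 4}(1 - p^{-u_i-u_j})$, which is the reciprocal of the local factor of $\prod_{1\leq i\leq j\leq 4}\zeta_2(u_i+u_j)$. A direct algebraic manipulation — clearing denominators in the two products above over the common denominator $\prod_i (1-p^{-2u_i})$ and comparing with $\prod_{i<j}(1-p^{-u_i-u_j})$ — shows that $\prod_{1\leq i\leq j\leq 4}(1-p^{-u_i-u_j})\,S_p(\boldu) = C_p(\boldu)$, where $C_p$ is as in \eqref{def:Cpu}; the key combinatorial point is that the numerator collapses to $\sum_{|A|\text{ even}}\prod_{i\in A}p^{-u_i}$ times $\prod_{i<j}(1-p^{-u_i-u_j})$. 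Meanwhile $\prod_{1\leq i\leq j\leq 4}(1-p^{-u_i-u_j})\cdot(1-g_\ell(p)) = (1-g_\ell(p))\prod_{i\leq j}(1-p^{-u_i-u_j})$ is already of the desired shape. Putting these together gives exactly $\colh_{\ell,p}(\boldu)$ as in \eqref{def:H}, so $\cold_\ell(\boldu) = \bigl(\prod_{i\leq j}\zeta_2(u_i+u_j)\bigr)\colh_\ell(\boldu)$.

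For the analyticity claim, I would estimate the local factor $\colh_{\ell,p}(\boldu)$ for $\Re(u_i)\geq \frac 14 + \varepsilon$: the leading term is $1$, and a Taylor expansion shows the next terms are $O(p^{-1-2\varepsilon})$ — the point being that the terms linear in the $p^{-u_i}$ (which would only be $O(p^{-1/4})$) cancel because $C_p(\boldu)$ only involves even-size subsets $A$, so the smallest nontrivial contributions come from $|A|=2$, i.e. from $p^{-u_i-u_j}$ with $\Re(u_i+u_j)\geq \frac12+2\varepsilon$, and also from the $1-g_\ell(p) = O(1/p)$ factor. Hence $\prod_{p\neq 2}\colh_{\ell,p}$ converges absolutely and uniformly in that region, giving an analytic bounded function. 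Finally, the two explicit evaluations at $\boldu = (\tfrac12,\tfrac12,\tfrac12,\tfrac12)$ are obtained by substituting $u_i = \tfrac12$ into \eqref{def:H}: one computes $C_p(\tfrac12,\dots,\tfrac12) = (1-\tfrac 1p)^6\sum_{|A|\text{ even}}p^{-|A|/2} = (1-\tfrac1p)^6\cdot\tfrac12\bigl((1+p^{-1/2})^4 + (1-p^{-1/2})^4\bigr)$, substitutes $g_1(p) = 1-\tfrac 1p$ and $g_2(p) = 1-\tfrac1{p+1} = \tfrac{p}{p+1}$, and simplifies; the stated formulas for $\colh_1$ and $\colh_2$ follow after routine algebra (for $\colh_2$ one pulls out $(1-\tfrac1p)^{10}/(1+\tfrac1p)$ and recognizes the remaining bracket). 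The main obstacle is the bookkeeping in the algebraic identity $\prod_{i\leq j}(1-p^{-u_i-u_j})\,S_p(\boldu) = C_p(\boldu)$ — one must be careful that the $\binom 42 = 6$ factors $(1-p^{-u_i-u_j})$ with $i<j$ combine correctly with the numerator and that the spurious poles at $u_i = -u_i$, i.e. from the $(1-p^{-2u_i})$ in the denominator of $S_p$, are genuinely cancelled (they are, since $\prod_{i\leq j}$ includes the diagonal terms $i=j$). I expect this to be exactly the computation deferred to Appendix~\ref{prfDiri}, and the rest is standard.
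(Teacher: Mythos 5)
Your overall plan is correct and is essentially the same as the paper's: both you and the paper write $\cold_\ell$ as an Euler product, isolate the $\boldn = \mathbf{0}$ term to get $1 - g_\ell(p) + g_\ell(p)\,B_p(\boldu)$, evaluate the even-parity generating function $B_p(\boldu)$ in closed form, and then multiply through by $\prod_{i\leq j}(1 - p^{-u_i-u_j})$. Your derivation of $S_p = \tfrac12\bigl(\prod_i(1-p^{-u_i})^{-1} + \prod_i(1+p^{-u_i})^{-1}\bigr)$ via the parity indicator $\tfrac{1+(-1)^n}{2}$ is equivalent to the paper's decomposition over subsets of odd coordinates (Lemma~\ref{lem:BsumEval}); both collapse the numerator to $\sum_{|A|\text{ even}}\prod_{i\in A}p^{-u_i}$. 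The algebraic identity $\prod_{1\leq i\leq j\leq 4}(1-p^{-u_i-u_j})\,S_p(\boldu) = C_p(\boldu)$ and the evaluations at $\boldu = (\tfrac12,\tfrac12,\tfrac12,\tfrac12)$ all check out.

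However, your analyticity argument as stated has a genuine gap. You claim $\colh_{\ell,p}(\boldu) = 1 + O(p^{-1-2\varepsilon})$, but your justification — that ``the smallest nontrivial contributions come from $|A|=2$, i.e.\ from $p^{-u_i-u_j}$ with $\Re(u_i+u_j)\geq \tfrac12+2\varepsilon$'' — would only yield $O(p^{-1/2-2\varepsilon})$, which does \emph{not} give absolute convergence of the Euler product. The missing observation is that the $|A|=2$ terms in $\sum_{|A|\text{ even}}\prod_{i\in A}p^{-u_i}$ cancel exactly against the degree-one terms of the prefactor $\prod_{i<j}(1-p^{-u_i-u_j})$ in $C_p(\boldu)$: one has
\[
\prod_{1\leq i<j\leq 4}\Bigl(1-\tfrac{1}{p^{u_i+u_j}}\Bigr)\cdot\sum_{|A|\text{ even}}\prod_{i\in A}\tfrac{1}{p^{u_i}}
= \Bigl(1 - \sum_{i<j}\tfrac{1}{p^{u_i+u_j}} + O(p^{-4\theta})\Bigr)\Bigl(1 + \sum_{i<j}\tfrac{1}{p^{u_i+u_j}} + O(p^{-4\theta})\Bigr) = 1 + O(p^{-4\theta})
\]
for $\Re(u_i)\geq\theta$. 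With $\theta = \tfrac14 + \varepsilon$ this gives $C_p(\boldu) = 1 + O(p^{-1-4\varepsilon})$, and combined with $1-g_\ell(p) = O(1/p)$ this is what makes the Euler product converge in the region $\Re(u_i)\geq\tfrac14+\varepsilon$ rather than only $\Re(u_i) > \tfrac12$. Without spelling out this cancellation (which is precisely the content of Lemma~\ref{lem:BsumEval} in the paper), your argument does not justify the stated domain of analyticity.
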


\begin{lem}\label{lem:DirichletSeriesOffDiagonal}
Let $k_1$ be squarefree and define
\begin{equation}\label{def:m(k)}
\mathfrak{m} = \mathfrak{m}(k_1) = \begin{cases}
k_1 & \text{if $k_1 \equiv 1\mod{4}$}, \\
4k_1 & \text{if $k_1\equiv 2,3 \mod{4}$}.
\end{cases}
\end{equation}
For $a$ odd, we have 
\[
Z^*(\boldu,s;k_1,a) = (2^{1-2s}-1)Z(\boldu,s;k_1,a),
\]
and
\[
Z(\boldu,s;k_1,a) = \zeta(2s) \prod_{i=1}^4 L_2\pth{\half+u_i,\chi_\mathfrak{m}} Y(\boldu,s;k_1,a),
\]
where
\[
Y(\boldu,s;k_1,a) = \prod_{1\leq i\leq j\leq {4}} \frac{\zeta_2(u_i+u_j+2s)}{\zeta_2(u_i+u_j+1)} \frac{Z_2(\boldu,s;k_1,a)}{\colp(\boldu,s;k_1)}, 
\]
and
\[
\colp(\boldu,s;k_1) = \prod_{i=1}^4 L_2\pth{\half+u_i+2s,\chi_\mathfrak{m}}.
\]
In the following regions, the function $Z_2$ is analytic and satisfies the estimate $Z_2 \ll \tau(a)$:
\begin{enumerate}[label=\normalfont{(\roman*)}]
\item $\Re(s) \geq \frac{1}{2}$, $\Re(u_i) > 0$.
\item $\Re(s) \geq -\frac{1}{20}$, $\Re(u_i) \geq \frac{1}{3}$.
\item $\Re(s) \geq \frac{3}{5}$, $\Re(u_i) \geq -\frac{1}{20}$.
\end{enumerate}
For $\Re(u_i) > \frac{1}{2}$, we also have
\begin{align}
\underset{s=\half}{\text{\normalfont Res}}\ Z(\boldu,s;1,1) &=\prod_{i=1}^4 \zeta_2(u_i+\half) \prod_{p\neq 2}\sumpth{1-\frac{1}{p} + \frac{1}{p}\prod_{i=1}^4  \sumpth{1-\frac{1}{p^{\frac{1}{2}+u_i}}}},\label{eq:resZathalf} \\
Z(\boldu,0;1,1) &= -\frac{1}{2} \prod_{p\neq 2} \sumpth{\frac{1}{p} + \pth{1-\frac{1}{p}}B_p(\boldu;0)}.\label{eq:resZat0}
\end{align}
Here $B_{p,4}(\boldu;0)$ is defined in \eqref{eq:Bdef}.
\end{lem}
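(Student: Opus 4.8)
The plan is to prove Lemma \ref{lem:DirichletSeriesOffDiagonal} by an Euler product computation, carefully splitting the analysis at the prime $2$, at primes dividing $k_1$, and at the remaining primes. First I would fix $k_1$ squarefree and $a$ odd, and study the inner sum over $n_1,n_2,n_3,n_4$ coprime to $2a$. Using the multiplicativity of $G_{k_1k_2^2}(\cdot)$ from Lemma \ref{lem:GkEval}, the whole series $Z(\boldu,s;k_1,a)$ factors as an Euler product over odd primes $p\nmid a$. The sum over $k_2$ must be folded in prime-by-prime: writing $k_2 = \prod p^{e_p}$, the local factor at a prime $p$ collects the contribution of $p^{e_p}\|k_2$ together with the sum over the exponents of $p$ in $n_1\cdots n_4$, and one uses Lemma \ref{lem:GkEval} with $\alpha = \operatorname{ord}_p(k_1 k_2^2) = \operatorname{ord}_p(k_1) + 2e_p$, which is controlled because $k_1$ is squarefree (so $\operatorname{ord}_p(k_1)\in\{0,1\}$). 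The first identity $Z^* = (2^{1-2s}-1)Z$ is easiest: since the $n_i$ are odd, the only difference between $Z^*$ and $Z$ is the factor $(-1)^{k_2}$, and writing $k_2 = 2^j k_2'$ with $k_2'$ odd shows that replacing $k_2$ by $(-1)^{k_2}$ amounts to multiplying the $k_2$-Dirichlet series by $\sum_{j\geq 1}(-1)^j 2^{-2js}/\sum_{j\geq 0}2^{-2js} + (\text{something})$; more precisely $\sum_{k_2}(-1)^{k_2}k_2^{-2s} = \zeta(2s)(2^{1-2s}-1)$ compared with $\sum_{k_2}k_2^{-2s}=\zeta(2s)$, and since the $n_i$-part is insensitive to the $2$-adic valuation of $k_2$ (because $G_{k_1k_2^2}(n)$ with $n$ odd only sees the odd part of $k_1k_2^2$... actually it sees $k_1k_2^2$ through $(a/n)e(ak/n)$, so one must check the claim that only the odd part matters), the ratio is exactly $2^{1-2s}-1$.

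Next I would extract the main zeta and $L$-function factors. For each odd prime $p\nmid a k_1$, Lemma \ref{lem:GkEval} with $\alpha = 2e_p$ gives that $G_{k_1k_2^2}(p^\beta)$ vanishes for odd $\beta \leq 2e_p$, equals $\phi(p^\beta)$ for even $\beta\leq 2e_p$, equals $-p^{2e_p}$ for $\beta = 2e_p+1$ (impossible since that's odd), and equals $(k_1k_2^2 p^{-2e_p}/p)p^{2e_p}\sqrt p$ for $\beta = 2e_p+1$ odd — this matches the pattern producing $\zeta(2s)$ (from the even-$\beta$, $\beta\leq \alpha$ terms summed against $k_2^{-2s}$), the four copies of $L_2(\tfrac12+u_i,\chi_{\mathfrak m})$ (from the $\beta = \alpha+1$ odd terms, which carry the Jacobi symbol $\fracp{k_1 k_2^2 p^{-2e_p}}{p}$; here the definition of $\mathfrak m$ via the sign/$\equiv 1 \bmod 4$ normalization of $k_1$ enters, together with quadratic reciprocity to identify $\fracp{k_1}{p}$ with $\chi_{\mathfrak m}(p)$), and the products $\prod \zeta_2(u_i+u_j+2s)/\zeta_2(u_i+u_j+1)$ from the $\beta\leq\alpha$ even terms with two distinct indices. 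The primes $p\mid k_1$ (where $\operatorname{ord}_p(k_1)=1$ so $\alpha = 2e_p+1$ is odd) require a separate but analogous local computation; their contribution gets absorbed into $Z_2$ and into the definition of $\mathfrak m$. What remains after dividing out all these explicit factors is $Z_2(\boldu,s;k_1,a)$, and one must show its Euler product converges in the three claimed regions; this is a matter of expanding each local factor and checking the leading terms cancel so that the general local factor is $1 + O(p^{-1-\delta})$ for appropriate $\delta>0$ in each region, with the dependence on $a$ entering only through the finitely many primes $p\mid a$, giving $Z_2 \ll \tau(a)$.

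Finally, for the residue at $s=\tfrac12$ and the value at $s=0$ in the case $k_1=a=1$, I would specialize the Euler product. At $k_1=1$ we have $\mathfrak m = 1$, so $\chi_{\mathfrak m}$ is trivial and $L_2(\tfrac12+u_i,\chi_1) = \zeta_2(\tfrac12+u_i)$, and $\colp(\boldu,s;1) = \prod_i \zeta_2(\tfrac12+u_i+2s)$. The factor $\zeta(2s)$ supplies a simple pole at $s=\tfrac12$ with residue $\tfrac12$, and the remaining factors are evaluated at $s=\tfrac12$: the products $\zeta_2(u_i+u_j+2s)/\zeta_2(u_i+u_j+1)$ become $1$, the $\colp$ in the denominator becomes $\prod_i\zeta_2(\tfrac32+u_i)$, and $Z_2(\boldu,\tfrac12;1,1)$ must be computed explicitly as an Euler product — one reassembles the local factors and recognizes the product $\prod_{p\neq2}(1-\tfrac1p + \tfrac1p\prod_i(1-p^{-1/2-u_i}))$ as claimed. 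The value at $s=0$ is more delicate because $\zeta(2s)$ does not vanish there, so $Z(\boldu,0;1,1)$ is a genuine value; one takes $s\to 0$ in each local factor, with $\zeta(0) = -\tfrac12$ producing the overall $-\tfrac12$, and the local factors collapse to $\tfrac1p + (1-\tfrac1p)B_p(\boldu;0)$ where $B_p$ is the quantity defined in \eqref{eq:Bdef}; this requires matching the $s=0$ specialization of the local Euler factor (including the $\beta=\alpha+1$ terms, which at $s=0$ no longer form a convergent $L$-function but a finite local contribution) against the definition of $B_p$.

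The main obstacle I anticipate is the bookkeeping in the local computation at primes $p\mid k_1$ combined with the case analysis on $\operatorname{ord}_p(k_1 k_2^2)$ and the parity of exponents, together with correctly tracking the Jacobi-symbol/quadratic-reciprocity factors so that $\chi_{\mathfrak m}$ — with its $k_1 \equiv 2,3 \bmod 4$ correction to $4k_1$ — emerges exactly; getting the convergence regions (i)--(iii) for $Z_2$ sharp, especially region (iii) with $\Re(u_i)$ allowed slightly negative, will require the cancellation in the $\zeta_2(u_i+u_j+2s)/\zeta_2(u_i+u_j+1)$ ratio to be exploited carefully. This is why the proof is deferred to Appendix \ref{prfDiri}, where the full prime-by-prime expansion can be carried out in detail.
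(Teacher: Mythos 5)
Your proposal is correct and follows essentially the same approach as the paper's Appendix~\ref{prfDiri}: factor $Z$ as an Euler product, carry out a prime-by-prime local computation using Lemma~\ref{lem:GkEval} split into the cases $p\mid 2a$, $p\mid k_1$ with $p\nmid 2a$, and $p\nmid 2ak_1$, extract the explicit $\zeta$- and $L$-factors, and then specialize at $s=\tfrac12$ and $s=0$. One small loose end you flagged is worth pinning down: the reason the $n_i$-part of $Z$ is insensitive to the $2$-adic valuation of $k_2$ is the identity $G_{4k}(n)=G_k(n)$ for odd $n$ and $k\neq 0$, which follows from Lemma~\ref{lem:GkEval} since the local factors of $G$ at odd primes depend only on $\operatorname{ord}_p(k)$ and on $\fracp{kp^{-\alpha}}{p}$, both unchanged under $k\mapsto 4k$; this yields $J(2k_2)=2^{-2s}J(k_2)$ and hence the first identity. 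Likewise no quadratic reciprocity is needed to identify $\fracp{k_1}{p}$ with $\chi_{\mathfrak m}(p)$ — the observation $\fracp{4}{p}=1$ for odd $p$ already shows $\fracp{k_1}{p}=\fracp{\mathfrak m}{p}$ in both cases of \eqref{def:m(k)}.
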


\begin{lem}\label{lem:DirichletSeriesSuma}
With notation as above,  let
\[
Z_3(\boldu,s) := \sum_{(a,2)=1} \frac{\mu(a)}{a^{2-2s}} Z_2(\boldu,s;1,a).
\]
Then
\begin{align}
Z_3(\boldu,s) &= \prod_{1\leq i\leq j\leq 4}\frac{\zeta_2(u_i+u_j+1)}{\zeta_2(u_i+u_j+2s)} \prod_{i=1}^4 \frac{\zeta_2(\frac{1}{2}+u_i+2s)}{\zeta_2(\frac{1}{2}+u_i)} \nonumber\\
&\quad\times\prod_{p\neq 2}\Bigg[\frac{1}{p} + \pth{1-\frac{1}{p}}B_p(\boldu+s;0) + \pth{1-\frac{1}{p^{2s}}}\frac{1}{p^{\frac{1}{2}-s}}B_p(\boldu+s;1) -\frac{1}{p^{2-2s}}\Bigg].
\label{eq:Z3}
\end{align}
Further, $Z_3$ is analytic and uniformly bounded in the region 
\begin{equation}\label{eq:Z3Region}
\Re(u_i) \geq \frac{1}{3}, \quad -\frac{1}{20} \leq \Re(s) \leq \frac{1}{3}.
\end{equation}
\end{lem}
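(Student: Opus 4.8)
\textbf{Proof plan for Lemma \ref{lem:DirichletSeriesSuma}.}

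The plan is to derive the formula for $Z_3(\boldu,s)$ from the known factorization of $Z_2$ implicit in Lemma \ref{lem:DirichletSeriesOffDiagonal}, and then to read off the region of analyticity from the resulting Euler product. First I would unwind the definition: $Z_3(\boldu,s) = \sum_{(a,2)=1} \mu(a) a^{2s-2} Z_2(\boldu,s;1,a)$. From Lemma \ref{lem:DirichletSeriesOffDiagonal} with $k_1=1$ (so $\mathfrak{m}=1$ and $\chi_{\mathfrak m}$ is trivial), we have the identity
\[
Z(\boldu,s;1,a) = \prod_{1\leq i\leq j\leq 4}\frac{\zeta_2(u_i+u_j+2s)}{\zeta_2(u_i+u_j+1)}\,\zeta(2s)\prod_{i=1}^4\frac{\zeta_2(\tfrac12+u_i)}{\zeta_2(\tfrac12+u_i+2s)}\cdot\frac{Z_2(\boldu,s;1,a)}{\prod_i L_2(\tfrac12+u_i+2s,\chi_1)},
\]
but more usefully $Z_2$ itself is (by its construction in the appendix) an Euler product over $p\nmid 2a$ times a finite factor encoding the coprimality to $a$; the dependence on $a$ is through omitting the Euler factors at $p\mid a$. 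Thus $Z_2(\boldu,s;1,a)$ should factor as $Z_2(\boldu,s;1,1)$ times $\prod_{p\mid a}(\text{local factor at }p)^{-1}$, and the Dirichlet convolution over squarefree $a$ with $\mu(a)a^{2s-2}$ then converts into an Euler product: each prime $p\neq 2$ contributes its original local factor plus $-p^{2s-2}$ times (that local factor with its own $p$-part stripped).

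The key computational step is therefore to identify the local factor of $Z_2(\boldu,s;1,1)$ at a prime $p\neq2$ explicitly in terms of the quantity $B_p$ appearing in \eqref{eq:Bdef}. I expect this to come out, after dividing off the zeta-quotients displayed in Lemma \ref{lem:DirichletSeriesOffDiagonal}, as something of the shape $\frac{1}{p}+\bigl(1-\frac1p\bigr)B_p(\boldu+s;0)+\bigl(1-\frac1{p^{2s}}\bigr)p^{s-1/2}B_p(\boldu+s;1)$ — indeed the shift $\boldu\mapsto\boldu+s$ is exactly what one expects from the $k_2^{2s}$ weight in \eqref{def:Z} being absorbed, and the two $B_p$ terms with $j=0,1$ correspond to the even/odd contributions in $G_{k_2^2}(p^\beta)$ via Lemma \ref{lem:GkEval}. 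Forming $\sum_{(a,2)=1}\mu(a)a^{2s-2}(\cdot)$ then multiplies the $p$-local factor by $\bigl(1-p^{2s-2}\cdot(\text{$p$-deleted local factor})\bigr)$; matching the result against the bracket in \eqref{eq:Z3}, the extra $-p^{2s-2}$ term and the precise coefficients follow. I would verify this by a direct Euler-factor computation, but since the heavy lifting is deferred to Appendix \ref{prfDiri}, in the main text I would only state the algebraic identity and check convergence.

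For the analyticity claim in the region \eqref{eq:Z3Region}, namely $\Re(u_i)\geq\frac13$ and $-\frac1{20}\leq\Re(s)\leq\frac13$: the zeta-quotient prefactor $\prod_{i\leq j}\zeta_2(u_i+u_j+1)/\zeta_2(u_i+u_j+2s)$ is analytic and bounded there because $\Re(u_i+u_j+1)\geq\frac53$ keeps the numerator away from the pole and zeros of $\zeta$, while $\Re(u_i+u_j+2s)\geq\frac23-\frac1{10}>\frac12$ keeps $1/\zeta_2$ bounded; similarly $\prod_i\zeta_2(\tfrac12+u_i+2s)/\zeta_2(\tfrac12+u_i)$ is fine since $\Re(\tfrac12+u_i)\geq\frac56$ and $\Re(\tfrac12+u_i+2s)\geq\frac56-\frac1{10}>\frac12$. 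For the Euler product $\prod_{p\neq2}[\cdots]$ I would estimate the general factor: using $B_p(\boldu+s;0)=1+O(p^{-(1+\Re(u_i+u_j)+2\Re s)})$-type bounds from \eqref{eq:Bdef} (valid once $\Re(u_i)+\Re(s)$ is large enough, which holds here since $\Re(u_i)+\Re(s)\geq\frac13-\frac1{20}>\frac14$), the bracket equals $1+O(p^{-1-\delta})$ for some $\delta>0$, so the product converges absolutely and uniformly. The main obstacle I anticipate is purely bookkeeping: pinning down the exact local factor of $Z_2$ and confirming that the convolution with $\mu(a)a^{2s-2}$ reproduces the bracket in \eqref{eq:Z3} with the stated $-p^{2-2s}$ term and the $\bigl(1-p^{-2s}\bigr)p^{s-1/2}B_p(\boldu+s;1)$ term — getting the precise powers of $p$ right in the $B_p(\cdot;1)$ contribution (which tracks the odd-$\beta$ Gauss sum values $\genfrac(){}{}{\cdot}{p}p^{(\beta-1)/2}\sqrt p$) requires care, but it is a finite local computation with no analytic subtlety.
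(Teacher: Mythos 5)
Your derivation of the product formula \eqref{eq:Z3} is essentially the paper's: the dependence of $Z_2(\boldu,s;1,a)$ on $a$ is the removal of one local factor $Z^{\sharp}(p)$ at each $p\mid a$, so the M\"obius sum is multiplicative in $a$ and becomes an Euler product whose $p$-factor is $Z^{\flat}(p)\bigl(Z^{\sharp}(p)-p^{2s-2}\bigr)$, with $\prod_p Z^{\flat}(p)$ producing the displayed $\zeta_2$-ratios and the parenthesis producing the bracket. That part of the plan is sound and matches the paper.

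The analyticity argument, however, has a genuine gap, and it matters because the whole paper is devoted to avoiding GRH. You assert that $\prod_{i\leq j}\zeta_2(u_i+u_j+1)/\zeta_2(u_i+u_j+2s)$ is ``analytic and bounded'' because $\Re(u_i+u_j+2s)\geq \tfrac{17}{30}>\tfrac12$. But unconditionally we do not know that $\zeta(\sigma+it)$ is nonzero for $\sigma\geq \tfrac{17}{30}$ (the known zero-free regions give nothing in this range once $|t|$ is large), so $1/\zeta_2$ is not known to be analytic there, let alone uniformly bounded; what you claim would itself be a partial Riemann Hypothesis. The companion estimate is also off: your $B_p(\boldu+s;0)=1+O\bigl(p^{-(1+\Re(u_i+u_j)+2\Re s)}\bigr)$ has a spurious $+1$ in the exponent. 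By Lemma \ref{lem:BsumEval}, $B_p(\boldu+s;0)=\prod_{i\leq j}(1-p^{-(u_i+u_j+2s)})^{-1}C_p(\boldu+s)$, whose leading correction is $\sum_{i\leq j}p^{-(u_i+u_j+2s)}$ with exponent $\geq 2\bigl(\tfrac13-\tfrac1{20}\bigr)=\tfrac{17}{30}<1$. So the bracket is $1+O(p^{-17/30})$, not $1+O(p^{-1-\delta})$, and $\prod_{p\neq 2}[\cdots]$ does not converge absolutely as you describe it. The resolution, and what the paper actually does, is that these two defects cancel each other: the geometric series hidden in $B_p(\boldu+s;0)$ and $B_p(\boldu+s;1)$ supply Euler factors which, once pulled out of the bracket, are exactly $\prod_{i\leq j}(1-p^{-(u_i+u_j+2s)})^{-1}$, $\prod_i(1-p^{-(\frac12+u_i)})^{-1}$, and $\prod_i(1-p^{-(\frac12+u_i+2s)})$; these cancel $\zeta_2(u_i+u_j+2s)^{-1}$, $\zeta_2(\tfrac12+u_i)^{-1}$, and $\zeta_2(\tfrac12+u_i+2s)$ in your prefactor, leaving only $\prod_{i\leq j}\zeta_2(u_i+u_j+1)$ (safe, since $\Re(u_i+u_j+1)\geq\tfrac53$) times an Euler product whose local factors are $1+O(p^{-1-1/60})$. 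You cannot bound the prefactor and the bracket separately; the cancellation between them is precisely what makes the claim true unconditionally.
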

Finally, we have the following relation between $Z_3$ and $\colh_2$. This lemma is one of the key identities we need in order to witness cancellation between the diagonal terms and off-diagonal, square terms.
\begin{lem}
\label{lem:iden}
Let $z_i \in \mathbb{C}$.
We have
\begin{enumerate}[label=\normalfont{(\roman*)}]

\item 
\begin{align*}
&\tfrac{1}{p} + \left(1 - \tfrac{1}{p} \right) B_p(\tfrac{1}{2}+z_1, \tfrac{1}{2}+z_2, \tfrac{1}{2}+z_3 , \tfrac{1}{2}+z_4;0)\nonumber\\
&\quad  + \tfrac{1}{p^{\frac{1}{2}-z_1}} 
\left(1 - \tfrac{1}{p^{2z_1}} \right)B_p(\tfrac{1}{2}+z_1, \tfrac{1}{2}+z_2, \tfrac{1}{2}+z_3 , \tfrac{1}{2}+z_4;1) -\tfrac{1}{p^{2-2z_1}}\nonumber\\
&=
\left(1-\tfrac{1}{p^{1-2z_1}} \right) \left(\tfrac{1}{p}+   B_p(\tfrac{1}{2}-z_1, \tfrac{1}{2}+z_2, \tfrac{1}{2}+z_3 , \tfrac{1}{2}+z_4;0) \right).
\end{align*}

\item If 
\begin{align*}
Z_4(\boldu, s) := \prod_{1 \leq i\leq j \leq 4} \zeta_2(1+u_i + u_j )^{-1}Z_3(\boldu, s),
\end{align*}
then
\begin{align*}
Z_4(\tfrac{1}{2},\tfrac{1}{2}+z_2-z_1,\tfrac{1}{2}+z_3-z_1,\tfrac{1}{2}+z_4-z_1,z_1)
=
\frac{8}{\pi^2} \mathcal{H}_2(\tfrac{1}{2}-z_1,\tfrac{1}{2}+z_2,\tfrac{1}{2}+z_3,\tfrac{1}{2}+z_4).
\end{align*}
\end{enumerate}

\end{lem}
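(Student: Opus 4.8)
\textbf{Proof plan for Lemma \ref{lem:iden}.}

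The plan is to prove both parts by a direct computation at the level of local Euler factors, using the explicit definition of $B_p$ from \eqref{eq:Bdef} together with the product formula \eqref{eq:Z3} for $Z_3$. For part (i), the strategy is to regard both sides as rational functions in $p^{-z_1}$ (with $p^{-z_2}, p^{-z_3}, p^{-z_4}$ as parameters). The key observation is that the operation $z_1 \mapsto -z_1$ appearing on the right-hand side should, when combined with the prefactor $(1-p^{-1+2z_1})$, encode exactly the ``functional-equation'' symmetry that the local factor of $\prod_i L_2(\frac12 + u_i, \chi_{\mathfrak m})$ satisfies at the place $p \nmid \mathfrak m$. Concretely, I would write out $B_p(\tfrac12+z_1,\ldots;0)$ and $B_p(\tfrac12+z_1,\ldots;1)$ explicitly, substitute into the left-hand side, clear denominators, and check the resulting polynomial identity in $p^{-z_1}$ term by term; since both sides are polynomials of bounded degree in $p^{-z_1}$, it suffices to verify agreement of finitely many coefficients (or, equivalently, to evaluate at enough specializations of $z_1$). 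The symmetry $z_1 \leftrightarrow -z_1$ on the right should make the bookkeeping manageable: the combination $\tfrac1p + B_p(\tfrac12 \pm z_1, \ldots; 0)$ is what naturally appears as the ``balanced'' local factor, and the terms involving the exponent $1$ in $B_p(\cdot;1)$ and the stray $-p^{-2+2z_1}$ are precisely the correction needed to convert the $z_1$-version into the $(-z_1)$-version.

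For part (ii), I would start from the formula \eqref{eq:Z3} for $Z_3(\boldu,s)$, divide out $\prod_{i\le j}\zeta_2(1+u_i+u_j)$ to form $Z_4$, and then substitute the specific arguments $\boldu = (\tfrac12,\, \tfrac12+z_2-z_1,\, \tfrac12+z_3-z_1,\, \tfrac12+z_4-z_1)$ and $s = z_1$. Under this substitution one computes $u_i + 2s$ and $u_i + u_j + 2s$: for instance $u_1 + 2s = \tfrac12 + 2z_1$, while $u_i + 2s = \tfrac12 + z_i + z_1$ for $i=2,3,4$, and $\tfrac12 + u_i = \tfrac12 + z_i - z_1$ etc. The various ratios $\zeta_2(\tfrac12+u_i+2s)/\zeta_2(\tfrac12+u_i)$ and $\zeta_2(u_i+u_j+1)/\zeta_2(u_i+u_j+2s)$ should collapse — after the cancellation with the $\prod \zeta_2(1+u_i+u_j)^{-1}$ defining $Z_4$ — into exactly the zeta-factors appearing implicitly inside $\mathcal H_2$ (recall $\mathcal H_2$ is the ``arithmetic factor'' obtained after pulling out $\prod_{i\le j}\zeta_2$). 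The remaining Euler product over $p \ne 2$ in \eqref{eq:Z3} becomes
\[
\prod_{p\neq 2}\Bigl[\tfrac1p + \bigl(1-\tfrac1p\bigr)B_p(\boldu+s;0) + \bigl(1-\tfrac1{p^{2s}}\bigr)\tfrac{1}{p^{1/2-s}}B_p(\boldu+s;1) - \tfrac{1}{p^{2-2s}}\Bigr],
\]
and the first coordinate of $\boldu + s$ is $\tfrac12 + z_1$, so each local factor is exactly the left-hand side of part (i) with that choice of arguments. Applying part (i) rewrites this as $\prod_{p\neq 2}(1-p^{-1+2z_1})(\tfrac1p + B_p(\tfrac12-z_1,\tfrac12+z_2,\tfrac12+z_3,\tfrac12+z_4;0))$, and I would then identify this product — together with the surviving zeta-ratios and the constant $\tfrac{8}{\pi^2}$ (which is $\zeta_2(2)^{?}$-type bookkeeping at the prime $2$, i.e. $\prod_{p\ne2}$ versus $\prod_p$ normalization combined with $\zeta(2)^{-1} = 6/\pi^2$ and the factor $(1-2^{-2})=3/4$ giving $8/\pi^2$) — with the defining Euler product of $\mathcal H_2(\tfrac12-z_1,\tfrac12+z_2,\tfrac12+z_3,\tfrac12+z_4)$ from \eqref{def:H}, up to the replacement $g_2(p) = 1-\tfrac1{p+1}$ and the $\zeta_2$-normalization. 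The factor $\tfrac{8}{\pi^2}$ will have to be tracked carefully through the prime $2$: since $\mathcal H_2$ and the $\cold_\ell$'s all have their Euler products over $p\neq 2$ while $Z_3$ carries some explicit $\zeta(2s)$-type factors whose local factor at $2$ must be reinstated, the constant should emerge as $\zeta_2(2)^{-1}\cdot(\text{something})$; I expect $\tfrac{8}{\pi^2} = \tfrac{1}{\zeta(2)}\cdot\tfrac{1}{1-2^{-2}} = \tfrac{6}{\pi^2}\cdot\tfrac43$.

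The main obstacle I anticipate is bookkeeping rather than conceptual: part (i) requires writing $B_p(\cdot;0)$ and $B_p(\cdot;1)$ in a form where the $z_1 \leftrightarrow -z_1$ symmetry is transparent, and the polynomial identity, while finite, involves four parameters $p^{-z_i}$ and may have many monomials; organizing the verification (perhaps by symmetrizing in $z_2,z_3,z_4$ first, or by factoring out the common $(1-p^{-1/2-z_1-z_i})$-type terms) is where care is needed. In part (ii), the delicate point is matching the $2$-local normalization constants exactly — confirming that the accumulated factors of $(1-2^{-s})^{\pm1}$, $\zeta(2s)$ at $2$, and the $\zeta_2$ versus $\zeta$ discrepancies combine to precisely $\tfrac{8}{\pi^2}$ and nothing more. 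Everything else is a routine, if lengthy, manipulation of Euler products, and since all the relevant Dirichlet series were shown to be analytic in overlapping regions in Lemmas \ref{lem:DirichletSeriesDiagonal}--\ref{lem:DirichletSeriesSuma}, the identity, once verified for $\Re(z_i)$ in a suitable range, extends by analytic continuation.
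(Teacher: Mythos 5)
Your plan for part (ii) is essentially the paper's argument: substitute $\boldu=(\tfrac12,\tfrac12+z_2-z_1,\tfrac12+z_3-z_1,\tfrac12+z_4-z_1)$, $s=z_1$ into the Euler product \eqref{eq:Z3}, observe that each local factor in the bracket becomes exactly the left-hand side of part (i) (since $\boldu+s$ has first coordinate $\tfrac12+z_1$), apply (i), and match the result against the Euler factor $\colh_{2,p}$; the constant is $\prod_{p\neq 2}(1-p^{-2})=\zeta_2(2)^{-1}=\tfrac{8}{\pi^2}$, which you compute correctly. For part (i), though, your proposal to write out $B_p(\cdot;0)$ and $B_p(\cdot;1)$, clear denominators, and check a polynomial identity in $p^{-z_1}$ term by term would work in principle but would be very unwieldy with four free parameters. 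The paper's proof is substantially cleaner: split the parity constraint $n_1+\cdots+n_4\equiv a\ (\mathrm{mod}\ 2)$ according to the parity of $n_1$ alone, so that $B_p(\tfrac12+z_1,\ldots;a)$ decomposes as a sum of two terms, each a geometric series in $p^{-(1/2+z_1)}$ (even or odd powers) times a sum over $n_2,n_3,n_4$ with a fixed parity of $n_2+n_3+n_4$. Substituting this into the left-hand side of (i) collects the $z_1$-dependence into two scalar coefficients $K_2$ and $K_3$ multiplying the two $(n_2,n_3,n_4)$-sums, and these simplify to $K_2=1$, $K_3=p^{-(1/2-z_1)}$. Applying the same parity split to $B_p(\tfrac12-z_1,\ldots;0)$ shows the right-hand side produces the identical combination, with the prefactor $(1-p^{-1+2z_1})$ absorbing the geometric-series denominator $(1-p^{-1+2z_1})^{-1}$. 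You gestured at "factoring out common terms" as a way to organize the verification; the $n_1$-parity split is the specific factoring that makes the identity fall out without any polynomial bookkeeping.
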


\subsection{Functional Equations for Dirichlet Polynomials}
We require the following direct application of the functional equation for $L(s,\chi_m)$.

\begin{lem}\label{lem:FEDirect}
For $m$ a fundamental discriminant and $G$ any Schwartz class function,
\[
\sum_{n}\frac{\chi_{m}(n)}{n^{\frac{1}{2}+z}} G\fracp{n}{N} = \fracp{\pi}{\abs{m}}^{z} \sum_{n}\frac{\chi_{m}(n)}{n^{\frac{1}{2}-z}} \grave{G}_z\fracp{\pi n N}{\abs{m}},
\]
where
\[
\grave{G}_z(x) = \frac{1}{2\pi  i } \int\limits_{(2)} \frac{\Gamma\pth{\frac{1}{4}-\frac{z-s-\ida}{2}}}{\Gamma\pth{\frac{1}{4}+\frac{z-s+\ida}{2}}} x^{-s}\tilde{G}(-s) \,ds.
\]
\end{lem}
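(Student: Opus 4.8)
Lemma \ref{lem:FEDirect} is a direct consequence of the functional equation \eqref{eq:FunctionalEquation} for $L(s,\chi_m)$, so the plan is to open the Dirichlet polynomial via Mellin inversion, convert the $L$-function to its dual using the functional equation, and then move contours. First I would write $G\fracp{n}{N}$ using its Mellin transform, namely
\[
G\fracp{n}{N} = \frac{1}{2\pi i}\int_{(c)} \tilde{G}(s) \pth{\frac{n}{N}}^{-s}\, ds
\]
for $c$ large (say $c=2$), so that
\[
\sum_n \frac{\chi_m(n)}{n^{\frac{1}{2}+z}} G\fracp{n}{N} = \frac{1}{2\pi i}\int_{(2)} \tilde{G}(s) N^{s} L\pth{\tfrac{1}{2}+z+s,\chi_m}\, ds,
\]
using that $\Re(\tfrac12 + z + s) > 1$ on the line of integration so the $L$-series converges absolutely.

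Next I would apply the functional equation. From \eqref{eq:FunctionalEquation}, writing $\Lambda(w,\chi_m) = (\abs{m}/\pi)^{(w+\ida)/2}\Gamma\pth{\tfrac{w+\ida}{2}} L(w,\chi_m) = \Lambda(1-w,\chi_m)$, we get
\[
L(w,\chi_m) = \pth{\frac{\abs{m}}{\pi}}^{\frac{1-2w}{2}} \frac{\Gamma\pth{\frac{1-w+\ida}{2}}}{\Gamma\pth{\frac{w+\ida}{2}}} L(1-w,\chi_m).
\]
Substituting $w = \tfrac12 + z + s$, the gamma factor becomes $\Gamma\pth{\tfrac14 - \tfrac{z+s-\ida}{2}}\big/\Gamma\pth{\tfrac14 + \tfrac{z+s+\ida}{2}}$, the power of $\abs{m}/\pi$ becomes $(\abs{m}/\pi)^{-z-s}$, and $L(1-w,\chi_m) = L(\tfrac12 - z - s,\chi_m)$. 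Thus
\[
\sum_n \frac{\chi_m(n)}{n^{\frac{1}{2}+z}} G\fracp{n}{N} = \pth{\frac{\pi}{\abs{m}}}^{z}\frac{1}{2\pi i}\int_{(2)} \tilde{G}(s) \pth{\frac{\pi N}{\abs{m}}}^{s}\frac{\Gamma\pth{\frac14 - \frac{z+s-\ida}{2}}}{\Gamma\pth{\frac14 + \frac{z+s+\ida}{2}}} L\pth{\tfrac{1}{2}-z-s,\chi_m}\, ds.
\]
Now I would expand $L\pth{\tfrac12 - z - s,\chi_m} = \sum_n \chi_m(n) n^{-1/2+z+s}$; this requires moving the contour to $\Re(s) = -2$ (or any line with $\Re(\tfrac12 - z - s) > 1$), which is permissible because $\tilde{G}$ decays rapidly in vertical strips (Schwartz class), the gamma ratio has at most polynomial growth, and $L(\tfrac12 - z - s,\chi_m)$ in the original region of the shift is entire in $s$ (no poles crossed, since $\chi_m$ is non-principal for $m$ a fundamental discriminant $\neq 1$; the edge case $m=1$ is handled separately or excluded). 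After exchanging sum and integral — justified by absolute convergence once $\Re(s)$ is sufficiently negative — one obtains
\[
\pth{\frac{\pi}{\abs{m}}}^{z}\sum_n \frac{\chi_m(n)}{n^{\frac12 - z}} \cdot \frac{1}{2\pi i}\int_{(-2)} \tilde{G}(s) \pth{\frac{\pi n N}{\abs{m}}}^{s}\frac{\Gamma\pth{\frac14 - \frac{z+s-\ida}{2}}}{\Gamma\pth{\frac14 + \frac{z+s+\ida}{2}}}\, ds.
\]
Finally, substituting $s\mapsto -s$ in the inner integral converts $\int_{(-2)}$ to $\int_{(2)}$, turns $\pth{\pi n N/\abs{m}}^{s}$ into $\pth{\pi n N/\abs{m}}^{-s}$, sends $\tilde{G}(s)$ to $\tilde{G}(-s)$, and replaces the gamma ratio with $\Gamma\pth{\tfrac14 - \tfrac{z - s - \ida}{2}}\big/\Gamma\pth{\tfrac14 + \tfrac{z - s + \ida}{2}}$; the inner integral is then exactly $\grave{G}_z\fracp{\pi n N}{\abs{m}}$ as defined in the statement, completing the proof.

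The main technical point to be careful about is the contour shift from $\Re(s)=2$ to $\Re(s)=-2$: one must check that there are no poles of the integrand in between (the gamma ratio $\Gamma\pth{\tfrac14 - \tfrac{z+s-\ida}{2}}/\Gamma\pth{\tfrac14 + \tfrac{z+s+\ida}{2}}$ is analytic there for $z$ small, and $L(\tfrac12-z-s,\chi_m)$ is entire), and that the horizontal pieces at height $\pm T$ vanish as $T\to\infty$, which follows from the super-polynomial decay of $\tilde{G}$ in vertical strips against the at-most-polynomial growth of the gamma factor and the polynomial (in $\Im(s)$) growth of $L$ on vertical lines via convexity. This is entirely routine, so I expect no genuine obstacle — the lemma is essentially a bookkeeping exercise built on the functional equation.
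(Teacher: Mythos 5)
Your proposal is correct and follows essentially the same route as the paper: Mellin inversion to pull out $L\pth{\tfrac12+z+s,\chi_m}$, the functional equation \eqref{eq:FunctionalEquation}, a contour shift to where the dual Dirichlet series converges, and the change of variables $s\mapsto -s$ to recognize $\grave{G}_z$. The only cosmetic difference is ordering — you apply the functional equation before shifting to $\Re(s)=-2$, while the paper shifts first (to $\Re(s)=-c$ with $c=|\Re(z)|+1$) and then applies the functional equation — but since the functional equation is an identity of entire functions, both orderings are equivalent.
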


\begin{proof}
Let $c = \abs{\Re(z)} + 1$. By Mellin inversion,
\begin{align*}
\sum_{n}\frac{\chi_{m}(n)}{n^{\frac{1}{2}+z}} G\fracp{n}{N} = \frac{1}{2\pi  i } \int\limits_{(c)} L\pth{\half+z+s,\chi_m} \tilde{G}(s) N^s\, ds.
%\label{mellin-factor}
\end{align*}
Shifting contours to $\Re(s) = -c$ and applying the functional equation \eqref{eq:FunctionalEquation}, we have
\begin{align*}
\sum_{n}\frac{\chi_{m}(n)}{n^{\frac{1}{2}+z}} G\fracp{n}{N} &=\frac{1}{2\pi  i } \int\limits_{(-c)} \fracp{\abs{m}}{\pi}^{-s-z} \frac{\Gamma\fracp{1/2-s-z+\ida}{2}}{\Gamma\fracp{1/2+s+z+\ida}{2}}L\pth{\half-z-s,\chi_m} \tilde{G}(s) N^s\, ds \\
&=\frac{1}{2\pi  i } \int\limits_{(c)} \fracp{\abs{m}}{\pi}^{s-z} \frac{\Gamma\fracp{1/2+s-z+\ida}{2}}{\Gamma\fracp{1/2-s+z+\ida}{2}}L\pth{\half-z+s,\chi_m} \tilde{G}(-s) N^{-s}\, ds \\
&= \fracp{\pi}{\abs{m}}^z \sum_{n} \frac{\chi_m(n)}{n^{\frac{1}{2}-z}} \grave{G}_z\fracp{\pi n N}{\abs{m}}.
\end{align*}
\end{proof}

For
\[
g(s) = \frac{\Gamma\pth{\frac{1}{4}+\frac{s}{2}+\frac{\ida}{2}}}{\Gamma\pth{\frac{1}{4}-\frac{s}{2}+\frac{\ida}{2}}},
\]
and $s=\sigma+ it$ with $\sigma \geq -\frac{1}{5}$, say, Stirling's formula (see e.g. 5.A.4 of \cite{IwaniecKowalski}) implies that
\begin{equation}\label{eq:GammaRatioBound}
g(s) \ll (1+\abs{t})^\sigma.
\end{equation}
This gives the standard estimate
\begin{equation}\label{eq:GraveGStirlingBound}
\grave{G}_z(x) \ll \fracp{1+\abs{\Im(z)}}{1+x}^A
\end{equation}
for any $A > 0$ upon shifting contours to the right. We also note that $\grave{G}_z(x)$ is bounded uniformly for $\Re(z) \geq -\frac{1}{5}$, say, by shifting contours to $\Re(s) = 0$.

\subsection{Smooth Functions}
To simplify our arguments, we also use the following test functions.

\begin{lem}\label{lem:ConvenientTestFandG}
The following hold:
\begin{enumerate}[label={\normalfont(\roman*)}]
\item Let $c_0,c_1$ be any fixed positive real numbers. There is a function $F$ such that
\begin{enumerate}[label={\normalfont(\alph*)}]
\item $F$ is smooth, non-negative, even, and Schwartz class;
\item $F(x) \geq 1$ for all $x \in [-c_1, c_1]$;
\item $\hat{F}(x)$ is even and compactly supported on $[-c_0, c_0]$ (so $\check{F}(x)$ is also even and compactly supported on $[-c_0,c_0]$).
\end{enumerate}
\item There is a function $G$ such that
\begin{enumerate}[label={\normalfont(\alph*)}]
\item $G$ is smooth and non-negative;
\item  $G$ is compactly supported in $[3/4,2]$;
\item For any $J\geq 0$, the function
\[
G(x)  + G(x/2) + \cdots + G(x/2^J) = 1
\]
for all $x\in[1,3\cdot 2^{J-1}]$ and is supported on $[3/4,2^{J+1}]$. 
\end{enumerate}
\end{enumerate}
\end{lem}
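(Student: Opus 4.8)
\textbf{Proof plan for Lemma \ref{lem:ConvenientTestFandG}.} Both parts are soft constructions, so the plan is to exhibit explicit functions and verify the listed properties by elementary means. For part (i), I would start from the dual side: pick any nonzero smooth even bump $\psi$ supported on $[-c_0/2,c_0/2]$ with $\psi\geq 0$ and $\psi\not\equiv 0$, and set $h = \psi * \psi$ (autocorrelation, i.e.\ $h(x)=\int \psi(t)\psi(t-x)\,dt$). Then $h$ is smooth, even, nonnegative, and supported on $[-c_0,c_0]$, and moreover $\hat h = |\hat\psi|^2 \geq 0$. Now define $F = \widehat{h}$ up to normalization; more precisely, take $F(x) = \int h(y)\,e(xy)\,dy$ so that $\hat F$ is (a reflection of) $h$, hence even and compactly supported in $[-c_0,c_0]$, giving (c). Since $\hat F = h \geq 0$ and $F(0) = \int h > 0$, continuity of $F$ shows $F > 0$ on a neighborhood of $0$; rescaling $h(y)\mapsto h(\lambda y)$ (which keeps the support condition if $\lambda\geq 1$, and only dilates $F$) together with multiplying by a large positive constant makes $F\geq 1$ on $[-c_1,c_1]$, giving (b). Smoothness and Schwartz decay of $F$ follow because $F$ is the Fourier transform of the compactly supported smooth function $h$; evenness of $F$ follows from evenness of $h$. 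The remark about $\check F$ being even and compactly supported on $[-c_0,c_0]$ is immediate from the identity $\check F(x) = \frac{1+i}{2}\hat F(x) + \frac{1-i}{2}\hat F(-x)$ recorded just before the lemma, since $\hat F$ is even.

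For part (ii), the standard device is a smooth dyadic partition of unity. I would fix a smooth nonnegative function $\rho$ on $(0,\infty)$ with $\rho(x) = 1$ for $x\in[1,3/2]$ and $\rho(x)=0$ outside $[3/4,2]$ (obtained by convolving the indicator of $[7/8,7/4]$ with a narrow mollifier, say). Define
\[
G(x) = \frac{\rho(x)}{\sum_{j\in\mathbb{Z}} \rho(x/2^j)}.
\]
The denominator is a locally finite sum (for each $x$ only boundedly many terms are nonzero), is smooth and strictly positive on $(0,\infty)$ since the supports $[3/4\cdot 2^j, 2^{j+1}]$ cover $(0,\infty)$ with overlaps, so $G$ is well defined, smooth, nonnegative, and supported on $[3/4,2]$, giving (a) and (b). By construction $\sum_{j\in\mathbb{Z}} G(x/2^j) = 1$ for all $x>0$. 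For the finite-sum statement in (c), note that $G(x/2^j)$ is supported on $[3/4\cdot 2^j, 2^{j+1}]$, so for $x\in[1, 3\cdot 2^{J-1}]$ the only indices $j$ with $G(x/2^j)\neq 0$ lie in $\{0,1,\ldots,J\}$: indeed $j<0$ would force $x < 2^{j+1}\leq 1$, impossible, and $j>J$ would force $x > 3/4\cdot 2^j \geq 3/4\cdot 2^{J+1} = 3\cdot 2^{J-1}$, also impossible. Hence on that interval the truncated sum $G(x)+G(x/2)+\cdots+G(x/2^J)$ equals the full sum, which is $1$; and the truncated sum is visibly supported on $\bigcup_{j=0}^J [3/4\cdot 2^j, 2^{j+1}] = [3/4, 2^{J+1}]$.

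There is really no serious obstacle here; the only point requiring a little care is making sure the constructed $F$ in part (i) is \emph{simultaneously} real-valued, even, and has nonnegative Fourier transform, which is exactly why I take $F$ to be the Fourier transform of an autocorrelation $h=\psi*\psi$ rather than of an arbitrary bump — this guarantees $\hat F \geq 0$ (needed so that, after scaling, $F\geq 1$ near the origin can be propagated, and also so $F$ itself could be arranged nonnegative if desired, though only $\hat F\geq 0$ is used). I would also remark that the normalization constants and the dilation parameter $\lambda$ depend only on $c_0, c_1$, consistent with the statement that $c_0,c_1$ are fixed in advance. The rest is bookkeeping with supports and elementary properties of the Fourier transform.
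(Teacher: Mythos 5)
Your proposal is correct. Note that the paper does not actually prove this lemma itself but simply cites Lemma~2.7 of Li's paper \cite{XiannanFourth}, so there is nothing in the present paper to compare against; your constructions (the autocorrelation $h=\psi*\psi$ for part (i) and the dyadic partition of unity $G(x)=\rho(x)/\sum_{j}\rho(x/2^j)$ for part (ii)) are the standard ones. One small clarification is worth making in your closing remarks: you write that ``only $\hat F\geq 0$ is used,'' but condition (a) does explicitly demand that $F$ itself be non-negative. Fortunately your construction delivers this automatically, since $h$ is even so the inverse transform satisfies $F=\check h=\hat h=|\hat\psi|^2\geq 0$; it would be cleaner to state this directly rather than suggest it is dispensable. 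The rest of the bookkeeping — the rescaling $h(y)\mapsto h(\lambda y)$ and multiplication by a constant to achieve $F\geq 1$ on $[-c_1,c_1]$ while keeping $\operatorname{supp}\hat F\subset[-c_0,c_0]$, and the index analysis showing that only $j\in\{0,\dots,J\}$ contribute to $\sum_j G(x/2^j)$ on $[1,3\cdot 2^{J-1}]$ — is sound.
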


For a proof, see Lemma 2.7 and the paragraph following this lemma in \cite{XiannanFourth}. As in that work, we fix, once and for all, a function $G$ with these properties, and we will specify a particular $F$ later.

\section{Proof of Proposition \ref{prop:LargeSieve}: Initial Steps}\label{sec:LargeSieve-InitialSteps}

Let $\coll_0$ be a sufficiently large constant satisfying that the number of primes in the interval $[\sqrt{\coll},\sqrt{2\coll}]$ exceeds $\frac{\sqrt{\coll}}{2\log\coll}$ for all $\coll\geq \coll_0$. We set
\[
S(M,N,t) = \sum_{\substack{M \leq \abs{m} < 2M\\ m\neq \square}} \sumabs{\sum_{n} \frac{\chi_m(n)}{n^{\frac{1}{2}+ i  t}} G\fracp{n}{N}}^4
\]
and
\[
S^\flat(M,N,t) = \sumflat_{M \leq \abs{m} < 2M} \sumabs{\sum_{n} \frac{\chi_m(n)}{n^{\frac{1}{2}+ i  t}}G\fracp{n}{N}}^4.
\]
Clearly $S^\flat(M,N,t)\leq S(M,N,t)$. We now state our inflation lemma alluded to in Section \ref{sec:Outline}.

\begin{lem}\label{lem:Inflation}
For any $\coll \geq \coll_0$, we have
\begin{align*}\label{eq:InflationSstar}
S^\flat&(M,N,t) \ll\frac{\log \coll}{\sqrt{\coll}} \sumpth{S(M\coll,N,t) + S(2M\coll,N,t) + \sum_{\sqrt{\coll} \leq p \leq \sqrt{2\coll}} \frac{S^\flat(M,N/p,t)}{p^2}},
\end{align*}
where the implied constants are absolute and in particular do not depend on $\coll$.
\end{lem}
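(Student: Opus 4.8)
The plan is to prove the inflation lemma by relating, for a fixed fundamental discriminant $m \sim M$, the inner sum $\left|\sum_n \chi_m(n) n^{-1/2-it} G(n/N)\right|$ to analogous sums for the "inflated" modulus $mp^2$ (or $2mp^2$), where $p$ runs over primes in $[\sqrt{\coll}, \sqrt{2\coll}]$. The key arithmetic observation is that $\chi_{mp^2}(n) = \chi_m(n) \delta((n,p)=1)$, so that
\[
\sum_n \frac{\chi_{mp^2}(n)}{n^{1/2+it}} G\fracp{n}{N} = \sum_n \frac{\chi_m(n)}{n^{1/2+it}} G\fracp{n}{N} - \frac{\chi_m(p)}{p^{1/2+it}} \sum_n \frac{\chi_m(n)}{n^{1/2+it}} G\fracp{pn}{N}.
\]
Rearranging, the original inner sum for modulus $m$ is expressed as the inflated sum for modulus $mp^2$ plus a term involving the sum for modulus $m$ at scale $N/p$. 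Since there are at least $\frac{\sqrt{\coll}}{2\log\coll}$ primes in the relevant interval (by the choice of $\coll_0$), I can average this identity over all such $p$: for each fundamental $m \sim M$,
\[
\left|\sum_n \frac{\chi_m(n)}{n^{1/2+it}} G\fracp{n}{N}\right| \ll \frac{\log\coll}{\sqrt{\coll}} \sum_{\sqrt{\coll}\le p\le\sqrt{2\coll}} \left( \left|\sum_n \frac{\chi_{mp^2}(n)}{n^{1/2+it}} G\fracp{n}{N}\right| + \frac{1}{\sqrt{p}}\left|\sum_n \frac{\chi_m(n)}{n^{1/2+it}} G\fracp{pn}{N}\right| \right),
\]
where I've used that the number of terms in the $p$-sum is $\gg \sqrt{\coll}/\log\coll$ to convert the pointwise identity into this averaged bound.

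Next I raise both sides to the fourth power. On the right, applying Hölder's inequality (or the power-mean inequality) to the sum over the $\gg \sqrt{\coll}/\log\coll$ primes costs a factor $(\log\coll/\sqrt{\coll})^{3}$ in addition to the $(\log\coll/\sqrt{\coll})$ already present, and redistributes the fourth power inside the $p$-sum. Thus, summing over fundamental discriminants $m \sim M$,
\[
S^\flat(M,N,t) \ll \frac{\log\coll}{\sqrt{\coll}} \sum_{\sqrt{\coll}\le p\le\sqrt{2\coll}} \sumflat_{m\sim M}\left( \left|\sum_n \frac{\chi_{mp^2}(n)}{n^{1/2+it}} G\fracp{n}{N}\right|^4 + \frac{1}{p^2}\left|\sum_n \frac{\chi_m(n)}{n^{1/2+it}} G\fracp{pn}{N}\right|^4 \right).
\]
For the first term, as $m$ ranges over fundamental discriminants in $[M,2M)$ and $p$ over $[\sqrt{\coll},\sqrt{2\coll}]$, the modulus $|mp^2|$ ranges over a subset of $[M\coll, 4M\coll)$ of non-square integers, with each value hit at most $O(1)$ times (since $p$ is determined as the largest prime whose square divides the modulus, up to the ambiguity from $m$ versus $mp^2$ both being squarefree-ish — more carefully, $mp^2$ determines $m$ and $p$ essentially uniquely because $p > \sqrt{\coll}$ is large). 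Splitting $[M\coll, 4M\coll)$ into the two dyadic blocks $[M\coll, 2M\coll)$ and $[2M\coll,4M\coll)$ and dropping the restriction to this particular subset (legitimate since all summands are non-negative and the moduli are non-squares), the first term is bounded by $S(M\coll,N,t) + S(2M\coll,N,t)$. For the second term, the substitution $G(pn/N)$ means the effective length is $N/p$; since $G$ is supported in $[3/4,2]$, this is exactly a sum of the shape defining $S^\flat(M,N/p,t)$, and summing over $p$ with the weight $p^{-2}$ gives the remaining term $\sum_{\sqrt{\coll}\le p\le\sqrt{2\coll}} p^{-2} S^\flat(M,N/p,t)$.

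The main obstacle I anticipate is the bookkeeping in the first term: verifying that the map $(m,p) \mapsto mp^2$ (together with the $m \mapsto 2m$ twist arising because $8d$-type moduli force a factor of $8$, i.e. tracking the two cases that produce the $M\coll$ and $2M\coll$ arguments) is essentially injective with bounded fibers, and that every resulting modulus is a non-square so that it legitimately appears in $S(M\coll,N,t)$ rather than being one of the excluded square moduli. One must check that $mp^2$ is never a perfect square: this holds because $m$ is a fundamental discriminant, hence not a perfect square (indeed $|m| > 1$ and $m$ is squarefree up to the factor $4$), so $mp^2$ cannot be square either. The power-mean/Hölder step and the support manipulation for $G(pn/N)$ are routine once the identity and prime-counting input are in place; the absolute nature of all implied constants (independence from $\coll$) follows because the only $\coll$-dependence is the explicit factor $\log\coll/\sqrt{\coll}$ and the Hölder loss, both of which are displayed.
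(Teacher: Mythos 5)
Your proof is correct and follows essentially the same approach as the paper: the same factoring identity relating $\chi_m$, $\chi_{mp^2}$, and the $N/p$-truncated sum, the same injectivity of $(m,p)\mapsto mp^2$, and the same observation that $mp^2$ is never a square. The only substantive difference is bookkeeping: you average the triangle inequality over $p$ first and then raise to the fourth power, which forces you to control the Hölder loss $(\#p)^3$ and hence requires a two-sided bound on the prime count in $[\sqrt{\coll},\sqrt{2\coll}]$ (your stated cost $(\log\coll/\sqrt{\coll})^3$ is inverted — Hölder costs $(\sqrt{\coll}/\log\coll)^3$, though the net exponent works out), whereas the paper applies $|a+b|^4\leq 8(|a|^4+|b|^4)$ for each $p$ separately and then sums over $p$, needing only the lower bound on the prime count; also, the two dyadic blocks $M\coll$ and $2M\coll$ arise simply because $mp^2\in[M\coll,4M\coll)$, not from any $8d$-type modulus consideration as you suggest.
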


\begin{proof}
For a prime $p\asymp \sqrt{\coll}$ and $a(n)$ arbitrary complex numbers, H\"{o}lder's inequality gives
\begin{align*}
\sumabs{\sum_{n} a(n) \fracp{m}{n}}^4 &= \sumabs{\sum_{\substack{n\\ p\nmid n}} a(n) \fracp{mp^2}{n} + \sum_{\substack{n\\ p \mid n}} a(n) \fracp{m}{n}}^4 \\
&\leq 8\sumabs{\sum_{n} a(n) \fracp{mp^2}{n}}^4 + 8\delta(p\nmid m) \sumabs{\sum_{n} a(np) \fracp{m}{n}}^4.
\end{align*}
For $a(n) = n^{-\frac{1}{2}- i  t} G\fracp{n}{N}$, we have
\[
\sumabs{\sum_{n} a(np) \fracp{m}{n}}^4 = \sumabs{\sum_{n} \frac{1}{(np)^{\frac{1}{2}+ i  t}} G\fracp{np}{N} \fracp{m}{n}}^4 = \frac{1}{p^2}\sumabs{\sum_{n} \frac{1}{n^{\frac{1}{2}+ i  t}} G\fracp{np}{N} \fracp{m}{n}}^4,
\]
and it follows that
\begin{align*}
&\sum_{\sqrt{\coll} \leq p \leq \sqrt{2\coll}} S^\flat(M,N,t) \\
&\ll \sum_{\sqrt{\coll} \leq p \leq \sqrt{2\coll}} \sumflat_{\substack{M\leq m < 2M}} \sumabs{\sum_{n} \frac{1}{n^{\frac{1}{2}+ i  t}}  G\fracp{n}{N} \fracp{mp^2}{n}}^4 +\sum_{\sqrt{\coll} \leq p \leq \sqrt{2\coll}} \frac{S^\flat(M,N/p,t)}{p^2}  \\
&\ll S(\coll M,N,t) + S(2\coll M,N,t) + \sum_{\sqrt{\coll} \leq p \leq \sqrt{2\coll}} \frac{S^\flat(M,N/p,t)}{p^2}.
\end{align*}
The last line follows from the following observations. If $m_1,m_2$ are fundamental discriminants and $p_1,p_2$ are odd primes, then $m_1p_1^2= m_2p_2^2$ if and only if $m_1=m_2$ and $p_1=p_2$, which holds since $m_1,m_2$ are either squarefree or 4 times a squarefree integer. This also implies that no $mp^2$ is a square. For $\coll\geq\coll_0$, the number of primes in the interval $[\sqrt{\coll},\sqrt{2\coll}]$ is $\geq \frac{\sqrt{\coll}}{2\log\coll}$, and the lemma follows.
\end{proof}
As already stated in the introduction, the main technical device used in our proof of Theorem \ref{thm:Main} is the large sieve inequality given in Proposition \ref{prop:LargeSieve}. For the reader's convenience, we restate this here in the notation introduced in this section.

\begin{propC}{\ref{prop:LargeSieve}}
For $M,N\geq 1$ and notation as above, there exists an absolute constant $\coll \geq \coll_0$ such that
\[
S^\flat(M,N,t) \leq \coll^\frac{2}{3}(1+\abs{t})^2\pth{M + N^2\log(2+N^2/M)}\log^6(2+MN).
\]
\end{propC}
% can make exponent of t as small as 1+\delta for any \delta > 0

We remark that we have made little effort to optimize the exponent of $1+\abs{t}$ above, as this is unimportant to our applications. With careful accounting, one can probably get $(1+\abs{t})^{1+\ep}$

As in \cite{XiannanFourth}, we prove Proposition \ref{prop:LargeSieve} via a nested induction argument on $M$ and $N$. The following lemma gives our base case.

\begin{lem}\label{lem:InductionBase}
For the $G$ fixed of Lemma \ref{lem:ConvenientTestFandG}, $N > 0$, and $m$ a fundamental discriminant,
\[
\sumabs{\sum_{n} \frac{\chi_m(n)}{n^{\frac{1}{2}+ i  t}} G\fracp{n}{N}} \ll \sqrt{N_0}+1,
\]
where
\[
N_0 =  \min\pth{N,\frac{m(1+\abs{t})}{N}} \leq \sqrt{m(1+\abs{t})}
\]
and the implied constant is absolute.
\end{lem}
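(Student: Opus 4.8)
The plan is to estimate the Dirichlet polynomial $\sum_n \chi_m(n) n^{-1/2-it} G(n/N)$ by writing it as a contour integral of $L(s,\chi_m)$ against the Mellin transform $\tilde{G}$ and then balancing two competing estimates, exactly the classical ``approximate functional equation'' trade-off. Concretely, by Mellin inversion one has
\[
\sum_{n} \frac{\chi_m(n)}{n^{\frac{1}{2}+it}} G\fracp{n}{N} = \frac{1}{2\pi i}\int\limits_{(c)} L\pth{\tfrac{1}{2}+it+s,\chi_m}\tilde{G}(s) N^s\, ds
\]
for $c$ slightly larger than $\frac12$, where $\tilde{G}$ decays rapidly in vertical strips because $G$ is smooth and compactly supported. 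This already gives a bound of size $O(\sqrt{N}+1)$ after shifting to $\Re(s)$ near $0$ and using the convexity bound $L(\tfrac12+it+s,\chi_m)\ll (m(1+|t|))^{1/4+\varepsilon}$ on the critical line together with absolute convergence for $\Re(s)>\tfrac12$; more carefully, shifting just past $\tfrac12$ and trivially bounding the tail of the Dirichlet series gives the clean bound $\ll \sqrt{N}+1$ with no $\varepsilon$-loss, since the terms with $n\le N$ contribute $\ll \sqrt N$ trivially after Cauchy--Schwarz against... actually more simply: the sum has $\asymp N$ terms each of size $\le n^{-1/2}\ll N^{-1/2}$, giving the trivial bound $\ll \sqrt N$, and when $N\lesssim 1$ the sum is $\ll 1$.

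For the complementary regime where $N$ is large compared to $\sqrt{m(1+|t|)}$, I would instead invoke the functional equation, in the form of Lemma~\ref{lem:FEDirect}. That lemma rewrites the sum as $\fracp{\pi}{|m|}^{it}$ times $\sum_n \chi_m(n) n^{-1/2+it}\grave{G}_{it}\fracp{\pi n N}{|m|}$ up to the Gamma-factor ratio, and by \eqref{eq:GraveGStirlingBound} the new test function $\grave{G}_{it}(x)$ decays faster than any power of $(1+|t|)^{-1}(1+x)^{-1}$. Hence the dual sum is effectively supported on $n \ll \frac{|m|(1+|t|)}{N}$, and the trivial bound there gives $\ll \sqrt{\frac{m(1+|t|)}{N}}+1$. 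Taking the better of the two estimates yields the bound with $N_0 = \min\!\bigl(N, \frac{m(1+|t|)}{N}\bigr)$, and the inequality $N_0 \le \sqrt{m(1+|t|)}$ is immediate since $\min(a,b)\le\sqrt{ab}$ with $a=N$, $b=m(1+|t|)/N$.

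The main technical point to watch is the dependence on $t$: in the functional-equation step one picks up the ratio $\Gamma\bigl(\tfrac14-\tfrac{it-s-\ida}{2}\bigr)/\Gamma\bigl(\tfrac14+\tfrac{it-s+\ida}{2}\bigr)$, and although this has modulus $1$ on the relevant line, the effective support of $\grave{G}_{it}$ genuinely scales like $(1+|t|)$, which is the source of the factor $1+|t|$ inside $N_0$; one must be careful that shifting contours to exploit the rapid decay of $\tilde{G}(-s)$ does not cross poles (there are none, since $\chi_m$ is non-principal for $m$ a fundamental discriminant, so $L(s,\chi_m)$ is entire) and does not lose more than a constant from the Gamma-ratio for $\Re(s)$ bounded. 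A secondary bookkeeping issue is making the ``$+1$'' uniform: when $N_0 < 1$ the square-root term is smaller than $1$ and one must retain the additive constant, but this is harmless since in that range the appropriate sum (either the original or its dual) has $O(1)$ terms each of size $O(1)$. No cancellation or arithmetic input beyond $L(s,\chi_m)$ being entire is needed; this is purely a size estimate, and the only genuine ``idea'' is choosing which of the two representations to use according to the size of $N$ relative to $\sqrt{m(1+|t|)}$.
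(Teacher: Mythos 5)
Your final argument is the same as the paper's: the trivial bound $\ll \sqrt{N}$ for the original sum, and, when $N$ is large, an application of Lemma \ref{lem:FEDirect} followed by the decay estimate \eqref{eq:GraveGStirlingBound} to reduce the dual sum to an effective length $N_1 = |m|(1+|t|)/N$, yielding $\ll \sqrt{N_1}+1$; taking the minimum then gives $N_0$. The first paragraph of your proposal wanders before arriving there: the claim that a contour shift to $\Re(s)$ near $0$ plus the convexity bound $L(\tfrac12+it+s,\chi_m)\ll (m(1+|t|))^{1/4+\varepsilon}$ ``already gives $O(\sqrt N+1)$'' is not correct as stated (that estimate depends on $m$ and $t$, not on $N$), but you then discard it in favour of the direct trivial bound $\ll \sqrt N$, which is what the paper uses. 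I would simply delete the contour-integral/convexity detour; the clean version is exactly the two-line estimate you give at the end of that paragraph, and the functional-equation half matches the paper's Lemma \ref{lem:InductionBase} step for step, including the split of the dual sum at $N_1$ and the use of uniform boundedness of $\grave{G}_{it}$ on the initial range.
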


\begin{proof}
Trivially
\[
\sum_{n} \frac{\chi_m(n)}{n^{\frac{1}{2}+ i  t}} G\fracp{n}{N} \ll \sqrt{N}.
\]
Let $N_1 = \frac{\abs{m}(1+\abs{t})}{N}$. Then by Lemma \ref{lem:FEDirect}, we have
\begin{align*}
\sumabs{\sum_{n} \frac{\chi_m(n)}{n^{\frac{1}{2}+ i  t}} G\fracp{n}{N}} &= \sumabs{\sum_{n} \frac{\chi_m(n)}{n^{\frac{1}{2}- i  t}} \grave{G}_{it}\fracp{\pi nN}{\abs{m}}} \\
&\ll \sum_{n\leq N_1} \frac{1}{n^\frac{1}{2}} + \sum_{n > N_1} \frac{1}{n^\frac{1}{2}} \fracp{(1+\abs{t})\abs{m}}{nN} \ll \sqrt{N_1}+1.
\end{align*}
by applying \eqref{eq:GraveGStirlingBound} and noting that $\grave{G}_{it}(x)$ is bounded uniformly for $t\in \R$ and $x>0$. 
\end{proof}

Lemma \ref{lem:InductionBase} implies that
\[
S^\flat(M,N,t) \ll M^2(1+\abs{t})
\]
where the implied constant $C'$ is absolute. Thus the base case $M\leq M_0$ is trivially true provided that $\coll^{\frac{2}{3}} \geq C' M_0$. The above also shows that the case $M\leq N$ holds, and so we may always assume $M > N$. For some fixed $M_1 \geq M_0$, our induction hypothesis is that for any $M \leq M_1$ that
\begin{equation}\label{eq:InductionHyp}
S^\flat(M,N,t) \leq \coll^\frac{2}{3}(1+\abs{t})^{2}\pth{M + N^2\log(2+N^2/M)}\log^6(2+MN)
\end{equation}
for all $N$ and $t$, and we now proceed to prove \eqref{eq:InductionHyp} for $M$ fixed with $M_1 < M \leq M_1 + 1$.

For $M$ fixed, we proceed by a nested induction argument on $N$. The base case $N \leq 2$ is trivial, and our second induction hypothesis is that \eqref{eq:InductionHyp} holds for all $N \leq N_1$ and any $\abs{t}$ for some $N_1 \geq 2$. We now fix some $N$ with $N_1 < N \leq N_1+1$. We want to prove \eqref{eq:InductionHyp} for our fixed $M$ and $N$.

Occasionally, we will need to assume $M$ and $N$ are large enough for certain expressions to be valid. By Lemma \ref{lem:InductionBase}, we may always do so by, at worst, suitably modifying the constant $\coll$.

The bound in \eqref{eq:InductionHyp} becomes ineffectual when $N$ is very large compared to $M$. The following lemma rectifies this situation and will be the form of the induction hypothesis we use most often.

\begin{lem}\label{lem:InductionAllN}
Suppose that \eqref{eq:InductionHyp} holds for all $M\leq M_1$ and all $N$ and $t$. Then we also have that there exists some absolute constant $C'$ such that
\[
S^\flat(M,N,t) \leq C' \coll^{\frac{2}{3}} (1+\abs{t})^{3} M\log^6(2+M)
\]
for all $M\leq M_1$ and all $N$ and $t$.
\end{lem}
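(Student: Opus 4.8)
\textbf{Proof proposal for Lemma \ref{lem:InductionAllN}.}

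The plan is to remove the dependence on $N$ in \eqref{eq:InductionHyp} by using the functional-equation estimate of Lemma \ref{lem:InductionBase} whenever $N$ is large compared to $M$, and using \eqref{eq:InductionHyp} directly whenever $N$ is small. Fix $M \leq M_1$ and $t$. First suppose $N \leq M(1+\abs{t})$. Then $N^2 \leq N \cdot M(1+\abs{t})$, and more crudely $N^2 \log(2+N^2/M) \ll M(1+\abs{t}) N \log(2 + N(1+\abs{t}))$ is \emph{not} what we want since it still grows in $N$; instead I would split further. When $N \leq \sqrt{M(1+\abs{t})}$ we have $N^2 \leq M(1+\abs{t})$, so the bracket $M + N^2\log(2+N^2/M)$ in \eqref{eq:InductionHyp} is $\ll M(1+\abs{t})\log(2+M(1+\abs{t}))$, and since $\log(2+MN) \ll \log(2+M(1+\abs{t}))$ in this range as well, \eqref{eq:InductionHyp} already gives a bound of the desired shape (with a couple of extra logs absorbed into the power of $1+\abs{t}$ and the constant $C'$, using $\log(2+x) \ll_\epsilon x^\epsilon$ and being generous).

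For the complementary range $N > \sqrt{M(1+\abs{t})}$, the bound \eqref{eq:InductionHyp} is useless, so here I would not use the induction hypothesis at all but rather Lemma \ref{lem:InductionBase} directly. By that lemma, for each fundamental discriminant $m \sim M$ we have
\[
\sumabs{\sum_n \frac{\chi_m(n)}{n^{\frac12 + it}} G\fracp{n}{N}} \ll \sqrt{N_0} + 1, \qquad N_0 = \min\pth{N, \frac{\abs{m}(1+\abs{t})}{N}} \leq \sqrt{\abs{m}(1+\abs{t})} \ll \sqrt{M(1+\abs{t})}.
\]
Raising to the fourth power gives that each summand is $\ll M(1+\abs{t}) + 1 \ll M(1+\abs{t})$, and summing over the $\ll M$ fundamental discriminants $m$ with $M \leq \abs{m} < 2M$ yields $S^\flat(M,N,t) \ll M^2(1+\abs{t})$. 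Since we are in the range $M > N$ is not assumed here, but note $M^2(1+\abs{t}) \leq M \cdot M(1+\abs{t})$; to get the stated bound $C'\coll^{2/3}(1+\abs{t})^3 M \log^6(2+M)$ we only need $M(1+\abs{t})^2 \cdot M \leq \coll^{2/3}(1+\abs{t})^3 M \log^6(2+M)$ — wait, this requires $M \leq \coll^{2/3}(1+\abs{t})\log^6(2+M)$, which is \emph{not} automatic for large $M$. So in this large-$N$ regime one must use the \emph{better} bound $N_0 \leq \min(N, M(1+\abs{t})/N) \leq M(1+\abs{t})/N$: the fourth power of each summand is $\ll (M(1+\abs{t})/N)^2 + 1$, and summing over $m \sim M$ gives $S^\flat(M,N,t) \ll M(M(1+\abs{t})/N)^2 + M$. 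Since $N > \sqrt{M(1+\abs{t})}$ forces $M(1+\abs{t})/N < \sqrt{M(1+\abs{t})}$, this is $\ll M^2(1+\abs{t}) + M \ll M^2(1+\abs{t})$ — the same bound, so the difficulty persists: for large $M$, $M^2(1+\abs{t})$ exceeds the claimed bound. The resolution is that one should \emph{not} prove the lemma for all $M$ from Lemma \ref{lem:InductionBase} alone, but rather combine: for $N \leq \sqrt{M(1+\abs{t})}$ use \eqref{eq:InductionHyp}, which gives $\ll \coll^{2/3}(1+\abs{t})^2 M \log(2+M(1+\abs{t}))\log^6(2+MN) \ll C'\coll^{2/3}(1+\abs{t})^3 M \log^6(2+M)$ after absorbing the extra $\log$ (and the $\log(2+MN)$ into $\log(2+M)$ since $N \leq M$ here) and a power of $1+\abs{t}$; and for $N > \sqrt{M(1+\abs{t})}$, instead of Lemma \ref{lem:InductionBase} naively, apply \eqref{eq:InductionHyp} with a \emph{truncated} $N$, using the fact that the sum over $n$ in $S^\flat$ only ranges over $n \asymp N$ but the contribution is controlled by the functional equation: specifically, apply Lemma \ref{lem:FEDirect} to rewrite each inner sum as a dual sum of length $\asymp M(1+\abs{t})/N \leq \sqrt{M(1+\abs{t})}$, so that $S^\flat(M,N,t)$ is bounded by (a constant times) $S^\flat(M, c M(1+\abs{t})/N, -t)$ type quantity plus a negligible tail, and the latter falls into the already-treated small-$N$ range.

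The cleanest way to organize this is: \textbf{(1)} handle $N \leq \sqrt{M(1+\abs{t})}$ by \eqref{eq:InductionHyp}, absorbing logarithmic losses into $\log^6(2+M)$ and $(1+\abs{t})^3$; \textbf{(2)} for $N > \sqrt{M(1+\abs{t})}$, apply the functional equation (Lemma \ref{lem:FEDirect}) inside each $m$-sum, using \eqref{eq:GraveGStirlingBound} to discard the tail beyond $n \asymp M(1+\abs{t})/N$ with negligible error, thereby bounding $S^\flat(M,N,t)$ in terms of a fourth-moment sum with an effective length $N' \asymp M(1+\abs{t})/N \leq \sqrt{M(1+\abs{t})}$, which by step (1) is $\ll C'\coll^{2/3}(1+\abs{t})^3 M \log^6(2+M)$. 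I expect the main obstacle to be \textbf{(2)}: one must be careful that applying the functional equation termwise and then reassembling the fourth moment does not lose more than an absolute constant — since Lemma \ref{lem:FEDirect} is an exact identity (not an inequality), the fourth power of the inner sum equals the fourth power of the dual inner sum exactly, so one really does get $S^\flat(M,N,t) = \sumflat_{m \sim M} \sumabs{(\pi/\abs{m})^{it}\sum_n \chi_m(n) n^{-1/2+it}\grave{G}_{it}(\pi n N/\abs{m})}^4$, and then splitting $\grave{G}_{it}$ smoothly into dyadic pieces $n \asymp 2^j$ and using \eqref{eq:GraveGStirlingBound} to bound the contribution of each piece (the pieces with $2^j \gg M(1+\abs{t})/N$ decaying rapidly) reduces matters to $O(\log M)$ applications of step (1) with various $N' \ll \sqrt{M(1+\abs{t})}$, the extra $\log M$ again absorbed into $\log^6(2+M)$. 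A routine bookkeeping of the $1+\abs{t}$ powers (where the $(1+\abs{t})^2$ of \eqref{eq:InductionHyp} becomes $(1+\abs{t})^3$ to swallow a factor of $\log(2+M(1+\abs{t})) \ll (1+\abs{t})$ when $\abs{t}$ is large, and is harmless when $\abs{t}$ is bounded) completes the proof.
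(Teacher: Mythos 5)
Your proposal identifies the same two-case split as the paper (treat $N \leq \sqrt{M(1+\abs{t})}$ directly from \eqref{eq:InductionHyp}, and reduce $N > \sqrt{M(1+\abs{t})}$ to that case via the functional equation of Lemma \ref{lem:FEDirect}), and you correctly diagnose why a naive application of Lemma \ref{lem:InductionBase} alone fails. However, your handling of the large-$N$ case has a genuine gap, and it is precisely the point where the paper's Mellin-transform machinery is doing essential work.

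The issue is your final claim that the dyadic decomposition of the dual sum produces ``$O(\log M)$ applications of step (1), the extra $\log M$ again absorbed into $\log^6(2+M)$.'' That absorption is not valid: the exponent $6$ in the target bound $C'\coll^{2/3}(1+\abs{t})^3 M\log^6(2+M)$ is fixed, and an extra factor of $\log M$ (or, more realistically, $\log^3 M$ after applying H\"older's inequality to the fourth power of a sum of $O(\log M)$ dyadic pieces) gives $\log^7$ or $\log^9$, not $\log^6$. There is no room to absorb it. The $(1+\abs{t})^3$ has slack over the $(1+\abs{t})^2$ of \eqref{eq:InductionHyp} to swallow powers of $\log(2+\abs{t})$, but $M$ and $t$ are independent, so it cannot rescue a lost $\log M$.

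What actually makes the paper's argument close is that each dyadic piece $N_0$ arrives with a geometric weight, so the sum over $N_0$ is dominated by its largest term and contributes no additional logarithm. Concretely, the paper opens $\grave{G}_{it}$ via Mellin inversion (variable $s$), inserts the dyadic partition of unity, and then opens the dyadic window $G(n/N_0)$ by a \emph{second} Mellin inversion (variable $u$), which decouples the factor $n^s$ from the smooth cutoff so that the inner $n$-sum becomes $\sum_n \chi_m(n)n^{-1/2-it+u}V(n/N_0)$ with $\Re(u)=0$ — exactly the shape the induction hypothesis applies to, with the parameter $t$ replaced by $t-\Im(u)$. With the $s$-dependence now living only in $(\abs{m}/\pi)^s N^{-s}N_0^{u-s}$ and the gamma ratio, one shifts $\Re(s)$ to $-1$ when $N_0\leq N_1$ (giving the weight $N_0/N_1$) and to $+4$ when $N_0> N_1$ (giving the weight $(N_1/N_0)^4$), where $N_1=M(1+\abs{t})/N$. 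These weights are the key, and your splitting of $\grave{G}_{it}$ into dyadic pieces does not produce them: $\grave{G}_{it}(x)$ is merely $O(1)$ on the whole range $N_0\leq N_1$, not geometrically small, so the pieces do not decay. A second, smaller glossed-over point is that within a dyadic piece $n\asymp 2^j$ the factor $\grave{G}_{it}(\pi nN/\abs{m})$ still varies in $n$, so the inner sum is not literally of the form $\sum_n\chi_m(n)n^{-1/2+it'}G(n/2^j)$; separating this $n$-dependence is exactly what the second Mellin inversion accomplishes, and some such device is unavoidable before \eqref{eq:InductionHyp} can be invoked.
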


\begin{proof}
If $N \leq \sqrt{M(1+\abs{t})}$, then the statement follows immediately from \eqref{eq:InductionHyp}. Suppose now that $N > \sqrt{M(1+\abs{t})}$. Lemma \ref{lem:FEDirect} gives
\[
S^\flat(M,N,t) = \sumflat_{M \leq \abs{m} < 2M} \sumabs{\sum_{n}\frac{\chi_{m}(n)}{n^{\frac{1}{2}- i  t}} \grave{G}_{it}\fracp{\pi n N}{\abs{m}}}^4.
\]
We have 
\begin{align}
\sum_{n}\frac{\chi_{m}(n)}{n^{\frac{1}{2}- i  t}} \grave{G}_{it}\fracp{\pi n N}{\abs{m}} &= \frac{1}{2\pi  i } \int\limits_{(1)} \fracp{\abs{m}}{\pi}^s \frac{\Gamma\pth{\frac{1}{4}-\frac{it-s-\ida}{2}}}{\Gamma\pth{\frac{1}{4}+\frac{it-s+\ida}{2}}} \sum_{n} \frac{\chi_m(n)}{n^{\frac{1}{2}- i  t+s}} N^{-s}\tilde{G}(-s)\, ds \nonumber\\
&= \sumd_{N_0} \frac{1}{2\pi  i } \int\limits_{(1)} \fracp{\abs{m}}{\pi}^s \frac{\Gamma\pth{\frac{1}{4}-\frac{it-s-\ida}{2}}}{\Gamma\pth{\frac{1}{4}+\frac{it-s+\ida}{2}}} \sum_{n} \frac{\chi_m(n)}{n^{\frac{1}{2}- i  t+s}} G\fracp{n}{N_0} N^{-s}\tilde{G}(-s)\, ds, \label{eq:InductionAllNExpansion}
\end{align}
where $\sumd_{N_0} $ denotes the sum over $N_0=2^j$, $i \geq 0$.
Let
\begin{equation}
V(x) = G(2x) + G(x) + G(x/2)
\label{V-def}
\end{equation}
for the same fixed $G$ so that $V(x) = 1$ on $[3/4,2]$. Then
\[
\sum_{n} \frac{\chi_m(n)}{n^{\frac{1}{2}- i  t+s}} G\fracp{n}{N_0} = \frac{1}{2\pi  i } \int\limits_{(1)} \sum_n \frac{\chi_m(n)}{n^{\frac{1}{2}- i  t+u}} V\fracp{n}{N_0} N_0^{u-s} \tilde{G}(u-s)\, du.
\]
Let $N_1 = \frac{M(1+\abs{t})}{N}$. When $N_0 \leq N_1$, we move the contour of integration in $s$ to $\Re(s) = -1$, and when $N_0 > N_1$, we move the contour to $\Re(s) = 4$. In both cases, we move the integration in $u$ to $\Re(u) = 0$. Using $\tilde{G}(s) \ll (1+\abs{s})^{-10}$, \eqref{eq:GammaRatioBound}  and Lemma 5.2 of \cite{XiannanFourth}, the quantity in \eqref{eq:InductionAllNExpansion} is $O(R(\leq N_1) + R(> N_1))$, where
\begin{align*}
R(\leq N_1) &= \sumd_{N_0\leq N_1} \int\limits_{(-1)} \fracp{NN_0}{\abs{m}(1+\abs{t})} \int\limits_{(0)} \sumabs{\sum_n \frac{\chi_m(n)}{n^{\frac{1}{2}- i  t+u}} V\fracp{n}{N_0}} \frac{1}{(1+\abs{s})^{10} (1+\abs{u-s})^{10}}\, du\, ds \\
&\ll \sumd_{N_0\leq N_1} \frac{N_0}{N_1} \int\limits_{(0)} \sumabs{\sum_n \frac{\chi_m(n)}{n^{\frac{1}{2}- i  t+u}} V\fracp{n}{N_0}} \frac{1}{(1+\abs{u})^9}\, du,
\end{align*}
and similarly
\[
R(>N_1) \ll \sumd_{N_0> N_1} \fracp{N_1}{N_0}^4 \int\limits_{(0)} \sumabs{\sum_n \frac{\chi_m(n)}{n^{\frac{1}{2}- i  t+u}} V\fracp{n}{N_0}} \frac{1}{(1+\abs{u})^9}\, du.
\]
By H\"{o}lder and the induction hypothesis \eqref{eq:InductionHyp}, the contribution of $R(\leq N_1)$ to $S^\flat(M,N,t)$ is
\begin{align*}
&\ll \sumd_{N_0\leq N_1} \frac{N_0}{N_1} \int\limits_{(0)} \sumflat_{M\leq \abs{m} < 2M} \sumabs{\sum_n \frac{\chi_m(n)}{n^{\frac{1}{2}- i  t+u}} V\fracp{n}{N_0}}^4 \frac{1}{(1+\abs{u})^9}\, du \\
&\leq \sumd_{N_0\leq N_1} \frac{N_0}{N_1} \int\limits_{(0)} \pth{\coll^\frac{2}{3}(1+\abs{t}+\abs{u})^{2}\pth{M + N_0^2\log(2+N_0^2/M)}\log^6(2+MN_0)} \frac{1}{(1+\abs{u})^9} \, du \\
&\ll \coll^\frac{2}{3}(1+\abs{t})^{2} \sumd_{N_0\leq N_1} \frac{N_0}{N_1} \pth{M + N_0^2\log(2+N_0^2/M)}\log^6(2+MN_0) \\
&\ll \coll^\frac{2}{3}(1+\abs{t})^{2} \pth{M + N_1^2\log(2+N_1^2/M)}\log^6(2+MN_1).
\end{align*}
A similar calculation shows that $R(>N_1)$ satisfies the same bound. Since $N > \sqrt{M(1+\abs{t})}$, we have $N_1 < \sqrt{M(1+\abs{t})}$, and the lemma follows.
\end{proof}

\subsection{Inflation and Initial Decomposition}\label{sec:Inflation}
Let
\[
\coll_2 = \coll(1+\abs{t})^{3} \max\pth{1,\frac{N^4}{M^2}},
\]
and
\begin{align}
X = \coll_2 M.
\label{def-X}
\end{align}
From Lemma \ref{lem:Inflation}, we will estimate $S(X,N,t)$ in place of $S(M,N,t)$. One should understand the choice of $\coll_2$ in the following way. Our goal is to make the sum over $m$ (of length $M$) in $S(M,N,t)$ just slightly longer than the sum over $n$, which we think of as having length $N^2$. We do this in order for Poisson summation  in $m$ to gain us something in terms of the length of the dual sum. If $M \geq N^2$, then we only need to ensure that $M$ is slightly larger than $N^2$, and hence we need only scale up $M$ by a small constant factor $\coll$, and we choose 1 in the max in this case. The factor of $(1+\abs{t})^3$ is included to ensure that certain other exponents of $1+\abs{t}$ remain under control when performing the induction step (see Lemma \ref{lem:UsefulEstimates} and the comments after Lemma \ref{lem:LS-NonSquare}). If, on the other hand, $M \leq N^2$, then we need to inflate the outer sum to be at least the length of the inner sum, and then scale the outer sum to be a bit longer. Hence we choose $N^4/M^2$ in the max in this case. The reason for using $N^4/M^2$ as opposed to $N^2/M$ can be seen in Lemma \ref{lem:UsefulEstimates} below. Our choice of $\coll_2$ ensures that we always have
\begin{equation*}%\label{eq:X>N^2}
X \geq \coll N^2,
\end{equation*}
and so the outer sum of $S(X,N,t)$ is always longer than the inner sum by at least a large constant factor. This constant factor becomes a savings in the length of the dual sum after applying Poisson summation, and it is precisely this savings that we exploit in order to carry out our induction argument. This can be seen clearly in \eqref{eq:InductionRequirement}.

We now prove an analogue of Lemma 5.4 of \cite{XiannanFourth}, which gives some useful estimates involving the quantities above.

\begin{lem}\label{lem:UsefulEstimates}
With notation as above, we have
\[
\frac{N^4}{X} \leq \frac{M}{\coll}
\]
and
\[
\frac{\log\coll_2}{\sqrt{\coll_2}} X \ll \coll^{\frac{3}{5}} (1+\abs{t})^{\frac{5}{3}} \pth{M+N^2\log(2+N^2/M)}.
\]
\end{lem}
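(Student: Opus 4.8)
The statement asks for two elementary bounds involving $X = \coll_2 M$ and $\coll_2 = \coll(1+\abs{t})^3 \max(1, N^4/M^2)$. The plan is to verify each directly from the definitions, splitting into the two cases governed by the $\max$.

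\emph{First inequality.} We want $N^4/X \leq M/\coll$, i.e. $N^4 \leq XM/\coll = \coll_2 M^2/\coll = (1+\abs{t})^3 \max(1,N^4/M^2) M^2$. If $M \geq N^2$, the max is $1$, and the claim reads $N^4 \leq (1+\abs{t})^3 M^2$, which holds since $N^4 \leq M^2 \leq (1+\abs{t})^3 M^2$. If $M < N^2$, the max is $N^4/M^2$, and the claim reads $N^4 \leq (1+\abs{t})^3 N^4$, which is trivial. So in both cases $N^4/X \leq M/\coll$. (Alternatively, one simply notes $\max(1,N^4/M^2) \geq N^4/M^2$ always, so $\coll_2 M^2 / \coll \geq (1+\abs{t})^3 N^4 \geq N^4$.)

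\emph{Second inequality.} Here I would first simplify the left side. Since $\coll_2 = \coll(1+\abs{t})^3 \max(1,N^4/M^2)$ and $X = \coll_2 M$, we have $\frac{\log\coll_2}{\sqrt{\coll_2}} X = \sqrt{\coll_2}\,(\log\coll_2)\, M$. Now $\sqrt{\coll_2} = \coll^{1/2}(1+\abs{t})^{3/2}\max(1, N^2/M)$, and $\log\coll_2 \ll_\varepsilon \coll_2^{\varepsilon'} \ll \coll^{\varepsilon'}(1+\abs{t})^{\varepsilon'}\max(1,N^2/M)^{\varepsilon'}$ for any small $\varepsilon'>0$; absorbing these tiny powers into the exponents $3/5$ and $5/3$ (which exceed $1/2$ and $3/2$ respectively), it suffices to show
\[
\coll^{1/2}(1+\abs{t})^{3/2}\,\max\!\pth{1,\tfrac{N^2}{M}}\, M \ll \coll^{3/5}(1+\abs{t})^{5/3}\pth{M + N^2\log(2+N^2/M)},
\]
with room to spare for the logarithmic factors. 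If $M \geq N^2$, the max is $1$ and the left side is $\ll \coll^{1/2}(1+\abs{t})^{3/2} M \ll \coll^{3/5}(1+\abs{t})^{5/3} M$, which suffices. If $M < N^2$, the max is $N^2/M$, the left side becomes $\coll^{1/2}(1+\abs{t})^{3/2} N^2 \ll \coll^{3/5}(1+\abs{t})^{5/3} N^2 \leq \coll^{3/5}(1+\abs{t})^{5/3}(M + N^2\log(2+N^2/M))$, again with room for the $\log\coll_2$ factor since $N^2\log(2+N^2/M) \gg N^2$.

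\emph{Main obstacle.} There is no real obstacle here; this is a bookkeeping lemma. The only point requiring minor care is handling the $\log\coll_2$ factor: one must observe that the exponent gaps ($3/5 - 1/2 = 1/10$ in $\coll$ and $5/3 - 3/2 = 1/6$ in $1+\abs{t}$) are strictly positive, so $\log\coll_2 \ll_{\varepsilon} \coll^{1/10}(1+\abs{t})^{1/6}$ trivially (indeed $\log x \ll x^{\eta}$ for any $\eta>0$), and likewise the extra $\log(2+N^2/M)$ on the right only helps. One should also note that if $N^4/M^2 < 1$ one may still pessimistically replace the max by $N^4/M^2$ in the first inequality as remarked above, streamlining that case.
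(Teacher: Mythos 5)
Your first inequality is handled cleanly, and in fact your unified observation ($\max(1,N^4/M^2)\geq N^4/M^2$ and $(1+\abs{t})^3\geq 1$, so $\coll_2 M^2/\coll \geq N^4$ always) is a nice streamlining of the paper's case split. The overall structure of your argument for the second inequality — split according to which term wins the $\max$, then compare exponents — is also the same route the paper takes.

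However, there is a genuine flaw in how you treat the factor $\log\coll_2$. You assert at one point that $\log\coll_2 \ll_\varepsilon \coll_2^{\varepsilon'} \ll \coll^{\varepsilon'}(1+\abs{t})^{\varepsilon'}\max(1,N^2/M)^{\varepsilon'}$ and propose to absorb these into the exponent gaps, and in your closing paragraph you write ``$\log\coll_2 \ll \coll^{1/10}(1+\abs{t})^{1/6}$ trivially.'' Neither of these works. The second claim is simply false: when $M < N^2$, $\log\coll_2 = \log\coll + 3\log(1+\abs{t}) + 2\log(N^2/M)$ diverges as $N^2/M\to\infty$ while $\coll^{1/10}(1+\abs{t})^{1/6}$ does not. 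The first route fares no better, because the factor $\max(1,N^2/M)^{\varepsilon'}$ has nowhere to go: the right-hand side provides only the far weaker $\log(2+N^2/M)$, and $(N^2/M)^{\varepsilon'}\gg \log(2+N^2/M)$ for $N^2/M$ large, so the absorption fails precisely where you need it. You do gesture at the correct resolution (``the extra $\log(2+N^2/M)$ on the right only helps''), but as written the argument does not close. The fix is to use the additive structure of the logarithm rather than $\log x \ll x^\eta$: write $\log\coll_2 = \log\coll + 3\log(1+\abs{t}) + 2\log\max(1,N^2/M)$, bound the first two terms by $\coll^{1/10}$ and $(1+\abs{t})^{1/6}$ respectively, bound the third by $2\log(2+N^2/M)$, and then observe that since each of $\coll^{1/10}$, $(1+\abs{t})^{1/6}$ and $\log(2+N^2/M)$ is bounded below by a positive constant in the relevant case, the sum is $\ll$ the product, giving $\log\coll_2 \ll \coll^{1/10}(1+\abs{t})^{1/6}\log(2+N^2/M)$, which is exactly what the second inequality needs.
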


\begin{proof}
Suppose first that $\coll_2 = \coll(1+\abs{t})^{3}$ so that $N^2 \leq M$, whence
\[
\frac{N^4}{X} = \frac{N^4}{\coll(1+\abs{t})^{3}M} \leq \frac{M}{\coll}.
\]
In this case, we also have
\[
\frac{\log\coll_2}{\sqrt{\coll_2}} X =  M\sqrt{\coll}(1+\abs{t})^{\frac{3}{2}} \log(\coll(1+\abs{t})^{3}) \ll \coll^\frac{3}{5}M(1+\abs{t})^{\frac{5}{3}}.
\]
In the complimentary case, $\coll_2 = \coll(1+\abs{t})^{3} \dfrac{N^4}{M^2}$, so
\[
\frac{N^4}{X} = \frac{M}{\coll(1+\abs{t})^{3}} \leq \frac{M}{\coll}.
\]
As well,
\[
\frac{\log\coll_2}{\sqrt{\coll_2}} X =  N^2\sqrt{\coll}(1+\abs{t})^{\frac{3}{2}}\log \coll_2 \ll \coll^\frac{3}{5}N^2(1+\abs{t})^{\frac{5}{3}}\log(2+N^2/M).
\]
\end{proof}

Applying Lemma \ref{lem:Inflation} with $\coll_2$ as defined above, we have
\begin{align}
S^\flat(M,N,t) &\ll \frac{\log\coll_2}{\sqrt{\coll_2}}\sumpth{S(X,N,t) + S(2X,N,t) + \sum_{\sqrt{\coll_2} \leq p \leq \sqrt{2\coll_2}} \frac{S^\flat(M,N/p,t)}{p^2}}\nonumber \\
&\ll \frac{\log\coll_2}{\sqrt{\coll_2}}\bigg(S(X,N,t) + S(2X,N,t) \nonumber\\
&\quad\quad\quad\quad+ \frac{\coll^\frac{2}{3}(1+\abs{t})^{2}\pth{M + N^2\log(2+N^2/M)}\log^6(2+MN)}{\sqrt{\coll_2}\log \coll_2}\bigg),
\label{eq:InflatedSum}
\end{align}
where the implied constant is absolute, upon applying the induction hypothesis for the last term. The third term of the last line suffices, and we now examine $S(X,N,t) + S(2X,N,t)$. We fix $F$ to be the test function given in Lemma \ref{lem:ConvenientTestFandG} with
\begin{equation}\label{eq:Fbounds}
c_0 = \frac{1}{64} \mand c_1=4.
\end{equation}
By positivity and H\"{o}lder's inequality, we have
\begin{align}
S(X,N,t) + S(2X,N,t) &\leq \sum_{m\neq \square} F\fracp{m}{X} \sumabs{\sum_{n} \frac{\chi_m(n)}{n^{\frac{1}{2}+ i  t}} G\fracp{n}{N} }^4\nonumber\\
&\ll \sum_{k\geq 0} \frac{1}{2^k} \sum_{m\neq\square} F\fracp{m}{X} \sumabs{\sum_{(n,2)=1} \frac{\chi_m(n)}{n^{\frac{1}{2}+ i  t}} G\fracp{2^k n}{N} }^4.
\label{eq:MainPlusSquares}
\end{align}
We now look to estimate the sums
\[
\sings = \sum_{\substack{m\\ m\neq \square}} F\fracp{m}{X} \sumabs{\sum_{(n,2)=1} \frac{\chi_m(n)}{n^{\frac{1}{2}+ i  t}} G\fracp{n}{N}}^4.
\]
Opening the absolute values,
\[
\sings = \sum_{\substack{m\\ m\neq \square}} F\fracp{m}{X} \sum_{\substack{n_1,\ldots,n_4\\(n_i,2)=1}} \frac{\chi_m(n_1n_2n_3n_4)}{(n_1n_2)^{\frac{1}{2}+ i  t}(n_3n_4)^{\frac{1}{2}- i  t}} \prod_{i=1}^4 G\fracp{n_i}{N}.
\]
We wish to apply Poisson summation to $m$ in the form of Lemma \ref{lem:Poisson}. To do so, we must include the contribution of square $m$, and so we write
\[
\sings = \colm - \colt,
\]
where $\colm$ is the full sum and $\colt$ is the contribution of square $m$:
\begin{align}
\colt= \sum_{m} F\fracp{m^2}{X} \sum_{\substack{n_1,\ldots,n_4\\ (n_i,2m)=1}} \frac{1}{(n_1n_2)^{\frac{1}{2}+ i  t}(n_3n_4)^{\frac{1}{2}- i  t}} G\fracp{n_i}{N}.
\label{equ:def-S}
\end{align}
Poisson summation now gives
\[
\colm = \colm^{0} + \colm^{\square} + \colm^{\neq\square},
\]
where
\begin{align}
\colm^{0} &= \hat{F}(0)X \sum_{\substack{n_1n_2n_3n_4=\square\\ (n_i,2)=1}} \frac{1}{(n_1n_2)^{\frac{1}{2}+ i  t} (n_3n_4)^{\frac{1}{2}- i  t}} \prod_{p \mid n_1n_2n_3n_4}\pth{1-\frac{1}{p}} \prod_{i=1}^4 G\fracp{n_i}{N}, \label{def:PoissonDiag}
\\
\colm^{\square} &= X\sum_{k\geq 1} \sum_{\substack{n_1,\ldots,n_4 \\ (n_i,2)=1}} \frac{1}{(n_1n_2)^{\frac{1}{2}+ i  t} (n_3n_4)^{\frac{1}{2}- i  t}} \frac{G_{k^2}(n_1n_2n_3n_4)}{n_1n_2n_3n_4} \check{F}\fracp{k^2X}{n_1n_2n_3n_4} \prod_{i=1}^4 G\fracp{n_i}{N},\label{def:DualSquares}\\
\colm^{\neq \square} &= X\sum_{k\neq 0,\square} \sum_{\substack{n_1,\ldots,n_4 \\ (n_i,2)=1}} \frac{1}{(n_1n_2)^{\frac{1}{2}+ i  t} (n_3n_4)^{\frac{1}{2}- i  t}} \frac{G_k(n_1n_2n_3n_4)}{n_1n_2n_3n_4} \check{F}\fracp{kX}{n_1n_2n_3n_4} \prod_{i=1}^4 G\fracp{n_i}{N}.\label{def:NonSquare}
\end{align}
Note that \eqref{def:PoissonDiag} uses the fact that $G_0(n)= \phi(n)$ when $n=0$, and otherwise $G_0(n)= 0$. We have
\[
\sings = \colm^{0} + \colm^{\square} + \colm^{\neq \square}- \colt ,
\]
and we will prove the following lemmas concerning these quantities.

\begin{lem}\label{lem:LS-Diagonal}
With notation as above, we have
\[
\colm^{0} = \frac{\hat{F}(0)|\Tilde{G}(it)|^4}{2^{10}\cdot 6!} \colh_1\pth{\half,\half,\half,\half}\pth{1-\tfrac{12}{32}} X(\log N)^6 + O\pth{X(1+\abs{t})^{\frac{1}{5}}(\log N)^5}.
\]
\end{lem}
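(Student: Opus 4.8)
The plan is to evaluate $\colm^{0}$ given in \eqref{def:PoissonDiag} by recognizing the sum over $n_1,\ldots,n_4$ as essentially the Dirichlet series $\cold_1(\boldu)$ from \eqref{def:D1} with a smooth cutoff, and then extracting the main term via a contour integral argument. First I would write each $G(n_i/N)$ using its Mellin transform, so that
\[
\colm^{0} = \hat{F}(0)X \frac{1}{(2\pi i)^4} \int\limits_{(c)}\cdots\int\limits_{(c)} \cold_1\bigl(\tfrac12+it+s_1,\tfrac12+it+s_2,\tfrac12-it+s_3,\tfrac12-it+s_4\bigr) \prod_{i=1}^4 \tilde{G}(s_i) N^{s_i}\, ds_i
\]
for $c$ slightly larger than $0$ (so that $\Re(u_i)>\frac12$ and Lemma \ref{lem:DirichletSeriesDiagonal} applies). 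By that lemma, $\cold_1(\boldu) = \bigl(\prod_{1\le i\le j\le 4}\zeta_2(u_i+u_j)\bigr)\colh_1(\boldu)$, where $\colh_1$ is analytic and bounded for $\Re(u_i)\ge \frac14+\varepsilon$.

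Next I would shift all four contours simultaneously to $\Re(s_i) = -\delta$ for a small fixed $\delta>0$ (staying in the region where $\colh_\ell$ is analytic, i.e. $\Re(\tfrac12+s_i)\ge\tfrac14+\varepsilon$), picking up residues from the poles of the zeta factors. The relevant poles come from $\zeta_2(u_i+u_j)$ having a pole at $u_i+u_j=1$; when all four $s_i$ are small and the shifts satisfy $it+it$, $-it-it$, $it-it$ appropriately, the factors $\zeta_2(2(\tfrac12+it)+s_i+s_j)$, etc., only produce a genuine pole from the three ``mixed'' pairs $\zeta_2(1+s_i+s_j)$ with one index from $\{1,2\}$ and one from $\{3,4\}$ — these are $\zeta_2(1+s_i+s_j)$ with $i\in\{1,2\}$, $j\in\{3,4\}$, a total of $\binom{4}{2}$ minus the same-side pairs; actually the main contribution is the iterated residue. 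The cleanest route is the standard one: the polar part behaves like $\int \prod (\text{simple poles}) \prod \tilde G(s_i) N^{s_i}\,ds_i$, and the full residue computation produces a polynomial in $\log N$ of degree $6$ (matching the $6$ pairs of the form $\binom{\{1,2,3,4\}}{2}$), with leading coefficient obtained by replacing each $\zeta_2(1+s_i+s_j)$ by $\frac{1}{s_i+s_j}\bigl(1-\tfrac12\bigr)$ — accounting for the removed Euler factor at $2$ — evaluating $\colh_1$ and $\tilde G$ at the central point, and computing the resulting constant $\frac{1}{(2\pi i)^4}\int\cdots\int \prod_{1\le i\le j\le 4}\frac{1}{s_i+s_j}\prod N^{s_i}\tilde G(s_i)\,ds_i$, which is a purely combinatorial integral that evaluates to $\frac{(\log N)^6}{2^{?}\cdot 6!}$ times a rational factor; the paper records this as $\frac{1}{2^{10}\cdot 6!}(1-\frac{12}{32})$. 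One must also track the $\tilde G(it)$ dependence: since each $G(n_i/N)$ contributes a factor that, near the central point, is $\tilde G(\pm it)$ up to the $N^{s_i}$, and there are two $+it$ and two $-it$ slots, the leading coefficient carries $|\tilde G(it)|^4$.

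After the shift, the new integral over $\Re(s_i)=-\delta$ is bounded by estimating $\cold_1$ there: the zeta factors on the mixed pairs now have $\Re(1+s_i+s_j) = 1-2\delta < 1$, so by convexity $\zeta_2(1+s_i+s_j) \ll (1+|\Im(s_i+s_j)|)^{O(\delta)}$ and, crucially, combined with the shifts in the same-side pairs $\zeta_2(2it+\cdots)$ one uses $|\zeta(\tfrac12+\varepsilon + iu)|$ bounds and the rapid decay $\tilde G(s_i)\ll (1+|s_i|)^{-A}$. The resulting bound is $O\bigl(X N^{-4\delta}(\ldots)\bigr)$, but in fact one does not shift that far for all variables at once — the standard trick (as in Soundararajan's or Li's papers) is to shift one variable at a time, peeling off residues, so that each residual integral is genuinely smaller by a factor $\log N$ or a small power of $N$, and one ends with the error $O(X(1+|t|)^{1/5}(\log N)^5)$, where the $(1+|t|)^{1/5}$ comes from the Stirling-type estimates on the gamma factors hidden in $\tilde G$ and from not optimizing the $t$-dependence. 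The main obstacle, and the step requiring the most care, is the bookkeeping in the iterated residue computation: one must correctly identify which of the $\binom{4}{2}=6$ pairs genuinely contribute a pole in the regime dictated by the constraint on the $\alpha_i$-shifts (here $\pm it$), verify no double poles arise, and evaluate the combinatorial integral $\frac{1}{(2\pi i)^4}\int\prod_{i\le j}(s_i+s_j)^{-1}\prod N^{s_i}\tilde G(s_i)\,ds_i$ to pin down the rational constant $\frac{1}{2^{10}\cdot 6!}(1-\frac{12}{32})$ — the power of $2$ being the delicate part, arising from the product of the $(1-2^{-1})$ factors from the six removed Euler factors together with the normalization of $\tilde G$. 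I expect the degree-$6$ polynomial structure and leading term to fall out readily once the poles are correctly catalogued, with the error term following from routine (if tedious) contour-shift estimates using the decay of $\tilde G$ and convexity for $\zeta_2$.
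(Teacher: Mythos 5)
Your overall framework (Mellin inversion, factoring $\cold_1$ via Lemma~\ref{lem:DirichletSeriesDiagonal}, then iterated contour shifting) matches the paper's. But there are two genuine errors and one serious gap.

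First, the pole structure is misidentified. You claim that ``only the three mixed pairs'' $\zeta_2(1+s_i+s_j)$ with $i\in\{1,2\}$, $j\in\{3,4\}$ produce genuine poles. This is not so: after the change of variables that absorbs $\pm it$ into the $\tilde G$ factors (as the paper does), the Dirichlet series factors are $\prod_{1\le i\le j\le 4}\zeta_2(1+u_i+u_j)$, and \emph{all ten} pairs have poles — the four diagonal factors $\zeta_2(1+2u_i)$ at $u_i=0$, and the six off-diagonal factors $\zeta_2(1+u_i+u_j)$ at $u_i+u_j=0$. (Even in your parametrization, the same-side pairs have poles at $s_i+s_j=\mp 2it$, which must still be tracked.) Relatedly, your assertion that one must ``verify no double poles arise'' is false: the paper's $\coli_{1,1}$ arises from a pole of order~2 at $u_2=0$, and the final one-variable integral has a pole of order~7.

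Second, and more seriously, the entire content of the lemma is the constant $\pth{1-\tfrac{12}{32}}$, and you do not derive it — you explicitly write ``the paper records this as $\frac{1}{2^{10}\cdot 6!}(1-\frac{12}{32})$,'' which quotes the answer rather than proving it. That factor does \emph{not} come from a clean Vandermonde-type evaluation of $\int\prod(s_i+s_j)^{-1}\prod N^{s_i}\tilde G(s_i)\,ds_i$. It is the net of two competing contributions: (i) the iterated residue $\colr^{\#}$ at $\boldu=(0,0,0,0)$, which gives $\frac{1}{6!}\cole_t(0,0,0,0)(\log N)^6$; and (ii) a single-variable integral $\frac{1}{32}\cdot\frac{1}{2\pi i}\int_{(2\ep)}N^u\cola_t^*(u,N)u^{-7}\,du$ collecting the off-diagonal residues at $u_j=-u_i$, where $\cola_t^*(u,N)=\sum_{\ell}\cole_t(\ldots,0,\ldots)-\cole_t(-u,u,u,u)\tfrac{N^u}{4}$. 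The residue of the latter at $u=0$ produces $-\frac{12}{6!}\cole_t(0,0,0,0)(\log N)^6$ because the four $N^u\cole_t(\ldots,0,\ldots)$ terms each contribute $+1$ while the $\tfrac14 N^{2u}\cole_t(-u,u,u,u)$ term contributes $-\tfrac{2^6}{4}=-16$ (the $N^{2u}$ doubling the logarithm), giving $4-16=-12$. None of this bookkeeping appears in your proposal; without it, the leading coefficient cannot be pinned down, and the claim reduces to a citation of the target statement.
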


\begin{lem}\label{lem:LS-SquareCancellation}
With notation as above and
\[
\log^*v = \max(0,\log v),
\]
we have
\begin{align*}
\colm^{\square} - \colt &= -\frac{\hat{F}(0)|\Tilde{G}(it)|^4}{2^{10}\cdot 6!} \colh_1\pth{\half,\half,\half,\half} \cdot \frac{1}{64} \cdot X\left[(\log^*({N^4}/{X}))^6 - 4(\log^*({N^3}/{X}))^6 \right]\\
&\quad+ O\pth{X(\log X)^5(1+\abs{t})^{\frac{1}{5}}}.
\end{align*}
\end{lem}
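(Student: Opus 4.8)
The plan is to compute $\colm^{\square}$ and $\colt$ separately as contour integrals of multiple Dirichlet series and then exhibit the cancellation of their main terms, leaving an explicit secondary main term plus an acceptable error. For $\colt$, starting from \eqref{equ:def-S}, I would write each sum $\sum_{(n_i,2m)=1} n_i^{-1/2 \mp it} G(n_i/N)$ via Mellin inversion as a contour integral of $\zeta_2(\tfrac12 + u_i \pm it)$ times a correction factor accounting for the coprimality to $m$, then sum over $m$ with the weight $F(m^2/X)$ using another Mellin transform; since $F$ is even and Schwartz, $\sum_m F(m^2/X) m^{-w} = \tfrac12 X^{(1-w)/2}\tilde F\!\big(\tfrac{1-w}{2}\big)$ up to the trivial $m=0$ issue. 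After collecting the Euler factors, the $m$-sum contributes a factor that, near the relevant poles, looks like $X^{1/2 \cdot (\text{something})}$ times a product of $\zeta_2$'s; shifting the $u_i$-contours past the poles at $u_i + u_j = 0$ and evaluating the residues gives a polynomial in $\log N$ of degree $6$ with main coefficient proportional to $\colh_1(\tfrac12,\tfrac12,\tfrac12,\tfrac12)$ and a power of $X$ reflecting the constraint $n_1n_2n_3n_4 \asymp N^4$ versus $m \asymp \sqrt X$ — this is where the factors $\log^*(N^4/X)$ and $\log^*(N^3/X)$ enter, since the truncation of the $n_i$ sums by $G(n_i/N)$ interacts with which $n_i$ are forced to be $1$.

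For $\colm^{\square}$, I would use Lemma \ref{lem:GkEval} to evaluate $G_{k^2}(n_1n_2n_3n_4)$: writing $k = k_1 k_2^2$ with $k_1$ squarefree and separating, the dominant contribution comes from $n_1n_2n_3n_4$ dividing a fixed power structure determined by $k$, and after re-summing one recognizes the Dirichlet series $Z(\boldu,s;k_1,a)$ of \eqref{def:Z}. Using Lemma \ref{lem:DirichletSeriesOffDiagonal}, the $k_2$-sum produces the factor $\zeta(2s)$ and the product $\prod_i L_2(\tfrac12 + u_i,\chi_{\mathfrak m})$, and the $\check F$ transform is unfolded via \eqref{eq:Fhatfirst} into an $s$-integral with a $\Gamma$-factor and $(2\pi|k|X/(n_1\cdots n_4))^{-s}$. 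The key point is that the main pole in this computation — arising from $\zeta(2s)$ at $s=\tfrac12$ together with the pole of $\prod L_2(\tfrac12+u_i,\chi_{\mathfrak m})$ when $\mathfrak m = 1$, i.e. $k_1 = 1$ — produces exactly a term proportional to $X^{1/2}$ times $\colh_1(\tfrac12,\tfrac12,\tfrac12,\tfrac12)$ and the same polynomial structure in $\log(N^2/X)$, but with the opposite sign, so that the main terms of $\colm^{\square}$ and $\colt$ cancel. This is the heart of the argument; identifying it cleanly requires the precise form of $\underset{s=1/2}{\operatorname{Res}} Z(\boldu,s;1,1)$ in \eqref{eq:resZathalf}, which matches the Euler product appearing in $\colt$.

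After the cancellation, one is left with the contributions of $k_1 \neq 1$ (handled trivially since $\prod L_2(\tfrac12+u_i,\chi_{\mathfrak m})$ has no pole, giving a bound $O(X(\log X)^5(1+|t|)^{1/5})$ after summing over $k_1$ with the rapid decay of $\check F$ and $\tilde G$), together with the residue at $s = 0$ of the $s$-integral in $\colm^{\square}$ (which is where $Z(\boldu,0;1,1)$ from \eqref{eq:resZat0} would enter, but a power-of-$X$ count shows this is also absorbed into the error), plus the explicit surviving secondary term. The surviving term is precisely the difference between what $\colm^\square$ contributes and what $\colt$ contributes at the level of the \emph{power of $X$}: morally $X\big[(\log^* N^4/X)^6 - 4(\log^* N^3/X)^6\big]$, where the $4$ is the combinatorial count of ways to set one of the four $n_i$ equal to $1$. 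I expect the main obstacle to be the bookkeeping in matching the two computations: one must track the Euler factors at $p \mid m$ (respectively $p \mid k$) carefully enough to see that the non-pole parts agree, and one must be careful with the ranges of absolute convergence when shifting the $u_i$-contours — the uniform bounds on $\colh_\ell$ for $\Re(u_i) \geq \tfrac14 + \varepsilon$ from Lemma \ref{lem:DirichletSeriesDiagonal} and on $Z_2$ in the regions listed in Lemma \ref{lem:DirichletSeriesOffDiagonal} are exactly what make this legitimate, but the contour manipulations and the resulting residue computations (six nested shifts, each losing a pole of order one) are delicate and account for the degree-$6$ polynomials and the various $\log^*$ thresholds.
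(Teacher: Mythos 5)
Your identification of the key cancellation is correct and matches the paper: the residue at $s=\tfrac12$ (from $\zeta(2s)$ together with $\prod_i L_2(\tfrac12+u_i,\chi_{\mathfrak m})$ at $\mathfrak m=1$) of the $\colm^{\square}$ integral matches, term for term, the residue at $s=\tfrac12$ that arises in $\colt$ after Mellin-transforming $F(m^2/X)$ and using M\"obius to peel off the condition $(n_i,m)=1$. This is exactly the role of \eqref{eq:resZathalf}, and you have this right.

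The gap is in your account of what survives. Two specific problems. First, ``the contributions of $k_1\neq 1$'' do not appear in $\colm^{\square}$ at all: by definition \eqref{def:DualSquares}, $\colm^{\square}$ sums only over $k=k'^2$, so writing $k=k_1k_2^2$ with $k_1$ squarefree forces $k_1=1$ identically. Values $k_1\neq 1$ belong to $\colm^{\neq\square}$, which is a separate lemma. Second, and more seriously, you assert that the residue of the remaining $s$-integral at $s=0$ --- the place where $Z(\boldu,0;1,1)$ from \eqref{eq:resZat0} enters --- ``is also absorbed into the error'' by a power-of-$X$ count. This is backwards: that residue is precisely where the secondary main term comes from. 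In the paper's notation the leftover integral $\colj_2$ is reduced, after shifting each $u_i$-contour past $u_i=0$ (producing the residue $\colr_0$) and past $u_i=-s$ (producing the four integrals $\colp^{(i)}$), to $\colr_0+\sum_{i=1}^4\colp^{(i)}+O(X(\log X)^3(1+|t|)^{1/5})$, and it is the \emph{order-$7$ pole at $s=0$} in $\colr_0$ and $\colp^{(i)}$ that produces $(\log^*(N^4/X))^6$ and the four copies of $(\log^*(N^3/X))^6$. Your heuristic that the ``$4$'' counts the ways of setting one $n_i=1$ is a reasonable paraphrase of the four $u_i=-s$ residues, but since you have simultaneously discarded the $s=0$ pole, your argument as written cannot produce the claimed secondary term --- it would only give $O(X(\log X)^5(1+|t|)^{1/5})$, which is weaker than the lemma. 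The paper also flags that the naive identification of the $s=0$ residue (before the $u_i$-shifts) with $-\colm^0$ is ``illusory,'' precisely because the additional $u_i=-s$ poles contribute; your proposal does not engage with this subtlety, and you would need the full $16$-term decomposition of $\colj_2$ to recover the stated formula.
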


\begin{lem}\label{lem:LS-NonSquare}
With notation as above, we have
\[
\colm^{\neq \square} \ll \coll^\frac{2}{3} (1+\abs{t})^{3+\frac{1}{5}} N^2 \log^6(2+M).
\]
\end{lem}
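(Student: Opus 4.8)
\textbf{Plan for the proof of Lemma \ref{lem:LS-NonSquare}.}

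The goal is to bound the non-square contribution $\colm^{\neq\square}$ defined in \eqref{def:NonSquare}, and this is exactly the place where the induction hypothesis \eqref{eq:InductionHyp} (in the convenient form of Lemma \ref{lem:InductionAllN}) must be fed back in. The plan is to first replace the four sums over $n_i$ by a four-fold Mellin--Barnes contour integral: writing each $G(n_i/N)$ via its Mellin transform $\tilde G$ and each $\check F(kX/(n_1n_2n_3n_4))$ via \eqref{eq:Fhatsecond}, one converts $\colm^{\neq\square}$ into an integral whose integrand contains the Dirichlet series $Z(\boldu,s;k_1,a)$ from \eqref{def:Z} after factoring $k = k_1 k_2^2$ with $k_1$ squarefree and sieving the coprimality condition $(n_i,2)=1$ against $a\mid n_i$ with a Möbius function (this is where $Z_2, Z_3$ and Lemma \ref{lem:DirichletSeriesOffDiagonal}, Lemma \ref{lem:DirichletSeriesSuma} enter). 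The first substantive step is then to shift all four $u_i$-contours and the $s$-contour to exploit the analytic continuations recorded in Lemma \ref{lem:DirichletSeriesOffDiagonal}(i)--(iii): since $k\neq 0,\square$, the factor $\prod_i L_2(\tfrac12+u_i,\chi_{\mathfrak m})$ is entire (no pole at $u_i=\tfrac12$ because $\mathfrak m$ is not a perfect square), so no residues are picked up, and one can push the $u_i$ to roughly $\Re(u_i)=\tfrac13$ and $s$ to $\Re(s)=-\tfrac1{20}$, gaining a saving of a power of $N$ (from $N^{-u_i}$) and a power of $X/k$ (from $(kX/(n_1\cdots n_4))^{-s}$, i.e. $k^{1/20}X^{-1/20}$-type factors) at the cost of the polynomial bound $Z_2\ll\tau(a)$ and the rapid decay of $\tilde G$ and $\Gamma(s)\cos(\pi s/2)$ along the imaginary directions.

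After the contour shifts, the $n_i$-sums no longer converge absolutely in the naive sense, so rather than estimating the resulting Dirichlet series directly one reorganizes: one undoes the Mellin transforms to return to honest sums $\sum_{n_i} \chi_m(n_i) n_i^{-1/2-it}\,\tilde G$-weighted, reinstating the character $\chi_m$ (equivalently $G_k$) so that the inner object becomes a fourth moment $\sum_m F(m/X)\,|\sum_n \chi_m(n)n^{-1/2-it}\tilde G(\cdots)|^4$ over a \emph{shorter} range of $m$ — this is the "dual" $m$-sum of length $\asymp N^4/X$ produced by Poisson summation, which by Lemma \ref{lem:UsefulEstimates} is $\leq M/\coll$. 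Applying the induction hypothesis through Lemma \ref{lem:InductionAllN} to this shorter moment (of length $\ll M/\coll$) produces a bound of the shape $C'\coll^{2/3}(1+|t|)^3 (M/\coll)\log^6(2+M)$, and the extra factor $1/\coll$ — which is precisely the constant-factor inflation built into the choice of $\coll_2$ and $X$ in Section \ref{sec:Inflation} — is what makes the induction close: one needs
\begin{equation*}
C' \coll^{-1/3} (1+|t|)^3 M \log^6(2+M) \leq \coll^{2/3}(1+|t|)^2 (M+N^2\log(2+N^2/M))\log^6(2+MN),
\end{equation*}
which holds for $\coll$ large enough absolute, with room to spare in the powers of $\coll$, $\log$, and (by the $(1+|t|)^3$ built into $\coll_2$) in the power of $1+|t|$. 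Summing the geometric series in $k$ (the tail $k$ large is controlled by the rapid decay of $\check F$, i.e. $\check F(y)$ is small for $|y|\gg 1$, forcing $k \ll n_1n_2n_3n_4/X \ll N^4/X$) and the geometric $2^{-k}$ from \eqref{eq:MainPlusSquares}, together with the $\mu(a)$-sum over $a$ being absolutely convergent thanks to $Z_2\ll\tau(a)$, then yields the stated bound $\colm^{\neq\square}\ll\coll^{2/3}(1+|t|)^{3+1/5}N^2\log^6(2+M)$.

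\textbf{Main obstacle.} The delicate point is the bookkeeping of the contour shifts in the regime $M\asymp N^2$ (equivalently $X\asymp \coll N^2$), where Poisson summation in $m$ barely helps: here the dual $m$-sum has length $\asymp N^4/X \asymp N^2/\coll \asymp M/\coll$, so there is no genuine length saving beyond the constant $\coll$, and one must be careful that shifting the $u_i$ and $s$ contours does not cost more than this $\coll$-factor can absorb — in particular one must verify that the estimate $Z_2\ll\tau(a)$ holds \emph{uniformly} in the relevant strip (region (ii) of Lemma \ref{lem:DirichletSeriesOffDiagonal}, $\Re(s)\geq-\tfrac1{20}$, $\Re(u_i)\geq\tfrac13$) and that the extra powers of $1+|t|$ generated by Stirling-type bounds \eqref{eq:GammaRatioBound} on the various $\Gamma$-ratios stay below the budgeted $(1+|t|)^{3+1/5}$. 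Keeping all logarithmic factors at $\log^6$ rather than a higher power — which is the entire reason for working with the fourth moment \eqref{Large-sie} rather than \eqref{eq:analogue}, as explained in the introduction — requires that each contour shift past $\Re(u_i)=\tfrac12$ be avoided entirely in the non-square case (so that one never picks up the pole of $\prod\zeta_2(u_i+u_j)$), which is exactly why the non-square hypothesis $k\neq\square$ is essential and why this lemma, unlike Lemmas \ref{lem:LS-Diagonal} and \ref{lem:LS-SquareCancellation}, produces a pure error term with no main term.
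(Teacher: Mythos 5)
Your high-level idea is the right one: after Poisson summation, the dual sum over $k_1$ has length $K = N^4/(4X) \leq M/\coll < M - 1$ (Lemma \ref{lem:UsefulEstimates}), which is what allows the induction hypothesis, via Lemma \ref{lem:InductionAllN}, to be fed back in and close the induction. However, the execution has several concrete problems. First, the contour placements you propose (region (ii) of Lemma \ref{lem:DirichletSeriesOffDiagonal}, $\Re(s) = -\tfrac{1}{20}$, $\Re(u_i) = \tfrac13$) would require crossing the pole of $\zeta(2s)$ at $s = \tfrac12$ and the pole of $\Gamma(s)$ at $s = 0$, producing residues that you never account for. The paper instead works in region (iii) — it keeps $\Re(s) = \tfrac35$ throughout (so no residues) and, after the change of variables $u_i \mapsto u_i + s$, pushes $\Re(u_i)$ down to $-\tfrac12$. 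Second, the Möbius sieving over $a \mid n_i$, the $\mu(a)$-sum, and the references to $Z_3$ and Lemma \ref{lem:DirichletSeriesSuma} do not belong here at all: no variable $a$ appears in the proof of Proposition \ref{prop:LargeSieve}. You appear to be conflating this lemma with the treatment of $\cols^{\neq\square}$ in Section \ref{sec:Asymptotic-NonSquare}, where $a$ enters through the squarefree condition on $d$. Lemma \ref{lem:LS-NonSquare} only invokes Lemma \ref{lem:DirichletSeriesOffDiagonal} with $a = 1$.

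The most serious gap is that you omit the step that makes the argument rigorous: after writing $Y(\boldu,s;k_1,1)$ as a multiple Dirichlet series in $r_1,\ldots,r_5$, one must introduce a dyadic partition of unity in each $r_i$ to genuinely separate the $r_i$-sums from the $n_i$-sums, and then control the $r_i$-sums uniformly over all the dyadic scales. This is the content of Lemma \ref{lem:CsumBound}, whose proof requires the zero-free region for $\zeta$ to extract the saving $\exp(-c_1\sqrt{\log(R_1R_2R_3R_4)})$ that makes the outer dyadic sum over $R_1,\ldots,R_4$ convergent. Without this, "undoing the Mellin transforms to return to honest sums" is only a slogan; the $r_i$ and $n_i$ variables are still entangled inside $Y$ and the fourth-moment structure over $k_1$ (to which the induction hypothesis must be applied via H\"older) does not materialize. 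Your "main obstacle" paragraph accordingly worries about the wrong thing: the issue is not whether $Z_2 \ll \tau(a)$ holds uniformly (there is no $a$), but whether the $R_i$-sum can be controlled, which is exactly what Lemma \ref{lem:CsumBound} accomplishes.
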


These estimates imply
\[
\sings \ll \pth{X(1+\abs{t})^\frac{1}{5} + \coll^\frac{2}{3} (1+\abs{t})^{3+\frac{1}{5}} N^2} \log^6(2+MN).
\]
Let us show that this suffices for Proposition \ref{prop:LargeSieve}. Applying this with $N/2^k$ in place of $N$ and summing over $k\geq 0$ in \eqref{eq:MainPlusSquares} implies that
\[
S(X,N,t) + S(2X,N,t) \ll  \pth{X(1+\abs{t})^\frac{1}{5} + \coll^\frac{2}{3} (1+\abs{t})^{3+\frac{1}{5}} N^2} \log^6(2+MN).
\]
 Combining this with \eqref{eq:InflatedSum} and applying Lemma \ref{lem:UsefulEstimates}, we have that there exist absolute constants $\colc_1$, $\colc_2$ such that
\begin{align*}
S^\flat(M,N,t) &\leq \colc_1\Bigg(\frac{\log\coll_2}{\sqrt{\coll_2}}\sumpth{X (1+\abs{t})^\frac{1}{5} + \coll^\frac{2}{3} (1+\abs{t})^{3+\frac{1}{5}} N^2} \\
&\quad\quad\quad\quad+ \frac{\coll^\frac{2}{3}(1+\abs{t})^{2}\pth{M + N^2\log(2+N^2/M)}}{\coll_2} \Bigg)\log^6(2+MN), \\
&\leq \colc_2 (\coll^{3/5-2/3} + \coll^{-1/3})\coll^\frac{2}{3} (1+\abs{t})^2 (M+N^2\log(2+N^2/M))\log^6(2+MN) \\
&\leq \coll^\frac{2}{3} (1+\abs{t})^{2}(M+N^2\log(2+N^2/M))\log^6(2+MN)
\end{align*}
upon choosing $\coll$ large enough compared to $\colc_2$, where the second line follows from $\coll_2 \geq \coll(1+\abs{t})^3$. 

In the following  Sections \ref{sec:LargeSieve-Diagonal}, \ref{sec:LargeSieve-OffDiagonal} and  \ref{sec:LargeSieve-NonSqare}, we will prove Lemmas \ref{lem:LS-Diagonal}, \ref{lem:LS-SquareCancellation} and \ref{lem:LS-NonSquare}, respectively.

\section{Diagonal Contribution $\colm^{0}$}\label{sec:LargeSieve-Diagonal}

To evaluate $\colm^{0}$ in \eqref{def:PoissonDiag}, we first expand the four factors of $G$ via Mellin inversion and write
\begin{align}
\colm^{0} &= \frac{\hat{F}(0)X}{(2\pi i)^4} \int\limits_{(2)}\int\limits_{(2)}\int\limits_{(2)}\int\limits_{(2)}  \sum_{\substack{n_1n_2n_3n_4=\square \\ (n_i,2)=1}} \frac{1}{\sqrt{n_1n_2n_3n_4} n_1^{u_1+ i  t}n_2^{u_2+ i  t}n_3^{u_3- i  t}n_4^{u_4- i  t}}\prod_{p \mid n_1n_2n_3n_4}\pth{1-\frac{1}{p}} \nonumber\\
&\quad \times\sumpth{\prod_{i=1}^4 \tilde{G}(u_i) N^{u_i}}  d\boldu \nonumber\\
&= \frac{\hat{F}(0)X}{(2\pi i)^4} \int\limits_{(2)}\int\limits_{(2)}\int\limits_{(2)}\int\limits_{(2)} \colg_t(\boldu) N^{u_1+u_2+u_3+u_4} \cold_1(\boldu+\half)\,  d\boldu, 
\label{eq:DiagonalIntegral}
\end{align}
say, where
\begin{equation}\label{eq:colgDef}
\colg_t(\boldu) = \tilde{G}(u_1- i  t)\tilde{G}(u_2- i  t)\tilde{G}(u_3+ i  t)\tilde{G}(u_4+ i  t)
\end{equation}
and $\cold_1(\boldu)$ is as in \eqref{def:D1}.  By Lemma \ref{lem:DirichletSeriesDiagonal},
\[
\cold_1(\boldu+\half) = \sumpth{\prod_{1\leq i\leq j\leq 4} \zeta_2(1+u_i+u_j)}\colh_1(\boldu+\half),
\]
where $\colh_1(\boldu)$ is analyitc and bounded uniformly for $\Re(u_i) > \frac{1}{4}+\varepsilon$. We then define
\[
\cole_t(\boldu) = \colg_t(\boldu) \cold_1(\boldu+\half) \prod_{1\leq i\leq j\leq 4}(u_i+u_j)
\]
and note that $\cole_t(\boldu)$ is analytic in the region $\Re(u_i) > -\frac{1}{4}+ \varepsilon$. Moreover, standard estimates for $\zeta$ (see, e.g., Theorem 3.5 and Equation 5.1.4 of \cite{Titchmarsh}) give
\[
\zeta_2(1+u) \ll \frac{1}{\abs{u}} + 
(1+\abs{\Im(u)})^{\max(0,-\frac{\Re(u)}{2})+\ep},
\]
uniformly for $-\frac{1}{2}\leq \Re(u) \leq 5$, say. In what follows, we fix $\ep = \frac{1}{250}$ (our exact choice here is unimportant) and we will always have $-2\ep \leq \Re(u_i)\leq 2$. In this region, the above estimate and the triangle inequality imply
\[
\zeta_2(1+u_i+u_j)(u_i+u_j) \ll 1+ \abs{u_i+u_j}(1+\abs{\Im(u_i)})^{3\ep}(1+\abs{\Im(u_j)})^{3\ep}.
\]
Thus
%\label{eq:coletbound}
\begin{align*}
\cole_t(\boldu) \ll  \prod_{i=1}^4 \frac{(1+\abs{\Im(u_i)})^{12\ep}}{(1+\abs{u_i\pm it})^{10}} \prod_{1\leq i\leq j\leq k} \abs{u_i+u_j}^{\delta(\abs{u_i+u_j} > 1)},
\end{align*}
where in the $\pm$, one takes a $-$ if $i=1,2$ and a $+$ if $i=3,4$, and $\delta(\abs{u_i+u_j} > 1)=1$ if $\abs{u_i+u_j} > 1$ and 0 otherwise.\\

To illustrate the structure of the poles and subsequent residue calculations, we shift contours in \eqref{eq:DiagonalIntegral} once more and write $\colm^{0} = \hat{F}(0)X \coli$, where
\begin{align}
\coli= \frac{1}{(2\pi i)^4} \int\limits_{(\frac{1}{20})}\int\limits_{(\frac{1}{21})}\int\limits_{(\frac{1}{22})}\int\limits_{(\frac{1}{23})} \cole_t(\boldu) N^{u_1+u_2+u_3+u_4} \prod_{1\leq i\leq j\leq 4} (u_i+u_j)^{-1} \, d\boldu \nonumber.
\label{equ:M0-1}
\end{align}
Let
\begin{equation}\label{eq:Ldef}
L = \log N.
\end{equation}
We first take the line of integration in $u_1$ to $\Re(u_1) = - 2\ep$, passing a simple pole at $u_1 = 0$. This gives
\begin{align*}
\coli = \coli_1 + \coli_2,
\end{align*}
where
\begin{align*}
\coli_1 &= \frac{1}{16(2\pi i)^3} \int\limits_{(1)}\int\limits_{(1)}\int\limits_{(1)} \frac{\cole_t(0,u_2,u_3,u_4) N^{u_2+u_3+u_4}}{(u_2u_3u_4)^2 (u_2+u_3)(u_2+u_4)(u_3+u_4)} du_2\ du_3\ du_4 , \\
\coli_2 &= \frac{1}{(2\pi i)^4} \int\limits_{(1)}\int\limits_{(1)}\int\limits_{(1)}\int\limits_{(-2\ep)}  \cole_t(\boldu) N^{u_1+u_2+u_3+u_4} \prod_{1\leq i\leq j\leq 4} (u_i+u_j)^{-1} d\boldu.
\end{align*}
%\label{equ:I1I2}

\subsection{Calculation of $\coli_1$}

To evaluate $\coli_1$, we first take the line of integration in $u_2$ to $\Re(u_2) = -2\ep$, passing a pole of order 2 at $u_2 = 0$, and so
\[
\coli_1 = \coli_{1,1} + \coli_{1,2},
\]
where
\begin{align*}
\coli_{1,1} &= \frac{1}{16(2\pi i)^2} \int\limits_{(1)}\int\limits_{(1)} \frac{f_1(u_3,u_4) N^{u_3+u_4}}{(u_3u_4)^2 (u_3+u_4)} du_3\ du_4, \\
\coli_{1,2} &=\frac{1}{16(2\pi i)^3} \int\limits_{(1)}\int\limits_{(1)}\int\limits_{(-2\ep)} \frac{\cole_t(0,u_2,u_3,u_4) N^{u_2+u_3+u_4}}{(u_2u_3u_4)^2 (u_2+u_3)(u_2+u_4)(u_3+u_4)} \, du_2\, du_3\, du_4,
\end{align*}
and
\[
f_1(u_3,u_4) := \res{u_2=0} \fracp{\cole_t(0,u_2,u_3,u_4)N^{u_2}}{u_2^2(u_2+u_3)(u_2+u_4)}
\]
We proceed similarly for $\coli_{1,1}$, taking the line of integration in $u_3$ to $\Re(u_3) = -2\ep$ and passing a pole at $u_3=0$. In the two resulting integrals, we do the same for the variable $u_4$. Ultimately we obtain
\[
\coli_1 = \colr^{\#}  + \coli_{1,2}+ \coli_{1,1,2} + O((1+\abs{t})^{\frac{1}{5}}),
\]
where
\begin{align*}
\colr^{\#} &= \res{\boldu=(0,0,0,0)} \sumpth{\cole_t(\boldu) N^{u_1+u_2+u_3+u_4} \prod_{1\leq i\leq j\leq 4} (u_i+u_j)^{-1}}
\end{align*}
and
\begin{align*}
\coli_{1,1,2} =\frac{1}{16(2\pi i)^2} \int\limits_{(1)}\int\limits_{(-2\ep)} \frac{f_1(u_3,u_4) N^{u_3+u_4}}{(u_3u_4)^2(u_3+u_4)}\, du_3\, du_4.
\end{align*}
For $\coli_{1,1,2}$, we take the line of integration in $u_4$ to $\Re(u_4) = 2\ep + \frac{1}{L}$, passing no poles in the process, and the new integral is $O((1+\abs{t})^{\frac{1}{5}}(\log N))$.

For $\coli_{1,2}$, we move the line of integration in $u_3$ to $\Re(u_3) = \frac{1}{L}$, passing a simple pole at $u_3 = -u_2$, so
\[
\coli_{1,2} = \coli_{1,2,1} + \coli_{1,2,2},
\]
where
\begin{align*}
\coli_{1,2,1} &= \frac{1}{16(2\pi i)^2} \int\limits_{(1)} \int\limits_{(-2\ep)} \frac{\cole_t(0,u_2,-u_2,u_4) N^{u_4}}{u_2^4 u_4^2 (u_2+u_4)(u_4-u_2)}\, du_2\, du_4, \\
\coli_{1,2,2} &= \frac{1}{16(2\pi i)^3} \int\limits_{(1)}\int\limits_{(\frac{1}{L})}\int\limits_{(-2\ep)} \frac{\cole_t(0,u_2,u_3,u_4) N^{u_2+u_3+u_4}}{(u_2u_3u_4)^2 (u_2+u_3)(u_2+u_4)(u_3+u_4)}\, du_2\, du_3\, du_4. \\
\end{align*}
In $\coli_{1,2,2}$, we take the line of integration in $u_4$ to $\Re(u_4) = 2\ep+\frac{1}{L}$, passing no poles in the process, and the new integral is $O((1+\abs{t})^{\frac{1}{5}}(\log N)^3)$. In $\coli_{1,2,1}$, we take the line of integration in $u_4$ to $\Re(u_4) = \frac{1}{L}$, passing a simple pole at $u_4 = -u_2$. The integral on the new line of integration is $O((1+\abs{t})^{\frac{1}{5}}(\log N)^2)$, and so
\[
\coli_{1,2}  = \coli_{1,2,1,1} + O((1+\abs{t})^{\frac{1}{5}}(\log N)^3),
\]
where
\begin{align*}
\coli_{1,2,1,1} &= -\frac{1}{32(2\pi i)} \int\limits_{(-2\ep)} \frac{\cole_t(0,u_2,-u_2,-u_2) N^{-u_2}}{u_2^7}\, du_2= \frac{1}{32(2\pi i)} \int\limits_{(2\ep)} \frac{\cole_t(0,-u_2,u_2,u_2) N^{u_2}}{u_2^7}du_2.
\end{align*}
Combining the above estimates, we have shown
\begin{equation*}\label{eq:coli1reduced}
\coli_1 = \colr^{\#} + \frac{1}{32(2\pi i)} \int\limits_{(2\ep)} \frac{\cole_t(0,-u,u,u) N^{u}}{u^7} du + O((1+\abs{t})^{\frac{1}{5}}(\log N)^3).
\end{equation*}

\subsection{Calculation of $\coli_2$}
We now calculate $\coli_{2}$. First, we move the line of integration in $u_2$ to $\Re(u_2) = \frac{1}{L}$, passing a simple pole at $u_2 = -u_1$, so 
\[
\coli_2 = \coli_{2,1} + \coli_{2,2},
\]
where
\begin{align*}
\coli_{2,1} &= \frac{-1}{16(2\pi i)^3} \int\limits_{(1)}\int\limits_{(1)}\int\limits_{(-2\ep)}   \frac{\cole_t(u_1,-u_1,u_3,u_4) N^{u_3+u_4}}{u_1^2u_3 u_4 (u_1+u_3) (u_1+u_4) (u_3+u_4)(u_3-u_1)(u_4-u_1)}\, du_1\, du_3\, du_4,\\
\coli_{2,2} &= \frac{1}{(2\pi i)^4} \int\limits_{(1)}\int\limits_{(1)}\int\limits_{(\frac{1}{L})}\int\limits_{(-2\ep)}  \cole_t(\boldu) N^{u_1+u_2+u_3+u_4} \prod_{1\leq i\leq j\leq 4} (u_i+u_j)^{-1}\, d\boldu.
\end{align*}
We proceed similarly in $\coli_{2,2}$ and move the line of integration in $u_3$ to $\Re(u_3) = \frac{1}{L}$, again passing only a simple pole at $u_3 = -u_1$, so
\[
\coli_{2,2} = \coli_{2,2,1} + \coli_{2,2,2},
\]
where
\begin{align}
\coli_{2,2,1} &= \frac{-1}{16(2\pi i)^3} \int\limits_{(1)}\int\limits_{(\frac{1}{L})}\int\limits_{(-2\ep)}   \frac{\cole_t(u_1,u_2,-u_1,u_4) N^{u_2+u_4}}{u_1^2u_2 u_4 (u_1+u_2) (u_1+u_4) (u_2+u_4)(u_2-u_1)(u_4-u_1)}\, du_1\, du_2\, du_4,\label{equ:I221}\\
\coli_{2,2,2} &= \frac{1}{(2\pi i)^4} \int\limits_{(1)}\int\limits_{(\frac{1}{L})}\int\limits_{(\frac{1}{L})}\int\limits_{(-2\ep)}  \cole_t(\boldu) N^{u_1+u_2+u_3+u_4} \prod_{1\leq i\leq j\leq 4} (u_i+u_j)^{-1}\, d\boldu.\nonumber
\end{align}
Once more for $\coli_{2,2,2}$, we take the line of integration in $u_4$ to $\Re(u_4) = \ep$, passing only a simple pole at $u_4 = -u_1$, so
\[
\coli_{2,2,2} = \coli_{2,2,2,1} + \coli_{2,2,2,2},
\]
where
\begin{align*}
\coli_{2,2,2,1} &= \frac{-1}{16(2\pi i)^3} \int\limits_{(\frac{1}{L})}\int\limits_{(\frac{1}{L})}\int\limits_{(-2\ep)}  \frac{\cole_t(u_1,u_2,u_3,-u_1) N^{u_2+u_3}}{u_1^2u_2u_3(u_1+u_2)(u_1+u_3)(u_2+u_3)(u_2-u_1)(u_3-u_1)}\, du_1\, du_2\, du_3,\\
&\ll(1+\abs{t})^{\frac{1}{5}}(\log N)^3,\\
\coli_{2,2,2,2} &= \frac{1}{(2\pi i)^4} \int\limits_{(\ep)}\int\limits_{(\frac{1}{L})}\int\limits_{(\frac{1}{L})}\int\limits_{(-2\ep)}  \cole_t(\boldu) N^{u_1+u_2+u_3+u_4} \prod_{1\leq i\leq j\leq 4} (u_i+u_j)^{-1}\, d\boldu\\
&\ll N^{-\ep/2} (1+\abs{t})^{\frac{1}{5}}.
\end{align*}
Hence,
\[
\coli_{2} = \coli_{2,1} + \coli_{2,2,1}  + O((1+\abs{t})^{\frac{1}{5}} (\log N)^3).
\]

We evaluate $\coli_{2,1}, \coli_{2,2,1}$ in reverse order. For $\coli_{2,2,1}$, we take the line of integration in $u_4$ to $\Re(u_4) = \frac{1}{L}$, passing a simple pole at $u_4 = -u_1$. The integral on the new lines of integration may be bounded as $\coli_{2,2,2,1}$ and so is $O((1+\abs{t})^{\frac{1}{5}}(\log N)^3)$. Thus
\begin{align}
\coli_{2,2,1} &= \frac{-1}{32(2\pi i)^2} \int\limits_{(\frac{1}{L})}\int\limits_{(-2\ep)}   \frac{\cole_t(u_1,u_2,-u_1,-u_1) N^{u_2-u_1}}{u_1^4u_2 (u_1+u_2) (u_2-u_1)^2}\, du_1\, du_2 + O((1+\abs{t})^{\frac{1}{5}}(\log N)^3)\nonumber \\
&= \frac{-1}{32(2\pi i)^2} \int\limits_{(\frac{1}{L})}\int\limits_{(2\ep)}   \frac{\cole_t(-u_1,u_2,u_1,u_1) N^{u_2+u_1}}{u_1^4u_2 (u_1+u_2)^2 (u_2-u_1)}\, du_1\, du_2 + O((1+\abs{t})^{\frac{1}{5}}(\log N)^3). \label{equ:I221+C}
\end{align}
We now take the line of integration in $u_2$ to $\Re(u_2) = -2\ep + \frac{1}{L}$, passing a simple pole at $u_2 = 0$. The integral on the new line of integration is $O((1+\abs{t})^{\frac{1}{5}}(\log N)^2)$, and so
\begin{equation}
\coli_{2,2,1} = \frac{1}{32(2\pi i)} \int\limits_{(2\ep)}   \frac{\cole_t(-u_1,0,u_1,u_1) N^{u_1}}{u_1^7}\, du_1+ O((1+\abs{t})^{\frac{1}{5}}(\log N)^3).
\label{equ:I221-A}
\end{equation}
We leave this integral as is for the moment and calculate $\coli_{2,1}$. Proceeding similarly as in the calculation above, our first goal is to take the lines of integration in $u_3$ and $u_4$ to $\Re(u_3) = \Re(u_4) = \frac{1}{L}$. We begin with $u_3$, and in moving the line of integration, we pass a simple pole at $u_3 = -u_1$, so
\[
\coli_{2,1} = \coli_{2,1,1} + \coli_{2,1,2},
\]
where
\begin{align*}
\coli_{2,1,1} &= \frac{-1}{32(2\pi i)^2} \int\limits_{(1)}\int\limits_{(-2\ep)}   \frac{\cole_t(u_1,-u_1,-u_1,u_4) N^{u_4-u_1}}{u_1^4 u_4 (u_1+u_4)(u_4-u_1) ^2}\, du_1\, du_4,\\
&= \frac{-1}{32(2\pi i)^2} \int\limits_{(1)}\int\limits_{(2\ep)}   \frac{\cole_t(-u_1,u_1,u_1,u_4) N^{u_4+u_1}}{u_1^4 u_4 (u_1+u_4)^2(u_4-u_1)}\, du_1\, du_4,\\
\coli_{2,1,2} &= \frac{-1}{16(2\pi i)^3} \int\limits_{(1)}\int\limits_{(\frac{1}{L})}\int\limits_{(-2\ep)}   \frac{\cole_t(u_1,-u_1,u_3,u_4) N^{u_3+u_4}}{u_1^2u_3 u_4 (u_1+u_3) (u_1+u_4)(u_3+u_4)(u_3-u_1)(u_4-u_1)}\, du_1\, du_3\, du_4.
\end{align*}
Note that $\coli_{2,1,2}$ is essentially the same as $\coli_{2,2,1}$ in \eqref{equ:I221}. Shifting contours in a similar manner as that case, by \eqref{equ:I221-A}, we find that
\[
\coli_{2,1,2} = \frac{1}{32(2\pi i)} \int\limits_{(2\ep)}   \frac{\cole_t(-u_1,u_1,0,u_1) N^{u_1}}{u_1^7}\, du_1+ O((1+\abs{t})^{\frac{1}{5}}(\log N)^3).
\]
Note the 0 in the arguments of $\cole_t$ is in the third argument, rather than the second.

The integral $\coli_{2,1,1}$ is quite similar to the expression for $\coli_{2,2,1}$  in \eqref{equ:I221+C}, except that the line of integration in the outer variable lies to the right of $2\ep$. Proceeding as in that case, we take the line of integration in $u_4$ to $\Re(u_4) = -2\ep + \frac{1}{L}$, passing simple poles at $u_4 = 0$ and $u_4 = u_1$. Just as for $\coli_{2,2,1}$, the integral on the new line of integration is $O((1+\abs{t})^{\frac{1}{5}}(\log N)^2)$. Thus
\[
\coli_{2,1,1} = \coli_{2,1,1,1} + \coli_{2,1,1,2} + O((1+\abs{t})^{\frac{1}{5}}(\log N)^2),
\]
where
\begin{align*}
\coli_{2,1,1,1} &= \frac{1}{32(2\pi i)} \int\limits_{(2\ep)}   \frac{\cole_t(-u_1,u_1,u_1,0) N^{u_1}}{u_1^7}\, du_1, \\
\coli_{2,1,1,2} &= \frac{-1}{128(2\pi i)} \int\limits_{(2\ep)}   \frac{\cole_t(-u_1,u_1,u_1,u_1) N^{2u_1}}{u_1^7}\, du_1.
\end{align*}
Collecting together the above calculations, we find that
\begin{align*}
\coli_2 &= \coli_{2,1,1,1} + \coli_{2,1,1,2} + \coli_{2,1,2} + \coli_{2,2,1}+O((1+\abs{t})^{\frac{1}{5}}(\log N)^3) \\ 
&= \frac{1}{32(2\pi i)} \int\limits_{(2\ep)} \frac{N^{u}\cola_t(u,N)}{u^7}\, du +O((1+\abs{t})^{\frac{1}{5}}(\log N)^3),
\end{align*}
where
\[
\cola_t(u,N) = \cole_t(-u,0,u,u) + \cole_t(-u,u,0,u) + \cole_t(-u,u,u,0) - \cole_t(-u,u,u,u) \frac{N^{u}}{4}.
\]

\subsection{Assembling Estimates}

Combining our estimates from the previous two sections, we find that
\[
\coli = \colr^{\#}  + \frac{1}{32(2\pi i)} \int\limits_{(2\ep)} \frac{N^{u}\cola_t^*(u,N)}{u^7}\, du +O((1+\abs{t})^{\frac{1}{5}}(\log N)^3),
\]
where
\begin{align*}
\cola_t^*(\boldu,N) &= \cola_t(\boldu,N)+\cole_t(0,-u,u,u) \\
&=\cole_t(-u,0,u,u) + \cole_t(-u,u,0,u) + \cole_t(-u,u,u,0) + \cole_t(0,-u,u,u) - \cole_t(-u,u,u,u) \frac{N^{u}}{4}.
\end{align*}
Shifting the contours in the integral to $\Re(u)= -\ep$, we pass a pole of order 7. The integral on the new line of integration is $O((1+\abs{t})^{\frac{1}{5}})$, and the residue at $u=0$ is
\[
\res{u=0} \fracp{N^{u}\cola_t^*(u,N)}{u^7} = -\frac{12}{6!}\cole_t(0,0,0,0)(\log N)^6+O((1+\abs{t})^{\frac{1}{5}}(\log N)^5).
\]
We also have
\[
\colr^{\#} = \frac{1}{6!} \cole_t(0,0,0,0)(\log N)^6+O((1+\abs{t})^{\frac{1}{5}}(\log N)^5).
\]
Since
\[
\cole_t(0,0,0,0) = \frac{1}{2^{10}}|\Tilde{G}(it)|^4 \colh_1\pth{\half,\half,\half,\half},
\]
we deduce that
\[
\coli = \frac{|\Tilde{G}(it)|^4}{2^{10}\cdot 6!} \pth{1-\frac{12}{32}} \colh_1\pth{\half,\half,\half,\half} (\log N)^6 + O((1+\abs{t})^{\frac{1}{5}}(\log N)^5).
\]
This completes the proof of Lemma \ref{lem:LS-Diagonal}. 

\section{Cancellation of Square Contributions $\colm^{\square} - {\colt}$}\label{sec:LargeSieve-OffDiagonal}

\subsection{Calculation of $\colt$}

We now calculate the contribution of square $m$ to the sum $\colm$ (see \eqref{equ:def-S}), which is
\[
\colt = \sum_{m} F\fracp{m^2}{X} \sum_{\substack{n_1,\ldots,n_4\\ (n_i,2m)=1}} \frac{1}{(n_1n_2)^{\frac{1}{2}+ i  t}(n_3n_4)^{\frac{1}{2}- i  t}} \prod_{i=1}^4 G\fracp{n_i}{N}.
\]
By M\"{o}bius inversion,
\[
\sum_{(n,2m)=1} \frac{1}{n^{\frac{1}{2}+ i  t}} G\fracp{n}{N} = \sum_{\substack{a \mid m\\ (a,2)=1}} \frac{\mu(a)}{a^{\frac{1}{2}+ i  t}}\sum_{ (n,2)=1} \frac{1}{n^{\frac{1}{2}+ i  t}} G\fracp{an}{N},
\]
and so
\begin{align*}
\colt &= \sum_{m} F\fracp{m^2}{X} \sum_{\substack{a_1,\ldots,a_4 \mid m\\ (a_i,2)=1}} \frac{\mu(a_1)\mu(a_2)\mu(a_3)\mu(a_4)}{(a_1a_2)^{\frac{1}{2}+ i  t}(a_3a_4)^{\frac{1}{2}- i  t}} \sum_{\substack{n_1,\ldots,n_4\\ (n_i,2)=1}} \frac{1}{(n_1n_2)^{\frac{1}{2}+ i  t}(n_3n_4)^{\frac{1}{2}- i  t}} \prod_{i=1}^4 G\fracp{a_in_i}{N}.
\end{align*}
Mellin inversion and a change of variables then gives
\begin{equation}\label{eq:TvarChanged}
\colt = \frac{1}{(2\pi i)^5} \int\limits_{(1)}\cdots\int\limits_{(1)} \colg_t(\boldu) N^{u_1+u_2+u_3+u_4}\tilde{F}(s) X^s D\pth{\boldu+\half, s} \prod_{i=1}^{4} \zeta_2(u_i+\half) \,  d\boldu\, ds,
\end{equation}
where $\colg_t(\boldu)$ is defined in \eqref{eq:colgDef}, and 
\[
D(\boldu,s) = \sum_{m \geq 1} \frac{1}{m^{2s}} \sum_{\substack{a_1,\ldots,a_4  \mid m\\ (a_i,2)=1}} \frac{\mu(a_1)\mu(a_2)\mu(a_3)\mu(a_4)}{a_1^{u_1}a_2^{u_2}a_3^{u_3}a_4^{u_4}} = \prod_{p} D_p(\boldu,s),
\]
and the local factors $D_p$ are defined as follows. If $p=2$, then
\[
D_2(\boldu,s) = \pth{1-\frac{1}{2^{2s}}}^{-1},
\]
and if $p\neq 2$, then
\begin{align*}
D_p(\boldu,s) &= \sum_{m\geq 0} \frac{1}{p^{2ms}} \summany_{\substack{a_1,\ldots,a_4\geq 0 \\ a_i\leq m}} \frac{\mu(p^{a_1})\mu(p^{a_2})\mu(p^{a_3})\mu(p^{a_4})}{p^{a_1u_1}p^{a_2u_2}p^{a_3u_3}p^{a_4u_4}} \\
&= \pth{1-\frac{1}{p^{2s}}}^{-1} \summany_{a_1,\ldots,a_4\geq 0 } \frac{\mu(p^{a_1})\mu(p^{a_2})\mu(p^{a_3})\mu(p^{a_4})}{p^{a_1u_1}p^{a_2u_2}p^{a_3u_3}p^{a_4u_4} p^{2s\max(a_i)}} \\
&=\pth{1-\frac{1}{p^{2s}}}^{-1} Z_{1,p}(\boldu,s),
\end{align*}
say. Here we also define $Z_{1,2}(\boldu,s) = 1$. The sum $Z_{1,p}(\boldu,s)$ contains 16 nonzero terms determined by the condition $\max(a_i) \leq 1$, and we have
\begin{align*}
Z_{1,p}(\boldu,s) &= 1+\frac{1}{p^{2s}}\sum_{\substack{A\subset \set{1,2,3,4}\\ A\neq \emptyset}} (-1)^{\abs{A}} \prod_{i\in A} \frac{1}{p^{u_i}} = 1- \frac{1}{p^{2s}} + \frac{1}{p^{2s}} \prod_{i=1}^4 \pth{1-\frac{1}{p^{u_i}}}.
\end{align*}
Note that if $\Re(u_i) \geq \frac{2}{3}$, $\Re(s) \geq -\frac{1}{10}$, then
\begin{align*}
Z_{1,p}(\boldu,s) &= 1 - \sum_{i=1}^4 \frac{1}{p^{u_i+2s}} + O\fracp{1}{p^{2\min(\Re(u_i)) + 2\Re(s)}} = \prod_{i=1}^4 \sumpth{1-\frac{1}{p^{u_i+2s}}} Z_{1,p}^*(\boldu,s),
\end{align*}
say, where $Z_{1,2}^*(\boldu,s) = 1$ and
\[
Z_{1,p}^*(\boldu,s) = 1+O\sumpth{\frac{1}{p^{1+\frac{2}{15}}}}.
\]
for $p\neq 2$. Writing 
\[
Z_1(\boldu,s) = \prod_{p} Z_{1,p}(\boldu,s), \quad Z_1^*(\boldu,s) = \prod_{p} Z_{1,p}^*(\boldu,s),
\]
we have, in this region,
\[
D_p(\boldu,s) = \zeta(2s) Z_1(\boldu,s) = \zeta(2s)\sumpth{\prod_{i=1}^{4} \zeta_2(u_i+2s)}^{-1} Z_1^*(\boldu,s).
\]
Combining the above calculations gives
\[
D(\boldu,s) = \zeta(2s)\sumpth{\prod_{i=1}^{4} \zeta_2(u_i+2s)}^{-1} Z_1^*(\boldu,s),
\]
where $Z_1^*(\boldu,s)$ is analytic and uniformly bounded in the region $\Re(u_i) \geq \frac{2}{3}$, $\Re(s) \geq -\frac{1}{10}$. Returning to \eqref{eq:TvarChanged}, we have
\begin{align*}
\colt &= \frac{1}{(2\pi i)^5} \int\limits_{(1)}\cdots\int\limits_{(1)} \colg_t(\boldu)N^{u_1+u_2+u_3+u_4}\tilde{F}(s) X^s \zeta(2s) Z_{1}\pth{\boldu+\half, s} \prod_{i=1}^{4} \zeta_2(u_i+\half) \, d\boldu\, ds\\
&=\frac{1}{(2\pi i)^5} \int\limits_{(1)}\cdots\int\limits_{(1)} \colg_t(\boldu)N^{u_1+u_2+u_3+u_4}\tilde{F}(s) X^s \zeta(2s) Z_{1}^*\pth{\boldu+\half, s} \sumpth{\prod_{i=1}^{4} \frac{\zeta_2(u_i+\half)}{\zeta_2(u_i+\half+2s)}}\, d\boldu\, ds
\end{align*}
For use later, we note that for $\Re(u_i) > 0$,
\begin{equation}\label{eq:Z0half}
Z_1(\boldu,\half) = \prod_{p\neq 2} \sumpth{1- \frac{1}{p} + \frac{1}{p} \prod_{i=1}^4 \pth{1-\frac{1}{p^{u_i}}}}.
\end{equation}
In the integral $\colt$, we take the line of integration in $s$ to $\Re(s) = \frac{1}{L}$, where now we put $L = \log X$ (note that we have changed the definition from that given in \eqref{eq:Ldef}). In doing so, we pass a simple pole at $s=\frac{1}{2}$. Thus
\[
\colt = \colr_1 + \colj_1,
\]
where
\begin{equation}\label{eq:ResidueNonDual}
\colr_1= \frac{\tilde{F}\fracp{1}{2}\sqrt{X}}{(2\pi i)^4} \int\limits_{(2)}\cdots\int\limits_{(2)} \colg_t(\boldu)N^{u_1+u_2+u_3+u_4} Z_{1}\pth{\boldu+\half, \half} \prod_{i=1}^{4} \zeta_2(u_i+\half)\, d\boldu
\end{equation}
and
\begin{align}
\colj_1 &= \frac{1}{(2\pi i)^5} \int\limits_{(\frac{1}{L})} \int\limits_{(2)}\cdots\int\limits_{(2)} \colg_t(\boldu)N^{u_1+u_2+u_3+u_4}\tilde{F}(s) X^s \zeta(2s) \nonumber\\
&\quad\times Z_{1}^*\pth{\boldu+\half, s} \sumpth{\prod_{i=1}^{4} \frac{\zeta_2(u_i+\half)}{\zeta_2(u_i+\half+2s)}}\, d\boldu\, ds,
\label{def:ErrorIntegralSquares}
\end{align}
and $\colg_t(\boldu)$ is defined in \eqref{eq:colgDef}.

\subsection{Calculation of $\colm^{\square}$}
Recall from \eqref{def:DualSquares} that
\[
\colm^{\square} = X\sum_{k\geq 1} \sum_{\substack{n_1,\ldots,n_4 \\ (n_i,2)=1}} \frac{1}{(n_1n_2)^{\frac{1}{2}+ i  t} (n_3n_4)^{\frac{1}{2}- i  t}} \frac{G_{k^2}(n_1n_2n_3n_4)}{n_1n_2n_3n_4} \check{F}\fracp{k^2X}{n_1n_2n_3n_4} \prod_{i=1}^4 G\fracp{n_i}{N}.
\]
By Mellin inversion, \eqref{eq:Fhatfirst}, \eqref{def:Z}, Lemma \ref{lem:DirichletSeriesOffDiagonal}, and  a change of variables, we have
\begin{align*}
    \colm^{\square} &= \frac{2X}{(2\pi i)^5} \int\limits_{(\frac{3}{4})} \int\limits_{(1)} \cdots \int\limits_{(1)} \\
    &\quad \times (2\pi X)^{-s} \tilde{F}(1-s) \Gamma(s) \cos\fracp{\pi s}{2} \colg_t(\boldu)N^{u_1+u_2+u_3+u_4} Z(\boldu+\half-s,s;1,1)\, d\boldu\, ds.
\end{align*}
We take the line of integration in $s$ to $\Re(s) = \frac{1}{12}$, passing only a simple pole at $s=\half$. Since
\[
2\Gamma(\tfrac{1}{2}) (2\pi)^{-\frac{1}{2}} \cos(\tfrac{\pi}{4}) = 1,
\]
we obtain
\[
\colm^{\square} = \colr_2 +\colj_2,
\]
where
\[
\colr_2 = \frac{\tilde{F}(\half) \sqrt{X}}{(2\pi i)^4} \int\limits_{(1)} \cdots \int\limits_{(1)} \colg_t(\boldu)N^{u_1+u_2+u_3+u_4} \sumpth{\res{s=\half}  Z(\boldu+\half-s,s;1,1)}\, d\boldu
\]
and

\begin{align}
\colj_2 &= \frac{X}{(2\pi i)^5} \int\limits_{(\frac{1}{12})} \int\limits_{(1)} \cdots \int\limits_{(1)}\colg_t(\boldu)N^{u_1+u_2+u_3+u_4}\tilde{F}(1-s) X^{-s} \Gamma(s) \cos\fracp{\pi s}{2} 2^{1-s} \pi^{-s}\nonumber\\
&\quad\times Z(\boldu+\half-s,s;1,1)\, d\boldu\, ds.
\label{def:ErrorIntegralDualSquares}
\end{align}
By \eqref{eq:resZathalf} and \eqref{eq:Z0half}, we have
\begin{align*}
\res{s=\half}  Z(\boldu+\half-s,s;1,1) &= \prod_{i=1}^4 \zeta_2(u_i+\half) \prod_{p\neq 2}\sumpth{1-\frac{1}{p} + \frac{1}{p}\prod_{i=1}^4  \sumpth{1-\frac{1}{p^{\frac{1}{2}+u_i}}}} \\
&= Z_1(\boldu+\half,\half)\prod_{i=1}^4 \zeta_2(u_i+\half),
\end{align*}
and so $\colr_1 = \colr_2$. Combining the above calculations with \eqref{eq:ResidueNonDual} and \eqref{def:ErrorIntegralSquares}, we find that
\[
\colm^{\square} - \colt= \colj_2 - \colj_1,
\]
where $\colj_1$ is given by \eqref{def:ErrorIntegralSquares} and $\colj_2$ is given by \eqref{def:ErrorIntegralDualSquares}. 

\subsection{Estimation of $\colj_1$}
To estimate $\colj_1$, we change variables $u_i\mapsto u_i + \frac{1}{2}$ and take the four lines of integration in $u_i$ to $\Re(u_i) = 1$, so
\begin{align*}
\colj_1 &= \frac{N^2}{(2\pi i)^5} \int\limits_{(\frac{1}{L})} \int\limits_{(1)} \cdots\int\limits_{(1)}
 \colg_t(\boldu+\half)N^{u_1+u_2+u_3+u_4}\tilde{F}(s) X^s \zeta(2s) \\
 &\quad \times Z_{1}^*\pth{\boldu+1, s} \sumpth{\prod_{i=1}^{4} \frac{\zeta_2(u_i+1)}{\zeta_2(u_i+1+2s)}} \, d\boldu\, ds.
\end{align*}
where $Z_1^*(1+\boldu,s)$ is analytic and uniformly bounded for $\Re(u_i) \geq -\frac{1}{3}$ and $\Re(s) \geq -\frac{1}{10}$ and recall that $L = \log X$ in this section. We now take the lines of integration in $u_i$ to $\Re(u_i) = \frac{1}{L}$, passing no poles in the process. We estimate the various factors inside the integral as follows:
\begin{itemize}
\item We have
\[
\colg_t(\boldu+\half) \ll \prod_{i=1}^4 \frac{1}{(1+\abs{u_i\pm it})^{10}}.
\]
with the $\pm$ convention as in Section \ref{sec:LargeSieve-Diagonal}. 
\item Integrating by parts twice, we have
\[
\tilde{F}(s) = \frac{1}{s(s+1)}  \int\limits_{0}^{\infty} F''(t) t^{s+1} dt.
\]
 Thus
\[
\tilde{F}(\sigma+ i  t) \ll \frac{1}{(\sigma+\abs{t})(1+\abs{t})} \ll \frac{\sigma^{-1}}{(1+\abs{t})^2}.
\]
\item By the standard convexity estimate for $\zeta$, we have
\[
\zeta(2s) \ll (1+\abs{\Im(s)})^{\frac{1}{2}}
\]
uniformly for $0 \leq \Re(s) \leq \frac{1}{2}$.
\item Since $\Re(s) = \Re(u_i) = \frac{1}{L}$ and $X > N^2$, we have 
\[
X^s \prod_{i=1}^4 N^{u_i} \ll 1.
\]
Using the zero free region for $\zeta$, we have
\[
\frac{1}{\zeta(u_i+1+2s)} \ll \log(2+\abs{\Im(u_i+s)})
\]
uniformly for $\Re(u_i) \geq 0$, $\Re(s) \geq 0$.
\item For $\Re(u_i) = \frac{1}{L}$, we have
\[
\zeta(1+u_i) \ll L.
\]
\end{itemize}
Putting $t_i = \Im(u_i)$ for $i=1,2,3,4$ and $t_5 = \Im(s)$, we combine all of the above estimates to see that
\begin{align*}
\colj_1 &\ll N^2 L^5 \int\limits_{-\infty}^\infty \cdots   \int\limits_{-\infty}^\infty\frac{1}{(1+\abs{t_5})^\frac{3}{2}} \prod_{i=1}^4 \frac{\log(2+\abs{t_i}+\abs{t_5})}{(1+\abs{t_i\pm it})^{10}}\, dt_1 \cdots dt_5 \\
&\ll N^2 L^5 \log^4(2+\abs{t}) \int\limits_{-\infty}^\infty \cdots   \int\limits_{-\infty}^\infty\frac{\log^4(2+\abs{t_5})}{(1+\abs{t_5})^\frac{3}{2}} \prod_{i=1}^4 \frac{\log(2+\abs{t_i})}{(1+\abs{t_i})^{10}}\, dt_1 \cdots dt_5 \\
&\ll N^2 L^5 \log^4(2+\abs{t}).
\end{align*}

Here we have used the inequality
\begin{equation}\label{eq:logSumProd}
\log(2+a_1+\cdots +a_r) \ll_r \prod_{i=1}^r \log(2+a_r)
\end{equation}
for $a_i \geq 0$ and $r\geq 1$. Therefore
\begin{equation*}\label{eq:J1Bound}
\colj_1 \ll N^2 (\log X)^5 \log^4(2+\abs{t}) \ll N^2 (1+\abs{t})^{\frac{1}{5}}(\log X)^5.
\end{equation*}

\subsection{Calculation of $\colj_2$}\label{sec:colj2}
Before evaluating the integral $\colj_2$ in \eqref{def:ErrorIntegralDualSquares}, let us make the following observation: the integrand has a simple pole at $s=0$ arising from the factor of $\Gamma(s)$. The residue $\colr$, say, of the integrand at this point is
\[
\colr = \frac{2\tilde{F}(1)X}{(2\pi i)^4} \int\limits_{(1)} \cdots \int\limits_{(1)}\colg_t(\boldu)N^{u_1+u_2+u_3+u_4} \sumpth{\res{s=0} Z(\boldu+\half-s,s;1,1)} d\boldu.
\]
By \eqref{eq:resZat0} and the proof of Lemma \ref{lem:DirichletSeriesDiagonal} (see \eqref{equ:D1-111++}), we have
\[
2\sumpth{\res{s=0} Z(\boldu+\half-s,s;1,1)} = -\prod_{p\neq 2} \sumpth{\frac{1}{p} + \pth{1-\frac{1}{p}}B_p(\boldu+\half;0)} = - \cold_1(\boldu+\half),
\]
and so by \eqref{eq:DiagonalIntegral},
\[
\colr = -\colm^{0}.
\]
Unfortunately, this perceived cancellation is somewhat ``illusory''. As our calculations below demonstrate, there are, in fact, additional poles of the integrand which contribute terms that do not cancel with $\colm^{0}$. Just as we did with \eqref{eq:DiagonalIntegral}, we will show that $\colj_2$ may be reduced to an integral in a single variable plus an acceptable error term. Calculating the leading order term of this integral then yields Lemma \ref{lem:LS-SquareCancellation}. 

Proceeding to details, we have
\begin{align*}
\colj_2 &= \frac{X}{(2\pi i)^5} \int\limits_{(\frac{1}{12})} \int\limits_{(1)} \cdots \int\limits_{(1)}\colg_t(\boldu)N^{u_1+u_2+u_3+u_4}\tilde{F}(1-s) X^{-s} \Gamma(s) \cos\fracp{\pi s}{2} 2^{1-s} \pi^{-s}\\
&\quad\times Z(\boldu+\half-s,s;1,1)\, d\boldu\, ds \\
&= \frac{X}{(2\pi i)^5} \int\limits_{(\eta+\frac{1}{L})} \int\limits_{(1)} \cdots \int\limits_{(1)}\colg_t(\boldu+s)\fracp{N^4}{X}^s N^{u_1+u_2+u_3+u_4}\tilde{F}(1-s)  \Gamma(s) \cos\fracp{\pi s}{2} 2^{1-s} \pi^{-s}\\
&\quad\times \zeta(2s) \prod_{i=1}^4 \frac{\zeta_2\pth{1+u_i}}{\zeta_2(1+u_i+2s)}\prod_{1\leq i\leq j\leq {4}} \frac{\zeta_2(1+u_i+u_j+2s)}{\zeta_2(2+u_i+u_j)} Z_2(\boldu+\half,s;1,1) d\boldu\, ds 
\end{align*}
after changing variables $u_i \mapsto u_i+s$, applying the identity for $Z$ given in Lemma \ref{lem:DirichletSeriesOffDiagonal}, and shifting the line of integration in $s$ to $\Re(s) = \eta + \frac{1}{L}$ for some small constant $\eta>0$ that we will choose later. Here, as before, $L = \log X$. We first isolate the relevant portions of the integrand of $\colj_2$ as follows. Let
\begin{align*}
\colg_t(\boldu;s) &= 2^{1-s} \pi^{-s} \tilde{F}(1-s) \cos\fracp{\pi s}{2} \zeta(2s) Z_2(\boldu+\half,s;1,1) \colg_t(\boldu+s)\prod_{1\leq i\leq j\leq 4} \frac{1}{\zeta_2\pth{2+u_i+u_j}} \\
&\quad \times s \Gamma(s) \sumpth{\prod_{i=1}^4 \frac{\zeta_2(1+u_i)u_i}{\zeta_2(1+u_i+2s)(u_i+2s)}} \sumpth{\prod_{1\leq i\leq j\leq 4} \zeta_2\pth{1+u_i+u_j+2s}(u_i+u_j+2s)}.
\end{align*}
In the region 
\[
- \frac{1}{20} \leq \Re(s) \leq \frac{1}{4}, \quad \Re(u_i) \geq -\frac{1}{10},
\]
the following estimates hold uniformly in the parameters $u_i$ and $s$:
\begin{itemize}
\item By Lemma \ref{lem:DirichletSeriesOffDiagonal}, the function $Z_2(\boldu+\half,s;1,1)$ is analytic and bounded.
\item The factors of $\zeta_2(2+u_i+u_j)^{-1}$ are bounded.
\item We have
\[
\colg_t(\boldu+s) \ll \prod_{i=1}^4 \frac{1}{(1+\abs{u_i+s\pm it})^{10}},
\]
with the $\pm$ convention as in Section \ref{sec:LargeSieve-Diagonal}.
\item By Stirling's formula and the convexity estimate for $\zeta$ (see Section \ref{sec:LargeSieve-Diagonal}), we have
\[
s\Gamma(s) \cos\fracp{\pi s}{2}\zeta(2s) \ll (1+\abs{\Im(s)})^2.
\]
and
\[
\zeta_2(1+u_i) \ll  \frac{1}{\abs{u_i}}+(1+ \abs{\Im(u_i)})^{\max(0, \frac{-\Re(u_i)}{2})+\ep}
\]
\item By repeated integration by parts, we have
\[
\tilde{F}(1-s) \ll \frac{1}{(1+\abs{\Im(s)})^{12}}.
\]
\item The other four factors of $\zeta^{-1}$ may be bounded using \eqref{eq:ZeroFreeRegion} and \eqref{eq:1/ZetaUpper}: $\zeta(\sigma+it)$ has no zeros in the region
\[
\sigma \geq 1 - \frac{c}{\log(2+\abs{t})},
\]
and in this region
\[
\frac{1}{\zeta_2(\sigma+it)} \ll \log(2+\abs{t}).
\]
Thus in the region
\[
- \frac{1}{20} \leq \Re(s) \leq \frac{1}{4}, \qquad \Re(u_i)\geq -2\Re(s), 
\]
we have the uniform estimate
\[
\prod_{i=1}^4 \frac{1}{\zeta_2(1+u_i+2s)(u_i+2s)} \ll \prod_{i=1}^4 \log(2+\abs{\Im(u_i+2s)}).
\]
\end{itemize}
With the above notation, we have
\[
\colj_2 = \frac{X}{(2\pi i)^5} \int\limits_{(\eta+\frac{1}{L})} \int\limits_{(1)} \cdots \int\limits_{(1)} \fracp{N^4}{X}^s N^{u_1+u_2+u_3+u_4} \colg_t(\boldu;s)\prod_{i=1}^4 \frac{u_i+2s}{u_i}\prod_{1\leq i\leq j\leq 4} \frac{1}{u_i+u_j+2s} d\boldu\, \frac{ds}{s}.
\]
 We begin by taking $u_1$ to $\Re(u_1) = -\eta$, passing a simple pole at $u_1=0$. Thus
\[
\colj_2 = \colj_{2,1} + \colj_{2,2},
\]
where
\begin{align*}
\colj_{2,1} &= \frac{X}{(2\pi i)^4} \int\limits_{(\eta+\frac{1}{L})} \int\limits_{(1)} \int\limits_{(1)} \int\limits_{(1)} \fracp{N^4}{X}^sN^{u_2+u_3+u_4} \frac{\colg_t(0,u_2,u_3,u_4;s)}{u_2u_3u_4} \\
&\quad\times\prod_{2\leq i\leq j\leq 4} \frac{1}{u_i+u_j+2s} du_2\, du_3\, du_4\, \frac{ds}{s}, \\
\colj_{2,2} &= \frac{X}{(2\pi i)^5} \int\limits_{(\eta+\frac{1}{L})} \int\limits_{(-\eta)}\int\limits_{(1)}\int\limits_{(1)}\int\limits_{(1)} \fracp{N^4}{X}^sN^{u_1+u_2+u_3+u_4} \colg_t(\boldu;s)\\
&\quad\times\prod_{i=1}^4 \frac{u_i+2s}{u_i}\prod_{1\leq i\leq j\leq 4} \frac{1}{u_i+u_j+2s}\, d\boldu\, \frac{ds}{s}.
\end{align*}
In both integrals above, we take the lines of integration in $u_2$ to $\Re(u_2) = -\eta$, passing a simple pole at $u_2 = 0$ in each. This gives a total of four integrals. In each of these four integrals, we then take the line of integration in $u_3$ to $\Re(u_3) = -\eta$, and then we do the same for the variable $u_4$. In shifting contours this way, we pass simple poles at $u_3 = u_4 = 0$, and we obtain 16 integrals in total. To list these concisely, recall the definition of the notation $\underset{x=x_0}{\operatorname{Value}\,}$ from the introduction. We have
\[
\colj_2 = \colr_0 + \sum_{i=1}^4 \colr_1^{(i)} + \sum_{1\leq i < j \leq 4} \colr_2^{(i,j)} + \sum_{i=1}^4 \colr_3^{(i)} + \colr_4,
\]
where
\begin{align*}
\colr_0 &= \frac{X}{64(2\pi i)} \int\limits_{(\eta+\frac{1}{L})}  \fracp{N^4}{X}^s \colg_t(0,0,0,0;s) \frac{ds}{s^7},\nonumber%\label{eq:r0def}
\\
\colr_1^{(i)} &= \frac{X}{8(2\pi i)^2} \int\limits_{(\eta+\frac{1}{L})} \int\limits_{(-\eta)} \frac{N^{u_i}}{u_i(u_i+2s)^2(2u_i+2s)} \fracp{N^4}{X}^s\underset{\substack{u_k=0\\ k\neq i}}{\operatorname{Value}\,}  \colg_t(\boldu;s) \, du_i\, \frac{ds}{s^4},\nonumber\\
\colr_2^{(i,j)} &= \frac{X}{2(2\pi i)^3} \int\limits_{(\eta+\frac{1}{L})} \int\limits_{(-\eta)}\int\limits_{(-\eta)}  \frac{N^{u_i+u_j}}{u_iu_j(u_i+2s)(u_j+2s)(2u_i+2s)(2u_j+2s)(u_i+u_j+2s)} \nonumber\\
&\quad\times \fracp{N^4}{X}^s\underset{\substack{u_k=0,\\ k\neq i,j}}{\operatorname{Value}\,}\colg_t(\boldu;s)\, du_i\,du_j\, \frac{ds}{s^2},\nonumber\\
\colr_3^{(\ell)} &= \frac{X}{(2\pi i)^4} \int\limits_{(\eta+\frac{1}{L})} \int\limits_{(-\eta)} \int\limits_{(-\eta)} \int\limits_{(-\eta)} \frac{N^{u_1+u_2+u_3+u_4-u_\ell} u_\ell}{u_1u_2u_3u_4} \prod_{\substack{1\leq i\leq j\leq 4\\ i,j\neq \ell}} \frac{1}{u_i+u_j+2s} \nonumber\\
&\quad\times \fracp{N^4}{X}^s\underset{u_\ell=0}{\operatorname{Value}\,} \colg_t(\boldu;s)\, d\boldu^{(\ell)}\, \frac{ds}{s}, \nonumber
\end{align*}
where $d\boldu^{(\ell)}$ means $du_1\, \ldots du_4$ with $du_\ell$ removed, and
\begin{align*}
\colr_4& = \frac{X}{(2\pi i)^5} \int\limits_{(\eta+\frac{1}{L})} \int\limits_{(-\eta)}\int\limits_{(-\eta)}\int\limits_{(-\eta)}\int\limits_{(-\eta)} \fracp{N^4}{X}^s N^{u_1+u_2+u_3+u_4} \colg_t(\boldu;s)\\
&\quad \times \prod_{i=1}^4 \frac{u_i+2s}{u_i}\prod_{1\leq i\leq j\leq 4} \frac{1}{u_i+u_j+2s} d\boldu\, \frac{ds}{s}.
\end{align*}
Since $X > N^2$, we immediately have
\[
\colr_4,\colr_3^{(\ell)} \ll X^{1-\frac{\eta}{2}}(1+\abs{t})^{\frac{1}{5}}, \qquad \colr_2^{(i,j)} \ll X(\log X)^3(1+\abs{t})^{\frac{1}{5}}
\]
upon using the estimates for $\colg_t$ given above so long as $\eta$ is small enough. Therefore
\[
\colj_2 = \colr_0 + \sum_{i=1}^4 \colr_1^{(i)} + O\pth{X(\log X)^3(1+\abs{t})^{\frac{1}{5}}}.
\]
For each of the four integrals $\colr_1^{(i)}$, we take the line of integration in $u_i$ to $\Re(u_i) = -2\eta$, passing a simple pole at $u_i = -s$. The integral on the new line of integration is $O\pth{X(\log X)^2(1+\abs{t})^{\frac{1}{5}}}$, and thus
\[
\colr_1^{(i)} = \colp^{(i)} + O\pth{X(\log X)^2(1+\abs{t})^{\frac{1}{5}}},
\]
where
\[
\colp^{(1)} = -\frac{X}{16(2\pi i)^2} \int\limits_{(\eta+\frac{1}{L})} \fracp{N^3}{X}^s  \colg_t(-s,0,0,0;s) \frac{ds}{s^7}.
\]
and $\colp^{(i)}$ are defined similarly for $i=2,3,4$. 
% and
% \[
% \colq^{(i)} = \frac{X}{8(2\pi i)^2} \int\limits_{(\eta+\frac{1}{L})} \fracp{N^4}{X}^s  \res{u_i=-2s}\bigg[\frac{N^{u_i}\colg_t^{(i)}(s)}{u_i(u_i+2s)^2(2u_i+2s)}\bigg] \frac{ds}{s^4}.
% \]
% A simple calculation gives, in the case $i=1$,
% \[
% \res{u_1=-2s}\bigg[\frac{N^{u_i}\colg_t^{(i)}(s)}{u_i(u_i+2s)^2(2u_i+2s)}\bigg] = \frac{N^{-2 s} (2 s \colg_t^{(1,0,0,0,0)}(-2s,0,0,0;s)+\colg_t(-2s,0,0,0;s) (2 s\log N+3))}{8 s^3}.
% \]
% Since $X > N^2$, 
Thus
\begin{equation}\label{eq:DualSquaresIntegralReduced}
\colj_2 = \colr_0 + \sum_{i=1}^4 \colp^{(i)} + O\pth{X(\log X)^3(1+\abs{t})^{\frac{1}{5}}}.
\end{equation} 
Note that
\begin{align*}
\colg_t(0,0,0,0;s) &= \frac{1}{16\zeta_2(2)^{10}}2^{1-s} \pi^{-s} \tilde{F}(1-s) \cos\fracp{\pi s}{2} \zeta(2s) Z_2(\half,\half,\half,\half,s;1,1) \colg_t(s,s,s,s)\\
&\quad \times s \Gamma(s)(\zeta_2\pth{1+2s}(2s))^6.
\end{align*}
\begin{align*}
\colg_t(-s,0,0,0;s) &= \frac{1}{16\zeta_2(2)^6}2^{1-s} \pi^{-s} \tilde{F}(1-s) \cos\fracp{\pi s}{2} \zeta(2s) Z_2(\half-s,\half,\half,\half,s;1,1) \colg_t(-s,0,0,0)  \\
&\quad \times s \Gamma(s) \frac{\zeta_2(1-s)(-s)(\zeta_2(1+s)s)^2(\zeta_2(1+2s)(2s))^3}{\zeta_2\pth{2-2s}\zeta_2(2-s)^3}
\end{align*}
and $\colg_t(0,-s,0,0;s)$, $\colg_t(0,0,-s,0;s)$, $\colg_t(0,0,0,-s;s)$ are analogous to $\colg_t(-s,0,0,0;s)$.  In particular, since we will have $-\frac{1}{10} \leq \Re(s) \leq \frac{1}{4}$, we do not need to use the zero free region for $\zeta$ when shifting contours in $s$ to the left.

We estimate the integrals in \eqref{eq:DualSquaresIntegralReduced} as follows. If $X > N^4$, we keep the lines of integration in $s$ on $\Re(s) = \eta+\frac{1}{L}$, and in this case $\colj_2 \ll X(\log X)^3(1+\abs{t})^{\frac{1}{5}}$. If $N^3 < X \leq N^4$, then $\colp^{(i)} \ll X(1+\abs{t})^{\frac{1}{5}}$ and 
\[
\colj_2 = \frac{X}{64(2\pi i)} \int\limits_{(\eta+\frac{1}{L})}X^{-s}N^{4s} \colg_t(0,0,0,0;s) \frac{ds}{s^7}+ O\pth{X(\log X)^3(1+\abs{t})^{\frac{1}{5}}}.
\]
We then take the line of integration in $s$ to $\Re(s) = -\frac{1}{10}$, passing the pole of order 7 at $s=0$. The integral on the new line of integration is $O(X(1+\abs{t})^{\frac{1}{5}})$, and so
\[
\colj_2 = \frac{\colg_t(0,0,0,0;0)X}{64\cdot 6!} \sumpth{\log\fracp{N^4}{X}}^6  + O\pth{X(\log X)^5(1+\abs{t})^{\frac{1}{5}}}.
\]
Lastly, if $N^2 < X \leq N^3$, we shift all lines of integration to $\Re(s) = -\frac{1}{10}$, and in this case
\[
\colj_2 = \frac{\colg_t(0,0,0,0;0)X}{64 \cdot 6!} \sumpth{\sumpth{\log\fracp{N^4}{X}}^6 - 4\sumpth{\log\fracp{N^3}{X}}^6} + O\pth{X(\log X)^5(1+\abs{t})^{\frac{1}{5}}}.
\]
Defining 
\begin{equation*}\label{eq:logstar}
\log^*v = \max(0,\log v),
\end{equation*}
we always have
\[
\colj_2 = \frac{\colg_t(0,0,0,0;0)X}{64 \cdot 6!} \sumpth{\sumpth{\log^*\fracp{N^4}{X}}^6 - 4\sumpth{\log^*\fracp{N^3}{X}}^6} + O\pth{X(\log X)^5(1+\abs{t})^{\frac{1}{5}}}.
\]
Now
\[
\colg_t(0,0,0,0;0) = -\frac{\tilde{F}(1)}{2^{10}\zeta_2(2)^{10}} Z_2(\half,\half,\half,\half,0;1,1) |\Tilde{G}(it)|^4,
\]
where, by \eqref{eq:Z2, p not dividing k1},
\[
Z_2(\half,\half,\half,\half,0;1,1) = \prod_{p\neq 2} Z_{2,p}(\half,\half,\half,\half,0;1,1)
\]
and by \eqref{eq:Chalf},
\begin{align*}
Z_{2,p}(\half,\half,\half,\half,0;1,1) &= \prod_{1\leq i\leq j\leq 4}\pth{1-\frac{1}{p^{2}}}^{-1} \sumpth{\frac{1}{p}\prod_{1\leq i \leq j \leq 4} \pth{1-\frac{1}{p}} + C_p(\half,\half,\half,\half) \sumpth{1-\frac{1}{p}}} \\
&= \pth{1-\frac{1}{p^2}}^{-10} \sumpth{\frac{1}{p}\pth{1-\frac{1}{p}}^{10} + \pth{1-\frac{1}{p}}^{7}\pth{1+\frac{6}{p}+\frac{1}{p^2}}}.
\end{align*}
Thus by \eqref{eq:Colh1Half}, we have
\[
Z_2(\half,\half,\half,\half,0;1,1) = \zeta_2(2)^{10}\colh_1\pth{\half,\half,\half,\half},
\]
and so
\[
\colg_t(0,0,0,0;0) = -\frac{\tilde{F}(1)}{2^{10}}|\Tilde{G}(it)|^4 \colh_1\pth{\half,\half,\half,\half}.
\]
This gives Lemma \ref{lem:LS-SquareCancellation}.

\section{Off-diagonal, Non-square Contribution $\colm^{\neq \square}$}\label{sec:LargeSieve-NonSqare}
Recall from \eqref{def:NonSquare} that
\[
\colm^{\neq \Box} = X\sum_{k\neq 0,\square} \sum_{\substack{n_1,\ldots,n_4 \\ (n_i,2)=1}} \frac{1}{(n_1n_2)^{\frac{1}{2}+ i  t} (n_3n_4)^{\frac{1}{2}- i  t}} \frac{G_k(n_1n_2n_3n_4)}{n_1n_2n_3n_4} \check{F}\fracp{kX}{n_1n_2n_3n_4} \prod_{i=1}^4 G\fracp{n_i}{N}.
\]
By \eqref{eq:Fbounds} and Lemma \ref{lem:ConvenientTestFandG}, we have that $\check{F}$ is supported on $[-\frac{1}{64},\frac{1}{64}]$ and $G$ is supported on $[\frac{3}{4},2]$. Thus the range of $k$ is
\begin{align}
\abs{k} \leq \frac{N^4}{4X} =: K,
\label{equ:defK}
\end{align}
say. For each $k\neq \square$, we write $k= k_1k_2^2$ with $k_1\neq 1$ squarefree. By Mellin inversion, \eqref{eq:Fhatfirst}, Lemma \ref{lem:DirichletSeriesOffDiagonal}, and a change of variables, we have
%\label{def:PoissonOffDiagMellin}
\begin{align*}
\colm^{\neq \square} &= \frac{2X}{(2\pi i)^5} \int\limits_{(\frac{3}{5})} \int\limits_{(1)} \cdots \int\limits_{(1)} (2\pi X)^{-s} \tilde{F}(1-s) \Gamma(s) \cos\fracp{\pi s}{2} \colg_t(\boldu)N^{u_1+u_2+u_3+u_4}\\
&\quad\times \zeta(2s)\sumstar_{\substack{\abs{k_1} \leq K\\ k_1\neq 0,1}} \frac{1}{\abs{k_1}^s} \sumpth{\prod_{i=1}^4 L\pth{1+u_i-s,\chi_\mathfrak{m}}} Y\pth{\boldu+\half-s,s;k_1,1} \, d\boldu\, ds,
\end{align*}
where recall that $\mathfrak{m} = \mathfrak{m}(k_1)$ is the fundamental discriminant defined in \eqref{def:m(k)}.
Writing out the multiple Dirichlet series
\[
Y(\boldu,s;k_1) = \sum_{r_1,\ldots,r_5} \frac{C_{k_1}(r_1,r_2,r_3,r_4,r_5)}{r_1^{u_1}r_2^{u_2}r_3^{u_3}r_4^{u_4} r_5^{2s}}, 
\]
we have
\begin{align*}
&\frac{2X}{(2\pi i)^4} \int\limits_{(1)}\cdots\int\limits_{(1)} \colg_t(\boldu)N^{u_1+u_2+u_3+u_4} \sumpth{\prod_{i=1}^4 L\pth{1+u_i-s,\chi_\mathfrak{m}}} Y\pth{\boldu +\half-s,s;k_1,1} d\boldu \\
&= \sum_{r_1,\ldots,r_5} \frac{C_{k_1}(r_1,r_2,r_3,r_4,r_5)}{r_1^{\frac{1}{2}+ i  t-s}r_2^{\frac{1}{2}+ i  t-s}r_3^{\frac{1}{2}- i  t-s}r_4^{\frac{1}{2}- i  t-s} r_5^{2s}} \sum_{n_1,\ldots,n_4} \frac{\chi_\mathfrak{m}(n_1n_2n_3n_4)}{n_1^{1+ i  t-s}n_2^{1+ i  t-s}n_3^{1- i  t-s}n_4^{1- i  t-s}} \prod_{i=1}^4 G\fracp{n_ir_i}{N} \\
&= \sumd_{R_1,\ldots, R_4} \sum_{r_1,\ldots,r_5} \frac{C_{k_1}(r_1,r_2,r_3,r_4,r_5)}{r_1^{\frac{1}{2}+ i  t-s}r_2^{\frac{1}{2}+ i  t-s}r_3^{\frac{1}{2}- i  t-s}r_4^{\frac{1}{2}- i  t-s} r_5^{2s}} \prod_{i=1}^4 G\fracp{r_i}{R_i} \\
&\quad\times \sum_{n_1,\ldots,n_4} \frac{\chi_\mathfrak{m}(n_1n_2n_3n_4)}{n_1^{1+ i  t+u_1-s}n_2^{1+ i  t-s}n_3^{1- i  t-s}n_4^{1- i  t-s}} \prod_{i=1}^4 G\fracp{n_ir_i}{N} V\fracp{n_iR_i}{N},
\end{align*}
where we have applied partitions of unity to the sums over $r_i$ and included redundant factors $V$, which we define as follows. The sum $\sumd_{R_1,\ldots,R_4}$ denotes sum over $R_i = 2^j$ for $j\geq 0$. Since $G(\frac{r_in_i}{N})$ restricts $\frac{3N}{4r_i} \leq n_i \leq \frac{2N}{r_i}$ and $G(\frac{r_i}{R_i})$ restricts $\frac{3}{4} R_i \leq r_i \leq 2R_i$, the only nonzero terms in sums over $n_i$ satisfy
\[
\frac{3N}{8R_i} \leq n_i \leq \frac{8N}{3R_i}. 
\]
Thus the above holds for any redundant factor $V$ which is identically 1 on $[\frac{3}{8},\frac{8}{3}]$. We set
\[
V_1(x) = G(x/2) + G(x) + G(2x) + G(4x)
\]
so that $V$ is identically 1 on $[\frac{1}{4},3]$. We now apply Mellin inversion once to obtain
\begin{align*}
\colm^{\neq \square} &= \sumd_{R_1,\ldots, R_4} \frac{2X}{(2\pi i)^5} \int\limits_{(\frac{3}{5})} \int\limits_{(1)} \cdots \int\limits_{(1)} (2\pi X)^{-s} \tilde{F}(1-s) \Gamma(s) \cos\fracp{\pi s}{2} \sumpth{\prod_{i=1}^4 \tilde{G}(u_i)}N^{u_1+u_2+u_3+u_4}\\
&\quad\times \zeta(2s)\sumstar_{\substack{\abs{k_1} \leq K\\ k_1\neq 0,1}} \frac{1}{\abs{k_1}^s} \sum_{r_1,\ldots,r_5} \frac{C_{k_1}(r_1,r_2,r_3,r_4,r_5)}{r_1^{\frac{1}{2}+ i  t+u_1-s}r_2^{\frac{1}{2}+ i  t+u_2-s}r_3^{\frac{1}{2}- i  t+u_3-s}r_4^{\frac{1}{2}- i  t+u_4-s} r_5^{2s}} \prod_{i=1}^4 G\fracp{r_i}{R_i} \\
&\quad\times \sum_{n_1,\ldots,n_4} \frac{\chi_\mathfrak{m}(n_1n_2n_3n_4)}{n_1^{1+ i  t+u_1-s}n_2^{1+ i  t+u_2-s}n_3^{1- i  t+u_3-s}n_4^{1- i  t+u_4-s}} \prod_{i=1}^4 V_1\fracp{n_iR_i}{N} \, d\boldu\, ds,
\end{align*}
Changing variables $u_i \mapsto u_i +s$ and recalling the definition of $V_1$, we see that the above is finite sum of terms of the form
\begin{align}
&\sumd_{R_1,\ldots, R_4} \frac{2X}{(2\pi i)^5} \int\limits_{(\frac{3}{5})} \int\limits_{(-\frac{1}{2})} \cdots \int\limits_{(-\frac{1}{2})} (2\pi X)^{-s} \tilde{F}(1-s) \Gamma(s) \cos\fracp{\pi s}{2} \sumpth{\prod_{i=1}^4 \tilde{G}(u_i+s)}N^{u_1+u_2+u_3+u_4+4s}\nonumber\\
&\times \zeta(2s)\sumstar_{\substack{\abs{k_1} \leq K\\ k_1\neq 0,1}} \frac{1}{\abs{k_1}^s} \sum_{r_1,\ldots,r_5} \frac{C_{k_1}(r_1,r_2,r_3,r_4,r_5)}{r_1^{\frac{1}{2}+ i  t+u_1}r_2^{\frac{1}{2}+ i  t+u_2}r_3^{\frac{1}{2}- i  t+u_3}r_4^{\frac{1}{2}- i  t+u_4} r_5^{2s}} \prod_{i=1}^4 G\fracp{r_i}{R_i}\nonumber \\
&\times \sum_{n_1,\ldots,n_4} \frac{\chi_\mathfrak{m}(n_1n_2n_3n_4)}{n_1^{1+ i  t+u_1}n_2^{1+ i  t+u_2}n_3^{1- i  t+u_3}n_4^{1- i  t+u_4}} \prod_{i=1}^4 G\fracp{n_i}{N_i} \, d\boldu\, ds,
\label{eq:OffDiagSeparated}
\end{align}
where $N_i \asymp N/R_i$. The variables $n_i$ and $r_i$ have been separated, and we now bound the sums over $r_i$.

\begin{lem}\label{lem:CsumBound}
Suppose $\Re(s) \geq \frac{3}{5}$ and write $u_i = -1/2+i\mu_i$, for real $\mu_i$. For any $k_1 \neq 0,1$, we have
\begin{align*}
&\sumabs{\sum_{r_1,\ldots,r_5} \frac{C_{k_1}(r_1,r_2,r_3,r_4,r_5)}{r_1^{\frac{1}{2}+ i  t+u_1}r_2^{\frac{1}{2}+ i  t+u_2}r_3^{\frac{1}{2}- i  t+u_3}r_4^{\frac{1}{2}- i  t+u_4} r_5^{2s}} \prod_{i=1}^4 G\fracp{r_i}{R_i} } \\
&\qquad \ll(1+\abs{t})^{\frac{1}{5}} \sumpth{\prod_{i=1}^4 (1+\abs{\mu_i})}\exp(-c_1 \sqrt{\log (R_1R_2R_3R_4)}),
\end{align*}
where the implied constant and $c_1$ are absolute.
\end{lem}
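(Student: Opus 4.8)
The plan is to recognise the sum over $r_1,\dots,r_5$ as an iterated Mellin--Barnes integral of the multiple Dirichlet series $Y$ from Lemma~\ref{lem:DirichletSeriesOffDiagonal} and then to move contours into the classical zero-free region of $\zeta$, much as in the treatment of $\colj_2$ in Section~\ref{sec:colj2}. First I would apply Mellin inversion to each factor $G(r_i/R_i)$: writing $v_i=\tfrac12+it+u_i$ for $i=1,2$ and $v_i=\tfrac12-it+u_i$ for $i=3,4$, so that $\Re v_i=0$ since $u_i=-\tfrac12+i\mu_i$, the sum in question equals
\[
\frac{1}{(2\pi i)^4}\int_{(\sigma)}\!\cdots\!\int_{(\sigma)} Y\pth{v_1+w_1,\,v_2+w_2,\,v_3+w_3,\,v_4+w_4,\,s;\,k_1,1}\prod_{i=1}^4\tilde G(w_i)\,R_i^{w_i}\,dw_1\cdots dw_4,
\]
for $\sigma$ large enough, so that the arguments $z_i:=v_i+w_i$ of $Y$ have real part $\Re w_i$. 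By Lemma~\ref{lem:DirichletSeriesOffDiagonal},
\[
Y(z_1,z_2,z_3,z_4,s;k_1,1)=\frac{\prod_{1\le i\le j\le4}\zeta_2(z_i+z_j+2s)}{\prod_{1\le i\le j\le4}\zeta_2(z_i+z_j+1)}\cdot\frac{Z_2(z_1,z_2,z_3,z_4,s;k_1,1)}{\prod_{i=1}^4 L_2\pth{\tfrac12+z_i+2s,\chi_\mathfrak{m}}}.
\]

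The hypothesis $\Re(s)\ge\tfrac35$ is used as follows. As soon as $\Re z_i\ge-\tfrac1{20}$ we lie in region (iii) of Lemma~\ref{lem:DirichletSeriesOffDiagonal}, so $Z_2\ll\tau(1)=1$; moreover $\Re(z_i+z_j+2s)>1$ and $\Re(\tfrac12+z_i+2s)>1$ there, so $\zeta_2(z_i+z_j+2s)$ and $1/L_2(\tfrac12+z_i+2s,\chi_\mathfrak{m})$ are bounded by absolute constants, the latter \emph{uniformly in $k_1$}, being simply a convergent Euler product $\prod_{p\ne 2}\bigl(1-\chi_\mathfrak{m}(p)p^{-(1/2+z_i+2s)}\bigr)$. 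This is precisely why the estimate is uniform in $k_1$ and why no exceptional zero of $L(\cdot,\chi_\mathfrak{m})$ ever plays a role; the only delicate factors are the reciprocals $1/\zeta_2(z_i+z_j+1)$, whose singularities are the zeros of $\zeta$. I would therefore move each $w_i$-contour leftward, keeping it just inside the zero-free region, onto a curve with $\Re w_i\asymp -1/\log\bigl(2+|t|+\sum_j|\mu_j|+|\Im w_i|\bigr)$. No pole is crossed: the poles of the numerator factors $\zeta_2(z_i+z_j+2s)$ lie at $\Re z_i\approx 1-2\Re(s)<0$, far to the left, while $Z_2$ and the $L_2$-factors stay analytic. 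On these curves the zero-free-region bound for $\zeta$ gives $1/\zeta_2(z_i+z_j+1)\ll\log\bigl(2+|t|+|\mu_i|+|\mu_j|+|\Im w_i|+|\Im w_j|\bigr)$, and \eqref{eq:logSumProd} separates the five variables into single integrals.

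It then remains to harvest the saving. On each shifted $w_i$-line one has $|R_i^{w_i}|=\exp\bigl(-c\log R_i/\log(2+|t|+\sum_j|\mu_j|+|\Im w_i|)\bigr)$; splitting the $w_i$-integral at $|\Im w_i|=\exp(\sqrt{\log R_i})$ and using this on the low range together with the rapid decay of $\tilde G$ on the high range produces a factor $\ll\exp(-c_1\sqrt{\log R_i})$ whenever $R_i$ exceeds a fixed power of $2+|t|+\sum_j|\mu_j|$, while in the complementary range the $w_i$-integral is merely $\ll 1$ up to the separated logarithmic factors. Collecting the at most $\log(2+|t|)^{10}\prod_j\log(2+|\mu_j|)^{4}$ logarithmic factors from the reciprocal zeta factors into $(1+|t|)^{1/5}\prod_j(1+|\mu_j|)$ — with room to spare, since $\log(2+|t|)^{10}\ll(1+|t|)^{\varepsilon}$ and $\log(2+|\mu_j|)^{4}\ll(1+|\mu_j|)$ — and using $\sum_i\sqrt{\log R_i}\ge\sqrt{\log(R_1R_2R_3R_4)}$ together with the fact that $\exp(c_1\sqrt{\log(R_1R_2R_3R_4)})$ is at most an arbitrarily small power of $2+|t|+\sum_j|\mu_j|$ in the range where all $R_i$ are bounded powers of that quantity, one arrives at the stated bound.

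I expect the main obstacle to be precisely this final balancing. The admissible width of the contour shift is only $\asymp 1/\log(2+|t|+\sum_j|\mu_j|+|\Im w_i|)$, so one must trade the resulting $R_i^{-\delta}$-type decay against the merely super-polynomial — not exponential — decay of $\tilde G$, and one must carry out the leftward shifts in an order (or along curves) that never lets a zero of $\zeta(z_i+z_j+1)$ slip to the right of a moving contour; moreover one has to be careful to expend only a tiny power of each $1+|\mu_i|$ and of $1+|t|$ on the accumulated logarithmic factors, so as not to overspend the permitted loss $(1+|t|)^{1/5}\prod_j(1+|\mu_j|)$. By contrast, the uniformity in $k_1$ comes for free, since every $\chi_\mathfrak{m}$-dependent factor occurs only at real part exceeding $1$.
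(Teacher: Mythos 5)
Your approach is essentially the paper's: Mellin inversion followed by the factorization of $Y$ from Lemma~\ref{lem:DirichletSeriesOffDiagonal} (with region (iii) making $Z_2$, the numerator $\zeta_2$-factors, and the $L_2(\cdot,\chi_{\mathfrak{m}})$-factors harmless and uniform in $k_1$), a leftward contour shift inside the zero-free region to control the reciprocal $\zeta_2(z_i+z_j+1)$ factors, truncation at height about $\exp(\sqrt{\log R_i})$, and absorption of the exponential saving into a small power of $(1+|t|)\prod_i(1+|\mu_i|)$ when $t$ or the $\mu_i$ dominate the $R_i$. The only cosmetic difference is that you use a curved contour $\Re w_i \asymp -1/\log(2+|t|+\sum_j|\mu_j|+|\Im w_i|)$, whereas the paper keeps the contour vertical at $\Re\omega_i=-c'/\log T$ with $T=\exp\sqrt{\log(R_1R_2R_3R_4)}$ and handles the ranges $|\Im\omega_i|>T$ and $\max(|t|,|\mu_i|)>T$ via an explicit three-way case split.
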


\begin{proof}
We may assume $\max(R_1,R_2,R_3,R_4) > e$, otherwise the estimate is trivial. We quote the standard zero-free region and lower bound for $\zeta(s)$. There is a positive constant $c$ such that for $s = \sigma+ i  t$, there are no zeros of $\zeta(s)$ in the region
\begin{equation}\label{eq:ZeroFreeRegion}
\sigma \geq 1 - \frac{c}{\log(2+\abs{t})}.
\end{equation}
See, for instance, Chapter 3 of \cite{Titchmarsh}. In this region, we have the standard estimate (see 3.11.8 of \cite{Titchmarsh})
\begin{equation}\label{eq:1/ZetaUpper}
\frac{1}{\zeta(s)} \ll \log(2+\abs{t}).
\end{equation}
By Mellin inversion, we have
\begin{align}
&\sum_{r_1,\ldots,r_5} \frac{C_{k_1}(r_1,r_2,r_3,r_4,r_5)}{r_1^{\frac{1}{2}+ i  t+u_1}r_2^{\frac{1}{2}+ i  t+u_2}r_3^{\frac{1}{2}- i  t+u_3}r_4^{\frac{1}{2}- i  t+u_4} r_5^{2s}} \prod_{i=1}^4 G\fracp{r_i}{R_i} \nonumber\\
&= \frac{1}{(2\pi i)^4} \int\limits_{(0)}\cdots \int\limits_{(0)} \sumpth{\prod_{i=1}^4 \tilde{G}(\omega_i)R^{\omega_i}}\nonumber\\
&\quad\times Y(\omega_1+i(\mu_1+t),\omega_2+i(\mu_2+t),\omega_3+i(\mu_3-t),\omega_4+i(\mu_4-t),s,k_1)\, d\boldomega. \label{eq:CsumMellin}
\end{align}
By Lemma \ref{lem:DirichletSeriesOffDiagonal},
\[
Y(\boldu,s;k_1) = \prod_{1\leq i\leq j\leq {4}} \sumpth{\frac{\zeta(u_i+u_j+2s)}{\zeta(u_i+u_j+1)}} \frac{Z_2(\boldu,s;k_1)}{\colp(\boldu,s;k_1)} =: Z_2^*(\boldu,s;k_1)\prod_{1\leq i\leq j\leq {4}} \zeta(u_i+u_j+1)^{-1}
\]
say, where
\[
\colp(\boldu,s;k_1) = \prod_{i=1}^4 L\pth{\half+u_i+2s,\chi_\mathfrak{m}}
\]
and the function $Z_2$ is analytic and uniformly bounded in the region $\Re(s) \geq \frac{3}{5}$, $\Re(u_i) \geq -\frac{1}{20}$. This implies that $Z_2^*$ is analytic and uniformly bounded in the same region. 

Using \eqref{eq:logSumProd}, we see that in the zero free region for $\zeta$, 
\begin{align}
&Y(\omega_1+i(\mu_1+t),\omega_2+i(\mu_2+t),\omega_3+i(\mu_3-t),\omega_4+i(\mu_4-t),s,k_1)\nonumber\\
&\ll \prod_{1\leq i\leq j \leq 4} \log(2+\abs{\omega_i+\omega_j + \mu_i+\mu_j} + \abs{t})\nonumber \\
&\ll \log^{10}(2+\abs{t}) \sumpth{\prod_{i=1}^4 \log(2+\abs{\omega_i}) \log(2+\abs{\mu_i})}^4.
\label{eq:YZFbound}
\end{align}
Here the implied constant is absolute, and we may assume $c$ is large enough to force $\Re(\omega_i) \geq -\frac{1}{20}$ up to a suitable height in the zero-free region. 

Let $T = \exp(\sqrt{\log (R_1R_2R_3R_4)})$. If $\abs{\Im(\omega_i)} > T$ for any $i$, the bound $\tilde{G}(\omega_i) \ll (1+\abs{\omega_i})^{-A}$ implies that this range contributes to \eqref{eq:CsumMellin} a quantity of order
\[
\ll \log^{10}(2+\abs{t}) \sumpth{\prod_{i=1}^4 \log(2+\abs{\mu_i}) }^4 \exp(-\sqrt{\log (R_1R_2R_3R_4)}).
\]

Now suppose that $\abs{\Im(\omega_i)} \leq T$ for all $i$.  If $\max(\abs{t},\abs{\mu_1},\abs{\mu_2},\abs{\mu_3},\abs{\mu_4}) \leq T$, then we move the contours of integration to $\Re(\omega_i) = -\frac{c'}{\log T} \geq -\frac{1}{20}$ for some $1\geq c' > 0$ so that \eqref{eq:YZFbound} is satisfied (note $\log T > 1$ since $\max(R_i) > e$). The contribution of this is 
\[
\ll \log^{10}(2+\abs{t}) \sumpth{\prod_{i=1}^4 \log(2+\abs{\mu_i})}^4 \exp\pth{-\frac{c'}{2}\sqrt{\log (R_1R_2R_3R_4)}},
\]
where we have used $\tilde{G}(\omega_i) \ll (1+\abs{\omega_i})^{-A}$ to estimate the contribution of the horizontal segments.

Finally, if $\max(\abs{t},\abs{\mu_1},\abs{\mu_2},\abs{\mu_3},\abs{\mu_4}) > T$, we do not shift contours. The contribution of this case to  \eqref{eq:CsumMellin} is
\begin{align*}
&\ll \log^6(2+\abs{t}) \sumpth{\prod_{i=1}^4 \log(2+\abs{\mu_i})}^4 \\
&\ll (1+\abs{t})^\frac{1}{5}\sumpth{\prod_{i=1}^4 (1+\abs{\mu_i})} \exp\pth{-\frac{1}{6}\sqrt{\log (R_1R_2R_3R_4)}}.
\end{align*}
\end{proof}
Using $\Gamma(s) \cos(\frac{\pi s}{2}) \ll \abs{s}^{\Re(s)-1/2}$ and Lemma \ref{lem:CsumBound}, the quantity in \eqref{eq:OffDiagSeparated} is bounded by
\begin{align}
&X^\frac{2}{5}N^\frac{2}{5} (1+\abs{t})^\frac{1}{5} \sumd_{R_1,\ldots,R_4} \exp\pth{-c_1\sqrt{\log(R_1R_2R_3R_4)}}  \int\limits_{-\infty}^\infty \cdots   \int\limits_{-\infty}^\infty \sumpth{\prod_{i=1}^4 \frac{1}{1+\abs{\mu_i}}}^{10}\nonumber \\
&\times \sumstar_{\substack{\abs{k_1} \leq K\\ k_1\neq 0,1}} \frac{1}{\abs{k_1}^\frac{3}{5}} \prod_{i=1}^4\sumabs{\sum_{n_i} \frac{\chi_\mathfrak{m}(n_i)}{n_i^{\frac{1}{2}+i\mu_i \pm it}} G\fracp{n_i}{N_i}}\, d\boldmu.\label{eq:AlmostDone}
\end{align}
We split the sum in $k_1$ into dyadic intervals of the form $\colk \leq \abs{k_1} < 2\colk$. Since $N_i \leq 2N$ and 
\begin{equation}\label{eq:InductionRequirement}
4\colk \leq 4K = \frac{N^4}{X} \leq \frac{M}{\coll} < M-1
\end{equation}
by \eqref{equ:defK} and Lemma \ref{lem:UsefulEstimates}, we may apply the induction hypothesis in the form of Lemma \ref{lem:InductionAllN} to see that
\begin{align*}
\sumstar_{\colk \leq \abs{k_1} < 2\colk} \sumabs{\sum_{n} \frac{\chi_{\mathfrak{m}}(n)}{n^{\frac{1}{2}+ix+ i  t}} G\fracp{n}{\coln}}^4 &\leq \sumflat_{\colk \leq \abs{m} < 8\colk} \sumabs{\sum_{n} \frac{\chi_m(n)}{n^{\frac{1}{2}+ix+ i  t}} G\fracp{n}{\coln}}^4 \\
&\ll \coll^{\frac{2}{3}} (1+\abs{t+x})^{3} \colk\log^6(2+\colk) \\
&\ll \coll^\frac{2}{3} (1+\abs{t})^{3}(1+\abs{x})^{3} \colk\log^6(2+\colk). 
\end{align*} 
By H\"{o}lder's inequality and dyadic summation over $\colk$, we have
\begin{align*}
\sumstar_{\substack{\abs{k_1} \leq K\\ k_1\neq 0,1}} \frac{1}{\abs{k_1}^\frac{3}{5}} \prod_{i=1}^4\sumabs{\sum_{n_i} \frac{\chi_\mathfrak{m}(n_i)}{n_i^{\frac{1}{2}+i\mu_i \pm it}} G\fracp{n_i}{N_i}} &\ll \sumd_{\colk\leq K} \frac{1}{\colk^\frac{3}{5}} \prod_{i=1}^4 \sumpth{\sumstar_{\colk\leq \abs{k_1} < 2\colk} \sumabs{\sum_{n_i} \frac{\chi_m(n_i)}{n_i^{\frac{1}{2}+i\mu_i \pm it}} G\fracp{n_i}{N_i}}^4}^\frac{1}{4} \\
&\ll \coll^\frac{2}{3} (1+\abs{t})^{3} K^\frac{2}{5}\log^6(2+K) \prod_{i=1}^4 (1+\abs{\mu_i})^2.
\end{align*}
Substituting this into \eqref{eq:AlmostDone} and recalling that $K = \frac{N^4}{4X}$ and  Lemma \ref{lem:UsefulEstimates},  we obtain a bound of
\[
\ll \coll^\frac{2}{3} (1+\abs{t})^{3+\frac{1}{5}} N^2 \log^6(2+M).
\]
Here we have estimated
\begin{align*}
\sumd_{R_1,\ldots,R_4} \exp(-c_1\sqrt{\log(R_1R_2R_3R_4)}) &= \sum_{\ell_1,\ldots,\ell_4\geq 0} \exp(-c_1\sqrt{\log 2}\sqrt{\ell_1+\ell_2+\ell_3+\ell_4}) \\
&= \sum_{h\geq 0} \binom{h+3}{3} \exp(-c_1\sqrt{\log 2}\sqrt{h}) \ll 1.
\end{align*}
This concludes the proof of Lemma \ref{lem:LS-NonSquare}.

\section{Proof of Theorem \ref{thm:Main}: Initial Steps}\label{sec:Asymptotic-Shortening}
In the rest of the paper, we prove Theorem \ref{thm:Main}. In this section, we shorten the length of the Dirichlet polynomial of the fourth power of quadratic Dirichlet $L$-functions from $X^2$ to $X^2/(\log X)^C$ for some large $C>0$. The key idea is to use  techniques of Soundararajan-Young (see Page 1110 of \cite{SoundYoungSecond}) and Petrow  (see Page 1585 of \cite{Petrow}), combined with the large sieve inequality given in Proposition \ref{prop:LargeSieve}.

Define
\begin{align}
U:= {(\log X)^{-Q}}
\label{defU}
\end{align}
for some large constant $Q\geq 1$ to be chosen later. We remark that our use of the symbol $X$ here is unrelated to the previous use of this symbol in \eqref{def-X}.  Recalling Lemma \ref{afe1}, we write, for $ i=1,2,3,4$,
\begin{align}
M^+(\a_i)&: = \sum_{n=1}^\infty \frac{\chi_{8d} (n)}{n^{\frac{1}{2} + \a_i}} V_{\a_i}\left( \frac{n}{\sqrt{8d}}\right),\nonumber\\
M^-(\a_i)& := \colx_{\a_i}  \sum_{n=1}^\infty \frac{\chi_{8d} (n)}{n^{\frac{1}{2} - \a_i}} V_{-\a_i}\left( \frac{n}{\sqrt{8d}}\right),\nonumber\\
N^+(\a_i)&: = \sum_{n=1}^\infty \frac{\chi_{8d} (n)}{n^{\frac{1}{2} + \a_i}} V_{\a_i}\left( \frac{n}{\sqrt{8dU}}\right),\nonumber\\
N^-(\a_i)& := U^{-\a_i}\colx_{\a_i}  \sum_{n=1}^\infty \frac{\chi_{8d} (n)}{n^{\frac{1}{2} - \a_i}} V_{-\a_i}\left( \frac{n}{\sqrt{8dU}}\right),
\label{defMN}
\end{align}
where $\colx_{\a_i}$ is given by \eqref{colxDef} and we define
\begin{align}
E^+(\a_i) &:=  M^+(\a_i) - N^+(\a_i),\nonumber\\
E^-(\a_i) &:=  M^-(\a_i) - N^-(\a_i). \label{defR}
\end{align}
For convenience, we also write 
\begin{align}
\label{def-lambda}
\lambda_i(\a_i) := \left(\frac{1}{\pi}\right)^{-\a_i} \frac{\Gamma(\frac{1/2-\a_i}{2})}{\Gamma(\frac{1/2+\a_i}{2})} = \colx_{\a_i} \cdot   (8d)^{\a_i}.
\end{align}
We abuse notation and also write $\lambda_i:= \lambda_i(\a_i)$, where we note that $\lambda_i(- \a_i) = \lambda_i^{-1}$. With the above notation, we have 
\begin{align}
&\sumstar_{(d,2)=1} L(\tfrac{1}{2} + \a_1,\chi_{8d})L(\tfrac{1}{2} + \a_2, \chi_{8d} )L(\tfrac{1}{2} + \a_3, \chi_{8d} )L(\tfrac{1}{2} + \a_4, \chi_{8d}) \Phi(\tfrac{d}{X})  \nonumber\\
&= \sumstar_{(d,2)=1} \Phi\left(\frac{d}{X} \right) \prod_{i=1}^4 \left( M^+(\a_i ) + M^-(\a_i) \right)  \nonumber\\
&= \sumstar_{(d,2)=1} \Phi\left(\frac{d}{X} \right) \prod_{i=1}^4 \left( E^+(\a_i ) +N^+(\a_i ) + E^-(\a_i) + N^-(\a_i) \right) \nonumber \\
&=\sumstar_{(d,2)=1} \Phi\left(\frac{d}{X} \right) \prod_{i=1}^4 \left( E^+(\a_i) +E^-(\a_i) \right) \nonumber\\
&\quad+\sumstar_{(d,2)=1}\Phi\left(\frac{d}{X} \right) \sum_{\substack{A \subsetneqq \{1,2,3,4\} }} \prod_{i \in A}  \left( E^+(\a_i) +E^-(\a_i) \right) \prod_{i \in \bar{A}} \left( N^+(\a_i) +N^-(\a_i) \right) \nonumber \\
%&\quad+\sumstar_{(d,2)=1} \prod_{j \in \{1,2,3,4\} } \left( N^+(\a_j) +N^-(\a_j) \right)\nonumber \\
&= \sumstar_{(d,2)=1}\Phi\left(\frac{d}{X} \right) \prod_{i=1}^4 \left( E^+(\a_i) +E^-(\a_i) \right) \nonumber\\
&\quad+\sumstar_{(d,2)=1} \Phi\left(\frac{d}{X} \right) \sum_{\substack{A \subsetneqq \{1,2,3,4\} }} \prod_{i \in A}  \left( M^+(\a_i) -N^+(\a_i) +M^-(\a_i)-N^-(\a_i) \right) \nonumber\\
&\quad\times \prod_{i \in \bar{A}} \left( N^+(\a_i) +N^-(\a_i) \right).
\label{Combin}
\end{align}

\begin{lem}
\label{lem:4therror}
With notation as above, we have 
\begin{align*}
\sumstar_{(d,2)=1} \Phi\left(\frac{d}{X} \right) \prod_{i=1}^4 \left( E^+(\a_i) +E^-(\a_i) \right)\ll X (\log X)^{6+ \varepsilon}.
\end{align*}
\end{lem}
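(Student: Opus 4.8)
\textbf{Proof strategy for Lemma \ref{lem:4therror}.}

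The plan is to estimate the full product by opening the brackets and bounding each of the $2^4 = 16$ resulting terms, and to show that each term of the form
\[
\sumstar_{(d,2)=1} \Phi\!\left(\tfrac{d}{X}\right) \prod_{i=1}^4 E^{\pm}(\a_i)
\]
is $\ll X(\log X)^{6+\varepsilon}$. The key point is that $E^{\pm}(\a_i) = M^{\pm}(\a_i) - N^{\pm}(\a_i)$ is a \emph{difference} of two truncated sums with arguments $n/\sqrt{8d}$ and $n/\sqrt{8dU}$ respectively. Using the Mellin representation \eqref{VaDef}--\eqref{gaDef} of $V_\a$, one sees that $E^+(\a_i)$ can be written as a contour integral
\[
E^+(\a_i) = \frac{1}{2\pi i}\int_{(c)} \frac{g_{\a_i}(s)}{s}\,(8d)^{s/2}\bigl(1 - U^{-s/2}\bigr) \sum_n \frac{\chi_{8d}(n)}{n^{\frac12+\a_i+s}}\,ds,
\]
and the factor $1 - U^{-s/2}$ is what produces the gain: since $U = (\log X)^{-Q}$, on the line $\Re(s) = 1/\log X$ (or nearby) one has $|1 - U^{-s/2}| \ll |s|\,\log(1/U) \ll |s|\,\log\log X$, which is small, while the tail of the $s$-integral where $|s|$ is large is controlled by the rapid decay of $g_{\a_i}(s)/s$. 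Thus $E^+(\a_i)$ behaves, up to a harmless $(\log\log X)^{O(1)}$ factor, like a short Dirichlet polynomial — and this is precisely where Proposition \ref{prop:LargeSieve} enters. An analogous representation holds for $E^-(\a_i)$ via the functional-equation piece $\colx_{\a_i}$.

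The main steps, in order, would be: (i) expand $\prod_i (E^+(\a_i) + E^-(\a_i))$ into $16$ terms and fix attention on one of them; (ii) use the Mellin representation above to express $E^{\pm}(\a_i)$ as a contour integral, separating it into dyadic pieces $G(n/N_i)$ via a partition of unity as in Lemma \ref{lem:ConvenientTestFandG}(ii), so that the $d$-sum becomes a sum of the form $\sumstar_{d\sim X}\prod_i \bigl|\sum_n \chi_{8d}(n) n^{-1/2-it_i} G(n/N_i)\bigr|$ against the weight $\Phi(d/X)$; (iii) remove $\Phi$ and the squarefree-support technicalities, then apply Hölder's inequality in the four factors to reduce to bounding $\sumflat_{|m|\leq X}\bigl|\sum_n \chi_m(n) n^{-1/2-it} G(n/N)\bigr|^4$, which is exactly $S^\flat(X,N,t)$; (iv) apply Proposition \ref{prop:LargeSieve} to get $S^\flat(X,N,t) \ll (1+|t|)^2 (X + N^2 \log(2+N^2/X))\log^6(2+XN)$; (v) sum over the dyadic parameters $N_i$ and integrate over the contour variables, exploiting in the ``long'' range ($N_i$ near $X$, i.e. $N \asymp \sqrt{X}$) the saving from the $1 - U^{-s/2}$ factor together with the decay of $g_{\a_i}$, and in the ``short'' range the trivial smallness of $N^2/X$.

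\textbf{The main obstacle} I anticipate is step (v): in the critical range $N_i \asymp \sqrt{X}$ — which is the length of the Dirichlet polynomial in the approximate functional equation for each individual $L(\tfrac12+\a_i,\chi_{8d})$ — Proposition \ref{prop:LargeSieve} only gives $S^\flat(X,\sqrt{X},t) \ll X(\log X)^{6+\varepsilon}$, i.e. no saving in $X$ at all. So the $(\log\log X)^{O(1)} = U^{-O(1/\log X)}$-type gain coming from the $1-U^{-s/2}$ factor must be carefully tracked and shown to at least not worsen the $(\log X)^{6+\varepsilon}$ bound; one must check that the contour integral over $s$ (and the auxiliary Mellin variables), after inserting $|1-U^{-s/2}| \ll |s|\log\log X$ and the Hölder-split large sieve bound, converges and contributes only $O((\log X)^\varepsilon)$, absorbing the $\log\log X$ powers into $X^\varepsilon$ or into the $\varepsilon$ in the exponent. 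The bookkeeping of the four shift parameters $\a_i$ (all of size $\asymp 1/\log X$, hence harmless) and the four contour variables through the Hölder step, while keeping the $(1+|t_i|)$-dependence integrable, is the delicate part; everything else is routine manipulation of approximate functional equations and partitions of unity.
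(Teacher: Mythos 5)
Your high-level architecture — expand the product into $16$ terms, write each $E^{\pm}(\a_i)$ as a contour integral, split dyadically into $G(n_i/R_i)$ blocks, remove the squarefree condition, apply H\"older to land on $S^\flat(\cdot,\cdot,\cdot)$, and invoke Proposition \ref{prop:LargeSieve} — coincides with the paper's. The gap is in your step~(v), and it is not cosmetic.

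The large sieve bound is $S^\flat(X,N,t)\ll (1+|t|)^2\bigl(X+N^2\log(2+N^2/X)\bigr)\log^6(2+XN)$. For \emph{every} $N\lesssim\sqrt{X}$ this is $\asymp X(\log X)^6$: there is no savings from $N^2/X$ being small, because the $X$ term is always present. So after H\"older you get $X(\log X)^6$ for each fixed choice of the four dyadic parameters $R_i$, and summing over the $\asymp\log X$ dyadic scales $R_i\le\sqrt{X}$ in even one variable already gives $X(\log X)^7$. ``The trivial smallness of $N^2/X$'' does not produce the saving you need, and the ``$|1-U^{-s/2}|\ll |s|\log\log X$ near $s=0$'' estimate on a fixed contour only gives a $\log\log X$ factor, not a gain against the $\log X$ many dyadic pieces. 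The missing idea is the \emph{three-range decomposition with contour shifts}: split each $R_i$-range into $R_i\le\sqrt{XU}$, $\sqrt{XU}<R_i\le\sqrt{X}$, and $R_i>\sqrt{X}$, and move the $u_i$-contour to $\Re(u_i)\in\{-1,0,4\}$ respectively. In the short range, $\Re(u_i)=-1$ makes $R_i^{\,\omega_i-u_i-\a_i}\asymp R_i$ while $(8d)^{u_i/2}-(8dU)^{u_i/2}\ll (XU)^{-1/2}$, so the dyadic sum over $R_i$ \emph{converges} to $O(1)$; in the long range, $\Re(u_i)=4$ gives $R_i^{-4}X^2$ which again sums to $O(1)$; and only the middle range $(\sqrt{XU},\sqrt{X}\,]$ is in play, where the number of dyadic blocks is $\ll \log(1/U)\ll\log\log X$ by the very choice $U=(\log X)^{-Q}$, and each block contributes $X(\log X)^6$ times a $\log\log X$ from $\frac{(8d)^{it}-(8dU)^{it}}{it}\ll\log\log X$. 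This is where the $U$-parameter earns its keep; the smallness of $1-U^{-s/2}$ near $s=0$ is a red herring.

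A second point you gloss over: the statement ``an analogous representation holds for $E^-(\a_i)$'' hides a genuine wrinkle. The $E^+$ integrand has the factor $\frac{(8d)^{u_i/2}-(8dU)^{u_i/2}}{u_i}$, which vanishes at $u_i=0$, so the contour $\Re(u_i)=0$ is legitimate. The $E^-$ integrand instead has $\frac{(8d)^{u_i/2-\a_i}-(8dU)^{u_i/2-\a_i}}{u_i}$, which does \emph{not} vanish at $u_i=0$, so you cannot pass through the origin. The paper handles this by splitting $E^-=E^-_{\mathrm I}+E^-_{\mathrm{II}}$ (one piece with an entire numerator, one whose numerator is $\ll\frac{\log\log X}{\log X}$ in size), and by keeping the offending contour at $\Re(u_i)=\tfrac{10}{L}$ throughout rather than at $0$; the resulting $\int_{(10/L)}\frac{du}{|u|(1+|u|)^{10}}\ll\log\log X$ and $\sum_{R_i\le\sqrt{XU}}R_i^{-10/L+\a_i}\ll\log X$ are exactly compensated by the $\frac{\log\log X}{\log X}$ from $E^-_{\mathrm{II}}$. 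Without this split your argument breaks down whenever the term under consideration contains an $E^-$ factor, which is $15$ of the $16$ terms.

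In short: keep your steps (i)--(iv), but replace (v) by a per-variable three-range split of the dyadic parameter with the contour placed at $-1$, $0$, or $4$ accordingly, count the critical-range dyadic pieces using $U=(\log X)^{-Q}$, and address the $u_i=0$ pole of the $E^-$ factors before you shift anything.
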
 
\begin{proof}
The product above can be expanded into a sum of 16 terms. Of these terms, we evaluate only the one corresponding to
$E^+(\a_1)E^+(\a_2)E^-(\a_3)E^-(\a_4) $, as the other terms may be evaluated similarly. Note that by \eqref{defMN} and \eqref{defR},  
\begin{align}
E^+(\a_i)& = \frac{1}{2\pi  i }  \int\limits_{(1)}g_{\a_i}(u_i) \sum_{n_i=1}^\infty
\frac{\chi_{8d}(n_i)}{n_i^{\frac{1}{2} + \a_i +u_i}}  \frac{{(8d)}^{\frac{u_i}{2}} - (8dU)^{\frac{u_i}{2}}}{u_i} \, du_i\nonumber\\
&=  \sumd_{R_i} \frac{1}{2\pi  i }  \int\limits_{(1)}g_{\a_i}(u_i) \sum_{n_i=1}^\infty
\frac{\chi_{8d}(n_i)}{n_i^{\frac{1}{2} + \a_i +u_i}}  \frac{{(8d)}^{\frac{u_i}{2}} - (8dU)^{\frac{u_i}{2}}}{u_i} G \left(\frac{n_i}{R_i} \right)\, du_i\nonumber\\
&=  \sumd_{R_i} \frac{1}{(2\pi  i )^2} \int\limits_{(1)} \int\limits_{(1)}g_{\a_i}(u_i) \sum_{n_i=1}^\infty
\frac{\chi_{8d}(n_i)}{n_i^{\frac{1}{2} + \omega_i}} V\left(\frac{n_i}{R_i} \right)R_i^{\omega_i-u_i-\a_i} \tilde{G}(\omega_i-u_i-\a_i)\nonumber\\
&\quad \times \frac{{(8d)}^{\frac{u_i}{2}} - (8dU)^{\frac{u_i}{2}}}{u_i}  \, d\omega_i \, du_i
,\nonumber\\
E^-(\a_i) &=\lambda_i(\a_i) \frac{1}{2\pi  i }  \int\limits_{(1)}g_{-\a_i}(u_i) \sum_{n_i=1}^\infty
\frac{\chi_{8d}(n_i)}{n_i^{\frac{1}{2} - \a_i +u_i}}  \frac{{(8d)}^{\frac{u_i}{2} -\a_i} - (8dU)^{\frac{u_i}{2}-\a_i}}{u_i} \, du_i\nonumber\\
&=
\sumd_{R_i} \lambda_i(\a_i) \frac{1}{(2\pi i)^2}  \int\limits_{(1)}\int\limits_{(1)}g_{-\a_i}(u_i) \sum_{n_i=1}^\infty
\frac{\chi_{8d}(n_i)}{n_i^{\frac{1}{2} +\omega_i}} 
V\left(\frac{n_i}{R_i} \right)R_i^{\omega_i-u_i+\a_i} \tilde{G}(\omega_i-u_i+\a_i)\nonumber\\
&\quad \times 
\frac{{(8d)}^{\frac{u_i}{2} -\a_i} - (8dU)^{\frac{u_i}{2}-\a_i}}{u_i} \, d \omega_i \, du_i,
\label{equ:addVG}
\end{align}
where $g_{\a}$ is given by \eqref{gaDef},  $G$ is the smooth function given in Lemma \ref{lem:ConvenientTestFandG}, and $V$ is the same as in \eqref{V-def}.

Note that  $\frac{{(8d)}^{\frac{u_i}{2} -\a_i} - (8dU)^{\frac{u_i}{2}-\a_i}}{u_i}$ in $E^-(\a_i)$ has a pole at $u_i=0$. To fix this,  let 
\begin{align*}
E^-(\a_i) = E^-_{\text{I}}(\a_i) + E^-_{\text{II}}(\a_i),
\end{align*}
where 
\begin{align*}
E^-_{\text{I}}(\a_i)& = 
\sumd_{R_i} \lambda_i(\a_i) \frac{1}{(2\pi i)^2}  \int\limits_{(1)}\int\limits_{(1)}g_{-\a_i}(u_i) \sum_{n_i=1}^\infty
\frac{\chi_{8d}(n_i)}{n_i^{\frac{1}{2} +\omega_i}} 
V\left(\frac{n_i}{R_i} \right)R_i^{\omega_i-u_i+\a_i} \tilde{G}(\omega_i-u_i+\a_i)\\
&\quad \times 
\frac{{(8d)}^{\frac{u_i}{2} -\a_i} -  (8dU)^{\frac{u_i}{2}} (8d)^{-\a_i} }{u_i} \, d\omega_i \, du_i,\\
E^-_{\text{II}}(\a_i)& = 
\sumd_{R_i} \lambda_i(\a_i) \frac{1}{(2\pi i)^2}  \int\limits_{(1)}\int\limits_{(1)}g_{-\a_i}(u_i) \sum_{n_i=1}^\infty
\frac{\chi_{8d}(n_i)}{n_i^{\frac{1}{2} +\omega_i}} 
V\left(\frac{n_i}{R_i} \right)R_i^{\omega_i-u_i+\a_i} \tilde{G}(\omega_i-u_i+\a_i)\\
&\quad \times 
\frac{ (8dU)^{\frac{u_i}{2}} (8d)^{-\a_i} - (8dU)^{\frac{u_i}{2}-\a_i}}{u_i} \, d\omega_i \, du_i.
\end{align*}
Note that $\frac{{(8d)}^{\frac{u_i}{2} -\a_i} -  (8dU)^{\frac{u_i}{2}} (8d)^{-\a_i} }{u_i}$ in $E^-_{\text{I}}(\a_i)$ is entire, and for $E^-_{\text{II}}(\a_i)$, we also have 
\begin{align}
(8d)^{-\a_i} - (8dU)^{-\a_i} = -(8d)^{-\a_i}(U^{-\a_i} -1) = -(8d)^{-\a_i} (e^{-\a_i \log U } -1) \ll \frac{\log \log X}{\log X}.
\label{equ:upperbdwithpole}
\end{align}
We evaluate only the following case, as the other cases are similar. We have
\begin{align}
&\sumstar_{(d,2)=1} \Phi\left(\frac{d}{X} \right)E^+(\a_1)E^+(\a_2)E^-_{\text{I}}(\a_3)E^-_{\text{II}}(\a_4) \nonumber\\
&=
\sumd_{R_1,R_2,R_3,R_4}
\sumstar_{(d,2)=1} \Phi\left(\frac{d}{X} \right) \frac{\lambda_3(\a_3)\lambda_4(\a_4)}{(2\pi i)^4}  \int\limits_{(1)}  \cdots  \int\limits_{(1)}g_{\a_1}(u_1)g_{\a_2}(u_2)g_{-\a_3}(u_3) g_{-\a_4}(u_4) \nonumber\\
&\quad\times  \frac{1}{(2\pi i)^4}  \int\limits_{(1)} \cdots \int\limits_{(1)}
\sum_{n_1=1}^\infty
\frac{\chi_{8d}(n_1)}{n_1^{\frac{1}{2} + \omega_1}} V\left(\frac{n_1}{R_1} \right)R_1^{\omega_1-u_1-\a_1}
\sum_{n_2=1}^\infty
\frac{\chi_{8d}(n_2)}{n_2^{\frac{1}{2} + \omega_2}} V\left(\frac{n_2}{R_2} \right)R_2^{\omega_2-u_2-\a_2} \nonumber\\
&\quad\times 
\sum_{n_3=1}^\infty
\frac{\chi_{8d}(n_3)}{n_3^{\frac{1}{2} + \omega_3}} V\left(\frac{n_3}{R_3} \right)R_3^{\omega_3-u_3+\a_3}
\sum_{n_4=1}^\infty
\frac{\chi_{8d}(n_4)}{n_4^{\frac{1}{2} + \omega_4}} V\left(\frac{n_4}{R_4} \right)R_4^{\omega_4-u_4+\a_4}\nonumber\\
&\quad\times   \tilde{G}(\omega_1-u_1-\a_1) \tilde{G}(\omega_2-u_2-\a_2)  \tilde{G}(\omega_3-u_3+\a_3) \tilde{G}(\omega_3-u_4+\a_4)\nonumber\\
&\quad \times 
\frac{(8d)^{\frac{u_1}{2}} - (8dU)^{\frac{u_1}{2}}}{u_1} \frac{(8d)^{\frac{u_2}{2}} - (8dU)^{\frac{u_2}{2}}}{u_2}\nonumber\\
&\quad\times\frac{{(8d)}^{\frac{u_3}{2} -\a_3} -  (8dU)^{\frac{u_3}{2}} (8d)^{-\a_3} }{u_3}  \frac{ (8dU)^{\frac{u_4}{2}} (8d)^{-\a_4} - (8dU)^{\frac{u_4}{2}-\a_4}}{u_4} \, d\boldomega \, d \boldu.
\label{errorR}
\end{align}
We first move the lines of integration in $\omega_i$ to $\Re(\omega_1) = \Re(\omega_2) =\Re(\omega_3)=\Re(\omega_4) = 0$. We then split each sum  over $R_i$, $i=1,2,3,4$ into three pieces depending as $R_i \leq \sqrt{XU}$, $\sqrt{XU} <R_i \leq \sqrt{X}$ and $R_i>\sqrt{X}$. For $i=1,2,3$, we move the line of integration in $u_i$ to
\[
\Re(u_i) = \begin{cases}
-1 & \text{if $R_i \leq \sqrt{XU}$}, \\
0 & \text{if $\sqrt{XU} <R_i \leq \sqrt{X}$}, \\
4 & \text{if $R_i>\sqrt{X}$},
\end{cases}
\]
and when $i=4$, we take $u_4$ to  $\Re(u_4) = \frac{10}{\log X}$ in each range, since $ \frac{ (8dU)^{\frac{u_4}{2}} (8d)^{-\a_4} - (8dU)^{\frac{u_4}{2}-\a_4}}{u_4}$ has a pole at $s=0$.

Shifting contours in this way, we encounter no poles. Note that there are 81 different pieces after the decomposition applied to  \eqref{errorR}. We only evaluate one of them here, which is $R_1 \leq \sqrt{XU}$, $\sqrt{XU}<R_2 \leq \sqrt{X}$, $R_3 > \sqrt{X}$ and $R_4 \leq \sqrt{XU}$, and the other cases can be estimated in a similar way. Write $L = \log X$. We see that 
\begin{align}
&
\sumd_{R_1\leq \sqrt{XU}}\sumd_{\sqrt{XU} <R_2 \leq \sqrt{X}}\sumd_{R_3 > \sqrt{X}}\sumd_{R_4\leq \sqrt{XU}}
\sumstar_{(d,2)=1} \Phi\left(\frac{d}{X} \right) \frac{\lambda_3(\a_3) \lambda_4(\a_4)}{(2\pi i)^4}  \int\limits_{(\frac{10}{L})}  \int\limits_{(4)}  \int\limits_{(0)}  \int\limits_{(-1)} \nonumber\\
&\quad\times g_{\a_1}(u_1)g_{\a_2}(u_2)g_{-\a_3}(u_3) g_{-\a_4}(u_4)  \nonumber\\
&\quad\times  \frac{1}{(2\pi i)^4}  \int\limits_{(0)} \cdots  \int\limits_{(0)}
\sum_{n_1=1}^\infty
\frac{\chi_{8d}(n_1)}{n_1^{\frac{1}{2} + \omega_1}} V\left(\frac{n_1}{R_1} \right)R_1^{\omega_1-u_1-\a_1}
\sum_{n_2=1}^\infty
\frac{\chi_{8d}(n_2)}{n_2^{\frac{1}{2} + \omega_2}} V\left(\frac{n_2}{R_2} \right)R_2^{\omega_2-u_2-\a_2}  \nonumber\\
&\quad\times 
\sum_{n_3=1}^\infty
\frac{\chi_{8d}(n_3)}{n_3^{\frac{1}{2} + \omega_3}} V\left(\frac{n_3}{R_3} \right)R_3^{\omega_3-u_3+\a_3}
\sum_{n_4=1}^\infty
\frac{\chi_{8d}(n_4)}{n_4^{\frac{1}{2} + \omega_4}} V\left(\frac{n_4}{R_4} \right)R_4^{\omega_4-u_4+\a_4}  \nonumber\\
&\quad\times  \tilde{G}(\omega_1-u_1-\a_1) \tilde{G}(\omega_2-u_2-\a_2)  \tilde{G}(\omega_3-u_3+\a_3) \tilde{G}(\omega_3-u_4+\a_4) \nonumber\\
&\quad \times 
\frac{(8d)^{\frac{u_1}{2}} - (8dU)^{\frac{u_1}{2}}}{u_1} \frac{(8d)^{\frac{u_2}{2}} - (8dU)^{\frac{u_2}{2}}}{u_2}r \nonumber \\
&\quad\times\frac{{(8d)}^{\frac{u_3}{2} -\a_3} -  (8dU)^{\frac{u_3}{2}} (8d)^{-\a_3} }{u_3}  \frac{ (8dU)^{\frac{u_4}{2}} (8d)^{-\a_4} - (8dU)^{\frac{u_4}{2}-\a_4}}{u_4} \, d\boldomega \, d \boldu \nonumber\\
&\ll  (XU)^{-\frac{1}{2}} (\log\log X) X^2 \frac{\log \log X}{\log X} \nonumber\\
&\quad \times
\sumd_{R_1\leq \sqrt{XU}} R_1^{1-\a_1} \sumd_{\sqrt{XU} <R_2 \leq \sqrt{X}} R_2^{-\a_2} \sumd_{ R_3 > \sqrt{X}} R_3^{-4+\a_3} \sumd_{R_4 \leq \sqrt{XU}}R_4^{-\frac{10}{L}+\a_4} \nonumber \\
&\quad\times   \int\limits_{(\frac{10}{L})}  \int\limits_{(4)}  \int\limits_{(0)}  \int\limits_{(-1)}\left| g_{\a_1}(u_1)g_{\a_2}(u_2)g_{-\a_3}(u_3) g_{-\a_4}(u_4)\right| \nonumber \\
&\quad\times   \sumstar_{(d,2)=1} \Phi\left(\frac{d}{X} \right)
\sumabs{\sum_{n_1=1}^\infty
\frac{\chi_{8d}(n_1)}{n_1^{\frac{1}{2} + \omega_1 }} V\left(\frac{n_1}{R_1} \right )} 
\sumabs{\sum_{n_2=1}^\infty
\frac{\chi_{8d}(n_2)}{n_2^{\frac{1}{2} + \omega_2 }} V\left(\frac{n_2}{R_2} \right)}  \nonumber \\
&\quad\times 
\sumabs{\sum_{n_3=1}^\infty
\frac{\chi_{8d}(n_3)}{n_3^{\frac{1}{2} + \omega_3 }} V\left(\frac{n_3}{R_3} \right)}
\sumabs{\sum_{n_4=1}^\infty
\frac{\chi_{8d}(n_4)}{n_4^{\frac{1}{2} + \omega_4 }} V\left(\frac{n_4}{R_4} \right)} \nonumber\\
&\quad\times \left| \tilde{G}(\omega_1 -u_1-\a_1)
\tilde{G}(\omega_2  -u_2-\a_2)
\tilde{G}(\omega_3   -u_3+\a_3)
\tilde{G}(\omega_4   -u_4+\a_4) \right| \frac{1}{|u_4 |}
\, d \boldomega \, d \boldu.
\label{eqsplit}
\end{align}
To establish the above inequality, we have used \eqref{equ:upperbdwithpole}
and the fact that $\frac{(8d)^{it} - (8dU)^{it}}{it}\ll \log (\frac{8d}{8dU} ) \ll \log \log X$.
By Cauchy-Schwarz inequality, \eqref{V-def}, Proposition \ref{prop:LargeSieve} and Lemma \ref{lem:InductionAllN}, we have
\begin{align}
\sumstar_{(d,2)=1}  \Phi\left(\frac{d}{X} \right) \prod_{i=1}^4
\sumabs{\sum_{n_i=1}^\infty
\frac{\chi_{8d}(n_i)}{n_i^{\frac{1}{2} + \omega_i }} V\left(\frac{n_i}{R_i} \right )} 
&\ll\prod_{i=1}^4 \sumpth{\sumstar_{(d,2)=1} \Phi\left(\frac{d}{X} \right)\sumabs{\sum_{n_i=1}^\infty
\frac{\chi_{8d}(n_i)}{n_i^{\frac{1}{2} + \omega_i }} V\left(\frac{n_i}{R_i} \right ) } ^4}^{\frac{1}{4}}\nonumber\\
&\ll  X \log^6 (2+X) \prod_{1 \leq i \leq 4} (1+|\omega_i|)^{3}  .
\label{error-4th-case}
\end{align}
This combined with the bound 
\[
\int_{(\frac{10}{L})} \frac{1}{|u_4|}  \frac{1}{(1+|u_4|)^{10}} \, d u_4 \ll \log \log X ,
\]
imply that  \eqref{eqsplit} is bounded by 
\begin{align*}
&\ll (XU)^{-\frac{1}{2}} (\log\log X)^2 X^2 \frac{\log \log X}{\log X} \\
&\quad \times 
\sumd_{R_1\leq \sqrt{XU}} R_1^{1-\a_1} \sumd_{\sqrt{XU} <R_2 \leq \sqrt{X}} R_2^{-\a_2} \sumd_{ R_3 > \sqrt{X}} R_3^{-4+\a_3} \sumd_{R_4\leq \sqrt{XU}}R_4^{-\frac{10}{L}+\a_4}
\cdot X \log^6 (2+X) \\
&\ll  (XU)^{-\frac{1}{2}} (\log\log X)^2 X^2 \frac{\log \log X}{\log X} \cdot  (XU)^{\frac{1}{2}}(\log \log X) X^{-2} \log  X \cdot
X(\log X)^{6+ \varepsilon} \\
&\ll X (\log X)^{6+ \varepsilon},
\end{align*}
as claimed. Here we have used the fact $\sum_{\sqrt{XU} <R_2 \leq \sqrt{X}} \ll \log \log X$ and $\sum_{R_4\leq \sqrt{XU}}R_4^{-\frac{10}{L}+\a_4} \ll \log X$. We remark that since  $\sum_{R_4> \sqrt{X}}R_4^{-\frac{10}{L}+\a_4} \ll \log X$, the argument above also works for the piece $\sum_{R_4> \sqrt{X}}$.
\end{proof}

By \eqref{Combin} and Lemma 7.1, in order to prove Theorem \ref{thm:Main}, it suffices to derive an asymptotic formula for 
%\label{combin4th-001}
\begin{align*}
\sumstar_{(d,2)=1} \Phi\left(\frac{d}{X} \right) \sum_{A \subsetneqq \{1,2,3,4\}} &\prod_{i \in A}  \left( M^+(\a_i) -N^+(\a_i) +M^-(\a_i)-N^-(\a_i) \right) \\
&\quad\times\prod_{i \in \bar{A}} \left( N^+(\a_i) +N^-(\a_i) \right).
\end{align*}
We first put the above expression into a more manageable form. When $A = \emptyset$, then the first product above is $1$. If $A \neq \emptyset$, we write
\begin{align*}
\prod_{i \in A}  \left( M^+(\a_i) -N^+(\a_i) +M^-(\a_i)-N^-(\a_i) \right)
& =
\sum_{\substack{D_i=M \text{\,or\,}N \\ \Delta_i = \pm\\  i \in A}} \prod_{i\in A} (-1)^{h(D_i)} D_i^{\Delta_i}(\alpha_i),\\
\prod_{i \in \bar{A}}   \left( N^+(\a_i) +N^-(\a_i) \right)
&= \sum_{\substack{\Delta_i = \pm\\ i \in  \bar{A}}} \prod_{i \in \bar{A}} N^{\Delta_i} (\alpha_i),
\end{align*}
where 
\[
h(D) = \begin{cases}
0 & \text{if $D=M$}, \\
1 & \text{if $D=N$}.
\end{cases}
\]
We remark that $h(D)$ only depends on the symbols $M,N$ and, in particular, is independent of $d$. Thus
\begin{align}
&\sum_{\substack{A \subsetneqq \{1,2,3,4\} \\ A \neq \emptyset}} \prod_{i \in A}  \left( M^+(\a_i) -N^+(\a_i) +M^-(\a_i)-N^-(\a_i) \right) \prod_{i \in \bar{A}} \left( N^+(\a_i) +N^-(\a_i) \right)\nonumber\\
&=\sum_{\substack{A \subsetneqq \{1,2,3,4\} \\ A \neq \emptyset}}
\sum_{\substack{D_i=M \text{\,or\,}N\\i \in A }} \sum_{\substack{\Delta_i = \pm\\i=1,2,3,4}}
\prod_{i\in A} (-1)^{h(D_i)} D_i^{\Delta_i}(\alpha_i) \prod_{i \in \bar{A}} N^{\Delta_i} (\alpha_i).
\label{general-1}
\end{align}
We would like to write the expression above in terms of only one of the symbols $+,-$ in the exponents of $D_i$ and $N$. To do so, we note that by \eqref{defMN}, we have $D_i^-(\a_i) = U^{-\a_i\delta(D_i=N)} \colx_{\a_i}  D_i^+(-\a_i)$, and so
\[
D_i^{\Delta_i}(\alpha_i) = \pth{U^{-\a_i\delta(D_i=N)} \colx_{\a_i}}^{\frac{1-\epsilon_i}{2}}  D^+(\epsilon_i\a_i), 
\]
where
\[
\epsilon_i = \begin{cases}
1 & \text{if $\Delta_i=+$}, \\
-1 & \text{if $\Delta_i = -$}.
\end{cases}
\]
Inserting this into \eqref{general-1}, the sum over $\Delta_i\in\set{-,+}$ is changed to a sum over $\epsilon_i\in\set{-1,1}$ and we have
%\label{4thsecran1}
\begin{align*}
&\sum_{\substack{\Delta_i = \pm\\i=1,2,3,4}}
\prod_{i\in A} (-1)^{h(D_i)} D_i^{\Delta_i}(\alpha_i) \prod_{i \in \bar{A}} N^{\Delta_i} (\alpha_i) \\
&= \sum_{\substack{\epsilon_i = \pm 1\\i=1,2,3,4}} 
\prod_{i\in A} (-1)^{h(D_i)} \pth{U^{-\a_i\delta(D_i=N)}\colx_{\alpha_i}}^{\frac{1-\epsilon_i}{2}}D_i^+(\epsilon_i \alpha_i) \prod_{i \in \bar{A}} \pth{U^{-\a_i}\colx_{\alpha_i}}^{\frac{1-\epsilon_i}{2}}N^+(\epsilon_i\alpha_i).
\end{align*}
From \eqref{def-lambda}, we have the identity
\[
\colx_{\a_i} = (8X)^{-\alpha_i}\lambda_i(\alpha_i) \fracp{d}{X}^{-\alpha_i}.
\]
Letting
\begin{align}
\Phi_{\{\epsilon_1,\epsilon_2,\epsilon_3,\epsilon_4\}} (x) := x^{-\sum_{i=1}^4 \frac{1-\epsilon_i}{2}\a_i} \Phi(x),
\label{tranf-phi}
\end{align}
the previous three displays imply
\begin{align*}
&\sumstar_{(d,2)=1} \sum_{\substack{\Delta_i = \pm\\i=1,2,3,4}}
\prod_{i\in A} (-1)^{h(D_i)} D_i^{\Delta_i}(\alpha_i) \prod_{i \in \bar{A}} N^{\Delta_i} (\alpha_i) \Phi\left(\frac{d}{X} \right)\nonumber\\
&= \sum_{\substack{\epsilon_i = \pm 1\\i=1,2,3,4}}
(8X)^{-\sum_{i=1}^4\frac{1-\epsilon_i}{2}\a_i}
\sumstar_{(d,2)=1} \Phi_{\{ \epsilon_1,\epsilon_2,\epsilon_3,\epsilon_4\}}\left(\frac{d}{X} \right) \\
&\quad \times \prod_{i\in A} (-1)^{h(D_i)} U^{-\frac{1-\epsilon_i}{2}\a_i\delta(D_i=N)} \lambda_i^{\frac{1-\epsilon_i}{2}}   D_i^+(\epsilon_i\alpha_i) \prod_{i \in \bar{A}} U^{-\frac{1-\epsilon_i}{2}\a_i} \lambda_i^{\frac{1-\epsilon_i}{2}} N^+ (\epsilon_i\alpha_i) \nonumber \\
&= \sum_{\substack{\epsilon_i = \pm 1\\i=1,2,3,4}} 
\prod_{i=1}^4
\pth{(8X)^{-\a_i}\lambda_i}^{\frac{1-\epsilon_i}{2}} \prod_{i\in A} (-1)^{h(D_i)} \prod_{i \in A} U^{-\frac{1-\epsilon_i}{2}\a_i\delta(D_i=N)} \prod_{i \in \bar{A}} U^{-\frac{1-\epsilon_i}{2}\a_i} \nonumber\\
&\quad\times 
\sumstar_{(d,2)=1} 
\prod_{i\in A} D_i^+(\epsilon_i\alpha_i) \prod_{i \in \bar{A}}   N^+ (\epsilon_i\alpha_i) \Phi_{\{\epsilon_1,\epsilon_2,\epsilon_3,\epsilon_4\}}\left(\frac{d}{X} \right).
%\label{general-2}
\end{align*}
Combining this with \eqref{general-1}, we find that
\begin{align}
&\sumstar_{(d,2)=1} \Phi\left(\frac{d}{X} \right) \sum_{\substack{A \subsetneqq \{1,2,3,4\}\\ A\neq \emptyset}}   \prod_{i \in A}  \left( M^+(\a_i) -N^+(\a_i) +M^-(\a_i)-N^-(\a_i) \right)\nonumber\\
&\quad\times\prod_{i \in \bar{A}} \left( N^+(\a_i) +N^-(\a_i) \right)\nonumber\\
&=  \sum_{\substack{A \subsetneqq \{1,2,3,4\}\\ A\neq \emptyset}} \sum_{\substack{D_i=M \text{\,or\,}N\\i \in A }} \sum_{\substack{\epsilon_i = \pm 1\\i=1,2,3,4}} \prod_{i=1}^4
\pth{(8X)^{-\a_i}\lambda_i}^{\frac{1-\epsilon_i}{2}}  \prod_{i\in A} (-1)^{h(D_i)}  \prod_{i \in A}{U^{-\a_i\delta(D_i=N)}}^{\frac{1-\epsilon_i}{2}} \prod_{i \in \bar{A}} U^{-\frac{1-\epsilon_i}{2}\a_i}\nonumber \\
&\quad \times\sumstar_{(d,2)=1} 
\prod_{i\in A}  D_i^+(\epsilon_i\alpha_i) \prod_{i \in \bar{A}}   N^+ (\epsilon_i\alpha_i) \Phi_{\{\epsilon_1,\epsilon_2,\epsilon_3,\epsilon_4\}}\left(\frac{d}{X} \right).
\label{equ:4thsec1ran2}
\end{align}
Our goal is to evaluate the inner sums over $d$ for fixed choices of $A$,  $D_i$, and $\epsilon_i$. We note that the expression above is a sum of 1024 terms indexed by the possible choices of these variables.

We also note that the various choices of the $\epsilon_i$ are not particularly significant. Indeed, since the $\a_i$ are any variables satisfying the conditions in Theorem \ref{thm:Main}, it suffices to consider the simplest case $\epsilon_i=1$ for each $i=1,2,3,4$, and our arguments will apply to the other possible choices of $\epsilon_i$ as well. Further, we consider only a specific case of the choice of $A$ and $D_i$. Of the 1024 expressions above, we will only evaluate the one corresponding to the choices
\begin{align*}
\epsilon_i &= 1,\ i=1,2,3,4,\\
A &= \{1,2,3\},\\
D_1 & = D_2 = D_3 = M.
\end{align*}
This expression is both notationally simplest and representative of the other expressions, and our arguments do not change substantially when considering any of the other expressions, and we remark later about the minor changes that occur. Examining \eqref{equ:4thsec1ran2}, the expression we need to consider is
\begin{align}
\sumstar_{(d,2)=1}   M^+(\a_1)  M^+(\a_2)  M^+(\a_3 )  N^+(\a_4)\Phi\left(\tfrac{d}{X} \right).
\label{splitting1}
\end{align}
We evaluate this sum in the following three sections.

\section{Separation of the Sum}\label{sec:Asymptotic-Diagonal-4th}
Recall definitions of $M^+$ and $N^+$ in  \eqref{defMN}. Using M\"{o}bius inversion to express the squarefree condition on $d$, the expression \eqref{splitting1} is 
\begin{align}
&\sumstar_{(d,2)=1}   M^+(\a_1)  M^+(\a_2)  M^+(\a_3 )  N^+(\a_4)\Phi\left(\tfrac{d}{X} \right)\nonumber\\
&=\sumstar_{(d,2)=1}  \sum_{n_1,n_2,n_3,n_4=1}^\infty \frac{\chi_{8d} (n_1n_2n_3n_4)}{n_1^{\frac{1}{2} + \a_1} n_2^{\frac{1}{2} + \a_2} n_3^{\frac{1}{2} + \a_3} n_4^{\frac{1}{2} + \a_4}} \nonumber\\
&\quad \times V_{\a_1}\left( \frac{n_1}{\sqrt{8d}}\right) V_{\a_2}\left( \frac{n_2}{\sqrt{8d}}\right) V_{\a_3}\left( \frac{n_3}{\sqrt{8d}}\right)V_{\a_4}\left( \frac{n_4}{\sqrt{8dU}}\right)\Phi\left(\frac{d}{X} \right)\nonumber\\
&=\sum_{(a,2)=1}  \mu(a) \sum_{(d,2)=1}  \sum_{(n_1n_2n_3n_4,2 a)=1} \frac{\chi_{8 d} (n_1n_2n_3n_4)}{n_1^{\frac{1}{2} + \a_1} n_2^{\frac{1}{2} + \a_2} n_3^{\frac{1}{2} + \a_3} n_4^{\frac{1}{2} + \a_4}}\nonumber\\
&\quad\times  V_{\a_1}\left( \frac{n_1}{\sqrt{8a^2 d}}\right) V_{\a_2}\left( \frac{n_2}{\sqrt{8a^2d}}\right) V_{\a_3}\left( \frac{n_3}{\sqrt{8a^2 d}}\right)V_{\a_4}\left( \frac{n_4}{\sqrt{8a^2dU}}\right)\Phi\left(\frac{a^2 d}{X} \right)\nonumber\\
&=:\mathcal{S} + \mathcal{S^*},
\label{asy1}
\end{align}
where $\mathcal{S}$ denotes the sum over $a$ above with $a \leq Y$ and $\mathcal{S}^*$ denotes the sum with $a>Y$, where $Y (\leq X)$ is a parameter chosen later.  An upper bound for $\mathcal{S}^*$ is given in the following lemma.
\begin{lem}
\label{lem4:SStar}
For some constant $A>0$, we have 
\[
\mathcal{S}^* \ll X Y^{-1} (\log X)^A .
\]
%\label{lemR}
\end{lem}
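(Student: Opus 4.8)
\textbf{Proof proposal for Lemma \ref{lem4:SStar}.}

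The plan is to bound $\mathcal{S}^*$ by a crude argument that does not attempt to exploit cancellation in the character sum, since $Y$ will be chosen large enough that the trivial-type bound suffices. First I would observe that in each summand the function $V_{\a_i}(n_i/\sqrt{8a^2 d})$ decays rapidly once $n_i$ exceeds (roughly) $\sqrt{8a^2 d}$, and similarly $V_{\a_4}(n_4/\sqrt{8a^2 d U})$ cuts off $n_4$ at around $\sqrt{8a^2 d U}$; this follows from the contour-integral definition \eqref{VaDef}--\eqref{gaDef} by shifting the line of integration to the right, using the Stirling bound on $g_{\a_i}(s)$ together with the hypothesis $|\Re(\a_i)|\ll 1/\log X$, $|\Im(\a_i)|$ small. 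Moreover $\Phi(a^2 d/X)$ restricts $a^2 d \asymp X$, so $d \asymp X/a^2$ and $n_1 n_2 n_3 n_4 \ll (a^2 d)^{2} U^{1/2+\varepsilon}\ll X^{2+\varepsilon}$. The cleanest route is to apply Mellin inversion to each $V_{\a_i}$ and to $\Phi$, reducing the inner sum over $d$ and $n_i$ to an integral of a product of Dirichlet $L$-functions $L(\tfrac12+\a_i+s_i,\chi_{8d})$ summed over $d$ (with $(d,2a)=1$), weighted by a rapidly decaying kernel. Then I would bound the sum over $d$ of the fourth-moment-type quantity $\prod_i |L(\tfrac12+\cdots,\chi_{8d})|$ by Cauchy--Schwarz and Proposition \ref{prop:LargeSieve} (equivalently, by the already-established bound of Lemma \ref{lem:InductionAllN} applied after restoring partitions of unity), which gives a bound of size $(X/a^2)(\log X)^{O(1)}$ for the block with $d\asymp X/a^2$, times convergent integrals in the $s_i$.

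Carrying this out, summing over the dyadic ranges of $d$ (of which there are $O(\log X)$, absorbed into the $(\log X)^A$), the contribution of a fixed $a$ is $\ll (X/a^2)(\log X)^A$ for some absolute $A$; here one must be slightly careful that the implied constant in Proposition \ref{prop:LargeSieve} does not depend on $a$ (it does not, since $\chi_{8a^2 d} = \chi_{8d}$ for $(d,a)=1$, and $\chi_{\mathfrak{m}}$-type sums were already handled uniformly in the conductor in Section \ref{sec:LargeSieve-NonSqare}). Summing this over $a > Y$ with $(a,2)=1$ gives
\[
\mathcal{S}^* \ll (\log X)^A \sum_{a>Y} \frac{X}{a^2} \ll X Y^{-1} (\log X)^A,
\]
as claimed. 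An alternative, even more elementary route avoids $L$-functions entirely: bound $V_{\a_i}(n_i/\sqrt{8a^2d}) \ll 1$ and use the rapid decay to truncate $n_i \leq (a^2 d)^{1/2+\varepsilon}$, then apply the Pólya--Vinogradov inequality (or the large sieve for real characters) to $\sum_{(d,2a)=1,\, d\asymp X/a^2} \chi_{8d}(n_1n_2n_3n_4)\Phi(a^2 d/X)$; this loses more powers of $\log X$ but still yields the stated shape $X Y^{-1}(\log X)^A$.

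The main obstacle is ensuring uniformity in $a$ and handling the interaction between the M\"obius sum (which makes $n_i$ coprime to $a$) and the cutoff $a^2 d \asymp X$: when $a$ is as large as $\sqrt{X}$, the inner sum over $d$ has length $O(1)$ and one must fall back on the trivial bound $\prod_i |L(\tfrac12+\cdots,\chi_{8d})| \ll X^\varepsilon$ (valid on average, or pointwise after the $\varepsilon$ using the convexity bound), rather than Proposition \ref{prop:LargeSieve}; but since the target bound $XY^{-1}(\log X)^A$ is very weak, splitting the range $Y < a \leq \sqrt{X}$ from $a > \sqrt{X}$ and estimating each crudely is harmless. I expect the write-up to be short, mirroring the standard treatment of such tail terms in \cite{SoundNonvanishing} and \cite{SoundYoungSecond}.
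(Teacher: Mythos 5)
Your main route is in the right spirit, but it glosses over the step that actually requires an argument: after the M\"obius inversion in \eqref{asy1}, the variable $d$ appearing in $\mathcal{S}^*$ runs over \emph{all} odd positive integers, not over squarefree ones. Consequently $\chi_{8d}$ is not a primitive character of conductor $8|d|$ and the $d$-sum is not a sum over fundamental discriminants, while every bound you want to invoke --- Proposition~\ref{prop:LargeSieve}, Lemma~\ref{lem:InductionAllN}, and the estimate \eqref{error-4th-case} --- is stated for $\sumflat$ or $\sumstar$ sums over fundamental discriminants or squarefree odd moduli. Your remark that ``$\chi_{8a^2d}=\chi_{8d}$ for $(d,a)=1$'' addresses a different (and essentially nonexistent) problem: the $a^2$ only enters through the lengths $\sqrt{8a^2d}$ and the cutoff $\Phi(a^2d/X)$, not through the conductor of the character. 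The paper resolves the genuine issue by writing $d=\ell b^2$ with $\ell$ squarefree, noting that $\chi_{8d}(n)=\chi_{8\ell}(n)$ for $(n,b)=1$ (and vanishes otherwise), so the coprimality condition $(n,2a)=1$ merges with $(n,b)=1$ into $(n,2c)=1$ with $c=ab$. The inner sum over \emph{squarefree} odd $\ell\asymp X/c^2$ can then be fed into \eqref{error-4th-case}, giving $\ll (X/c^2)(\log X)^{O(1)}$ per $c$, and the outer sum $\sum_{c>Y}\tau(c)^5/c^2 \ll Y^{-1}(\log X)^{O(1)}$ produces the factor $Y^{-1}$. This substitution is the missing step in your write-up; once it is inserted, the rest of your computation (Mellin inversion, restoring partitions of unity, applying Lemma~\ref{lem:InductionAllN}, summing over the outer variable) goes through exactly as you describe.

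Your proposed ``more elementary'' alternative via P\'olya--Vinogradov does \emph{not} in fact yield the claimed bound and should be dropped. After bounding $V_{\a_i}\ll 1$ and truncating $n_i\ll (a^2d)^{1/2+\varepsilon}\ll X^{1/2+\varepsilon}$, the non-square $n_1n_2n_3n_4$ contribute
\[
\sum_{\substack{n_i\ll \sqrt{X}\\ n_1n_2n_3n_4\neq\square}} \frac{1}{\sqrt{n_1n_2n_3n_4}}\min\!\left(\frac{X}{a^2},\ \sqrt{n_1n_2n_3n_4}\,\log\right)\gg X^{1+\delta},
\]
because the off-diagonal P\'olya--Vinogradov bound $\sqrt{n}\log n$ exactly cancels the weight $n^{-1/2}$ and leaves $\asymp X^{2}$ tuples, each weighted by $O(\log)$; capping by the trivial bound $X/a^2$ does not rescue this. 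The fourth-moment large sieve of Proposition~\ref{prop:LargeSieve} is doing essential work here --- its $M+N^2\log(\cdots)$ shape is what keeps the off-diagonal under control --- so the Dirichlet-polynomial/large-sieve route is not merely a convenience but necessary, and Lemma~\ref{lem:InductionAllN} (together with the $d=\ell b^2$ device) is the correct mechanism.
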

\begin{proof}
In $\mathcal{S}^*$, the range of $a$ can be restricted to $Y<a\ll\sqrt{X}$ since $\Phi(x)$ is compactly supported. Let $d= \ell b^2$ where $\ell$ is squarefree and $b>0$. By grouping terms according to $c=ab^2$, it follows that 
\begin{align*}
\mathcal{S}^* &=    \sum_{\substack{(a,2)=1\\Y<a\ll\sqrt{X}}}  \mu(a) \sum_{(d,2)=1}  \sum_{(n_1n_2n_3n_4,2 a)=1} \frac{\chi_{8 d} (n_1n_2n_3n_4)}{n_1^{\frac{1}{2} + \a_1} n_2^{\frac{1}{2} + \a_2} n_3^{\frac{1}{2} + \a_3} n_4^{\frac{1}{2} + \a_4}}\nonumber\\
&\quad\times  V_{\a_1}\left( \frac{n_1}{\sqrt{8a^2 d}}\right) V_{\a_2}\left( \frac{n_2}{\sqrt{8a^2d}}\right) V_{\a_3}\left( \frac{n_3}{\sqrt{8a^2 d}}\right)V_{\a_4}\left( \frac{n_4}{\sqrt{8a^2 dU}}\right)\Phi\left(\frac{a^2 d}{X} \right) \\
&= \sum_{(c,2)=1}\sum_{\substack{(a,2)=1\\a|c\\Y<a\ll\sqrt{X}}}   \mu(a) \sumstar_{(\ell,2)=1}  \sum_{(n_1n_2n_3n_4,2 c)=1} \frac{\chi_{8 \ell} (n_1n_2n_3n_4)}{n_1^{\frac{1}{2} + \a_1} n_2^{\frac{1}{2} + \a_2} n_3^{\frac{1}{2} + \a_3} n_4^{\frac{1}{2} + \a_4}}\nonumber\\
&\quad\times  V_{\a_1}\left( \frac{n_1}{\sqrt{8\ell c^2 }}\right) V_{\a_2}\left( \frac{n_2}{\sqrt{8\ell c^2}}\right) V_{\a_3}\left( \frac{n_3}{\sqrt{8\ell c^2}}\right)V_{\a_4}\left( \frac{n_4}{\sqrt{8\ell c^2 U}}\right)\Phi\left(\frac{\ell c^2}{X} \right)
\end{align*}
Recalling  $V_\alpha$ in \eqref{VaDef}, and using a similar argument as in \eqref{equ:addVG}, we see that
\begin{align*}
\mathcal{S}^* 
&= \frac{1}{(2\pi i)^4}  \int\limits_{(1)} \cdots \int\limits_{(1)} 
\prod_{i=1}^4\frac{1}{u_i}
g_{\a_i}(u_i) 
\nonumber\\
&\quad\times  
\sum_{(c,2)=1}\sum_{\substack{(a,2)=1\\a|c\\Y<a\ll\sqrt{X}}} \mu(a)  \sumstar_{(\ell,2)=1}  \sum_{(n_1n_2n_3n_4,2 c)=1} \frac{\chi_{8\ell } (n_1n_2n_3n_4)}{n_1^{\frac{1}{2} + \a_1+u_1} n_2^{\frac{1}{2} + \a_2+u_2} n_3^{\frac{1}{2} + \a_3+u_3} n_4^{\frac{1}{2} + \a_4+u_4}}
\\
&\quad\times  U ^{\frac{u_4}{2}} (8\ell c^2 )^{\frac{u_1}{2} + \frac{u_2}{2} + \frac{u_3}{2}+ \frac{u_4}{2}}\Phi\left(\frac{\ell c^2}{X} \right)\, d\boldu\\
&= \sumd_{R_1,R_2,R_3,R_4} \frac{1}{(2\pi i)^8}  \int\limits_{(0)} \cdots \int\limits_{(0)}
\int\limits_{(\frac{2}{L})} \cdots \int\limits_{(\frac{2}{L})}
\prod_{i=1}^4\frac{1}{u_i}
g_{\a_i}(u_i) 
\sum_{\substack{(c,2)=1\\ c\leq \sqrt{X}}}\sum_{\substack{(a,2)=1\\a|c\\Y<a\ll\sqrt{X}}} \mu(a)  \\
&\quad \times \sumstar_{(\ell,2)=1}  
\prod_{i=1}^4\prod_{p|2c} \left(1-\frac{\chi_{8\ell}(p)}{p^{\frac{1}{2}+\a_i+u_i}}\right)
\sum_{n_i=1}^\infty \frac{\chi_{8\ell}(n_i)}{n_i^{\frac{1}{2}  +  \omega_i}} V\left( \frac{n_i}{R_i}\right)\tilde{G}(\omega_i-u_i-\a_i)R_i^{\omega_i-u_i-\a_i}
\\
&\quad\times U ^{\frac{u_4}{2}}  (8\ell c^2 )^{\frac{u_1}{2} + \frac{u_2}{2} + \frac{u_3}{2} + \frac{u_4}{2}}\Phi\left(\frac{\ell c^2}{X} \right)\, d\boldu \,d\boldomega,
\end{align*}
where we have moved the lines of the integration to $\Re(\omega_i) =0$ and $\Re(u_i)=\frac{2}{L}$ without encountering any poles. We have
\begin{align}
\mathcal{S}^* 
&\ll \sumd_{R_1,R_2,R_3,R_4} \frac{1}{(2\pi i)^8}  \int\limits_{(0)} \cdots \int\limits_{(0)}
\int\limits_{(\frac{2}{L})} \cdots \int\limits_{(\frac{2}{L})}
\prod_{i=1}^4 \sumabs{\frac{1}{u_i}
g_{\a_i}(u_i)}
\sum_{\substack{(c,2)=1\\ c\leq \sqrt{X}}} \sum_{\substack{(a,2)=1\\a|c\\Y<a\ll\sqrt{X}}} \tau(c)^4  \nonumber\\
&\quad \times  \prod_{i=1}^4 R_j ^{-\frac{2}{L}-\a_i}|\tilde{G}(\omega_i-u_i-\a_i) |\sumstar_{(\ell,2)=1}  
\prod_{i=1}^4
\sumabs{\sum_{n_i=1}^\infty \frac{\chi_{8\ell}(n_i)}{n_i^{\frac{1}{2}  +  \omega_i}}  V\left( \frac{n_i}{R_i}\right)}\Phi\left(\frac{\ell }{X/c^2} \right) d\boldu\, d\boldomega.
\label{S>Y-1}
\end{align}
Replacing $X$ by $X/c^2$ in \eqref{error-4th-case}, we obtain
\begin{align*}
&\sumstar_{(\ell,2)=1}    \prod_{i=1}^4
\sumabs{\sum_{n_i=1}^\infty
\frac{\chi_{8\ell}(n_i)}{n_i^{\frac{1}{2} + \omega_i}} V\left(\frac{n_i}{R_i} \right )} \Phi\left(\frac{\ell}{X/c^2} \right)\\
&\ll \frac{X}{c^2} \log^6 (2+X) \prod_{1 \leq i \leq 4} (1+|\omega_i|)^{3}.
\end{align*}
Substituting this into \eqref{S>Y-1}, we find that 
\begin{align*}
\mathcal{S}^* 
\ll 
X (\log X)^{10} \sum_{\substack{(c,2)=1\\ c\leq \sqrt{X}}}\sum_{\substack{(a,2)=1\\a|c\\Y<a\ll\sqrt{X}}} \frac{ \tau(c)^4  }{c^2}
\ll X (\log X)^{10}\sum_{Y< c\leq \sqrt{X}}   \frac{ \tau(c)^5  }{c^2} \ll X Y^{-1} (\log X)^A
\end{align*}
for some $A>0$.
\end{proof}

We now evaluate $\mathcal{S}$ in \eqref{asy1}. Let
\begin{align*}
H(x)&:=H(x;n_1, n_2, n_3, n_4) \\
&:= \Phi(a^2 x)V_{\a_1}\left(\frac{n_1}{\sqrt{8a^2xX} }\right)V_{\a_2}\left(\frac{n_2}{\sqrt{8a^2xX} }\right)V_{\a_3}\left(\frac{n_3}{\sqrt{8a^2xX} }\right)V_{\a_4}\left(\frac{n_4}{\sqrt{8a^2 xX U }}\right).
\end{align*}
%\label{def-4th-h}
Then
\[
\cols = \sum_{\substack{(a,2)=1\\ a \leq Y}}  \mu(a) \sum_{(d,2)=1}  \sum_{(n_1n_2n_3n_4,2 a)=1} \frac{\chi_{8 d} (n_1n_2n_3n_4)}{n_1^{\frac{1}{2} + \a_1} n_2^{\frac{1}{2} + \a_2} n_3^{\frac{1}{2} + \a_3} n_4^{\frac{1}{2} + \a_4}}H\left(\frac{d}{X};n_1, n_2, n_3, n_4\right).
\]
By the Possion summation in Lemma \ref{lem:Poisson}, we derive 
\begin{align*}
\cols &=
\frac{X}{2}\sum_{\substack{(a,2)=1\\ a \leq Y}}  \mu(a)  \sum_{(n_1n_2n_3n_4,2 a)=1} \frac{1}{n_1^{\frac{1}{2} + \a_1} n_2^{\frac{1}{2} + \a_2} n_3^{\frac{1}{2} + \a_3} n_4^{\frac{1}{2} + \a_4}}\\
&\quad \times \sum_{k \in \mathbb{Z}} (-1)^k \frac{G_k(n_1 n_2 n_3 n_4)}{n_1 n_2 n_3 n_4} \check{H}\left(\frac{kX}{2n_1 n_2 n_3 n_4}\right).
\end{align*}
Write 
\begin{align}
\cols = \cols^{0} + \cols^{\Box} + \cols^{\neq \Box}, 
\label{splitS}
\end{align}
where 
\begin{align} 
\cols^{0}&: = \frac{X}{2} \hat{H}(0) \sum_{\substack{(a,2)=1\\ a \leq Y}}  \mu(a)  \sum_{\substack{(n_1n_2n_3n_4,2 a)=1 \\ n_1n_2n_3n_4 = \Box}} \frac{1}{n_1^{\frac{1}{2} + \a_1} n_2^{\frac{1}{2} + \a_2} n_3^{\frac{1}{2} + \a_3} n_4^{\frac{1}{2} + \a_4}}  \prod_{p|n_1 n_2 n_3 n_4} \left(1- \frac{1}{p} \right) ,\label{OR-11}
\\
\cols^{\Box}&:= \frac{X}{2}\sum_{\substack{(a,2)=1\\ a \leq Y}}  \mu(a)  \sum_{(n_1n_2n_3n_4,2 a)=1} \frac{1}{n_1^{\frac{1}{2} + \a_1} n_2^{\frac{1}{2} + \a_2} n_3^{\frac{1}{2} + \a_3} n_4^{\frac{1}{2} + \a_4}} \nonumber\\
&\quad \times \sum_{k =\Box} (-1)^k \frac{G_k(n_1 n_2 n_3 n_4)}{n_1 n_2 n_3 n_4} \check{H}\left(\frac{kX}{2n_1 n_2 n_3 n_4}\right),\label{O1}\\
\cols^{\neq \Box}&:= 
\frac{X}{2}\sum_{\substack{(a,2)=1\\ a \leq Y}}  \mu(a)  \sum_{(n_1n_2n_3n_4,2 a)=1} \frac{1}{n_1^{\frac{1}{2} + \a_1} n_2^{\frac{1}{2} + \a_2} n_3^{\frac{1}{2} + \a_3} n_4^{\frac{1}{2} + \a_4}} \nonumber\\
&\quad \times \sum_{\substack{k \in \mathbb{Z} \\ k \neq 0, \Box}}  (-1)^k \frac{G_k(n_1 n_2 n_3 n_4)}{n_1 n_2 n_3 n_4} \check{H}\left(\frac{kX}{2n_1 n_2 n_3 n_4}\right)
.\label{R1}
\end{align}
 As we have mentioned previously, both $\cols^0$ (the diagonal terms) and $\cols^{\square}$ (the off-diagonal, square terms) contribute main terms, while $\cols^{\neq\square}$ is an error term. We evaluate or estimate each of the expressions above in the following sections.
\section{Evaluation of Diagonal and Off-diagonal, Square Terms }
\subsection{Diagonal Contribution $\cols^0$}\label{sec:Asymptotic-Diagonal}
By \eqref{OR-11}, we have
\begin{align}
\cols^{0}& = \frac{X}{2} \sum_{\substack{(a,2)=1 \\ a\leq Y}}  \mu(a)   \sum_{\substack{(n_1n_2n_3n_4,2 a)=1 \\ n_1n_2n_3n_4 = \Box}} \frac{1}{n_1^{\frac{1}{2} + \a_1} n_2^{\frac{1}{2} + \a_2} n_3^{\frac{1}{2} + \a_3} n_4^{\frac{1}{2} + \a_4}}   \prod_{p|n_1 n_2 n_3 n_4} \left(1- \frac{1}{p} \right)\nonumber\\
&\quad\times 
\int\limits_{-\infty}^\infty  \Phi(a^2 x)V_{\a_1}\left(\frac{n_1}{\sqrt{8a^2xX} }\right)V_{\a_2}\left(\frac{n_2}{\sqrt{8a^2xX} }\right)V_{\a_3}\left(\frac{n_3}{\sqrt{8a^2xX} }\right)V_{\a_4}\left(\frac{n_4}{\sqrt{8a^2xX U}}\right)\, dx\nonumber \\
&=\frac{X}{2} \sum_{\substack{(a,2)=1 \\ a\leq Y}} \frac{ \mu(a)}{a^2}   \sum_{\substack{(n_1n_2n_3n_4,2 a)=1 \\ n_1n_2n_3n_4 = \Box}} \frac{1}{n_1^{\frac{1}{2} + \a_1} n_2^{\frac{1}{2} + \a_2} n_3^{\frac{1}{2} + \a_3} n_4^{\frac{1}{2} + \a_4}}    \prod_{p|n_1 n_2 n_3 n_4} \left(1- \frac{1}{p} \right)\nonumber\\
&\quad\times 
\int\limits_{-\infty}^\infty  \Phi(x)V_{\a_1}\left(\frac{n_1}{\sqrt{8xX} }\right)V_{\a_2}\left(\frac{n_2}{\sqrt{8xX} }\right)V_{\a_3}\left(\frac{n_3}{\sqrt{8xX} }\right)V_{\a_4}\left(\frac{n_4}{\sqrt{8xXU }}\right)\, dx.
\label{D1}
\end{align}
Since $V_{\a_j}(\xi)$ decays rapidly as $\xi \rightarrow + \infty$ and $\xi \ll 1$ when $\xi$ near $0$, it follows that 
\begin{align*}
&\frac{X}{2} \sum_{\substack{(a,2)=1 \\ a> Y}} \frac{ \mu(a)}{a^2}   \sum_{\substack{(n_1n_2n_3n_4,2 a)=1 \\ n_1n_2n_3n_4 = \Box}} \frac{1}{n_1^{\frac{1}{2} + \a_1} n_2^{\frac{1}{2} + \a_2} n_3^{\frac{1}{2} + \a_3} n_4^{\frac{1}{2} + \a_4}}    \prod_{p|n_1 n_2 n_3 n_4} \left(1- \frac{1}{p} \right)\\
&\quad\times 
\int\limits_{-\infty}^\infty  \Phi(x)V_{\a_1}\left(\frac{n_1}{\sqrt{8xX} }\right)V_{\a_2}\left(\frac{n_2}{\sqrt{8xX} }\right)V_{\a_3}\left(\frac{n_3}{\sqrt{8xX} }\right)V_{\a_4}\left(\frac{n_4}{\sqrt{8xXU }}\right)\, dx \\
&\ll  \frac{X}{Y} \sum_{n_1 n_2 n_3 n_4 = \Box} \frac{1}{n_1^{\frac{1}{2}+\a_1} n_2^{\frac{1}{2}+\a_2} n_3^{\frac{1}{2}+\a_3} n_4^{\frac{1}{2}+\a_4}  } \\
&\quad \times 
\left(1+ \frac{n_1}{\sqrt{X}} \right)^{-100}\left(1+ \frac{n_2}{\sqrt{X}} \right)^{-100}
\left(1+ \frac{n_3}{\sqrt{X}} \right)^{-100} \left(1+ \frac{n_4}{\sqrt{XU}} \right)^{-100}\\
&\ll X  Y^{-1} (\log X)^{A} 
\end{align*}
for some $A>0$. Here the last inequality follows by splitting the sums over $n_i$ depending as $n_i\leq X$. The contribution from the ranges with $n_i > X$ for at least one $i$ are clearly negligible, and the remaining contribution is bounded by
\[
\sum_{\substack{n_i\leq X\\ n_1 n_2 n_3 n_4 = \Box}} \frac{1}{\sqrt{n_1n_2n_3n_4}} \leq \sum_{n\leq X^4} \frac{\tau_4(n^2)}{n} \ll (\log X)^A.
\]
Thus we may extend the sum over $a$ in \eqref{D1} to all positive odd integers with an error of at most $X  Y^{-1} (\log X)^{A}$. Since
\[
\sum_{(a,2n_1n_2n_3n_4)=1} \frac{ \mu(a)}{a^2} 
=
\frac{8}{\pi^2} \prod_{p|n_1 n_2 n_3 n_4} \left(1 - \frac{1}{p^2} \right)^{-1},
\]
 we find that
\begin{align*}
\cols^{0}
&=\frac{4X}{\pi^2}   \sum_{\substack{(n_1n_2n_3n_4,2 )=1 \\ n_1n_2n_3n_4 = \Box}} \frac{1}{n_1^{\frac{1}{2} + \a_1} n_2^{\frac{1}{2} + \a_2} n_3^{\frac{1}{2} + \a_3} n_4^{\frac{1}{2} + \a_4}}    \prod_{p|n_1 n_2 n_3 n_4} \frac{p}{p+1} \\
&\quad\times 
\int\limits_{-\infty}^\infty  \Phi(x)V_{\a_1}\left(\frac{n_1}{\sqrt{8xX} }\right)V_{\a_2}\left(\frac{n_2}{\sqrt{8xX} }\right)V_{\a_3}\left(\frac{n_3}{\sqrt{8xX} }\right)V_{\a_4}\left(\frac{n_4}{\sqrt{8xXU }}\right)\, dx \\
&\quad + O\left(X  Y^{-1} (\log X)^{A} \right).
\end{align*}
By \eqref{afe1}, we then have
\begin{align}   
\cols^{0}
&=\frac{4X}{\pi^2} \frac{1}{(2\pi i)^4}  \int\limits_{(1)} \cdots \int\limits_{(1)} 
\prod_{i=1}^4\frac{1}{u_i}  g_{\a_i}(u_i) \mathcal{D}_2(\tfrac{1}{2}+\a_1+u_1,\tfrac{1}{2}+\a_2+u_2,\tfrac{1}{2}+ \a_3+u_3, \tfrac{1}{2}+\a_4+u_4) \nonumber\\
&\quad\times 
(8 X)^{\sum_{i=1}^4\frac{u_i}{2}} U^{\frac{u_4}{2}}  \int\limits_{-\infty}^\infty  \Phi(x)
x^{\sum_{i=1}^4\frac{u_i}{2}} \, dx\, d\boldu + O\left(X Y^{-1} (\log X)^{A} \right),
    \label{D2}
  \end{align}
where $\mathcal{D}_2$ is defined in \eqref{def:D2}. 
It follows from \eqref{D2} and Lemma \ref{lem:DirichletSeriesDiagonal} that 
\begin{lem} \label{final-diag}
We have
\begin{align} 
\cols^{0}
&=\frac{4X}{\pi^2} \frac{1}{(2\pi i)^4}  \int\limits_{(\frac{1}{10})} \cdots \int\limits_{(\frac{1}{10})} 
\prod_{i=1}^4\frac{1}{u_i}  g_{\a_i}(u_i)(8 X)^{\sum_{i=1}^4\frac{u_i}{2}} U^{\frac{u_4}{2}}  \int\limits_{-\infty}^\infty  \Phi(x)
x^{\sum_{i=1}^4\frac{u_i}{2} } \, dx \nonumber\\
&\quad\times 
\prod_{1 \leq i\leq j \leq 4} \zeta_2(1+\a_i+\a_j+u_i+u_j)  \mathcal{H}_2(\tfrac{1}{2}+\a_1+u_1,\tfrac{1}{2}+\a_2+u_2,\tfrac{1}{2}+ \a_3+u_3, \tfrac{1}{2}+\a_4+u_4) \, d\boldu\nonumber \\
& \quad+ O\left(X  Y^{-1}  (\log X)^{A}\right).
\label{Dia-nongen}
\end{align}
The function $\mathcal{H}_2(\tfrac{1}{2}+\a_1+u_1,\tfrac{1}{2}+\a_2+u_2,\tfrac{1}{2}+ \a_3+u_3, \tfrac{1}{2}+\a_4+u_4)$ is analytic and uniformly bounded for $\Re(u_i)>-\frac{1}{4} + \varepsilon$.
\end{lem}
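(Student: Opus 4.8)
The starting point is the formula \eqref{D2} for $\cols^0$ produced by the approximate functional equation, so the task is purely one of rewriting that $4$-fold integral. First I would apply Lemma \ref{lem:DirichletSeriesDiagonal} with $\ell=2$ to the Dirichlet series $\mathcal{D}_2$ occurring in \eqref{D2}: on the original lines $\Re(u_i)=1$ we have $\Re(\tfrac12+\a_i+u_i)=1+\Re(\a_i)>\tfrac12$, so Lemma \ref{lem:DirichletSeriesDiagonal} gives
\[
\mathcal{D}_2\bigl(\tfrac12+\a_1+u_1,\dots,\tfrac12+\a_4+u_4\bigr)=\prod_{1\le i\le j\le 4}\zeta_2\bigl(1+\a_i+\a_j+u_i+u_j\bigr)\,\mathcal{H}_2\bigl(\tfrac12+\a_1+u_1,\dots,\tfrac12+\a_4+u_4\bigr),
\]
where $\mathcal{H}_2=\colh_2$ is the function of Lemma \ref{lem:DirichletSeriesDiagonal}. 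Substituting this into \eqref{D2} yields exactly the integrand of \eqref{Dia-nongen}, but with the lines of integration still on $\Re(u_i)=1$, together with the error term $O(XY^{-1}(\log X)^A)$ already present in \eqref{D2}.

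Next I would move each line of integration from $\Re(u_i)=1$ to $\Re(u_i)=\tfrac1{10}$ and argue that no poles are crossed. On the strip $\tfrac1{10}\le\Re(u_i)\le 1$ the factor $g_{\a_i}(u_i)$ is analytic (the poles of $\Gamma(\tfrac{1/2+\a_i+u_i}{2})$ lie in $\Re(u_i)\le-\tfrac12-\Re(\a_i)$), the factor $(8X)^{\sum u_i/2}U^{u_4/2}\int_{-\infty}^\infty\Phi(x)x^{\sum u_i/2}\,dx$ is entire in $\boldu$, and $\mathcal{H}_2(\tfrac12+\a_1+u_1,\dots)$ is analytic there by Lemma \ref{lem:DirichletSeriesDiagonal}, since $\Re(\tfrac12+\a_i+u_i)\ge \tfrac35-|\Re(\a_i)|>\tfrac14+\varepsilon$ once $X$ is large. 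The only candidate poles are at $u_i=0$ (from $\prod_i u_i^{-1}$) and where $\a_i+\a_j+u_i+u_j=0$ (from the $\zeta_2$ factors, which are holomorphic except at argument $1$); but $|\Re(\a_i)|\le 1/\log X$ forces $\Re(u_i)\ge\tfrac1{10}>0$ and $\Re(\a_i+\a_j+u_i+u_j)\ge\tfrac15-\tfrac2{\log X}>0$ on the whole strip, so these poles lie strictly to the left of the new contours. The horizontal connecting segments vanish, and all integrals converge absolutely, because $g_{\a_i}(\sigma+i\tau)\ll_\sigma e^{-c|\tau|}$ by Stirling's formula (cf.\ \eqref{eq:GammaRatioBound}) while $\int_{-\infty}^\infty\Phi(x)x^{\sigma+i\tau}\,dx\ll_{A,\sigma}(1+|\tau|)^{-A}$ for any $A>0$ by repeated integration by parts. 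This gives \eqref{Dia-nongen}.

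Finally, the analyticity and uniform boundedness of $\mathcal{H}_2(\tfrac12+\a_1+u_1,\dots,\tfrac12+\a_4+u_4)$ for $\Re(u_i)>-\tfrac14+\varepsilon$ is immediate from Lemma \ref{lem:DirichletSeriesDiagonal}, which provides these properties for $\colh_2$ on $\Re(\,\cdot\,)\ge\tfrac14+\varepsilon$: here $\Re(\tfrac12+\a_i+u_i)>\tfrac14+\varepsilon-|\Re(\a_i)|\ge\tfrac14+\tfrac{\varepsilon}{2}$ once $X$ is large (shrinking $\varepsilon$ slightly if needed). There is no real obstacle in this lemma; the only point deserving attention is the bookkeeping of the shifts $\a_i$ when checking that the contour shift crosses no poles and remains in the region of analyticity of $\mathcal{H}_2$, which is taken care of by the hypothesis $\tfrac1{2\log X}\le|\a_i|\le\tfrac1{\log X}$ and taking $X$ sufficiently large.
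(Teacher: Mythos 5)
Your proof is correct and follows the same route as the paper: substitute the factorization of $\mathcal{D}_2$ from Lemma \ref{lem:DirichletSeriesDiagonal} into \eqref{D2}, then shift the lines from $\Re(u_i)=1$ to $\Re(u_i)=\tfrac1{10}$, noting that the poles at $u_i=0$ and at $u_i+u_j=-\a_i-\a_j$ lie strictly to the left of the new contours and that $\mathcal{H}_2$ stays in its region of analyticity. The paper leaves the contour shift implicit; your write-up just makes that step explicit and checks the convergence, which is consistent with the paper's intent.
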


\subsection{Off-diagonal, Square Contribution $\cols^{\Box}$}\label{sec:Asymptotic-OffDiagonal}
Recall the expression for $\cols^{\Box}$ given in \eqref{O1}. By \eqref{eq:Fhatsecond}, 
\begin{align*}
&\check{H}\left(\frac{kX}{2n_1 n_2 n_3 n_4}\right)\\
%&= \frac{1}{(2\pi i)^4}  \int\limits_{(\varepsilon)}  \int\limits_{(\varepsilon)}  \int\limits_{(\varepsilon)}  \int\limits_{(\varepsilon)} \int\limits_{(\varepsilon)} \tilde{H} (1-s, \omega_1, \omega_2, \omega_3,\omega_4) n_1^{-\omega_1} n_2^{-\omega_2} n_3^{-\omega_3} n_4^{-\omega_4} \left( \frac{n_1 n_2 n_3 n_4}{\pi X |k|}\right)^s \\
%&\quad\times \Gamma(s) (\cos + \operatorname{sgn}(k)\sin)\left(\frac{\pi s}{2} \right) \, dw_1\, dw_2\, dw_3\, dw_4\, ds\\
&= \frac{1}{2\pi  i }  \int\limits_{(\varepsilon)}  \int\limits_0^\infty H(x; n_1, n_2, n_3,n_4)  x^{-s} \, dx  \left( \frac{n_1 n_2 n_3 n_4}{\pi X |k|}\right)^s
\Gamma(s) (\cos + \operatorname{sgn}(k)\sin)\left(\frac{\pi s}{2} \right)\, ds\\
&= \frac{1}{2\pi  i }  \int\limits_{(\varepsilon)}  \int\limits_0^\infty  \Phi(a^2 x)V_{\a_1}\left(\frac{n_1}{\sqrt{8a^2xX} }\right)V_{\a_2}\left(\frac{n_2}{\sqrt{8a^2xX} }\right)V_{\a_3}\left(\frac{n_3}{\sqrt{8a^2xX} }\right)V_{\a_4}\left(\frac{n_4}{\sqrt{8a^2 xXU}}\right)\\
&\quad\times  x^{-s} \, dx  \left( \frac{n_1 n_2 n_3 n_4}{\pi X |k|}\right)^s
\Gamma(s) (\cos + \operatorname{sgn}(k)\sin)\left(\frac{\pi s}{2} \right)\, ds\\
&= \frac{1}{a^2}\frac{1}{(2\pi i)^5}  \int\limits_{(\varepsilon)}  \int\limits_{(2\varepsilon)}  \cdots \int\limits_{(2\varepsilon)}   \int\limits_0^\infty  \Phi(x)
\prod_{i=1}^4 \frac{1}{u_i} g_{\a_i}(u_i)(8xX)^{\sum_{i=1}^4\frac{u_i}{2}} U^{\frac{u_4}{2}} x^{-s} \, dx \\
&\quad\times  \frac{1}{n_1^{u_1}n_2^{u_2}n_3^{u_3}n_4^{u_4}} \left( \frac{n_1 n_2 n_3 n_4 a^2}{\pi X |k|}\right)^s
\Gamma(s) (\cos + \operatorname{sgn}(k)\sin)\left(\frac{\pi s}{2} \right)\, d\boldu\, ds.
\end{align*}
By the  change of variables $u_i \mapsto u_i +s$, we see that 
\begin{align}
\check{H}\left(\frac{kX}{2n_1 n_2 n_3 n_4}\right)
&= \frac{1}{a^2}\frac{1}{(2\pi i)^5}  \int\limits_{(\varepsilon)} \cdots  \int\limits_{(\varepsilon)}  \int\limits_{(\varepsilon)}  \int\limits_0^\infty  \Phi(x)
\prod_{i=1}^4 \frac{1}{u_i+s} g_{\a_i}(u_i+s)  (8xX)^{\sum_{i=1}^4\frac{u_i+s}{2}} U^{\frac{u_4+s}{2}}  x^{-s} \, dx  \nonumber\\
&\quad\times \frac{1}{n_1^{u_1}n_2^{u_2}n_3^{u_3}n_4^{u_4}}\left( \frac{a^2}{\pi X |k|}\right)^s
\Gamma(s) (\cos + \operatorname{sgn}(k)\sin)\left(\frac{\pi s}{2} \right)\, d\boldu\, ds.
\label{exI}
\end{align}
Inserting this into \eqref{O1},
%gives 
%\begin{align*}
%\cols^{\Box}&:= \frac{X}{2}\sum_{\substack{(a,2)=1 \\ a\leq Y}} \frac{ \mu(a)}{a^2}   \sum_{(n_1n_2n_3n_4,2 a)=1} \sum_{k=1 }^\infty (-1)^k \frac{G_k(n_1 n_2 n_3 n_4)}{n_1 n_2 n_3 n_4}\nonumber \\
%&\quad\times \frac{1}{(2\pi i)^5}  \int\limits_{(2\varepsilon)} \cdots \int\limits_{(2\varepsilon)}  \int\limits_{(\frac{1}{2}+\varepsilon)}  \int\limits_0^\infty
%   \Phi(x)
%\frac{\mathcal{G}(u_1+s)}{u_1+s} \frac{\mathcal{G}(u_2+s)}{u_2+s} \frac{\mathcal{G}(u_3+s)}{u_3+s} \frac{\mathcal{G}(u_4+s)}{u_4+s} \nonumber\\
%&\quad\times g_{\a_1}(u_1+s) g_{\a_2}(u_2+s)g_{\a_3}(u_3+s)g_{\a_4}(u_4+s)\nonumber \\
%&\quad\times (8xX)^{\frac{u_1+s}{2} + \frac{u_2+s}{2} + \frac{u_3+s}{2}} U^{\frac{u_4+s}{2}}  x^{-s} \, dx \frac{1}{n_1^{\frac{1}{2}+ \a_1 + u_1}n_2^{\frac{1}{2}+ \a_2 +u_2}n_3^{\frac{1}{2}+ \a_3+ u_3}n_4^{\frac{1}{2}+ \a_4 + u_4}} \\
%&\quad\times \left( \frac{a^2}{\pi X k^2}\right)^s\Gamma(s) (\cos + \sin)\left(\frac{\pi s}{2} \right) \, ds\, d\boldu,
%\label{ODDiri}
%\end{align*}
by \eqref{def:Z*}, we obtain
\begin{align}
\cols^{\Box}&= \frac{X}{2}\sum_{\substack{(a,2)=1 \\ a\leq Y}} \frac{ \mu(a)}{a^2}   \frac{1}{(2\pi i)^5}  \int\limits_{(\frac{1}{2}+\varepsilon)} \int\limits_{(2\varepsilon)}  \cdots \int\limits_{(2\varepsilon)}   \int\limits_0^\infty  \Phi(x)
\prod_{i=1}^4 \frac{1}{u_i+s} g_{\a_i}(u_i+s) \nonumber\\
&\quad\times (8xX)^{\sum_{i=1}^4\frac{u_i+s}{2} } U^{\frac{u_4+s}{2}}  x^{-s} \, dx  \left( \frac{a^2}{\pi X}\right)^s\Gamma(s) (\cos + \sin)\left(\frac{\pi s}{2} \right) \nonumber\\
&\quad\times  Z^*(\tfrac{1}{2}+u_1+\a_1,\tfrac{1}{2}+u_2+\a_2,\tfrac{1}{2}+u_3+\a_3,\tfrac{1}{2}+u_4+\a_4, s;1,a) \, d\boldu \, ds.
\label{ODDiri+1}
\end{align}
%{\color{blue}
%When $k_1 = 1$, we have 
%\begin{equation}
%\begin{split}
%&\sum_{k_2=1}^\infty J(k_2)\\
%& = \prod_{p}  \left( 1 - \frac{1}{p^{2s}}\right)^{-1}\\
%&\quad\times \prod_{(p,2a)=1} \left[  \left(\frac{1}{p} + \left(1 - \frac{1}{p} \right)  E_0\right) + \frac{\chi_{\mathfrak{m}}(p)}{p^{\frac{1}{2}-s}} 
%(F_0 -E_0)\left(1 - \frac{1}{p^{2s}} \right)\right]\\
%\end{split}
%\end{equation}
%\begin{equation*}
% \begin{split}
%& \sum_{(a,2)=1} \frac{\mu(a)}{a^{2-2s}} \prod_{(p,2a)=1} \left[  \left(\frac{1}{p} + \left(1 - \frac{1}{p} \right)  E_0\right) + \frac{\chi_{\mathfrak{m}}(p)}{p^{\frac{1}{2}-s}} 
%(F_0 -E_0)\left(1 - \frac{1}{p^{2s}} \right)\right]\\
%& =  \prod_{p\neq 2} \left[ \left(\frac{1}{p} + \left(1 - \frac{1}{p} \right)  E_0\right) + \frac{1}{p^{\frac{1}{2}-s}} 
%(F_0 -E_0)\left(1 - \frac{1}{p^{2s}} \right) -\frac{1}{p^{2-2s}} \right]
% \end{split}
%\end{equation*}
%We then complete the proof by evaluating the upper bound of each Euler factor above.
%}
By \eqref{ODDiri+1} and Lemma \ref{lem:DirichletSeriesOffDiagonal}, we  derive
\begin{align*}
\cols^{\Box}&=\frac{X}{2} \sum_{\substack{(a,2)=1 \\ a\leq Y}} \frac{ \mu(a)}{a^2}   \frac{1}{(2\pi i)^5} \int\limits_{(\frac{1}{2}+\varepsilon)}   \int\limits_{(2\varepsilon)} \cdots \int\limits_{(2\varepsilon)}   \int\limits_0^\infty \Phi(x)
\prod_{i=1}^4 \frac{1}{u_i+s} g_{\a_i}(u_i+s)  (8xX)^{\sum_{i=1}^4\frac{u_i+s}{2}}  \nonumber\\
&\quad \times U^{\frac{u_4+s}{2}}x^{-s} \, dx  \left( \frac{a^2}{\pi X }\right)^s  \Gamma(s) (\cos + \sin)\left(\frac{\pi s}{2} \right)\left( 2^{1-2s} -1 \right) \zeta(2s)\nonumber \\
&\quad\times  \prod_{1\leq i \leq j \leq 4}
\frac{\zeta_2(1+2s+ \a_i +u_i + \a_j+u_j)}{\zeta_2(2+u_i+u_j+\a_i +\a_j)}  \prod_{1 \leq i \leq 4} \frac{\zeta_2(1+ \a_i+u_i)}{\zeta_2(1+2s+\a_i+u_i )} \nonumber\\ 
&\quad\times Z_2(\tfrac{1}{2}+u_1+\a_1,\tfrac{1}{2}+u_2+\a_2,\tfrac{1}{2}+u_3+\a_3,\tfrac{1}{2}+u_4+\a_4, s;1,a) \, d\boldu \, ds.
%\label{ODDiri1}
\end{align*}
We move the lines of the integration above to $\Re(s) =\frac{10}{L}$ and $\Re(u_i) =  \frac{5}{L}$. By (ii) of Lemma \ref{lem:DirichletSeriesOffDiagonal}, we do not enounter any poles when shifting contours (the possible pole arising from $\zeta(2s)$ at $s=0$ is cancelled by the zero of $2^{1-2s}-1$). As in the previous section, by Lemma \ref{lem:DirichletSeriesOffDiagonal}, we may extend the sum over $a$ to all positive odd integers at the cost  of
\[
\ll  X(\log X)^A \sum_{a>Y} \frac{\tau(a)}{a^{2-\frac{20}{\log X}}}
\ll XY^{-1}(\log X)^A.
\]
%Indeed, we have 
%\begin{align}
%&\frac{X}{2} \sum_{\substack{(a,2)=1 \\ a> Y}} \frac{ \mu(a)}{a^2}   \frac{1}{(2\pi i)^5}  \int\limits_{(\frac{1}{\log X})}  \cdots  \int\limits_{(\frac{1}{\log X})}  \int\limits_{(\frac{2}{\log X})}  \int\limits_0^\infty\nonumber\\
%&\quad\times   \Phi(x)
%\frac{\mathcal{G}(u_1+s)}{u_1+s} \frac{\mathcal{G}(u_2+s)}{u_2+s} \frac{\mathcal{G}(u_3+s)}{u_3+s} \frac{\mathcal{G}(u_4+s)}{u_4+s} \nonumber\\
%&\quad\times g_{\a_1}(u_1+s) g_{\a_2}(u_2+s)g_{\a_3}(u_3+s)g_{\a_4}(u_4+s)  (8xX)^{\frac{u_1+s}{2} + \frac{u_2+s}{2} + \frac{u_3+s}{2}} U^{\frac{u_4+s}{2}}  x^{-s} \, dx  \left( \frac{a^2}{\pi X }\right)^s\nonumber\\
%&\quad\times \Gamma(s) (\cos +\sin)\left(\frac{\pi s}{2} \right)\left( 2^{1-2s} -1 \right) \zeta(2s) \nonumber\\
%&\quad\times  \prod_{1\leq i \leq j \leq 4}\frac{\zeta(1+2s+ \a_i +u_i + \a_j+u_j)}{\zeta(2+u_i+u_j+\a_i +\a_j)}   \prod_{1 \leq j \leq 4} \frac{\zeta(1+ \a_j+u_j)}{\zeta(1+2s+\a_j+u_j )}\nonumber \\ 
%&\quad\times Z_2(\tfrac{1}{2}+u_1+\a_1,\tfrac{1}{2}+u_2+\a_2,\tfrac{1}{2}+u_3+\a_3,\tfrac{1}{2}+u_4+\a_4, s;1,a)\nonumber\\
%&\quad\times \, ds\, d\boldu\nonumber\\
%&\ll X(\log X)^A \sum_{a>Y} \frac{\tau(a)}{a^{2-\frac{4}{\log X}}}
% XY^{-1}(\log X)^A
%\label{ODDiri2}
%\end{align}
%for some large $A>0$. The second last inequality is because $Z^*(s,\frac{1}{2}+z_1,\frac{1}{2} + z_2,\frac{1}{2}+ z_3,\frac{1}{2}+ z_4,1,a)$ is holomorphic and bounded by $\ll \tau(a)$  in the region $\Re(s) >  -\frac{1}{20}$ and $\Re(z_j) > -\frac{1}{6}$ by Lemma \ref{lem:DirichletSeriesOffDiagonal}(ii).
Therefore, 
\begin{align*} 
\cols^{\Box}&=\frac{X}{2} \sum_{(a,2)=1 } \frac{ \mu(a)}{a^2}  
\frac{1}{(2\pi i)^5} \int\limits_{(\frac{10}{L})}   \int\limits_{(\frac{5}{L})} \cdots \int\limits_{(\frac{5}{L})}   \int\limits_0^\infty  \Phi(x)
\prod_{i=1}^4 \frac{1}{u_i+s} g_{\a_i}(u_i+s)\nonumber\\
&\quad\times    (8xX)^{\sum_{i=1}^4\frac{u_i+s}{2}} U^{\frac{u_4+s}{2}}  x^{-s} \, dx  \left( \frac{a^2}{\pi X }\right)^s \Gamma(s) (\cos + \sin)\left(\frac{\pi s}{2} \right)\left( 2^{1-2s} -1 \right) \zeta(2s) \nonumber\\
&\quad\times  \prod_{1\leq i \leq j \leq 4}\frac{\zeta_2(1+2s+ \a_i +u_i + \a_j+u_j)}{\zeta_2(2+u_i+u_j+\a_i +\a_j)}   \prod_{1 \leq i \leq 4} \frac{\zeta_2(1+ \a_i+u_i)}{\zeta_2(1+2s+\a_i+u_i )} \nonumber\\
&\quad\times Z_2(\tfrac{1}{2}+u_1+\a_1,\tfrac{1}{2}+u_2+\a_2,\tfrac{1}{2}+u_3+\a_3,\tfrac{1}{2}+u_4+\a_4, s;1,a) \, d\boldu \, ds +O \left( XY^{-1}(\log X)^A\right).
%\label{ODDiri3}
\end{align*}
By Lemma \ref{lem:DirichletSeriesSuma}, the above is 
\begin{align}   
\cols^{\Box}&=\frac{X}{2}  
\frac{1}{(2\pi i)^5} \int\limits_{(\frac{10}{L})} \int\limits_{(\frac{5}{L})}  \cdots \int\limits_{(\frac{5}{L})}  X^{\sum_{i=1}^4\frac{u_i+s}{2}} U^{\frac{u_4+s}{2}}   X^{-s}\int\limits_0^\infty  \Phi(x) x^{-s} (8x)^{\sum_{i=1}^4\frac{u_i+s}{2}}\, dx \nonumber\\
&\quad\times    \prod_{i=1}^4 \frac{1}{u_i+s} g_{\a_i}(u_i+s)
\pi^{-s } (\cos + \sin)\left(\frac{\pi s}{2} \right)\left( 2^{1-2s} -1 \right) \nonumber\\
&\quad\times  \zeta(2s) \Gamma(s)   \prod_{1\leq i \leq j \leq 4}\frac{\zeta_2(1+2s+ \a_i +u_i + \a_j+u_j)}{\zeta_2(2+u_i+u_j+\a_i +\a_j)}  \prod_{1 \leq i \leq 4} \frac{\zeta_2(1+ \a_i+u_i)}{\zeta_2(1+2s+\a_i+u_i )} \nonumber\\ 
&\quad\times Z_3(\tfrac{1}{2}+\a_1+u_1,\tfrac{1}{2}+\a_2+u_2,\tfrac{1}{2}+ \a_3+u_3,\tfrac{1}{2}+\a_4+u_4,s) \, d\boldu \, ds +O \left( XY^{-1}(\log X)^A\right).
\label{ODDiri4-1}
\end{align}
\begin{lem}\label{lem:afeG}
Let $s \in \mathbb{C}$. Then 
\[
(2^{1-2s} -1) (\cos + \sin) \left(\frac{\pi s}{2}  \right) \pi ^{-s} \Gamma(s) = 2 \left(\frac{8}{\pi}\right)^{-s} \frac{\Gamma(\frac{1}{4}-\frac{s}{2})}{\Gamma(\frac{1}{4}+ \frac{s}{2})} \frac{\zeta_2(1-2s)}{\zeta(2s)}.
\] 
\end{lem}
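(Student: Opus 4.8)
The plan is to reduce this identity to three classical ingredients: the asymmetric functional equation of the Riemann zeta function, the Legendre duplication formula for $\Gamma$, and the reflection formula $\Gamma(z)\Gamma(1-z)=\pi/\sin(\pi z)$. No analytic estimates are required; everything is an identity of meromorphic functions of $s$, so it suffices to verify it on an open subset of $\mathbb{C}$ and extend by analytic continuation.

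\textbf{Step 1: remove the Euler factor at $2$.} Since $\zeta_2(1-2s) = (1-2^{2s-1})\zeta(1-2s)$ and $2^{1-2s}-1 = 2^{1-2s}(1-2^{2s-1})$, the factor $1-2^{2s-1}$ is common to both sides and may be cancelled. This reduces the claim to
\[
2^{1-2s}\,\pi^{-s}(\cos+\sin)\!\left(\tfrac{\pi s}{2}\right)\Gamma(s) = 2\,(8/\pi)^{-s}\,\frac{\Gamma(\tfrac14-\tfrac s2)}{\Gamma(\tfrac14+\tfrac s2)}\,\frac{\zeta(1-2s)}{\zeta(2s)}.
\]

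\textbf{Step 2: invoke the functional equation for $\zeta$.} Substituting $\zeta(1-w)=2(2\pi)^{-w}\cos(\tfrac{\pi w}{2})\Gamma(w)\zeta(w)$ with $w=2s$ turns the quotient into $\zeta(1-2s)/\zeta(2s) = 2(2\pi)^{-2s}\cos(\pi s)\Gamma(2s)$, so that $\zeta$ disappears entirely. Inserting the duplication formula $\Gamma(2s)=\pi^{-1/2}2^{2s-1}\Gamma(s)\Gamma(s+\tfrac12)$, cancelling the common factor $\Gamma(s)$, and collecting the powers of $2$ and $\pi$, the identity becomes the purely Gamma-and-trigonometric statement
\[
(\cos+\sin)\!\left(\tfrac{\pi s}{2}\right) = \frac{2^{-s}}{\sqrt\pi}\,\cos(\pi s)\,\Gamma\!\left(s+\tfrac12\right)\frac{\Gamma(\tfrac14-\tfrac s2)}{\Gamma(\tfrac14+\tfrac s2)}.
\]

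\textbf{Step 3: finish with duplication, reflection, and trigonometry.} Applying the duplication formula a second time as $\Gamma(s+\tfrac12)=\pi^{-1/2}2^{s-1/2}\Gamma(\tfrac14+\tfrac s2)\Gamma(\tfrac34+\tfrac s2)$ cancels the factor $\Gamma(\tfrac14+\tfrac s2)$, and the right-hand side collapses to $\tfrac{\cos(\pi s)}{\sqrt2\,\pi}\,\Gamma(\tfrac34+\tfrac s2)\Gamma(\tfrac14-\tfrac s2)$. Since the two Gamma arguments sum to $1$, the reflection formula replaces this product by $\pi/\sin(\tfrac\pi4-\tfrac{\pi s}{2})$; then $\sqrt2\,\sin(\tfrac\pi4-\tfrac{\pi s}{2}) = \cos(\tfrac{\pi s}{2})-\sin(\tfrac{\pi s}{2})$ and the double-angle identity $\cos(\pi s) = \bigl(\cos(\tfrac{\pi s}{2})-\sin(\tfrac{\pi s}{2})\bigr)\bigl(\cos(\tfrac{\pi s}{2})+\sin(\tfrac{\pi s}{2})\bigr)$ reduce the right-hand side to $\cos(\tfrac{\pi s}{2})+\sin(\tfrac{\pi s}{2})$, which is exactly the left-hand side.

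The argument is entirely mechanical, so I do not expect any genuine obstacle; the only care needed is the bookkeeping of the powers of $2$ and $\pi$ through Steps 1 and 2, and the routine observation that the cancelled factors are not identically zero, so that each reduction is valid as an identity of meromorphic functions on all of $\mathbb{C}$.
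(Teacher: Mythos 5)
Your argument is correct. The paper itself does not prove this lemma but simply cites Lemma 6.1 of Young's paper, so there is no in-text proof to compare against; what you have written is a complete, self-contained verification along the only natural route: strip the Euler factor at $2$ (using $\zeta_2(1-2s)=(1-2^{2s-1})\zeta(1-2s)$ and $2^{1-2s}-1=2^{1-2s}(1-2^{2s-1})$), invoke the asymmetric functional equation to eliminate $\zeta$, apply Legendre duplication twice to collapse $\Gamma(s)$ and $\Gamma(s+\tfrac12)$ into half-integer-argument Gammas, cancel $\Gamma(\tfrac14+\tfrac s2)$, then finish with reflection ($\Gamma(\tfrac34+\tfrac s2)\Gamma(\tfrac14-\tfrac s2)=\pi/\sin(\tfrac\pi4-\tfrac{\pi s}2)$) and the two elementary trigonometric identities $\sqrt2\sin(\tfrac\pi4-\tfrac{\pi s}2)=\cos(\tfrac{\pi s}2)-\sin(\tfrac{\pi s}2)$ and $\cos(\pi s)=(\cos(\tfrac{\pi s}2)-\sin(\tfrac{\pi s}2))(\cos(\tfrac{\pi s}2)+\sin(\tfrac{\pi s}2))$. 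I have traced the powers of $2$ and $\pi$ through Steps 1--3 and they all cancel as you claim; the meromorphic-continuation remark at the end is the right way to dispose of the removable issues at zeros of the cancelled factors. The only small presentational suggestion is to record explicitly which asymmetric form of the functional equation you are using (you do state it, which is good, since sign and constant conventions vary) and to note that the final identity holds for all $s$ by analytic continuation even at poles of $\Gamma(s)$ or zeros of $\zeta(2s)$, where both sides are understood via the continued expressions.
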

\begin{proof}
See Lemma 6.1 of \cite{Young}.
\end{proof}
Using Lemma \ref{lem:afeG} in \eqref{ODDiri4-1} and by  Lemma \ref{lem:DirichletSeriesSuma}, it follows that 
\begin{lem} \label{lem4:off}
We have 
\begin{align}
\cols^{\Box}&=
\frac{X}{(2\pi i)^5}  \int\limits_{(\frac{1}{10})}  \int\limits_{(\frac{1}{10})} \cdots \int\limits_{(\frac{1}{10})} X^{\sum_{i=1}^4\frac{u_i+s}{2}} U^{\frac{u_4+s}{2}}   X^{-s} \int\limits_0^\infty  \Phi(x) x^{-s} (8x)^{\sum_{i=1}^4\frac{u_i+s}{2}}\, dx\nonumber\\
&\quad\times
\prod_{i=1}^4 \frac{1}{u_i+s} g_{\a_i}(u_i+s) 
\left(\frac{8}{\pi}\right)^{-s} \frac{\Gamma(\frac{1}{4}-\frac{s}{2})}{\Gamma(\frac{1}{4}+ \frac{s}{2})} \nonumber\\
&\quad\times  \zeta_2(1-2s) \prod_{1\leq i \leq j \leq 4}\frac{\zeta_2(1+2s+ \a_i +u_i + \a_j+u_j)}{\zeta_2(2+u_i+u_j+\a_i +\a_j)}  \prod_{1 \leq i \leq 4} \frac{\zeta_2(1+ \a_i+u_i)}{\zeta_2(1+2s+\a_i+u_i)}\nonumber\\ 
&\quad\times  Z_3(\tfrac{1}{2}+\a_1+u_1,\tfrac{1}{2}+\a_2+u_2,\tfrac{1}{2}+ \a_3+u_3,\tfrac{1}{2}+\a_4+u_4,s)  \, d\boldu \, ds+O \left( XY^{-1}(\log X)^A\right).
\label{ODDiri4}
\end{align}
The functions  $Z_3(\tfrac{1}{2}+\a_1+u_1,\tfrac{1}{2}+\a_2+u_2,\tfrac{1}{2}+ \a_3+u_3,\tfrac{1}{2}+\a_4+u_4,s)$ is analytic and uniformly bounded in the region 
\begin{equation*}
\Re(u_i) \geq -\frac{1}{8}, \quad -\frac{1}{20} \leq \Re(s) \leq \frac{1}{3}.
\end{equation*}
\end{lem}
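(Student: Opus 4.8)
The plan is to start from \eqref{ODDiri4-1} and use Lemma \ref{lem:afeG} to collapse the archimedean and $2$-power factors. Concretely, I would apply that identity in the variable $s$ to the product $(2^{1-2s}-1)(\cos+\sin)\pth{\tfrac{\pi s}{2}}\pi^{-s}\Gamma(s)\zeta(2s)$ appearing in \eqref{ODDiri4-1}; the factor $\zeta(2s)$ then cancels, leaving $2\pth{\tfrac{8}{\pi}}^{-s}\frac{\Gamma(\frac14-\frac s2)}{\Gamma(\frac14+\frac s2)}\zeta_2(1-2s)$, and the constant $2$ absorbs the prefactor $\tfrac X2$ to give the overall factor $X$ of \eqref{ODDiri4}. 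After this substitution the integrand is exactly that of \eqref{ODDiri4}, but with the lines of integration still on $\Re(s)=\tfrac{10}{L}$ and $\Re(u_i)=\tfrac5L$.

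The next step is to move those lines to $\Re(s)=\Re(u_i)=\tfrac1{10}$. Since $|\Re(\a_i)|\le 1/\log X<\tfrac5L$, this is a rightward (or stationary) shift in each variable, so I would simply check that every factor of the integrand which could be singular has its poles strictly to the left of the starting contours: $1/(u_i+s)$ is singular only on $\Re(u_i)=-\Re(s)<0$; $\zeta_2(1-2s)$ only at $s=0$; the numerator factors $\zeta_2(1+2s+\a_i+u_i+\a_j+u_j)$ only where $2s+\a_i+u_i+\a_j+u_j=0$; and $\Gamma(\tfrac14-\tfrac s2)$ only at $s=\tfrac12,\tfrac52,\dots$. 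The delicate factor is $\zeta_2(1+\a_i+u_i)$, whose pole at $u_i=-\a_i$ could sit between $\tfrac5L$ and $\tfrac1{10}$; here I would point out that the formula for $Z_3$ in Lemma \ref{lem:DirichletSeriesSuma} contains the factor $\zeta_2(\tfrac12+u_i)^{-1}$, which under the substitution $u_i\mapsto\tfrac12+\a_i+u_i$ becomes $\zeta_2(1+\a_i+u_i)^{-1}$ and supplies a zero that exactly cancels that pole, so the product is analytic across the region and no residue is collected. The horizontal connecting segments contribute negligibly by Stirling's formula for the Gamma factors and $g_{\a_i}$, the convexity bound for $\zeta$, and the polynomial bound on $Z_3$ from Lemma \ref{lem:DirichletSeriesSuma}.

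Finally, the asserted analyticity and uniform boundedness of $Z_3(\tfrac12+\a_1+u_1,\dots,\tfrac12+\a_4+u_4,s)$ in the region $\Re(u_i)\ge-\tfrac18$, $-\tfrac1{20}\le\Re(s)\le\tfrac13$ is immediate from \eqref{eq:Z3Region}: since $|\Re(\a_i)|\le 1/\log X$, the bound $\Re(u_i)\ge-\tfrac18$ forces $\Re(\tfrac12+\a_i+u_i)\ge\tfrac38-1/\log X\ge\tfrac13$ once $X$ is large, which is precisely the hypothesis of Lemma \ref{lem:DirichletSeriesSuma}. Combining \eqref{ODDiri4-1}, the application of Lemma \ref{lem:afeG}, and the contour shift then yields Lemma \ref{lem4:off}. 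I expect the only genuinely delicate point to be this pole–zero bookkeeping between $\zeta_2(1+\a_i+u_i)$ and the factor hidden inside $Z_3$; the rest is routine manipulation together with standard Stirling and convexity estimates for the vanishing of the horizontal integrals.
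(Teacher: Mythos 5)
Your proposal follows the paper's route exactly: apply Lemma \ref{lem:afeG} to the archimedean and $2$-power factors in \eqref{ODDiri4-1}, cancel the $\zeta(2s)$, absorb the resulting factor $2$ into $\tfrac X2$, then shift $\Re(s)$ and $\Re(u_i)$ rightward to $\tfrac1{10}$ and invoke Lemma \ref{lem:DirichletSeriesSuma} (shifted by $\tfrac12+\a_i$) for the stated analyticity region of $Z_3$. That is what the paper does.

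One correction: the point you single out as ``the only genuinely delicate point'' is actually a non-issue, and your explanation for why it is safe is subtly off. The pole of $\zeta_2(1+\a_i+u_i)$ lies at $u_i=-\a_i$, and since $|\a_i|\le 1/\log X$, we have $\Re(-\a_i)\le 1/\log X < 5/\log X$. So this pole sits \emph{strictly to the left of the starting contour} $\Re(u_i)=5/L$ and is never encountered while shifting rightward to $\tfrac1{10}$. The cancellation you describe---$\zeta_2(1+\a_i+u_i)$ against the $\zeta_2(\tfrac12+u_i)^{-1}$ hidden inside $Z_3$ after the shift of arguments---is real and is why the combined integrand extends further left (it matters in later sections when contours are pushed past $-\a_i$), but it plays no role in justifying the rightward shift performed here, which collects no residues for the elementary reason above.
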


\section{Off-diagonal, Non-square Estimation $\cols^{\neq\Box}$}\label{sec:Asymptotic-NonSquare}
Recalling  \eqref{R1}, and combining it with \eqref{exI}, we obtain
\begin{align*}
&\cols^{\neq \Box}\\
&= \sum_{\substack{(a,2)=1 \\ a\leq Y}}  \frac{1}{(2\pi i)^5}  \int\limits_{(\frac{1}{2}+2\varepsilon)} \cdots  \int\limits_{(\frac{1}{2}+ 2\varepsilon)} \int\limits_{(1+\varepsilon)}   \mathcal{C}(\boldu;s;a) \sum_{\substack{k \in \mathbb{Z} \\ k \neq 0, \Box}} (-1)^k \frac{1}{|k|^s} (\cos + \operatorname{sgn}(k)\sin)\left(\frac{\pi s}{2} \right)\nonumber \\
&\quad\times
  \sum_{(n_1n_2n_3n_4,2 a)=1} \frac{1}{n_1^{\frac{1}{2} + \a_1+u_1} n_2^{\frac{1}{2} + \a_2+u_2} n_3^{\frac{1}{2} + \a_3+u_3} n_4^{\frac{1}{2} + \a_4+u_4}}\frac{G_k(n_1 n_2 n_3 n_4)}{n_1 n_2 n_3 n_4}  
\, ds \, d\boldu  \nonumber\\ 
&= \sum_{\substack{(a,2)=1 \\ a\leq Y}}  \frac{1}{(2\pi i)^5}  \int\limits_{(\frac{1}{2}+ 2\varepsilon)} \cdots  \int\limits_{(\frac{1}{2}+ 2\varepsilon)} \int\limits_{(1+\varepsilon)}   \mathcal{C}(\boldu;s;a) \sumstar_{ k_1 \neq 1}  \sum_{k_2=1}^\infty (-1)^{k_1k_2^2}\frac{1}{|k_1k_2^2|^s} (\cos + \operatorname{sgn}(k_1)\sin)\left(\frac{\pi s}{2} \right)
  \nonumber  \\
&\quad \times \sum_{(n_1n_2n_3n_4,2 a)=1} \frac{1}{n_1^{\frac{1}{2} + \a_1+u_1} n_2^{\frac{1}{2} + \a_2+u_2} n_3^{\frac{1}{2} + \a_3+u_3} n_4^{\frac{1}{2} + \a_4+u_4}} \frac{G_{k_1k_2^2}(n_1 n_2 n_3 n_4)}{n_1 n_2 n_3 n_4}
\, ds \, d\boldu
\end{align*}
where 
\begin{align*}
    \mathcal{C}(\boldu;s;a)&:= 
    \mu(a)  \frac{X}{2}  \frac{1}{a^2}
    \int\limits_0^\infty  \Phi(x)(8xX)^{\sum_{i=1}^4\frac{u_i+s}{2} }x^{-s} \, dx  \cdot 
     U^{\frac{u_4+s}{2}} X^{-s}\\
     &\quad \times \prod_{i=1}^4\frac{1}{u_i+s}   g_{\a_i}(u_i+s) \left( \frac{a^2}{\pi  }\right)^s
\Gamma(s).
\end{align*}
In the following, we assume $k_1$ is odd and positive, and other cases can be done similarly. We have
\begin{align*}
&\cols^{\neq \Box}(k_1 \geq 1, \, \text{odd})\\
&= 
\sum_{\substack{(a,2)=1 \\ a\leq Y}}  \frac{1}{(2\pi i)^5}  \int\limits_{(\frac{1}{2}+ 2\varepsilon)} \cdots  \int\limits_{(\frac{1}{2}+ 2\varepsilon)} \int\limits_{(1+\varepsilon)}   \mathcal{C}(\boldu;s;a) \sumstar_{\substack{\\ k_1 \geq 1 \\ k_1\,\text{odd} }}\frac{1}{|k_1|^s}(\cos + \operatorname{sgn}(k_1)\sin)\left(\frac{\pi s}{2} \right)\\
&\quad\times  \sum_{k_2=1}^\infty (-1)^{k_2} \frac{1}{k_2^{2s}}    \sum_{(n_1n_2n_3n_4,2 a)=1} \frac{1}{n_1^{\frac{1}{2} + \a_1+u_1} n_2^{\frac{1}{2} + \a_2+u_2} n_3^{\frac{1}{2} + \a_3+u_3} n_4^{\frac{1}{2} + \a_4+u_4}} 
\frac{G_{k_1k_2^2}(n_1 n_2 n_3 n_4)}{n_1 n_2 n_3 n_4}
\, ds \, d\boldu.
\end{align*}
By Lemma \ref{lem:DirichletSeriesOffDiagonal}, it follows that 
\begin{align*}
&\cols^{\neq \Box}(k_1 \geq 1, \, \text{odd})\\
&= 
\sum_{\substack{(a,2)=1 \\ a\leq Y}}  \frac{1}{(2\pi i)^5}  \int\limits_{(\frac{1}{2}+ 2\varepsilon)} \cdots  \int\limits_{(\frac{1}{2}+ 2\varepsilon)} \int\limits_{(1+\varepsilon)}   \mathcal{C}(\boldu;s;a) \sumstar_{\substack{\\ k_1 \geq 1 \\ k_1\,\text{odd} }}\frac{1}{|k_1|^s}(\cos + \operatorname{sgn}(k_1)\sin)\left(\frac{\pi s}{2} \right)\\
&\quad\times  \left( 2^{1-2s} -1 \right) \zeta(2s)  \prod_{1\leq i \leq j \leq 4}\frac{\zeta_2(1+2s+ \a_i +u_i + \a_j+u_j)}{\zeta_2(2+u_i+u_j+\a_i +\a_j)}  \prod_{1 \leq i \leq 4} \frac{L_2(1+ \a_i+u_i,\chi_{\mathfrak{m}})}{L_2(1+2s+\a_i+u_i,\chi_{\mathfrak{m}} )} \\ &\quad\times Z_2(\tfrac{1}{2}+ \a_1+u_1,\tfrac{1}{2}+ \a_2+u_2,\tfrac{1}{2}+ \a_3+u_3,\tfrac{1}{2}+\a_4+u_4,s;k_1,a)
\, ds \, d\boldu.
\end{align*}
As we did in \eqref{equ:addVG}, we can derive 
\begin{align*}
&\cols^{\neq \Box}(k_1 \geq 1, \, \text{odd})\\
&= \sumd_{R_1,R_2,R_3,R_4} \frac{1}{(2\pi i)^4}
\int\limits_{(\varepsilon)}  \cdots \int\limits_{(\varepsilon)}
  \frac{1}{(2\pi i)^5}    \int\limits_{(2\varepsilon)}  \cdots \int\limits_{(2\varepsilon)}  \int\limits_{(1+\varepsilon)} \sum_{\substack{(a,2)=1 \\ a\leq Y}}  \mathcal{C}(\boldu;s;a)\\
&\quad\times
\sumstar_{\substack{ k_1 \geq 1 \\ k_1\,\text{\,odd} }}
\frac{1}{|k_1|^s}(\cos + \operatorname{sgn}(k_1)\sin)\left(\frac{\pi s}{2} \right)\\
&\quad\times  \left( 2^{1-2s} -1 \right) \zeta(2s)  \prod_{1\leq i \leq j \leq 4}\frac{\zeta_2(1+2s+ \a_i +u_i + \a_j+u_j)}{\zeta_2(2+u_i+u_j+\a_i +\a_j)}   \prod_{1 \leq i  \leq 4} {L_2(1+2s+\a_i+u_i,\chi_{\mathfrak{m}} )^{-1}} \\
&\quad\times
\prod_{i=1}^4 \left(1- \frac{\chi_{\mathfrak{m}}(2)}{2^{1+\alpha_i+u_i}} \right)
\prod_{1\leq i \leq 4} \sum_{n_i=1}^\infty  \frac{\chi_{\mathfrak{m}}(n_i)}{n_i^{1+\omega_i}} V \left(\frac{n_i}{R_i} \right) \tilde{G}(\omega_i-u_i-\a_i) R_i^{\omega_i-u_i-\a_i}\\
&\quad\times 
Z_2(\tfrac{1}{2}+\a_1+u_1,\tfrac{1}{2}+\a_2+u_2,\tfrac{1}{2}+\a_3+u_3,\tfrac{1}{2}+\a_4+u_4,s;k_1,a)  
\, ds\, d\boldu\, d\boldomega.
\end{align*}
Move the contours of the integrations to $\Re(u_i)=-\frac{1}{2} + \frac{10}{L}$ and $\Re(s) = 1+ \frac{1}{L}$ and $\Re(\omega_i) = -\frac{1}{2}$. By Lemma \ref{lem:DirichletSeriesOffDiagonal}(i), the above is
\begin{align}
&\ll X U^{\frac{1}{4}} \sumd_{R_1,R_2,R_3,R_4}  \int\limits_{-\infty}^\infty   \int\limits_{-\infty}^\infty   \int\limits_{-\infty}^\infty   \int\limits_{-\infty}^\infty  \prod_{i=1}^4 \frac{1}{(1+|t_i|)^{10}} \nonumber\\
&\quad\times \sum_{a\leq Y} \tau(a) \sumstar_{k_1\geq 1} \frac{1}{k_1^{1+ \frac{1}{L}}}  
\prod_{1\leq i \leq 4}\left| \sum_{n_i=1}^\infty  \frac{\chi_{\mathfrak{m}}(n_i)}{n_i^{\frac{1}{2}+ i  t_i}} V \left(\frac{n_i}{R_i} \right)\right| R_i^{-\frac{1}{L}}\, dt_1\, dt_2\, dt_3\, dt_4 \nonumber  \\
&=X U^{\frac{1}{4}}  \sum_{a\leq Y} \tau(a)  \sumstar_{k_1\geq 1} \frac{1}{k_1^{1+ \frac{1}{L}}} 
\left( \sumd_{R}  \int\limits_{-\infty}^\infty
\left| \sum_{n=1}^\infty  \frac{\chi_{\mathfrak{m}}(n)}{n^{\frac{1}{2}+ i  t}} V \left(\frac{n}{R} \right)\right| R^{-\frac{1}{L}} \frac{1}{(1+|t|)^{10}}\, dt \right)^4\nonumber \\
&\ll X U^{\frac{1}{4}} (\log X)^A \sumd_{R} \sum_{a\leq Y} \tau(a)   
\int\limits_{-\infty}^\infty
\sumstar_{k_1\geq 1} \frac{1}{k_1^{1+ \frac{1}{L}}} 
\left| \sum_{n=1}^\infty  \frac{\chi_{\mathfrak{m}}(n)}{n^{\frac{1}{2}+ i  t}} V \left(\frac{n}{R} \right)\right|^4 R^{-\frac{1}{L}} \frac{1}{(1+|t|)^{10}}\, dt .
\label{non-non-1}
\end{align}
In the last equality, we have invoked the  Cauchy–Schwarz inequality several times. 
By  \eqref{V-def}, Proposition \ref{prop:LargeSieve} and Lemma \ref{lem:InductionAllN},
\begin{align*}
\sumstar_{k_1\geq 1} \frac{1}{k_1^{1+ \frac{1}{L}}}    \left| \sum_{n=1}^\infty  \frac{\chi_{\mathfrak{m}}(n)}{n^{\frac{1}{2}+ i  t}} V \left(\frac{n}{R} \right)\right|^4
&\ll 
\sum_{r=0}^\infty \frac{1}{2^{(1+ \frac{1}{L})r}}  \sumstar_{2^r \leq k_1 < 2^{r+1}}   \left| \sum_{n=1}^\infty  \frac{\chi_{\mathfrak{m}}(n)}{n^{\frac{1}{2}+ i  t}} V \left(\frac{n}{R} \right)\right|^4\\
&  \ll \sum_{r=0}^\infty \frac{1}{2^{(1+ \frac{1}{L})r}} 
(1+\abs{t})^{3} 2^r \log^6(2+2^r )\log^6(2+\abs{t}) \\
&\ll (\log X)^A (1+\abs{t})^{3.5},
\end{align*}
which implies that \eqref{non-non-1} is 
\begin{align*}
& \ll 
X U^{\frac{1}{4}} (\log X)^A \sum_{a\leq Y} \tau(a)    
\sumd_{R} R^{-\frac{1}{L}}  
\ll X U^{\frac{1}{4}} Y (\log X) ^A.
\end{align*}
Now we have proved  
\begin{lem}\label{remainE}
For some $A>0$,
\[
\cols^{\neq \Box} \ll X U^{\frac{1}{4}} Y (\log X)^A.
\]
\end{lem}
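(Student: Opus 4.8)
\textbf{Proof plan for Lemma \ref{remainE}.}

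The plan is to estimate $\cols^{\neq\Box}$ by pushing the contours far enough to the right that all the Dirichlet series and multiple Dirichlet series involved converge absolutely and are harmless, while keeping the modulus $k_1$ essentially unconstrained so that the large sieve inequality (Proposition \ref{prop:LargeSieve}) can be brought to bear on the sum over the quadratic character $\chi_{\mathfrak m}$. First I would handle $\cols^{\neq\Box}$ exactly as in the preceding sections: starting from \eqref{R1}, insert the integral representation \eqref{exI} for $\check H$, perform the change of variables $u_i\mapsto u_i+s$, factor out the sum over $a$ via the $\mu(a)$ (absorbing it into $\mathcal C(\boldu;s;a)$), split $k$ into $k=k_1k_2^2$ with $k_1\neq 1$ squarefree, and apply Lemma \ref{lem:DirichletSeriesOffDiagonal} to recognise the $k_2$- and $n_i$-sums as producing $(2^{1-2s}-1)\zeta(2s)$, the ratio of $\zeta_2$'s, the factors $L_2(1+\a_i+u_i,\chi_{\mathfrak m})/L_2(1+2s+\a_i+u_i,\chi_{\mathfrak m})$, and the function $Z_2(\cdots;k_1,a)$. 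One then applies a partition of unity to the $n_i$-sums (inserting redundant factors $V$ of the shape used in \eqref{equ:addVG}), turning the remaining $\chi_{\mathfrak m}$-sums into Dirichlet polynomials $\sum_n \chi_{\mathfrak m}(n) n^{-1/2-it_i} V(n/R_i)$; this introduces the extra $\tilde G$ and $R_i$-power factors and the $\boldomega$-integrals, and is WLOG reduced to the case $k_1$ odd and positive (the other residue classes of $k_1$, and the contribution of $k_1<0$, being handled identically, as in the remark after \eqref{R1}).

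Next I would move the contours. The crucial point is to take $\Re(u_i)=-\tfrac12+\tfrac{10}{L}$, $\Re(\omega_i)=-\tfrac12$, and $\Re(s)=1+\tfrac1L$. On these lines: the factor $Z_2(\boldu+\half,s;k_1,a)$ is analytic and satisfies $Z_2\ll\tau(a)$ by part (i) of Lemma \ref{lem:DirichletSeriesOffDiagonal} (here $\Re(s)\geq\tfrac12$, $\Re(u_i)>0$ is exactly met since $\tfrac12+\Re(u_i)=\tfrac{10}{L}>0$); the ratios of $\zeta_2$'s and the $L_2$-factors are bounded since all their arguments have real part bounded away from $1$; the sum over $k_1$ is $\sumstar_{k_1\geq 1}|k_1|^{-1-1/L}$, which is only conditionally useful but becomes genuinely summable after a dyadic decomposition; the $x$-integral and the various $\Gamma$ and trigonometric factors contribute only harmless powers of $\log\log X$ and decay in the imaginary parts; and, most importantly, the arithmetic factor $X^{\sum(u_i+s)/2}U^{(u_4+s)/2}X^{-s}$, together with the $R_i^{\omega_i-u_i-\a_i}$, is of size $\ll X U^{1/4}\prod_i R_i^{-1/L}$ up to bounded factors, since $\Re(\sum(u_i+s)/2-s)=\sum\Re(u_i)/2-\Re(s)/2<0$ kills the power of $X$ down to $X^{1}$ (after accounting for the $\tfrac12 X$ prefactor in $\mathcal C$) and the $U^{(u_4+s)/2}$ factor with $\Re(u_4+s)=\tfrac12+O(1/L)$ gives $U^{1/4}$. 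I need to verify that no poles are crossed during this shift — in particular that the pole of $\zeta(2s)$ at $s=0$ is never approached (we stay with $\Re(s)\geq 1$ throughout) and that Lemma \ref{lem:DirichletSeriesOffDiagonal}(i) governs the whole region swept out.

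With the contours in place, I would collapse the estimate to: $\cols^{\neq\Box}\ll X U^{1/4}(\log X)^A \sumd_R \sum_{a\leq Y}\tau(a)\int_{\mathbb R} \sumstar_{k_1\geq 1}k_1^{-1-1/L}\big|\sum_n \chi_{\mathfrak m}(n)n^{-1/2-it}V(n/R)\big|^4 R^{-1/L}(1+|t|)^{-10}\,dt$, after several applications of Cauchy--Schwarz to decouple the four $n_i$-sums (replacing distinct $R_i,t_i$ by a single $R,t$ at the cost of a power of $\log X$ from the $t$-integrals and the $\tilde G$-factors). The inner sum over $k_1$ is then estimated by dyadic decomposition $2^r\leq k_1<2^{r+1}$: on each block, since $\mathfrak m=\mathfrak m(k_1)\asymp k_1$ is a fundamental discriminant in a dyadic range, Proposition \ref{prop:LargeSieve} (with $M\asymp 2^r$, $N\asymp R$, combined with Lemma \ref{lem:InductionAllN} to absorb the dependence on $R$ in the critical range) gives $\sumstar_{2^r\leq k_1<2^{r+1}}|\cdots|^4\ll (1+|t|)^3 2^r\log^6(2+2^r)\log^6(2+|t|)$, and summing $2^{-(1+1/L)r}\cdot 2^r$ over $r\leq O(\log X)$ produces a total $\ll(\log X)^A(1+|t|)^{3.5}$. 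Feeding this back, the $t$-integral converges, the sum over $R$ with $R^{-1/L}$ gives another $\log X$, and $\sum_{a\leq Y}\tau(a)\ll Y(\log X)$, yielding $\cols^{\neq\Box}\ll X U^{1/4}Y(\log X)^A$ as claimed. The main obstacle is the second step: ensuring that after moving $\Re(u_i)$ to $-\tfrac12+\tfrac{10}{L}$ all the analytic objects — especially $Z_2$ and the $L_2(\cdots,\chi_{\mathfrak m})$-factors — remain uniformly bounded in the swept region and that the bookkeeping of $X$, $U$, and $R_i$ powers genuinely collapses to $X U^{1/4}\prod R_i^{-1/L}$; everything after that is a routine, if careful, application of the large sieve inequality together with standard dyadic summation.
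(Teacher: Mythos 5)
Your plan reproduces the paper's argument for Lemma \ref{remainE} essentially step for step: inserting the integral representation \eqref{exI}, applying Lemma \ref{lem:DirichletSeriesOffDiagonal} to identify the arithmetic Euler-product structure and the bound $Z_2\ll\tau(a)$, inserting the $V$-partition of unity, shifting the contours to $\Re(u_i)=-\tfrac12+\tfrac{10}{L}$, $\Re(\omega_i)=-\tfrac12$, $\Re(s)=1+\tfrac1L$ to extract the $XU^{1/4}$ and $R_i^{-c/L}$ factors, decoupling the four Dirichlet polynomials via Cauchy--Schwarz, dyadically decomposing $k_1$ and invoking Proposition \ref{prop:LargeSieve} together with Lemma \ref{lem:InductionAllN}, and finally summing over $R$, $t$, and $a\leq Y$. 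This is the same route the paper takes, and your bookkeeping of the $X$, $U$, and $\log X$ powers is correct.
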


\section{Removing the Parameter $U$ and Completing the Proof of Theorem \ref{thm:Main}}\label{sec:removeU}

%\subsection{Removing the Parameter $U$}
Recall from \eqref{asy1} and \eqref{splitS} that
\begin{align*}
&\sumstar_{(d,2)=1}   M^+(\a_1)  M^+(\a_2)  M^+(\a_3 )  N^+(\a_4)\Phi\left(\tfrac{d}{X} \right)\\
&=  \mathcal{S} + \mathcal{S}^*\\
&=\mathcal{S}^0  + \mathcal{S}^{\Box} + \mathcal{S}^{\neq \Box} + \mathcal{S}^*. 
\end{align*}
Combining this with Lemmas \ref{lem4:SStar}, \ref{final-diag}, \ref{lem4:off} and \ref{remainE}, we have
\begin{align}
&\sumstar_{(d,2)=1}   M^+(\a_1)  M^+(\a_2)  M^+(\a_3 )  N^+(\a_4)\Phi\left(\tfrac{d}{X} \right) \nonumber\\
& = \frac{1}{(2\pi i)^4}  \int\limits_{(\frac{1}{10})} \cdots \int\limits_{(\frac{1}{10})} \colw_1(\boldsymbol{u} ; \boldsymbol{\a})U^{\frac{u_4}{2}} \, d\boldu
+ \frac{1}{(2\pi i)^5}  \int\limits_{(\frac{1}{10})}  \int\limits_{(\frac{1}{10})} \cdots \int\limits_{(\frac{1}{10})}  \colw_2(s;\boldsymbol{u};\boldsymbol{\a})U^{\frac{u_4+s}{2}}  \, d\boldu \, ds \nonumber
\\
&\quad  + O\left(X  Y^{-1}  (\log X)^{A}\right) + O(X U^{\frac{1}{4}} Y (\log X)^A),
\label{eq:MMMN}
\end{align}
where
\begin{align*}
&\colw_1(\boldsymbol{u};\boldsymbol{\a})U^{\frac{u_4}{2}}\\
&:= \frac{4X}{\pi^2}(8 X)^{\sum_{i=1}^4\frac{u_i}{2}} U^{\frac{u_4}{2}} \prod_{i=1}^4\frac{g_{\a_i}(u_i)}{u_i} \int\limits_{-\infty}^\infty  \Phi(x)
x^{\sum_{i=1}^4\frac{u_i}{2} } \, dx \prod_{1 \leq i\leq j \leq 4} \zeta_2(1+\a_i+\a_j+u_i+u_j) \\
&\quad\times \mathcal{H}_2(\tfrac{1}{2}+\a_1+u_1,\tfrac{1}{2}+\a_2+u_2,\tfrac{1}{2}+ \a_3+u_3, \tfrac{1}{2}+\a_4+u_4)
\end{align*}
is the integrand in \eqref{Dia-nongen} and
\begin{align*}
\colw_2(s;\boldsymbol{u};\boldsymbol{\a})U^{\frac{u_4+s}{2}} &:= X^{1+\sum_{i=1}^4\frac{u_i+s}{2}} U^{\frac{u_4+s}{2}}   X^{-s} \int\limits_0^\infty  \Phi(x) x^{-s} (8x)^{\sum_{i=1}^4\frac{u_i+s}{2}}\, dx\\
&\quad\times
\prod_{i=1}^4 \frac{1}{u_i+s} g_{\a_i}(u_i+s) 
\left(\frac{8}{\pi}\right)^{-s} \frac{\Gamma(\frac{1}{4}-\frac{s}{2})}{\Gamma(\frac{1}{4}+ \frac{s}{2})} \zeta_2(1-2s)\\
&\quad\times   \prod_{1\leq i \leq j \leq 4}\frac{\zeta_2(1+2s+ \a_i +u_i + \a_j+u_j)}{\zeta_2(2+u_i+u_j+\a_i +\a_j)}  \prod_{1 \leq i \leq 4} \frac{\zeta_2(1+ \a_i+u_i)}{\zeta_2(1+2s+\a_i+u_i)}\\ 
&\quad\times  Z_3(\tfrac{1}{2}+\a_1+u_1,\tfrac{1}{2}+\a_2+u_2,\tfrac{1}{2}+ \a_3+u_3,\tfrac{1}{2}+\a_4+u_4,s)
\end{align*}
is the integrand in \eqref{ODDiri4}. Letting $Y= (\log X)^A$ with $A\geq 10$ and  $Q=100A$ in the definition of $U$ \eqref{defU}, we see that the error term above is $O(X)$.

We now remark on what changes when considering the other expressions in \eqref{equ:4thsec1ran2}. By \eqref{defMN}, we see that if $N^+(\a_4)$ is replaced by $M^+(\a_4)$ on the left of \eqref{eq:MMMN}, then the factors of $U$ in the two integrals on the right of \eqref{eq:MMMN} are replaced by 1. Conversely, repeating the argument from Section \ref{sec:Asymptotic-Diagonal-4th} to Section \ref{sec:removeU} shows that if we replace $M^+(\a_i)$ by $N^+(\a_i)$ on the left of \eqref{eq:MMMN}, we need only add the factors $U^{\frac{u_i}{2}}$ and $U^{\frac{u_i+s}{2}}$, respectively, to the integrals on right of \eqref{eq:MMMN}. Hence, for a fixed nonempty $A \subsetneqq \{1,2,3,4\} $ and fixed choices of $D_i$ for each $i \in A$, we have
\begin{align}
& 
\sumstar_{(d,2)=1} 
\prod_{i\in A}  D_i^+(\alpha_i) \prod_{i \in \bar{A}}   N^+ (\alpha_i) \Phi\left(\frac{d}{X} \right) \nonumber\\
& = \frac{1}{(2\pi i)^4}  \int\limits_{(\frac{1}{10})} \cdots \int\limits_{(\frac{1}{10})} \colw_1(\boldsymbol{u} ; \boldsymbol{\a})U^{\sum_{\substack{ i \in A \\ D_i=N}}\frac{u_i}{2} + \sum_{i \in \bar{A}}  \frac{u_i}{2}} \, d\boldu \nonumber\\
&\quad + \frac{1}{(2\pi i)^5}  \int\limits_{(\frac{1}{10})}  \int\limits_{(\frac{1}{10})} \cdots \int\limits_{(\frac{1}{10})}  \colw_2(s;\boldsymbol{u};\boldsymbol{\a})U^{\sum_{\substack{ i \in A \\ D_i=N}}\frac{u_i+s}{2} + \sum_{i \in \bar{A}}  \frac{u_i+s}{2}} \, d\boldu \, ds
+ O(X).
\label{equ:secU-0}
\end{align}
That is to say, the various configurations of $M$ and $N$ which are indexed by $A$ and $D_i$ simply translate to different configurations of exponents for the parameter $U$ in the integrals above. In particular, the exponents of $U$ are the only portion \eqref{equ:secU-0} that depend on the subset $A$, and we also note that the equality above remains valid when $A = \emptyset$. By \eqref{equ:secU-0} and \eqref{equ:4thsec1ran2}, we obtain 
\begin{align}
&\sumstar_{(d,2)=1} \Phi\left(\frac{d}{X} \right) \sum_{\substack{A \subsetneqq \{1,2,3,4\}\\ A\neq\emptyset}}  \prod_{i \in A}  \left( M^+(\a_i) -N^+(\a_i) +M^-(\a_i)-N^-(\a_i) \right) \nonumber\\
&\quad\times\prod_{i \in \bar{A}} \left( N^+(\a_i) +N^-(\a_i) \right) \nonumber\\
&=  \sum_{\substack{\epsilon_i = \pm 1\\i=1,2,3,4}} \prod_{i=1}^4
(8X)^{-\frac{1-\epsilon_i}{2}\a_i} 
\lambda_i^{\frac{1-\epsilon_i}{2}} \sum_{\substack{A \subsetneqq \{1,2,3,4\}\\ A\neq\emptyset}}  
\sum_{\substack{D_i=M \text{\,or\,}N\\i \in A }} 
\prod_{i\in A} (-1)^{h(D_i)}\prod_{i \in A} U^{-\frac{1-\epsilon_i}{2}\a_i\delta(D_i=N)} \prod_{i \in \bar{A}} U^{-\frac{1-\epsilon_i}{2}\a_i}  \nonumber\\
&\quad \times \left\{
\frac{1}{(2\pi i)^4}  \int\limits_{(\frac{1}{10})} \cdots \int\limits_{(\frac{1}{10})} \colw_1(\boldsymbol{u};\boldsymbol{\a};\boldsymbol{\epsilon})U^{\sum_{\substack{ i \in A \\ D_i=N}}\frac{u_i}{2} + \sum_{i \in \bar{A}}  \frac{u_i}{2}} \, d\boldu \right.\nonumber\\
&\quad + \left.\frac{1}{(2\pi i)^5}  \int\limits_{(\frac{1}{10})}  \int\limits_{(\frac{1}{10})} \cdots \int\limits_{(\frac{1}{10})} \colw_2(s;\boldsymbol{u};\boldsymbol{\a};\boldsymbol{\epsilon})U^{\sum_{\substack{ i \in A \\ D_i=N}}\frac{u_i+s}{2} + \sum_{i \in \bar{A}}  \frac{u_i+s}{2}} \, d\boldu \, ds \right\}
+ O(X),
\label{equ:sec11-11}
\end{align}
where $ \colw_1(\boldsymbol{u};\boldsymbol{\a};\boldsymbol{\epsilon})$ is equal to   $ \colw_1(\boldsymbol{u} ; \boldsymbol{\a})$ with $\a_i$ and $\Phi(x)$ replaced by $\epsilon_i \alpha_i$ and $\Phi_{\{\epsilon_1,\epsilon_2, \epsilon_3,\epsilon_4\}}(x)$, respectively, and  $\colw_2(s;\boldsymbol{u};\boldsymbol{\a};\boldsymbol{\epsilon})$  is defined in the same way. The case when $A = \emptyset$ is given by
\begin{align}
&\sumstar_{(d,2)=1} \Phi\left(\frac{d}{X} \right) \prod_{i=1}^4 \left( N^+(\a_i) +N^-(\a_i) \right)\nonumber\\
&=  \sum_{\substack{\epsilon_i = \pm 1\\i=1,2,3,4}} \prod_{i=1}^4
(8X)^{-\frac{1-\epsilon_i}{2}\a_i} 
\lambda_i^{\frac{1-\epsilon_i}{2}} \prod_{i=1}^4 U^{-\frac{1-\epsilon_i}{2}\a_i} \times \left\{
\frac{1}{(2\pi i)^4}  \int\limits_{(\frac{1}{10})} \cdots \int\limits_{(\frac{1}{10})} \colw_1(\boldsymbol{u};\boldsymbol{\a};\boldsymbol{\epsilon})U^{\sum_{i=1}^4  \frac{u_i}{2}} \, d\boldu \right.\nonumber\\
&\quad + \left.\frac{1}{(2\pi i)^5}  \int\limits_{(\frac{1}{10})}  \int\limits_{(\frac{1}{10})} \cdots \int\limits_{(\frac{1}{10})} \colw_2(s;\boldsymbol{u};\boldsymbol{\a};\boldsymbol{\epsilon})U^{\sum_{i=1}^4  \frac{u_i+s}{2}} \, d\boldu \, ds \right\}
+ O(X).
\label{equ:sec11-22}
\end{align}
We now combine the expressions in the previous two displays via several combinatorial identities. Note that when $A\neq\emptyset$, we have
\begin{align}
& \sum_{\substack{D_i=M \text{\,or\,}N\\i \in A }} 
\prod_{i\in A} (-1)^{h(D_i)} \prod_{i \in A }
{U^{-\a_i\delta(D_i=N)}}^{\frac{1-\epsilon_i}{2}} \prod_{i \in \bar{A}}
U^{-\frac{1-\epsilon_i}{2}\a_i}  \cdot U^{\sum_{\substack{ i \in A \\ D_i=N}}\frac{u_i}{2} + \sum_{i \in \bar{A}}  \frac{u_i}{2}} \nonumber\\
&=  \sum_{\substack{D_i=M \text{\,or\,}N\\i \in A }} 
\prod_{\substack{i \in A \\ D_i =M}}  1 \cdot  \prod_{\substack{i \in A \\ D_i =N}} (-1) U^{-\frac{1-\epsilon_i}{2}\a_i+\frac{u_i}{2}} 
\prod_{i \in \bar{A}  } U^{-\frac{1-\epsilon_i}{2}\a_i+\frac{u_i}{2}}\nonumber\\
&= \prod_{i \in A} \pth{1-U^{-\frac{1-\epsilon_i}{2}\a_i+\frac{u_i}{2}}} \prod_{i \in \bar{A}} U^{-\frac{1-\epsilon_i}{2}\a_i+\frac{u_i}{2}},
\label{equ:secU-1}
\end{align}
Using the identity
\begin{align*}
\prod_{i=1}^4 (1-x_i) + \sum_{\substack{A \subsetneqq \{1,2,3,4\}\\ A\neq\emptyset}} \prod_{i\in A} (1-x_i) \prod_{i \in \bar{A}  } x_i + \prod_{i=1}^4 x_i =\prod_{i=1}^4 (1-x_i+x_i) =1,
\end{align*}
with $x_i =U^{-\frac{1-\epsilon_i}{2}\a_i+\frac{u_i}{2}}$ along with \eqref{equ:secU-1}, we have
\begin{align*}
&\sum_{\substack{A \subsetneqq \{1,2,3,4\}\\ A\neq \emptyset}}  
\sum_{\substack{D_i=M \text{\,or\,}N\\i \in A }} 
\prod_{i\in A} (-1)^{h(D_i)} \prod_{i \in A }
U^{-\frac{1-\epsilon_i}{2}\a_i\delta(D_i=N)} \prod_{i \in \bar{A}}
U^{-\frac{1-\epsilon_i}{2}\a_i}  \cdot U^{\sum_{\substack{ i \in A \\ D_i=N}}\frac{u_i}{2} + \sum_{i \in \bar{A}}  \frac{u_i}{2}} \\
&\quad+ \prod_{i=1}^4 U^{-\frac{1-\epsilon_i}{2}\a_i+\frac{u_i}{2}} \\
&= 1 -  \prod_{i=1}^4 \pth{1-U^{-\frac{1-\epsilon_i}{2}\a_i+\frac{u_i}{2}}}.
%\label{equ:secU-5}
\end{align*}
 The above equality, together with \eqref{equ:sec11-11} and \eqref{equ:sec11-22}, implies
\begin{align}
&\sumstar_{(d,2)=1} \Phi\left(\frac{d}{X} \right) \sum_{A \subsetneqq \{1,2,3,4\}}  \prod_{i \in A}  \left( M^+(\a_i) -N^+(\a_i) +M^-(\a_i)-N^-(\a_i) \right) \nonumber\\
&\quad\times\prod_{i \in \bar{A}} \left( N^+(\a_i) +N^-(\a_i) \right)\nonumber \\
&
= \sum_{\substack{\epsilon_i = \pm 1\\i=1,2,3,4}} \prod_{i=1}^4
(8X)^{-\sum_{i=1}^4\frac{1-\epsilon_i}{2}\a_i} 
\lambda_i^{\frac{1-\epsilon_i}{2}}  
(\mathcal{Z}_1 +\mathcal{Z}_2  - \mathcal{Z}_3- \mathcal{Z}_4) +O(X),
\label{equ:Z1Z2Z3Z4}
\end{align}
where 
\begin{align}
\mathcal{Z}_1 &:=  \frac{1}{(2\pi i)^4}  \int\limits_{(\frac{1}{10})} \cdots \int\limits_{(\frac{1}{10})} \colw_1(\boldsymbol{u};\boldsymbol{\a};\boldsymbol{\epsilon})\, d\boldu,\nonumber\\
\mathcal{Z}_2 &:= \frac{1}{(2\pi i)^5}  \int\limits_{(\frac{1}{10})}  \int\limits_{(\frac{1}{10})} \cdots \int\limits_{(\frac{1}{10})} \colw_2(s;\boldsymbol{u};\boldsymbol{\a};\boldsymbol{\epsilon})\, d\boldu \, ds,\nonumber \\
\mathcal{Z}_3 &:=  \frac{1}{(2\pi i)^4}  \int\limits_{(\frac{1}{10})} \cdots \int\limits_{(\frac{1}{10})} \colw_1(s,\boldsymbol{u};\boldsymbol{\a};\boldsymbol{\epsilon}) \prod_{i=1}^4 \pth{1-U^{-\frac{1-\epsilon_i}{2}\a_i+\frac{u_i}{2}}}  \, d\boldu,\nonumber\\
\mathcal{Z}_4 &:= \frac{1}{(2\pi i)^5}  \int\limits_{(\frac{1}{10})}  \int\limits_{(\frac{1}{10})} \cdots \int\limits_{(\frac{1}{10})} \colw_2(s;\boldsymbol{u};\boldsymbol{\a};\boldsymbol{\epsilon}) \prod_{i=1}^4 \pth{1-U^{-\frac{1-\epsilon_i}{2}\a_i+\frac{u_i+s}{2}}}  \, d\boldu\, ds.
\label{equ-defcolz} 
\end{align}

In the next two sections, we will prove the following two lemmas regarding the quantities above.

\begin{lem}
\label{lem:secU-main}
We have
\begin{align*}
& \sum_{\substack{\epsilon_i = \pm 1\\i=1,2,3,4}} \prod_{i=1}^4
(8X)^{-\frac{1-\epsilon_i}{2}\a_i} 
\lambda_i^{\frac{1-\epsilon_i}{2}} \cdot (     \mathcal{Z}_1+ \mathcal{Z}_2) \\
&= \sum_{\substack{\epsilon_i = \pm 1\\i=1,2,3,4}}  \prod_{i=1}^4
(8X)^{-\frac{1-\epsilon_i}{2}\a_i}
\lambda_i^{\frac{1-\epsilon_i}{2}} \cdot \frac{4X}{\pi^2}  
\int\limits_{-\infty}^\infty  \Phi_{\{\epsilon_1,\epsilon_2, \epsilon_3,\epsilon_4\}}(x)\, dx \\
&\quad \times 
\prod_{1 \leq i\leq j \leq 4} \zeta_2(1+\epsilon_i\a_i+\epsilon_j\a_j)
\mathcal{H}_2(\tfrac{1}{2}+\epsilon_1\a_1,\tfrac{1}{2}+\epsilon_2\a_2,\tfrac{1}{2}+\epsilon_3 \a_3, \tfrac{1}{2}+\epsilon_4\a_4) 
\\
&\quad 
+O(X (\log X)^6) .
\end{align*}
\end{lem}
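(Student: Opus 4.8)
The plan is to combine the diagonal contribution $\mathcal{Z}_1$ and the off-diagonal, square contribution $\mathcal{Z}_2$ and show that the parts of each which blow up as the shifts $\a_i$ become small cancel, leaving a main term that is regular at $\a_i = 0$ and whose leading behaviour is governed by the residue structure at $u_i + s = 0$. The starting point is the observation already foreshadowed in Section \ref{sec:colj2}: by Lemma \ref{lem:iden}(ii), the Dirichlet series factor $Z_3$ appearing in $\colw_2$ is, up to the product of $\zeta_2(1+u_i+u_j)$-factors, essentially $\frac{8}{\pi^2}\mathcal{H}_2$ evaluated at reflected arguments, so $\mathcal{Z}_2$ is structurally the same kind of contour integral as $\mathcal{Z}_1$ but with one variable (say the $s$-direction) reflected. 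First I would change variables in $\mathcal{Z}_2$, writing $z_i = u_i + s$ (or the appropriate combination dictated by Lemma \ref{lem:iden}(ii)) so that the integrand of $\mathcal{Z}_2$ is brought into a form directly comparable with that of $\mathcal{Z}_1$; the factors $\Gamma(\tfrac14 - \tfrac s2)/\Gamma(\tfrac14 + \tfrac s2)$ and $\zeta_2(1-2s)$ are exactly the archimedean and $2$-adic pieces of the functional equation (Lemma \ref{lem:afeG}), which is what makes this reflection legitimate.

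Next I would carry out the contour-shifting argument, exactly paralleling the computation of $\colm^0$ in Section \ref{sec:LargeSieve-Diagonal} and of $\colj_2$ in Section \ref{sec:colj2}. For $\mathcal{Z}_1$ one shifts the four $u_i$-contours to the left past the poles at $u_i = 0$ coming from the $1/u_i$ factors and the poles at $u_i + u_j = 0$ coming from $\zeta_2(1 + \a_i + \a_j + u_i + u_j)$; the leading term is the iterated residue at $\boldu = 0$, which produces $\frac{4X}{\pi^2}\int \Phi_{\{\epsilon\}}(x)\,dx \prod \zeta_2(1+\epsilon_i\a_i + \epsilon_j\a_j)\,\mathcal{H}_2(\cdots)$ plus contributions of size $O(X(\log X)^6)$ from the residual integrals (each contour shift past the $u_i = 0$ pole of order one, combined with the $\zeta_2$-poles, loses at most a factor $\log X$, and there are six $\zeta_2$-factors plus four $1/u_i$ factors). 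For $\mathcal{Z}_2$ one does the analogous shift in $s$ and the $u_i$; here the key point is that after the change of variables the pole of $\Gamma(s)$ at $s = 0$ produces (as computed in Section \ref{sec:colj2}, cf. the identity $2\operatorname{Res}_{s=0} Z = -\cold_1$) a term that exactly cancels the "$\colr = -\colm^0$"-type illusory cancellation, and the genuine surviving contribution of $\mathcal{Z}_2$ is an integral in fewer variables whose main term, via Lemma \ref{lem:iden}, reassembles into the same shape as the main term of $\mathcal{Z}_1$ but with the shift signs $\epsilon_i$ permuted. The summation over $\epsilon_i = \pm 1$ on both sides of the claimed identity is precisely what absorbs these permuted pieces, so that $\mathcal{Z}_1 + \mathcal{Z}_2$, summed over $\boldsymbol\epsilon$ with the weights $(8X)^{-\frac{1-\epsilon_i}{2}\a_i}\lambda_i^{(1-\epsilon_i)/2}$, collapses to the single symmetric main term stated.

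The main obstacle, and the step I expect to require the most care, is the bookkeeping of which residues survive and which cancel when $\mathcal{Z}_1$ and $\mathcal{Z}_2$ are added. Individually each of $\mathcal{Z}_1$ and $\mathcal{Z}_2$ has poles as $\a_i \to 0$ of high order (the $\zeta_2(1 + \a_i + \a_j + u_i + u_j)$ factors, and in $\mathcal{Z}_2$ also $\zeta_2(1 - 2s)$ and the $1/\zeta_2(1 + 2s + \a_i + u_i)$ factors), and only after the change of variables from Lemma \ref{lem:iden}(ii) and pairing terms with the same configuration of $\pm$ shifts does one see that the polar parts match with opposite sign. Concretely, one must verify that after moving all contours to $\Re(u_i) = \Re(s) = \tfrac{1}{10}$ back down to small negative real parts, the residues of $\colw_1$ at $\boldu = 0$ and of $\colw_2$ at the relevant coincidence loci combine — using $Z_4(\tfrac12, \cdots, z_1) = \tfrac{8}{\pi^2}\mathcal{H}_2(\tfrac12 - z_1, \cdots)$ and the $p$-local identity in Lemma \ref{lem:iden}(i) — to give exactly $\mathcal{H}_2$ with no reflection and the clean product of six $\zeta_2$-factors. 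The estimation of all the leftover contour integrals as $O(X(\log X)^6)$ is routine once the test functions $\Phi_{\{\epsilon\}}$, $g_{\a_i}$ and the analyticity/boundedness of $\mathcal{H}_2$ and $Z_3$ (from Lemmas \ref{lem:DirichletSeriesDiagonal} and \ref{lem:DirichletSeriesSuma}) are invoked, together with the standard bounds $\zeta_2(1+u)(u) \ll 1 + |u|(1+|\Im u|)^\varepsilon$ and rapid decay of $\tilde G$, $g_{\a_i}$; the only subtlety there is keeping the count of $\log X$ factors at exactly $6$, which is why the hypothesis $\tfrac{1}{2\log X} \le |\a_i| \le \tfrac{1}{\log X}$ and the separation condition on $\sum \eta_i \a_i$ are used, so that no $\zeta_2$-factor contributes more than $O(\log X)$ near its pole. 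I would first nail down the variable change and the pairing of $\epsilon$-configurations abstractly, then do the residue computation for one representative $\epsilon$ (say all $\epsilon_i = 1$) in full, and finally note that the general case follows by the same manipulation since, as remarked after \eqref{splitting1}, the $\a_i$ are arbitrary subject to the theorem's hypotheses.
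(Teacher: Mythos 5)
Your high-level plan — change variables so the integrand of $\mathcal{Z}_2$ becomes comparable to that of $\mathcal{Z}_1$, shift contours, and use the sum over $\boldsymbol{\epsilon}$ together with Lemma~\ref{lem:iden}(ii) (identifying $Z_4$ with $\frac{8}{\pi^2}\mathcal{H}_2$) to see cancellation — is the same as the paper's, and the hardest step is indeed the bookkeeping of which residues survive. But your description of the cancellation mechanism is wrong, and this wrongness hides a genuine gap. You attribute the cancellation to ``the pole of $\Gamma(s)$ at $s=0$'' and the $\colr = -\colm^0$-type phenomenon from Section~\ref{sec:colj2}. That computation is part of the proof of Proposition~\ref{prop:LargeSieve}, and it does not apply here: by the time one reaches $\colw_2$, Lemma~\ref{lem:afeG} has already converted $(2^{1-2s}-1)(\cos+\sin)(\frac{\pi s}{2})\pi^{-s}\Gamma(s)\zeta(2s)$ into $2\left(\frac{8}{\pi}\right)^{-s}\frac{\Gamma(\frac14-\frac s2)}{\Gamma(\frac14+\frac s2)}\zeta_2(1-2s)$, which is regular at $s=0$, and in any case the contours in $s$ are never moved past $s=0$ in Section~\ref{sec:cross-cancel}.

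The gap this hides is that the ``leftover contour integrals'' you dismiss as routine are \emph{not} $O(X(\log X)^6)$. After shifting $u_1$ in $\mathcal{Z}_1$ to $\Re(u_1) = -\frac{1}{10}+\frac{10}{L}$ past the simple poles at $u_1 = 0$ and $u_1 = -\epsilon_1\a_1$, the remaining integral $I_3$ carries $(8X)^{\sum u_i/2}$ with $\Re\bigl(\sum u_i/2\bigr) \approx \frac{1}{10}$, so its trivial size is $\asymp X^{11/10}$; it cannot be absorbed into the error and the proof stalls unless it is cancelled structurally. The paper resolves this by the reflection $u_1 \mapsto -s-\epsilon_1\a_1$, which turns $I_3$ into an $s$-integral that cancels exactly, after summing over $\epsilon_1$, with the residue $J_3$ of $\mathcal{Z}_2$ at $u_1 = s-\epsilon_1\a_1$; the exact matching is given by the pair of relations $t_o^r(\epsilon_r) = 2t_d^r(-\epsilon_r)$ and $h_o^r(\epsilon_r) = -\tfrac12 h_d^r(-\epsilon_r)$ of Lemma~\ref{lem-canceli-iii}(iii) and \eqref{equ:hohd-2}. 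Similarly, the residues at $u_r = -\epsilon_r\a_r$ individually vanish on summing over $\epsilon_r$ (Lemma~\ref{lem-canceli-iii}(i),(ii)), and the residues at $u_{r'} = -\epsilon_r\a_r-\epsilon_{r'}\a_{r'}$ vanish on summing over $(\epsilon_r,\epsilon_{r'})$ (Lemma~\ref{lem:cross-second-general}); these $\epsilon$-pairing identities, which rest on the functional-equation relation \eqref{g-iden-1} for $g_{\a_r}$, the identity $\Phi_{\{\dots,-\epsilon_r,\dots\}} = \Phi_{\{\dots,\epsilon_r,\dots\}}x^{\epsilon_r\a_r}$, and the symmetry of $\mathcal{H}_2$ in its arguments, are the real engine of the proof and are absent from your proposal. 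Without them the iterated contour shift never reaches the main residue at $\boldu = 0$ with controllable error.
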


\begin{lem}
\label{lem:secU-error}
We have 
\begin{align*}
\mathcal{Z}_3, \mathcal{Z}_4\ll X (\log X)^{6+ \varepsilon}. 
\end{align*}
\end{lem}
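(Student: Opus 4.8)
The plan is to bound $\mathcal{Z}_3$ and $\mathcal{Z}_4$ by repeating the contour shifts from the proof of Lemma \ref{lem:secU-main} (which themselves follow Sections \ref{sec:LargeSieve-Diagonal} and \ref{sec:LargeSieve-OffDiagonal}), the only new feature being the factor $\prod_{i=1}^4\bigl(1-U^{-\frac{1-\epsilon_i}{2}\a_i+\frac{u_i}{2}}\bigr)$ in $\mathcal{Z}_3$ (and the same with $u_i$ replaced by $u_i+s$ in $\mathcal{Z}_4$). The essential point is that this product must be kept \emph{intact} rather than expanded: each term of its expansion leads back to an integral of size $\asymp X(\log X)^{10}$, so the required savings of $(\log X)^4$ comes not from the individual terms but from the four simple zeros of the product taken together. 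It suffices to treat one choice of the signs $\epsilon_i$, say $\epsilon_i=1$ for all $i$ (so that the factor is $\prod_i(1-U^{u_i/2})$, vanishing exactly at each $u_i=0$), and one of $\mathcal{Z}_3,\mathcal{Z}_4$; the other cases are identical up to notation, and the powers of $\log\log X$ produced below are absorbed by the $\varepsilon$ in $(\log X)^{6+\varepsilon}$.

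The mechanism rests on the following elementary observation. Since $U=(\log X)^{-Q}$, we have $\log U=-Q\log\log X$, so whenever $\bigl|u_i-(1-\epsilon_i)\a_i\bigr|\ll\tfrac{1}{\log X}$ --- in particular, using $|\a_i|\leq\tfrac{1}{\log X}$, whenever $|u_i|\ll\tfrac{1}{\log X}$ ---
\[
1-U^{-\frac{1-\epsilon_i}{2}\a_i+\frac{u_i}{2}}=1-\exp\bigl(\tfrac{u_i-(1-\epsilon_i)\a_i}{2}\log U\bigr)\ll\frac{\log\log X}{\log X},
\]
while on the rest of the line of integration the factor is $\ll 1$. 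In the evaluation of $\mathcal{Z}_1$, all of the poles that contribute a nonnegligible residue lie within $O(1/\log X)$ of $\boldu=\mathbf 0$: they come from $\prod_i u_i^{-1}$ at $u_i=0$ and from $\prod_{1\leq i\leq j\leq 4}\zeta_2(1+\epsilon_i\a_i+\epsilon_j\a_j+u_i+u_j)$ at $u_i+u_j=-\epsilon_i\a_i-\epsilon_j\a_j$, the far-away poles being harmless because of the exponentially decaying $\Gamma$-ratio in $g_{\epsilon_i\a_i}$, and it is the $4$-fold residue there, through repeated differentiation of $(8xX)^{\sum u_i/2}$, that produces the main contribution of size $X(\log X)^{10}$; the leftover integrals over the shifted lines $\Re(u_i)=-\eta$ are $\ll X^{1-\eta'}$ by the analyticity and polynomial growth of $\mathcal{H}_2$ (Lemma \ref{lem:DirichletSeriesDiagonal}) and the standard $\zeta$- and $\Gamma$-estimates of Section \ref{sec:LargeSieve-Diagonal}. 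Performing the same shifts for $\mathcal{Z}_3$, each residue in a variable $u_i$ now carries the factor $1-U^{-\frac{1-\epsilon_i}{2}\a_i+u_i/2}$ evaluated at --- or, after Leibniz-expanding a higher-order residue, within $O(1/\log X)$ of --- a pole lying within $O(1/\log X)$ of the origin, hence a factor $\ll\tfrac{\log\log X}{\log X}$; doing this for all four variables yields
\[
\mathcal{Z}_3\ll X(\log X)^{10}\Bigl(\tfrac{\log\log X}{\log X}\Bigr)^{4}+X^{1-\eta'}\ll X(\log X)^{6+\varepsilon}.
\]
For $\mathcal{Z}_4$ one first shifts the $s$-contour to $\Re(s)=-\eta$, crossing the pole of $\zeta_2(1-2s)$ at $s=0$ (whose residue is precisely the $\mathcal{Z}_2$ main term) and controlling the factor $X^{s}$; thereafter the $u_i$-integrals are treated as above, the poles clustering now near $u_i=-s$ and the factor $\prod_i\bigl(1-U^{-\frac{1-\epsilon_i}{2}\a_i+(u_i+s)/2}\bigr)$ vanishing on the locus $u_i+s=(1-\epsilon_i)\a_i$, which lies within $O(1/\log X)$ of that cluster. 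The bounds on the shifted contours now use the analyticity and growth of $Z_3$ (Lemma \ref{lem:DirichletSeriesSuma}). Again one gains $\bigl(\tfrac{\log\log X}{\log X}\bigr)^{4}$ and obtains $\mathcal{Z}_4\ll X(\log X)^{6+\varepsilon}$.

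I expect the main difficulty to be the bookkeeping that makes the ``one factor of $\tfrac{\log\log X}{\log X}$ per variable'' precise: one must check that, whatever sequence of poles is crossed in the iterated shift, each of the four $U$-factors ends up evaluated (or differentiated) only at points $O(1/\log X)$ from its zero, and that differentiating $1-U^{w_i}$ costs only a bounded power of $\log U\asymp\log\log X$ rather than a power of $\log X$. The degree-$10$ input ---  the size $X(\log X)^{10}$ of $\mathcal{Z}_1$ and $\mathcal{Z}_2$ ---  that feeds this estimate is exactly what the proof of Lemma \ref{lem:secU-main} establishes, and so can be quoted rather than redone.
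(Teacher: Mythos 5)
Your high-level strategy — insert the factor $\prod_i\bigl(1-U^{\cdot}\bigr)$ into the residue computation that gives $\colz_1 \asymp X(\log X)^{10}$, and observe that each such factor contributes a savings near the relevant poles — is indeed the mechanism the paper exploits. But there is a concrete error in your key estimate that the paper has to work around in an essential way, and you have not identified it.

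You assert that whenever $u_i$ is not within $O(1/\log X)$ of the zero, the factor $1-U^{-\frac{1-\epsilon_i}{2}\a_i+\frac{u_i}{2}}$ is $\ll 1$. This is false on the contours you need. To get any decay from the factor $(8xX)^{\sum u_i/2}$ you must shift $\Re(u_i)$ to a negative value (the paper uses $-\tfrac{1}{20}$), and there $|U^{u_i/2}| = (\log X)^{-Q\Re(u_i)/2}$; with $Q = 100A \geq 1000$ and $\Re(u_i)=-\tfrac{1}{20}$ this is $(\log X)^{25}$, far from bounded. The same blowup infects the intermediate residues: after taking a residue at a $\zeta$-pole $u_j = -u_i + O(1/\log X)$ with $u_i$ still on a shifted line, the factor $1 - U^{u_j/2 - \a_j}$ is evaluated at a point with $\Re(u_j) \approx \tfrac{1}{20}$, which is \emph{not} within $O(1/\log X)$ of the origin, so your claimed $\frac{\log\log X}{\log X}$ savings is simply unavailable there. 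With the product kept intact, one cannot make both of a pair of factors $\bigl(1-U^{u_i/2}\bigr)\bigl(1-U^{-u_i/2+\text{small}}\bigr)$ small at once — one of the two exponents always has negative real part.

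The paper circumvents this in two ways you have not anticipated. First, it combines each $U$-factor with the corresponding $1/u_i$, and when $\epsilon_i = -1$ performs the split
\[
\frac{U^{\frac{u_i}{2}-\a_i}-1}{u_i} \;=\; U^{-\a_i}\,\frac{U^{\frac{u_i}{2}}-1}{u_i} \;+\; \frac{U^{-\a_i}-1}{u_i},
\]
isolating a factor $U^{-\a_i}-1 \ll \frac{\log\log X}{\log X}$ that is a genuine constant, uniformly small regardless of where $u_i$ sits; this is what makes the off-origin intermediate residues small. Second, after the substitutions it shifts the remaining contour \emph{back} to a small \emph{positive} line $\Re(u) = \eta$ with $\eta = \tfrac{1}{10Q}$, explicitly chosen so that the residual blowup $(\log X)^{Q\eta}$ is only $(\log X)^{O(1)}$ and is absorbed into the $\varepsilon$; the smallness of the final bound comes from Cauchy's theorem, not from a pointwise estimate on the negative line. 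Your display $\colz_3 \ll X(\log X)^{10}\bigl(\tfrac{\log\log X}{\log X}\bigr)^4 + X^{1-\eta'}$ presupposes that only the final cluster of residues near $\boldu = \mathbf 0$ matters and that the rest is pointwise negligible; both halves of that presupposition fail without the devices above, so acknowledging a generic "bookkeeping difficulty" does not close the gap.
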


\begin{proof}[Proof of Theorem \ref{thm:Main}] Combining the above two lemmas with  \eqref{Combin},   \eqref{equ:Z1Z2Z3Z4} and Lemma \ref{lem:4therror}, we obtain
\begin{align*}
&  \sumstar_{(d,2)=1} L(\tfrac{1}{2} + \a_1,\chi_{8d})L(\tfrac{1}{2} + \a_2, \chi_{8d} )L(\tfrac{1}{2} + \a_3, \chi_{8d} )L(\tfrac{1}{2} + \a_4, \chi_{8d}) \Phi(\tfrac{d}{X})\\
&=\sum_{\substack{\epsilon_i = \pm 1\\i=1,2,3,4}} 
\prod_{i=1}^4 (8X)^{-\frac{1-\epsilon_i}{2}\a_i}  
\lambda_i^{\frac{1-\epsilon_i}{2}} \cdot \frac{4X}{\pi^2}  
\int\limits_{-\infty}^\infty  \Phi_{\{\epsilon_1,\epsilon_2, \epsilon_3,\epsilon_4\}}(x)\, dx \\
&\quad \times 
\prod_{1 \leq i\leq j \leq 4} \zeta_2(1+\epsilon_i\a_i+\epsilon_j\a_j) 
\mathcal{H}_2(\tfrac{1}{2}+\epsilon_1\a_1,\tfrac{1}{2}+\epsilon_2\a_2,\tfrac{1}{2}+\epsilon_3 \a_3, \tfrac{1}{2}+\epsilon_4\a_4) \, d\boldu\\
&\quad +O(X (\log X)^{6+ \varepsilon}),
\end{align*}
which completes the proof of Theorem \ref{thm:Main}.
\end{proof}

\section{Cross-cancellation between $\colz_1$ and $\colz_2$} \label{sec:cross-cancel}
We prove Lemma \ref{lem:secU-main} in this section. By \eqref{equ-defcolz}, 
\begin{align*}
\mathcal{Z}_1 &= \frac{1}{(2\pi i)^4}  \int\limits_{(\frac{1}{10})} \cdots \int\limits_{(\frac{1}{10})} 
\colw_1(\boldsymbol{u};\boldsymbol{\a};\boldsymbol{\epsilon})   \, d\boldu,
%\label{def-cancel-z1}
\end{align*}
where 
\begin{align}
\colw_1(\boldsymbol{u};\boldsymbol{\a};\boldsymbol{\epsilon}) &= \colf_1(\boldsymbol{u}; \boldsymbol{\a};\boldsymbol{\epsilon})
\prod_{i=1}^4\frac{1}{u_i}     
\prod_{1 \leq i\leq j \leq 4}
\zeta_2(1+\epsilon_i\a_i+\epsilon_j\a_j+u_i+u_j),\nonumber\\
\colf_1(\boldsymbol{u}; 
\boldsymbol{\a};\boldsymbol{\epsilon}) 
&:=   \frac{4X}{\pi^2} \prod_{i=1}^4g_{\epsilon_i\a_i}(u_i)(8 X)^{\sum_{i=1}^4\frac{u_i}{2}} \int\limits_{-\infty}^\infty  \Phi_{\{\epsilon_1, \epsilon_2,\epsilon_3,\epsilon_4\}}(x)
x^{\sum_{i=1}^4\frac{u_i}{2} } \, dx\nonumber\\
&\quad \times \mathcal{H}_2(\tfrac{1}{2}+\epsilon_1\a_1+u_1,\tfrac{1}{2}+\epsilon_2\a_2+u_2,\tfrac{1}{2}+ \epsilon_3\a_3+u_3, \tfrac{1}{2}+\epsilon_4\a_4+u_4).
\label{def-colf1}
\end{align}
Changing  variables $u_i \mapsto u_i -s$ in $\colz_2$, we obtain
\begin{align*}
\colz_2 &=  \frac{1}{(2 \pi i)^5} \int\limits_{(\frac{1}{10})}  \int\limits_{(\frac{1}{10}+ \frac{5}{L})} \cdots \int\limits_{(\frac{1}{10}+ \frac{5}{L})} 
\colw_2(s;\boldsymbol{u}-s;\boldsymbol{\a};\boldsymbol{\epsilon}) 
\, d\boldu \, ds,
\end{align*}
where
\begin{align*}
&\colw_2(s;\boldsymbol{u}-s;\boldsymbol{\a};\boldsymbol{\epsilon})\\
& =\colf_2(s;\boldsymbol{u}; \boldsymbol{\a};\boldsymbol{\epsilon})\prod_{i=1}^4 \frac{1}{u_i}  
\zeta_2(1-2s) \prod_{1\leq i \leq j \leq 4} \zeta_2(1+ \epsilon_i\a_i  + \epsilon_j\a_j +u_i + u_j)  \prod_{1 \leq i \leq 4} \frac{\zeta_2(1+ \epsilon_i\a_i+u_i-s)}{\zeta_2(1+\epsilon_i\a_i+u_i+s)},
\end{align*}
and
\begin{align*}
&\colf_2(s;\boldsymbol{u}; \boldsymbol{\a};\boldsymbol{\epsilon}) \\
&:=X^{1+\sum_{i=1}^4\frac{u_i}{2}}    X^{-s}
\prod_{i=1}^4 g_{\epsilon_i\a_i}(u_i)\left(\frac{8}{\pi}\right)^{-s}\frac{\Gamma(\frac{1}{4}-\frac{s}{2})}{\Gamma(\frac{1}{4}+ \frac{s}{2})}  \int\limits_0^\infty  \Phi_{\{\epsilon_1, \epsilon_2,\epsilon_3,\epsilon_4\}}(x) x^{-s} (8x)^{\sum_{i=1}^4\frac{u_i}{2}}\, dx\\
&\quad \times 
Z_4(\tfrac{1}{2}+\epsilon_1\a_1+u_1-s,\tfrac{1}{2}+\epsilon_2\a_2+u_2-s,\tfrac{1}{2}+ \epsilon_3\a_3+u_3-s,\tfrac{1}{2}+\epsilon_4\a_4+u_4-s,s) .
\end{align*}
Here $Z_4$ is defined in Lemma \ref{lem:iden}. We begin with $\colz_1$ by taking the line of the integration in $u_1$ to $\Re(u_1) = -\frac{1}{10}+ \frac{10}{L}$, passing simple poles at $u_1 = 0,-\epsilon_1 \a_1$, so
\begin{align}
 \colz_1 = I_1 + I_2 + I_3,
\label{equ:Z1-first}
\end{align}
% \begin{align}
% \label{equ:Z1-first}
% \colz_1 &=I_1 + I_2 + I_3,
% \end{align}
where 
\begin{align*}
I_1 &= \frac{1}{(2\pi i)^3}  \int\limits_{(\frac{1}{10})} \int\limits_{(\frac{1}{10})} \int\limits_{(\frac{1}{10})}  \res{u_1=0} \colf_1(\boldsymbol{u}; \boldsymbol{\a};\boldsymbol{\epsilon})
\prod_{i=1}^4\frac{1}{u_i}     
\prod_{1 \leq i\leq j \leq 4} \zeta_2(1+\epsilon_i\a_i+\epsilon_j\a_j+u_i+u_j)  \, du_2\,du_3\, du_4,\\
I_2 &= \frac{1}{(2\pi i)^3}  \int\limits_{(\frac{1}{10})} \int\limits_{(\frac{1}{10})} \int\limits_{(\frac{1}{10})}   \res{u_1=-\epsilon_1\a_1} 
\colf_1(\boldsymbol{u}; \boldsymbol{\a};\boldsymbol{\epsilon})
\prod_{i=1}^4\frac{1}{u_i}     
\prod_{1 \leq i\leq j \leq 4} \zeta_2(1+\epsilon_i\a_i+\epsilon_j\a_j+u_i+u_j)  \, du_2\,du_3\, du_4,\\
I_3 &= \frac{1}{(2\pi i)^4}  \int\limits_{(\frac{1}{10})} \cdots \int\limits_{(-\frac{1}{10}+ \frac{10}{L})}  \colw_1(\boldsymbol{u};\boldsymbol{\a};\boldsymbol{\epsilon}) \, d\boldu.
\end{align*}
We are not able to bound $I_3$ trivially, as this would give a contribution $\asymp X^{1+\frac{1}{10}}\asymp X^{\frac{11}{10}}$. As we will see though, this integral cancels exactly with an integral arising from $\colz_2$. For $\colz_2$, we shift contours in a similar way to $\colz_1$ and take the line of the integration in $u_1$ to $\Re(u_1) = -\frac{1}{10}+\frac{10}{L} $, passing simple poles  at $u_1 =0, -\epsilon_1 \a_1, s-\epsilon_1 \a_1$. By Lemma \ref{lem:DirichletSeriesSuma}, the integration on $\Re(u_1) = -\frac{1}{10}+\frac{10}{L} $ is $O\pth{X (\log X)^6}$, and so 
% \begin{align}
%    \sum_{\substack{\epsilon_i = \pm 1\\i=1,2,3,4}} \prod_{i=1}^4
% (8X)^{-\sum_{i=1}^4\frac{1-\epsilon_i}{2}\a_i} 
% \lambda_i^{\frac{1-\epsilon_i}{2}} \cdot  \colz_2 =\sum_{\substack{\epsilon_i = \pm 1\\i=1,2,3,4}} \prod_{i=1}^4
% (8X)^{-\sum_{i=1}^4\frac{1-\epsilon_i}{2}\a_i} 
% \lambda_i^{\frac{1-\epsilon_i}{2}} ( J_1 + J_2 ) + O(X (\log X)^3),
% \label{equ:Z2-first}
% \end{align}
\begin{align}
\colz_2 = J_1 + J_2 + J_3 + O(X (\log X)^6),
\label{equ:Z2-first}
\end{align}
where 
\begin{align*}
J_1 &=  \frac{1}{(2 \pi i)^4} \int\limits_{(\frac{1}{10})}  \int\limits_{(\frac{1}{10}+ \frac{5}{L})} \int\limits_{(\frac{1}{10}+ \frac{5}{L})}  \int\limits_{(\frac{1}{10}+ \frac{5}{L})} 
\res{u_1=0}
\prod_{i=1}^4 \frac{1}{u_i}  
\zeta_2(1-2s) \prod_{1\leq i \leq j \leq 4} \zeta_2(1+ \epsilon_i\a_i  + \epsilon_j\a_j +u_i + u_j)  \\
&\quad \times \prod_{1 \leq i \leq 4} \frac{\zeta_2(1+ \epsilon_i\a_i+u_i-s)}{\zeta_2(1+s+\epsilon_i\a_i+u_i )}\colf_2(s;\boldsymbol{u}; \boldsymbol{\a};\boldsymbol{\epsilon}) \, du_2\,du_3\,du_4 \, ds,\\
J_2 &=  \frac{1}{(2 \pi i)^4} \int\limits_{(\frac{1}{10})}  \int\limits_{(\frac{1}{10}+ \frac{5}{L})} \int\limits_{(\frac{1}{10}+ \frac{5}{L})} \int\limits_{(\frac{1}{10}+ \frac{5}{L})} 
\res{u_1=-\epsilon_1 \a_1}
\prod_{i=1}^4 \frac{1}{u_i}  
\zeta_2(1-2s) \prod_{1\leq i \leq j \leq 4} \zeta_2(1+ \epsilon_i\a_i  + \epsilon_j\a_j +u_i + u_j)  \\
&\quad \times \prod_{1 \leq i \leq 4} \frac{\zeta_2(1+ \epsilon_i\a_i+u_i-s)}{\zeta_2(1+s+\epsilon_i\a_i+u_i )} \colf_2(s;\boldsymbol{u}; \boldsymbol{\a};\boldsymbol{\epsilon})\, du_2\,du_3\,du_4 \, ds,\\
J_3 &=  \frac{1}{(2 \pi i)^4} \int\limits_{(\frac{1}{10})}  \int\limits_{(\frac{1}{10}+ \frac{5}{L})} \int\limits_{(\frac{1}{10}+ \frac{5}{L})} \int\limits_{(\frac{1}{10}+ \frac{5}{L})} 
\res{u_1=s-\epsilon_1 \a_1}
\colw_2(s;\boldsymbol{u}-s;\boldsymbol{\a};\boldsymbol{\epsilon}) \, du_2\,du_3\,du_4 \, ds.
\end{align*}

\begin{lem} 
For $r =1,2,3,4$, we have the following identities: 
\begin{enumerate}[label=\normalfont{(\roman*)}]
\item 
$ \displaystyle 
\sum_{\epsilon_r = \pm 1}
(8X)^{-\frac{1-\epsilon_r}{2}\a_r} 
\lambda_r^{\frac{1-\epsilon_r}{2}}\cdot \epsilon_r \cdot  \underset{u_r=-\epsilon_r\a_r}{\operatorname{Value}\,}  
\colf_1(\boldsymbol{u}; \boldsymbol{\a};\boldsymbol{\epsilon}) 
= 0.
$
\item 
$ \displaystyle 
\displaystyle 
\sum_{\epsilon_r = \pm 1}
(8X)^{-\frac{1-\epsilon_r}{2}\a_r} 
\lambda_r^{\frac{1-\epsilon_r}{2}}\cdot  
\epsilon_r \cdot \underset{u_r=-\epsilon_r\a_r}{\operatorname{Value}\,}
\colf_2(s;\boldsymbol{u}; \boldsymbol{\a};\boldsymbol{\epsilon}) 
= 0.
$
\item 
Write 
\begin{align*}
t_d^r (\epsilon_r) &: =  (8X)^{-\frac{1-\epsilon_r}{2}\a_r} 
\lambda_r^{\frac{1-\epsilon_r}{2}}\cdot 
\underset{u_r=-s-\epsilon_r \a_r}{\operatorname{Value}\,} 
\colf_1(\boldsymbol{u}; \boldsymbol{\a};\boldsymbol{\epsilon}),\\
t_o^r (\epsilon_r) &: =  (8X)^{-\frac{1-\epsilon_r}{2}\a_r} 
\lambda_r^{\frac{1-\epsilon_r}{2}}\cdot 
\underset{u_r=s-\epsilon_r \a_r}{\operatorname{Value}\,} 
\colf_2(s;\boldsymbol{u}; \boldsymbol{\a};\boldsymbol{\epsilon}).
\end{align*}
Then 
\begin{align*}
t_o^r (\epsilon_r) = 2 t_d^r (-\epsilon_r).
\end{align*}
%\begin{align*}
%&\underset{\epsilon_r =\mp 1}{\operatorname{Value}\,}
%\left[ (8X)^{-\frac{1-\epsilon_r}{2}\a_r} 
%\lambda_1^{\frac{1-\epsilon_1}{2}}\cdot 
%\underset{u_r=s-\epsilon_r \a_r}{\operatorname{Value}\,} 
%\colf_2(s;\boldsymbol{u}; \boldsymbol{\a};\boldsymbol{\epsilon})\right] \\
%&= 
%2 \cdot  \underset{\epsilon_r =\pm 1}{\operatorname{Value}\,} 
%\left[
%(8X)^{-\frac{1-\epsilon_r}{2}\a_r} 
%\lambda_r^{\frac{1-\epsilon_r}{2}}\cdot  \underset{u_r=-s- \epsilon_r \a_r}{\operatorname{Value}\,}     \colf_1(\boldsymbol{u}; \boldsymbol{\a};\boldsymbol{\epsilon})
%\right].
%\end{align*}
\end{enumerate}
\label{lem-canceli-iii}
\end{lem}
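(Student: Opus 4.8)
The plan is to prove all three identities by direct computation; the only non‑elementary inputs are Lemma~\ref{lem:iden}(ii), which rewrites $Z_4$ in terms of $\mathcal{H}_2$, and the fact that $Z_4$ (hence $Z_3$) and $\mathcal{H}_2$ are symmetric in their first four arguments, which is immediate from \eqref{def:D1}--\eqref{def:Z*} since the $n_i$ there enter only through products. The computational workhorse is the reflection identity
\[
\lambda_i(\a)\, g_{-\a}(\xi) = g_\a(\xi - 2\a),
\]
valid for all $\a,\xi\in\mathbb{C}$, which follows at once from \eqref{gaDef} and \eqref{def-lambda} by combining the powers of $\pi$ with the ratio of $\Gamma$‑factors; in particular $\lambda_i(\a)g_{-\a}(\a) = g_\a(-\a)$. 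I also use throughout that $\Phi_{\{\epsilon_1,\dots,\epsilon_4\}}(x) = x^{-\sum_i\frac{1-\epsilon_i}{2}\a_i}\Phi(x)$, so that the power of $x$ (and, in $\colf_2$, the powers of $8$ and $X$) contributed by the $r$‑th slot of the relevant Mellin integrand combines with the external factor $(8X)^{-\frac{1-\epsilon_r}{2}\a_r}$ in a way that is easy to track.

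For part (i), substitute $u_r = -\epsilon_r\a_r$ into $\colf_1(\boldsymbol u;\boldsymbol\a;\boldsymbol\epsilon)$. The $r$‑th argument of $\mathcal{H}_2$ becomes $\tfrac12$, the $r$‑th power of $x$ in the integral collapses to $x^{-\a_r/2}$, and the $r$‑th factor $(8X)^{u_r/2}$ together with the external $(8X)^{-\frac{1-\epsilon_r}{2}\a_r}$ becomes $(8X)^{-\a_r/2}$ — all independent of $\epsilon_r$. Finally $\lambda_r^{\frac{1-\epsilon_r}{2}}g_{\epsilon_r\a_r}(-\epsilon_r\a_r) = g_{\a_r}(-\a_r)$ for both $\epsilon_r=\pm1$ by the reflection identity. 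Hence the $\epsilon_r$‑summand equals $\epsilon_r$ times a quantity independent of $\epsilon_r$, and the sum over $\epsilon_r=\pm1$ is $0$. Part (ii) is the same computation performed on $\colf_2$: after the substitution $u_r=-\epsilon_r\a_r$ the $r$‑th argument of $Z_4$ becomes $\tfrac12-s$, independent of $\epsilon_r$; the powers of $8$, $X$, $x$ are handled as above; $\lambda_r^{\frac{1-\epsilon_r}{2}}g_{\epsilon_r\a_r}(-\epsilon_r\a_r)=g_{\a_r}(-\a_r)$ again; and the $\epsilon_r$‑sum of $\epsilon_r\cdot(\text{const})$ vanishes.

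For part (iii), substitute $u_r = s-\epsilon_r\a_r$ into $\colf_2(s;\boldsymbol u;\boldsymbol\a;\boldsymbol\epsilon)$; the $r$‑th argument of $Z_4$ becomes $\tfrac12$, so Lemma~\ref{lem:iden}(ii) — applied, using the symmetry of $Z_4$, with slot $r$ as the distinguished slot, with the last parameter equal to $s$, and with $z_i = \epsilon_i\a_i+u_i$ for $i\neq r$ — replaces $Z_4$ by $\frac{8}{\pi^2}\mathcal{H}_2$ having $r$‑th argument $\tfrac12-s$ and $i$‑th argument $\tfrac12+\epsilon_i\a_i+u_i$. On the other side, substituting $u_r=-s+\epsilon_r\a_r$ into $\colf_1(\boldsymbol u;\boldsymbol\a;\boldsymbol\epsilon')$, where $\boldsymbol\epsilon'$ is obtained from $\boldsymbol\epsilon$ by replacing $\epsilon_r$ with $-\epsilon_r$, produces exactly this same $\mathcal{H}_2$, the same Mellin integral $\int\Phi(x)x^{\gamma}\,dx$ (one checks the $x$‑exponents agree after both substitutions), and the same remaining factors $\prod_{i\neq r}g_{\epsilon_i\a_i}(u_i)$. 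Consequently $t_o^r(\epsilon_r)/t_d^r(-\epsilon_r)$ is a ratio of scalar prefactors only. Using the reflection identity, $\lambda_r^{\frac{1-\epsilon_r}{2}}g_{\epsilon_r\a_r}(s-\epsilon_r\a_r)=g_{\a_r}(s-\a_r)$ and $\lambda_r^{\frac{1+\epsilon_r}{2}}g_{-\epsilon_r\a_r}(-s+\epsilon_r\a_r)=g_{\a_r}(-s-\a_r)$ (both independent of $\epsilon_r$); collecting the leftover constants — the $8$ in the $\frac{8}{\pi^2}$ of Lemma~\ref{lem:iden}(ii) against the $4$ in the $\frac{4X}{\pi^2}$ of $\colf_1$, the $\pi^{s}$ from the $(8/\pi)^{-s}$ in $\colf_2$, and the factor $\Gamma(\tfrac14-\tfrac s2)/\Gamma(\tfrac14+\tfrac s2)$ in $\colf_2$ — one finds that the powers of $8$ and $X$ agree up to a factor $8/4$, the powers of $\pi$ cancel identically, and the $\Gamma$‑factors cancel because $\tfrac{1/2\pm s}{2}=\tfrac14\pm\tfrac s2$. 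What remains is exactly $2$, which gives $t_o^r(\epsilon_r)=2\,t_d^r(-\epsilon_r)$.

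The main obstacle is entirely one of bookkeeping: each of the three substitutions $u_r\mapsto-\epsilon_r\a_r$, $u_r\mapsto s-\epsilon_r\a_r$, $u_r\mapsto-s-\epsilon_r\a_r$ shuffles factors $8^{\bullet}$, $X^{\bullet}$, $\pi^{\bullet}$, $x^{\bullet}$ between the external prefactors $(8X)^{-\frac{1\mp\epsilon_r}{2}\a_r}\lambda_r^{\frac{1\mp\epsilon_r}{2}}$, the definitions of $\colf_1$ and $\colf_2$, the Mellin integrals, and the $\Gamma$‑factors inside $g_{\pm\epsilon_r\a_r}$, and all of these must be tracked precisely to see the stated cancellations. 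The one point requiring care beyond bookkeeping is the invocation of Lemma~\ref{lem:iden}(ii): one must identify its variables correctly, with the last argument of $Z_4$ (here $s$) playing the role of the centring parameter and the symmetry of $Z_4$ used to take slot $r$, rather than slot $1$, as the distinguished one.
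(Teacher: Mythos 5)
Your proof is correct and follows essentially the same route as the paper: direct substitution combined with a $g$-reflection identity derived from \eqref{gaDef} and \eqref{def-lambda}, the $\Phi_\epsilon$ relation \eqref{tranf-phi}, and Lemma~\ref{lem:iden}(ii). The only stylistic difference is that you package the reflection as $\lambda(\a)g_{-\a}(\xi)=g_\a(\xi-2\a)$ and then argue $\epsilon_r$-independence of each side directly, whereas the paper uses the slightly more elaborate form \eqref{g-iden-1} and explicitly matches the $\epsilon_1=\pm 1$ cases; the underlying bookkeeping is identical.
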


\begin{proof}
It suffices to prove the case $r=1$ due to the symmetry of $u_1,u_2,u_3,u_4$. 
We begin with the following identities. First, from \eqref{gaDef} and \eqref{def-lambda}, we have
% \begin{equation}
% \begin{aligned}
% g_{\a_r} (z-\a_r) &=   \pi^{-\frac{z-\a_r}{2}}\frac{\Gamma(\frac{1/2+z}{2})}{\Gamma(\frac{1/2+\a_r}{2})}  = \pi^{-z+\a_r}  \frac{\Gamma(\frac{1/2-\a_r}{2})}{\Gamma(\frac{1/2+\a_r}{2})} \frac{\Gamma(\frac{1/2+z}{2})}{\Gamma(\frac{1/2-z}{2})} \pi^{-\frac{-z+\a_r}{2}}\frac{\Gamma(\frac{1/2-z}{2})}{\Gamma(\frac{1/2-\a_r}{2})} \\
% &= \pi^{-z+\a_r}  \frac{\Gamma(\frac{1/2-\a_r}{2})}{\Gamma(\frac{1/2+\a_r}{2})} \frac{\Gamma(\frac{1/2+z}{2})}{\Gamma(\frac{1/2-z}{2})} g_{-\a_r} (-z+\a_r)\\
% &= \pi^{-z}  \lambda_r(\alpha_r) \frac{\Gamma(\frac{1/2+z}{2})}{\Gamma(\frac{1/2-z}{2})} g_{-\a_r} (-z+\a_r).
% \label{g-iden-1}
% \end{aligned}
% \end{equation}
\begin{equation}
g_{\a_r} (z-\a_r) = \pi^{-z}  \lambda_r(\alpha_r) \frac{\Gamma(\frac{1/2+z}{2})}{\Gamma(\frac{1/2-z}{2})} g_{-\a_r} (-z+\a_r).
\label{g-iden-1}
\end{equation}
In particular, taking $z=0, r=1$ gives
\[
g_{\a_1} (-\a_1)=  \lambda_1 g_{-\a_1} (\a_1).
\]
Next, the definition \eqref{tranf-phi} implies that
\[
\Phi_{\{1,\epsilon_2,\epsilon_3,\epsilon_4\} }(x)
x^{\frac{- \a_1}{2}} = \Phi_{\{-1,\epsilon_2,\epsilon_3,\epsilon_4\} }(x) x^{\frac{\a_1}{2}}.
\]
Combining the above two displays with \eqref{def-colf1}, we find 
\begin{align*}
&\underset{u_1=-\a_1}{\operatorname{Value}\,}\mathcal{F}_1(\boldsymbol{u};\boldsymbol{\a};1,\epsilon_2,\epsilon_3,\epsilon_4) \\
&= \frac{4X}{\pi^2} \lambda_1
g_{-\a_1}(\a_1)(8X)^{-\frac{\a_1}{2}} \prod_{i=2}^4
g_{\epsilon_i\a_i}(u_i)(8X)^{\frac{u_i}{2}} \int\limits_{-\infty}^\infty  \Phi_{\{-1,\epsilon_2,\epsilon_3,\epsilon_4\} }(x)
x^{\frac{\a_1}{2} + \frac{u_2}{2} + \frac{u_3}{2}+ \frac{u_4}{2} } \, dx\\
&\quad \times \mathcal{H}_2(\tfrac{1}{2}, \tfrac{1}{2}+ \epsilon_2 \a_2 + u_2 , \tfrac{1}{2}+ \epsilon_3 \a_3 + u_3,\tfrac{1}{2}+ \epsilon_4 \a_4 + u_4) \\
&= \lambda_1(8X)^{-\a_1}\ \underset{u_1=\a_1}{\operatorname{Value}\,}\mathcal{F}_1(\boldsymbol{u};\boldsymbol{\a};-1,\epsilon_2,\epsilon_3,\epsilon_4),
\end{align*}
which proves (i). A similar argument using the definition of $\colf_2$ gives (ii).

% Note that 
% \begin{align*}
% &\sum_{\epsilon_1 = \pm 1} 
% (8X)^{-\frac{1-\epsilon_1}{2}\a_1} 
% \lambda_1^{\frac{1-\epsilon_1}{2}}\cdot \epsilon_1 \cdot   \underset{u_1=-\epsilon_1\a_1}{\operatorname{Value}\,}
% \colf_2(\boldsymbol{u}; \boldsymbol{\a};\boldsymbol{\epsilon}) \\
% &=\sum_{\epsilon_1 = \pm 1} 
% (8X)^{-\frac{1-\epsilon_1}{2}\a_1} 
% \lambda_1^{\frac{1-\epsilon_1}{2}}\cdot \epsilon_1\cdot   X^{1+\sum_{i=2}^4\frac{u_i}{2}} X^{-\frac{\epsilon_1 \a_1}{2}}   X^{-s}
% g_{\epsilon_1\a_1}(-\epsilon_1\a_1) 
% \prod_{i=2}^4 g_{\epsilon_i\a_i}(u_i)\left(\frac{8}{\pi}\right)^{-s}\frac{\Gamma(\frac{1}{4}-\frac{s}{2})}{\Gamma(\frac{1}{4}+ \frac{s}{2})}  \\
% &\quad \times 
% \int\limits_0^\infty  \Phi_{\{\epsilon_1, \epsilon_2,\epsilon_3,\epsilon_4\}}(x) x^{-s} (8x)^{-\frac{\epsilon_1\a_1}{2}}(8x)^{\sum_{i=2}^4\frac{u_i}{2}}\, dx\\
% &\quad \times 
% Z_4(\tfrac{1}{2}-s,\tfrac{1}{2}+\epsilon_2\a_2+u_2-s,\tfrac{1}{2}+ \epsilon_3\a_3+u_3-s,\tfrac{1}{2}+\epsilon_4\a_4+u_4-s,s) .
% \end{align*}

To prove (iii), note first that 
\begin{align*}
\underset{u_1=s-\epsilon_1 \a_1}{\operatorname{Value}\,} 
\colf_2(s;\boldsymbol{u}; \boldsymbol{\a};\boldsymbol{\epsilon})
& = X^{1+\sum_{i=2}^4\frac{u_i}{2}} X^{\frac{-s-\epsilon_1 \a_1}{2}}   
g_{\epsilon_1\a_1}(s-\epsilon_1 \a_1)
\left(\frac{8}{\pi}\right)^{-s}\frac{\Gamma(\frac{1}{4}-\frac{s}{2})}{\Gamma(\frac{1}{4}+ \frac{s}{2})}\prod_{i=2}^4 g_{\epsilon_i\a_i}(u_i)   \\
&\quad \times \int\limits_0^\infty  \Phi_{\{\epsilon_1, \epsilon_2,\epsilon_3,\epsilon_4\}}(x) x^{-s} (8x)^{\sum_{i=2}^4\frac{u_i}{2}} (8x)^{\frac{s}{2}-\frac{\epsilon_1 \a_1}{2}}\, dx\\
&\quad \times 
Z_4(\tfrac{1}{2},\tfrac{1}{2}+\epsilon_2\a_2+u_2-s,\tfrac{1}{2}+ \epsilon_3\a_3+u_3-s,\tfrac{1}{2}+\epsilon_4\a_4+u_4-s,s) 
\end{align*}
%Then 
%\begin{align*}
%\cole^1 (-s+\epsilon_1\a_1)
%&=-\frac{1}{2s}   \prod_{i=2}^4 \frac{1}{u_i}  
%  \zeta_2(1+2s-2\epsilon_1\a_1) 
%  \prod_{2\leq i \leq j \leq 4} \zeta_2(1+ \epsilon_i\a_i  + \epsilon_j\a_j +u_i + u_j)  \\
%  &\quad \times  \prod_{2 \leq i \leq 4} \zeta_2(1+ \epsilon_i\a_i+u_i+s-\epsilon_1\a_1)\colf_2(-s-\epsilon_1\a_1;-s, u_2, u_3, u_4; \boldsymbol{\a};\boldsymbol{\epsilon}).
%\end{align*}
Then \eqref{g-iden-1} with $\a_r = \epsilon_1 \a_1$ and $z = s$ gives
\[
g_{\epsilon_1\a_1}(s-\epsilon_1 \a_1) = \pi^{-s} \lambda_1(\epsilon_1\a_1) \frac{\Gamma(\frac{1}{4}+ \frac{s}{2})}{\Gamma(\frac{1}{4}-\frac{s}{2})} g_{-\epsilon_1\a_1}(-s+\epsilon_1 \a_1).
\]
Note also that $\lambda_1(\epsilon_1\a_1) = \lambda_1^{\ep_1}$. By Lemma \ref{lem:iden}(ii) with $z_1 = s$ and $z_i = \epsilon_i \a_i + u_i $ for $i=2,3,4$, we have 
\begin{align*}
 \underset{u_1=s-\epsilon_1 \a_1}{\operatorname{Value}\,} \colf_2(s;\boldsymbol{u}; \boldsymbol{\a};\boldsymbol{\epsilon})
& = \lambda_1^{\epsilon_1}\frac{8}{\pi^2 } X^{1+\sum_{i=2}^4\frac{u_i}{2}+\frac{-s-\epsilon_1 \a_1}{2}}   
g_{-\epsilon_1\a_1}(-s+\epsilon_1 \a_1)
\prod_{i=2}^4 g_{\epsilon_i\a_i}(u_i)   \\
&\quad \times \int\limits_0^\infty \Phi_{\{\epsilon_1, \epsilon_2,\epsilon_3,\epsilon_4\}}(x)  (8x)^{\frac{-s-\epsilon_1\a_1}{2}+\sum_{i=2}^4\frac{u_i}{2}} \, dx \\
&\quad \times 
\colh_2(\tfrac{1}{2}-s, \tfrac{1}{2}+ \epsilon_2 \a_2 + u_2,\tfrac{1}{2}+ \epsilon_3 \a_3 + u_3, \tfrac{1}{2}+ \epsilon_4 \a_4 + u_4).
\end{align*}
On the other hand, by \eqref{def-colf1}, we have
\begin{align*}
&\underset{u_1=-s-\epsilon_1 \a_1}{\operatorname{Value}\,} \colf_1(\boldsymbol{u}; \boldsymbol{\a};\boldsymbol{\epsilon})\\
&=   \frac{4}{\pi^2}  X^{1+\sum_{i=2}^4\frac{u_i}{2}+\frac{-s-\epsilon_1 \a_1}{2}} 
g_{\epsilon_1\a_1}(-s-\epsilon_1\a_1)
\prod_{i=2}^4g_{\epsilon_i\a_i}(u_i) \int\limits_{-\infty}^\infty  \Phi_{\{\epsilon_1, \epsilon_2,\epsilon_3,\epsilon_4\}}(x)
(8x)^{\frac{-s-\epsilon_1\a_1}{2} + \sum_{i=2}^4\frac{u_i}{2} } \, dx\\
&\quad \times \mathcal{H}_2(\tfrac{1}{2}-s,\tfrac{1}{2}+\epsilon_2\a_2+u_2,\tfrac{1}{2}+ \epsilon_3\a_3+u_3, \tfrac{1}{2}+\epsilon_4\a_4+u_4).
\end{align*}
Since $\Phi(x)$ is supported on $(0,\infty)$, the integrals in $x$ in the above two displays are equal. Also, $\Phi_{\{\epsilon_1, \epsilon_2,\epsilon_3,\epsilon_4\}} =\Phi_{\{-\epsilon_1, \epsilon_2,\epsilon_3,\epsilon_4\}} x^{\epsilon_1\a_1}$, and so
\begin{align}
&
(8X)^{-\frac{1-\epsilon_1}{2}\a_1} 
\lambda_1^{\frac{1-\epsilon_1}{2}}\cdot 
\underset{u_1=s-\epsilon_1 \a_1}{\operatorname{Value}\,} 
\colf_2(s;\boldsymbol{u}; \boldsymbol{\a};\boldsymbol{\epsilon})\nonumber\\
&=
8^{-\frac{\a_1}{2}}
X^{-\frac{\a_1}{2}} 
\lambda_1^{\frac{1+\epsilon_1}{2}} \frac{8}{\pi^2 } X^{1+\sum_{i=2}^4\frac{u_i}{2}+\frac{-s}{2}}   
g_{-\epsilon_1\a_1}(-s+\epsilon_1 \a_1)
\prod_{i=2}^4 g_{\epsilon_i\a_i}(u_i) \nonumber\\
&\quad \times 
 \int\limits_0^\infty  \Phi_{\{\epsilon_1, \epsilon_2,\epsilon_3,\epsilon_4\}}(x) 8^{-\frac{s}{2}+\sum_{i=2}^4\frac{u_i}{2}} x^{\frac{-s-\epsilon_1\a_1}{2}+\sum_{i=2}^4\frac{u_i}{2}} \, dx \nonumber\\
&\quad \times 
\colh_2(\tfrac{1}{2}-s, \tfrac{1}{2}+ \epsilon_2 \a_2 + u_2,\tfrac{1}{2}+ \epsilon_3 \a_3 + u_3, \tfrac{1}{2}+ \epsilon_4 \a_4 + u_4),
\label{equ:res-s-epsion-iii}
\end{align}
and 
\begin{align}
& 
(8X)^{-\frac{1-\epsilon_1}{2}\a_1} 
\lambda_1^{\frac{1-\epsilon_1}{2}}\cdot  \underset{u_1=-s- \epsilon_1 \a_1}{\operatorname{Value}\,}     \colf_1(\boldsymbol{u}; \boldsymbol{\a};\boldsymbol{\epsilon})\nonumber
\\
&= 
8^{-\frac{\a_1}{2}}
X^{-\frac{\a_1}{2}} 
\lambda_1^{\frac{1-\epsilon_1}{2}} \frac{4}{\pi^2}  X^{1+\sum_{i=2}^4\frac{u_i}{2}+\frac{-s}{2}} 
g_{\epsilon_1\a_1}(-s-\epsilon_1\a_1) 
\prod_{i=2}^4g_{\epsilon_i\a_i}(u_i) 
\nonumber\\
& \quad \times   
\int\limits_{-\infty}^\infty  \Phi_{\{-\epsilon_1, \epsilon_2,\epsilon_3,\epsilon_4\}}(x) 8^{-\frac{s}{2}+\sum_{i=2}^4\frac{u_i}{2}}
x^{\frac{-s+\epsilon_1\a_1}{2} + \sum_{i=2}^4\frac{u_i}{2} } \, dx\nonumber\\
&\quad \times \mathcal{H}_2(\tfrac{1}{2}-s,\tfrac{1}{2}+\epsilon_2\a_2+u_2,\tfrac{1}{2}+ \epsilon_3\a_3+u_3, \tfrac{1}{2}+\epsilon_4\a_4+u_4).
\label{equ:res-s-epsion-iii-1}
\end{align}
The assertion (iii) with $r=1$ now follows from \eqref{equ:res-s-epsion-iii} and \eqref{equ:res-s-epsion-iii-1}.
\end{proof}

By Lemma \ref{lem-canceli-iii}(i) with $r=1$, we have 
\begin{align*}
& \sum_{\epsilon_1 = \pm 1} (8X)^{-\frac{1-\epsilon_1}{2}\a_1} 
\lambda_1^{\frac{1-\epsilon_1}{2}} \cdot    \res{u_1=-\epsilon_1\a_1} 
\colf_1(\boldsymbol{u}; \boldsymbol{\a};\boldsymbol{\epsilon})
\prod_{i=1}^4\frac{1}{u_i}     
\prod_{1 \leq i\leq j \leq 4} \zeta_2(1+\epsilon_i\a_i+\epsilon_j\a_j+u_i+u_j)\\
&=-\frac{1}{4}\prod_{i=2}^4\frac{1}{u_i} \frac{1}{\a_1}
\prod_{2\leq i\leq j \leq 4} \zeta_2(1+\epsilon_i\a_i+\epsilon_j\a_j+u_i+u_j)
\prod_{2 \leq j \leq 4} \zeta_2(1+\epsilon_j\a_j+u_j) \\
& \quad\times  \sum_{\epsilon_1 = \pm 1} (8X)^{-\frac{1-\epsilon_1}{2}\a_1} 
\lambda_1^{\frac{1-\epsilon_1}{2}}
\frac{1}{\epsilon_1}
\underset{u_1=-\epsilon_1\a_1}{\operatorname{Value}\,}  
\colf_1(\boldsymbol{u}; \boldsymbol{\a};\boldsymbol{\epsilon})\\
&=0,
\end{align*}
which implies that
\begin{equation}
\sum_{\epsilon_1 = \pm 1} (8X)^{-\frac{1-\epsilon_1}{2}\a_1} 
\lambda_1^{\frac{1-\epsilon_1}{2}} \cdot I_2 = 0, 
\label{equ:I2J2=0}
\end{equation}
The key observation above is that $\epsilon_1$ only occurs as the coefficient of $u_1$, and so when taking residues at $u_1 = -\epsilon_1\a_1$, the expressions in the integrals no longer depend on $\epsilon_1$. By a similar argument, Lemma \ref{lem-canceli-iii}(ii) with $r=1$ implies that
\begin{equation}
\sum_{\epsilon_1 = \pm 1} (8X)^{-\frac{1-\epsilon_1}{2}\a_1} 
\lambda_1^{\frac{1-\epsilon_1}{2}}  \cdot J_2 = 0.
\label{equ:I2J2=0-ii}
\end{equation}

We now introduce some more notation and write
\begin{align*}
&
h_{o}^1(\epsilon_1)\\
&:=
\res{u_1=s-\epsilon_1 \a_1}
\prod_{i=1}^4 \frac{1}{u_i}  
\zeta_2(1-2s) \prod_{1\leq i \leq j \leq 4} \zeta_2(1+ \epsilon_i\a_i  + \epsilon_j\a_j +u_i + u_j) 
\prod_{1 \leq i \leq 4} \frac{\zeta_2(1+ \epsilon_i\a_i+u_i-s)}{\zeta_2(1+s+\epsilon_i\a_i+u_i )} \\
&=\frac{1}{2(s-\epsilon_1\a_1)}   \prod_{i=2}^4 \frac{1}{u_i}  
\zeta_2(1-2s) 
\prod_{2\leq i \leq j \leq 4} \zeta_2(1+ \epsilon_i\a_i  + \epsilon_j\a_j +u_i + u_j)  
\prod_{2 \leq i \leq 4} \zeta_2(1+ \epsilon_i\a_i+u_i-s),
%&= - \frac{1}{2}  \underset{u_1 =-s-\epsilon_1 \a_1}{\operatorname{Value}\,}  
%\prod_{i=1}^4\frac{1}{u_i}     
%\prod_{1 \leq i\leq j \leq 4} \zeta_2(1+\epsilon_i\a_i+\epsilon_j\a_j+u_i+u_j).
\end{align*}
and write 
\begin{equation*}
h_{d}^1(\epsilon_1)
:=
\underset{u_1 =-s-\epsilon_1 \a_1}{\operatorname{Value}\,}  
\prod_{i=1}^4\frac{1}{u_i}     
\prod_{1 \leq i\leq j \leq 4} \zeta_2(1+\epsilon_i\a_i+\epsilon_j\a_j+u_i+u_j).
\end{equation*}
One may check that
\begin{align}
h_{o}^1(\epsilon_1) =  - \frac{1}{2} h_{d}^1(-\epsilon_1),
\label{equ:hohd}
\end{align}
%In addition,  we have 
%\begin{equation*}
%\begin{aligned}
%&
%\underset{\epsilon_1 =\pm 1}{\operatorname{Value}\,} \left[ \res{u_1=s-\epsilon_1 \a_1}
%\prod_{i=1}^4 \frac{1}{u_i}  
%  \zeta_2(1-2s) \prod_{1\leq i \leq j \leq 4} \zeta(1+ \epsilon_i\a_i  + \epsilon_j\a_j +u_i + u_j) 
%   \prod_{1 \leq i \leq 4} \frac{\zeta_2(1+ \epsilon_i\a_i+u_i-s)}{\zeta_2(1+s+\epsilon_i\a_i+u_i )} \right]\\
%&=\underset{\epsilon_1 =\pm 1}{\operatorname{Value}\,} \left[\frac{1}{2(s-\epsilon_1\a_1)}   \prod_{i=2}^4 \frac{1}{u_i}  
%  \zeta_2(1-2s) 
%  \prod_{2\leq i \leq j \leq 4} \zeta_2(1+ \epsilon_i\a_i  + \epsilon_j\a_j +u_i + u_j) % 
%  \prod_{2 \leq i \leq 4} \zeta_2(1+ \epsilon_i\a_i+u_i-s)\right]\\
%&= - \frac{1}{2} \underset{\epsilon_1 =\mp 1}{\operatorname{Value}\,}  %\left[\underset{u_1 =-s-\epsilon_1 \a_1}{\operatorname{Value}\,}  
%\prod_{i=1}^4\frac{1}{u_i}     
%\prod_{1 \leq i\leq j \leq 4} \zeta_2(1+\epsilon_i\a_i+\epsilon_j\a_j+u_i+u_j)\right].
%\end{aligned}
%\end{equation*}
and by symmetry,
\begin{align}
h_{o}^r(\epsilon_r) =  - \frac{1}{2} h_{d}^r(-\epsilon_r).
\label{equ:hohd-2}
\end{align}
where $r=1,2,3,4$, and $h_{o}^r(\epsilon_r), h_{d}^r(\epsilon_r)$ are defined analogously to $h_{o}^1(\epsilon_1), h_{d}^1(\epsilon_1) $.
By \eqref{equ:hohd} and  Lemma \ref{lem-canceli-iii}(iii) with $r=1$, we have  
\begin{align*}
&   \sum_{\epsilon_1 = \pm 1} (8X)^{-\frac{1-\epsilon_1}{2}\a_1} 
\lambda_1^{\frac{1-\epsilon_1}{2}}  \left( 
\underset{u_1 =-s-\epsilon_1\a_1}{\operatorname{Value}\,}
\colw_1(\boldsymbol{u};\boldsymbol{\a};\boldsymbol{\epsilon}) + \res{u_1=s-\epsilon_1 \a_1} 
\colw_2(s;\boldsymbol{u}-s;\boldsymbol{\a};\boldsymbol{\epsilon})\right) \\
& =  \sum_{\epsilon_1 = \pm 1}  (  h_{d}^1(\epsilon_1) t_{d}^1(\epsilon_1)
+ h_{o}^1(\epsilon_1)   t_{o}^1(\epsilon_1)  )\\
&= \sum_{\epsilon_1 = \pm 1}  (  h_{d}^1(\epsilon_1) t_{d}^1(\epsilon_1)
- h_{d}^1(-\epsilon_1)   t_{d}^1(-\epsilon_1)  )\\
&=0.
\end{align*}
By the change of variable $u_1 \mapsto  -s- \epsilon_1\a_1$ in $I_3$, combined with the above formula, we find that 
\[
\sum_{\epsilon_1 = \pm 1} (8X)^{-\frac{1-\epsilon_1}{2}\a_1} 
\lambda_1^{\frac{1-\epsilon_1}{2}}(I_3+ J_3) = 0.
\]
Together with \eqref{equ:I2J2=0}, \eqref{equ:I2J2=0-ii}, \eqref{equ:Z1-first} and \eqref{equ:Z2-first}, we deduce that  
\begin{align}
&    \sum_{\substack{\epsilon_i = \pm 1\\i=1,2,3,4}} \prod_{i=1}^4
(8X)^{-\frac{1-\epsilon_i}{2}\a_i} 
\lambda_i^{\frac{1-\epsilon_i}{2}} \cdot  ( \colz_1 +   \colz_2)\nonumber\\
&=
\sum_{\substack{\epsilon_i = \pm 1\\i=1,2,3,4}} \prod_{i=1}^4
(8X)^{-\frac{1-\epsilon_i}{2}\a_i} 
\lambda_i^{\frac{1-\epsilon_i}{2}} \cdot (I_1 + J_1) +O(X (\log X)^6).
\label{equ:cross-second-1}
\end{align}

At this point, we are ready to continue shifting contours. In $I_1$, we take the line of the integration in $u_2$ to $\Re(u_2) = -\frac{1}{10}+ \frac{10}{L}$, passing simple poles at $u_2 =0, -\epsilon_2\a_2, -\epsilon_1 \a_1 -\epsilon_2 \a_2 $. Let $I_{1,1}, I_{1,2}, I_{1,3}$ denote the contribution of each residue, respectively, and let $I_{1,4}$ denote the integration on the line $\Re(u_2) = -\frac{1}{10}+ \frac{10}{L}$. 

We proceed similarly for $J_1$ and take the line of the integration in $u_2$ to  $\Re(u_2) = -\frac{1}{10}+ \frac{10}{L}$, passing simple poles at $u_2 = 0,-\epsilon_2\a_2, -\epsilon_1 \a_1 -\epsilon_2 \a_2, s-\epsilon_2 \a_2$, the contributions of which we denoted by $J_{1,1}, J_{1,2}, J_{1,3}, J_{1,4}$, respectively. One may check that the integral on $\Re(u_2) = -\frac{1}{10}+ \frac{10}{L}$ is $O\pth{X^{1- \frac{1}{21}}}$.

Arguing in a similar manner as above using Lemma \ref{lem-canceli-iii}(i) and (ii) with $r=2, u_1 =0$, we find that
\begin{align}
\sum_{\epsilon_2 = \pm 1} (8X)^{-\frac{1-\epsilon_2}{2}\a_2} 
\lambda_2^{\frac{1-\epsilon_2}{2}} \cdot  I_{1,2} &= 0,\nonumber\\
\sum_{\epsilon_2 = \pm 1} (8X)^{-\frac{1-\epsilon_2}{2}\a_2} 
\lambda_2^{\frac{1-\epsilon_2}{2}} \cdot   J_{1,2} &= 0,
\label{equ:cross-similar-1}
\end{align}
Moreover, Lemma \ref{lem-canceli-iii}(iii) and \eqref{equ:hohd-2} with $r=2, u_1 =0$ gives
\begin{equation}
\sum_{\epsilon_2 = \pm 1} (8X)^{-\frac{1-\epsilon_2}{2}\a_2} 
\lambda_2^{\frac{1-\epsilon_2}{2}} \cdot  (I_{1,4} +J_{1,4}) = 0.
\label{equ:cross-similar-2}
\end{equation}

Next, we show that the contributions from $I_{1,3}$ and $J_{1,3}$ vanish. To do, we need the following lemma.

%%%%%%%%%%%%%%%%%%

\begin{lem}
\label{lem:cross-second-general}
Let $r,r' = 1,2,3,4$ and $r \neq r'$.
Write 
\begin{align*}
f^{r,r'}_1(\epsilon_r, \epsilon_{r'}) &:=(8X)^{-\frac{1-\epsilon_r}{2}\a_r -\frac{1-\epsilon_{r'}}{2}\a_{r'}} 
\lambda_r^{\frac{1-\epsilon_r}{2}} \lambda_{r'}^{\frac{1-\epsilon_{r'}}{2}}\cdot  \underset{u_{r'}=-\epsilon_r\a_r - \epsilon_{r'}\a_{r'}}{\operatorname{Value}\,} 
\underset{u_r = 0}{\operatorname{Value}\,} 
\colf_1(\boldsymbol{u}; \boldsymbol{\a};\boldsymbol{\epsilon}), \\
f^{r,r'}_2(\epsilon_r, \epsilon_{r'}) &:=(8X)^{-\frac{1-\epsilon_r}{2}\a_r -\frac{1-\epsilon_{r'}}{2}\a_{r'}} 
\lambda_r^{\frac{1-\epsilon_r}{2}} \lambda_{r'}^{\frac{1-\epsilon_{r'}}{2}}\cdot  \underset{u_{r'}=-\epsilon_1\a_1 - \epsilon_{r'}\a_{r'}}{\operatorname{Value}\,}  
\underset{u_r = 0}{\operatorname{Value}\,} 
\colf_2(s;\boldsymbol{u}; \boldsymbol{\a};\boldsymbol{\epsilon}).
\end{align*}
Then 
\begin{enumerate}[label=\normalfont{(\roman*)}]
\item 
$ 
f^{r,r'}_1(\epsilon_r, \epsilon_{r'}) =  f^{r,r'}_1(-\epsilon_r, -\epsilon_{r'}).
$
\item 
$ 
f^{r,r'}_2(\epsilon_r, \epsilon_{r'}) =  f^{r,r'}_2(-\epsilon_r, -\epsilon_{r'}).
$
\end{enumerate}
\end{lem}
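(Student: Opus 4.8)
The plan is to reduce Lemma \ref{lem:cross-second-general} to repeated application of the functional-equation-type identities already established in Lemma \ref{lem-canceli-iii}. The key structural observation is the same one exploited throughout this section: in $\colf_1$ and $\colf_2$ the variable $\epsilon_j$ enters \emph{only} as the coefficient of $u_j$ inside the shift, together with the symmetry $\Phi_{\{\epsilon_1,\dots,\epsilon_4\}}(x) = \Phi_{\{-\epsilon_j,\dots\}}(x)\,x^{\epsilon_j\a_j}$ and the gamma-factor identity \eqref{g-iden-1}. So taking $\underset{u_r=0}{\operatorname{Value}\,}$ first merely kills the dependence on $\epsilon_r$ in all factors \emph{except} the remaining shift $u_{r'}=-\epsilon_r\a_r-\epsilon_{r'}\a_{r'}$, where $\epsilon_r$ still appears. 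The resulting object is then, up to relabeling, exactly of the form handled in the proof of Lemma \ref{lem-canceli-iii}(i),(ii), but now with a \emph{combined} shift $-\epsilon_r\a_r-\epsilon_{r'}\a_{r'}$ playing the role of $-\epsilon_1\a_1$.

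First I would, by the symmetry of $u_1,u_2,u_3,u_4$, reduce to the case $r=1$, $r'=2$ (so $u_1$ is set to $0$ and $u_2$ is set to $-\epsilon_1\a_1-\epsilon_2\a_2$); this is legitimate because $\colf_1,\colf_2$ are symmetric in the pairs $(\epsilon_j,u_j)$. Next I would write out $\underset{u_2=-\epsilon_1\a_1-\epsilon_2\a_2}{\operatorname{Value}\,}\underset{u_1=0}{\operatorname{Value}\,}\colf_1$ explicitly from \eqref{def-colf1}: the surviving $\epsilon_1,\epsilon_2$-dependence sits in the factor $g_{\epsilon_2\a_2}(-\epsilon_1\a_1-\epsilon_2\a_2)$, in the power $(8X)^{(-\epsilon_1\a_1-\epsilon_2\a_2)/2}$ and $x^{(-\epsilon_1\a_1-\epsilon_2\a_2)/2}$, and in the first argument $\tfrac12+\epsilon_2\a_2+u_2 = \tfrac12-\epsilon_1\a_1$ of $\mathcal H_2$, with $\mathcal H_2$'s other three arguments being $\tfrac12, \tfrac12+\epsilon_3\a_3+u_3, \tfrac12+\epsilon_4\a_4+u_4$ (note $\mathcal H_2$ is symmetric in its arguments, and $\tfrac12$ came from the $u_1=0$ residue). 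Then I apply \eqref{g-iden-1} with $\a_r=\epsilon_2\a_2$, $z=\epsilon_1\a_1+\epsilon_2\a_2$, together with the identity $\Phi_{\{\epsilon_1,\epsilon_2,\epsilon_3,\epsilon_4\}}(x)x^{(-\epsilon_1\a_1-\epsilon_2\a_2)/2} = \Phi_{\{-\epsilon_1,-\epsilon_2,\epsilon_3,\epsilon_4\}}(x)x^{(\epsilon_1\a_1+\epsilon_2\a_2)/2}$, and the relation $\lambda_2(\epsilon_2\a_2)=\lambda_2^{\epsilon_2}$. Bookkeeping the prefactors $(8X)^{-\frac{1-\epsilon_r}{2}\a_r}\lambda_r^{\frac{1-\epsilon_r}{2}}$ exactly as in the proof of Lemma \ref{lem-canceli-iii}(i) shows that flipping $(\epsilon_1,\epsilon_2)\to(-\epsilon_1,-\epsilon_2)$ leaves $f_1^{1,2}$ invariant, giving (i). For (ii) the argument is identical, using $Z_4$ (or equivalently $\colh_2$, via Lemma \ref{lem:iden}(ii)) in place of $\mathcal H_2$ and the same gamma-factor identity; the $u_1=0$ residue again produces a $\tfrac12$ in the appropriate slot, and $\colh_2$ is likewise symmetric in its arguments.

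The point worth double-checking — and the only place where care is needed — is the combinatorics of which gamma factors and $\zeta$-factors survive the two successive $\operatorname{Value}$ operations, and the sign/power bookkeeping when pulling the prefactors $(8X)^{-\frac{1-\epsilon_j}{2}\a_j}\lambda_j^{\frac{1-\epsilon_j}{2}}$ through. In particular, one must verify that after setting $u_1=0$ the reflection identity \eqref{g-iden-1} is applied to the \emph{right} combined variable $\epsilon_1\a_1+\epsilon_2\a_2$ and that the factor $\lambda_1^{\epsilon_1}$ that appears combines correctly with $(8X)^{-\a_1}$ when $\epsilon_1$ is flipped — this is exactly the computation displayed after \eqref{equ:res-s-epsion-iii} and \eqref{equ:res-s-epsion-iii-1}, just with one variable already specialized. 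I expect no genuine obstacle here: the lemma is a two-variable version of Lemma \ref{lem-canceli-iii}(i),(ii) and the proof is a routine, if notation-heavy, iteration of the same functional-equation symmetry. The payoff is that, combined with \eqref{equ:hohd-2}, it will force $\sum_{\epsilon_3=\pm1}(\cdots)(I_{1,3}+J_{1,3})=0$ just as the earlier identities forced the vanishing of the $I_{1,2},J_{1,2},I_{1,4},J_{1,4}$ contributions.
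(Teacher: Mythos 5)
Your approach is essentially the paper's: reduce to $r=1,r'=2$ by symmetry, track the surviving $\epsilon_1,\epsilon_2$-dependence after the two $\operatorname{Value}$ operations, and combine \eqref{g-iden-1} with the identity $\Phi_{\{\epsilon_1,\epsilon_2,\epsilon_3,\epsilon_4\}}x^{-(\epsilon_1\a_1+\epsilon_2\a_2)/2}=\Phi_{\{-\epsilon_1,-\epsilon_2,\epsilon_3,\epsilon_4\}}x^{(\epsilon_1\a_1+\epsilon_2\a_2)/2}$ and the symmetry of $\mathcal H_2$ (resp.\ $\colh_2$ via Lemma \ref{lem:iden}(ii)). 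One parameter slip, precisely in the spot you flagged: to relate $g_{\epsilon_2\a_2}(-\epsilon_1\a_1-\epsilon_2\a_2)$ and $g_{-\epsilon_2\a_2}(\epsilon_1\a_1+\epsilon_2\a_2)$ via \eqref{g-iden-1}, take $\a_r=\epsilon_2\a_2$ and $z=-\epsilon_1\a_1$ (so $z-\a_r=-\epsilon_1\a_1-\epsilon_2\a_2$), not $z=\epsilon_1\a_1+\epsilon_2\a_2$; the paper's equivalent choice $\a_r=-\epsilon_2\a_2$, $z=\epsilon_1\a_1$ produces the factor $\lambda_1^{-\epsilon_1}\lambda_2^{-\epsilon_2}$ that cancels the change in the $(8X)^{-\frac{1-\epsilon_j}{2}\a_j}\lambda_j^{\frac{1-\epsilon_j}{2}}$ prefactors. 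With that corrected, your outline is the paper's proof.
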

%\begin{lem}
%\label{lem:cross-second}
%Write 
%\begin{align*}
%   f_1(\epsilon_1, \epsilon_2) &:=(8X)^{-\frac{1-\epsilon_1}{2}\a_1 -\frac{1-\epsilon_2}{2}\a_2} 
%\lambda_1^{\frac{1-\epsilon_1}{2}} \lambda_2^{\frac{1-\epsilon_2}{2}}\cdot  \underset{u_2=-\epsilon_1\a_1 - \epsilon_2\a_2}{\operatorname{Value}\,} 
%\underset{u_1 = 0}{\operatorname{Value}\,} 
% \colf_1(\boldsymbol{u}; \boldsymbol{\a};\boldsymbol{\epsilon}), \\
% f_2(\epsilon_1, \epsilon_2) &:=(8X)^{-\frac{1-\epsilon_1}{2}\a_1 -\frac{1-\epsilon_2}{2}\a_2} 
%\lambda_1^{\frac{1-\epsilon_1}{2}} \lambda_2^{\frac{1-\epsilon_2}{2}}\cdot  \underset{u_2=-\epsilon_1\a_1 - \epsilon_2\a_2}{\operatorname{Value}\,}  
%\underset{u_1 = 0}{\operatorname{Value}\,} 
% \colf_2(\boldsymbol{u}; \boldsymbol{\a};\boldsymbol{\epsilon}).
%\end{align*}
%Then 
%\begin{enumerate}[label=\normalfont{(\roman*)}]
%\item 
%$ 
% f_1(\epsilon_1, \epsilon_2) =  f_1(-\epsilon_1, -\epsilon_2).
%$
%\item 
%$ 
% f_2(\epsilon_1, \epsilon_2) =  f_2(-\epsilon_1, -\epsilon_2).
%$
%\end{enumerate}
%\end{lem}
\begin{proof}
We prove the case $r=1,r'=2$ and other cases follow by the symmetry. For brevity, write $f_1(\epsilon_1,\epsilon_2) := f_1^{1,2}(\epsilon_1,\epsilon_2)$, $ f_2 := f_2^{1,2}(\epsilon_1,\epsilon_2) $.
We have
\begin{align*}
f_1(\epsilon_1, \epsilon_2) 
& =
(8X)^{-\frac{\a_1}{2} -\frac{\a_2}{2}} 
\lambda_1^{\frac{1-\epsilon_1}{2}} \lambda_2^{\frac{1-\epsilon_2}{2}} 
\frac{4X}{\pi^2} g_{\epsilon_2\a_2}(-\epsilon_1\a_1 - \epsilon_2\a_2)
\prod_{i=3}^4 g_{\epsilon_i\a_i}(u_i)
(8 X)^{\sum_{i=3}^4\frac{u_i}{2}} \\
&\quad \times \int\limits_{-\infty}^\infty  \Phi_{\{\epsilon_1, \epsilon_2,\epsilon_3,\epsilon_4\}}(x)
x^{\sum_{i=3}^4\frac{u_i}{2} } x^{\frac{-\epsilon_1\a_1 - \epsilon_2\a_2}{2} } \, dx\\
&\quad \times \mathcal{H}_2(\tfrac{1}{2}+\epsilon_1\a_1,\tfrac{1}{2}-\epsilon_1\a_1,\tfrac{1}{2}+ \epsilon_3\a_3+u_3, \tfrac{1}{2}+\epsilon_4\a_4+u_4).
\end{align*}
This implies that
\begin{align*}
f_1(-\epsilon_1, -\epsilon_2) 
& =
(8X)^{-\frac{\a_1}{2} -\frac{\a_2}{2}} 
\lambda_1^{\frac{1+\epsilon_1}{2}} \lambda_2^{\frac{1+\epsilon_2}{2}}   
\frac{4X}{\pi^2} g_{-\epsilon_2\a_2}(\epsilon_1\a_1 +\epsilon_2\a_2)
\prod_{i=3}^4 g_{\epsilon_i\a_i}(u_i)
(8 X)^{\sum_{i=3}^4\frac{u_i}{2}} \\
&\quad \times \int\limits_{-\infty}^\infty  \Phi_{\{-\epsilon_1, -\epsilon_2,\epsilon_3,\epsilon_4\}}(x)
x^{\sum_{i=3}^4\frac{u_i}{2} } x^{\frac{\epsilon_1\a_1 + \epsilon_2\a_2}{2} } \, dx\\
&\quad \times \mathcal{H}_2(\tfrac{1}{2}-\epsilon_1\a_1,\tfrac{1}{2}+\epsilon_1\a_1,\tfrac{1}{2}+ \epsilon_3\a_3+u_3, \tfrac{1}{2}+\epsilon_4\a_4+u_4).
\end{align*}
By   \eqref{g-iden-1}  with  $\a_r = -\epsilon_2\a_2, z= \epsilon_1\a_1$, we get 
\begin{align*}
g_{-\epsilon_2\a_2}(\epsilon_1\a_1 +\epsilon_2\a_2)
&= 
\pi^{-\epsilon_1\a_1}  \lambda_2(-\epsilon_2\a_2) \frac{\Gamma(\frac{1/2+\epsilon_1\a_1}{2})}{\Gamma(\frac{1/2-\epsilon_1\a_1}{2})} g_{\epsilon_2\a_2} (-\epsilon_1\a_1-\epsilon_2\a_2)\\
&= \lambda_1^{-\epsilon_1}\lambda_2^{-\epsilon_2}  g_{\epsilon_2\a_2} (-\epsilon_1\a_1-\epsilon_2\a_2).
\end{align*}
Note that $\Phi_{\{-\epsilon_1, -\epsilon_2,\epsilon_3,\epsilon_4\}} =\Phi_{\{\epsilon_1, \epsilon_2,\epsilon_3,\epsilon_4\}} x^{-\epsilon_1\a_1-\epsilon_2\a_2}$ and that the variables of $\mathcal{H}_2$ are symmetric. From this, it follows that $f_1(-\epsilon_1, -\epsilon_2)=f_1(\epsilon_1, \epsilon_2)$. For $f_2$, we have
\begin{align*}
f_2(\epsilon_1, \epsilon_2) &=(8X)^{-\frac{\a_1}{2} -\frac{\a_2}{2}} 
\lambda_1^{\frac{1-\epsilon_1}{2}} \lambda_2^{\frac{1-\epsilon_2}{2}}  \\
&\quad \times X^{1+\sum_{i=3}^4\frac{u_i}{2}}     X^{-s}
g_{\epsilon_2\a_2}(-\epsilon_1\a_1 - \epsilon_2\a_2) \prod_{i=3}^4 g_{\epsilon_i\a_i}(u_i)\left(\frac{8}{\pi}\right)^{-s}\frac{\Gamma(\frac{1}{4}-\frac{s}{2})}{\Gamma(\frac{1}{4}+ \frac{s}{2})}  \\
&\quad \times \int\limits_0^\infty  \Phi_{\{\epsilon_1, \epsilon_2,\epsilon_3,\epsilon_4\}}(x) x^{-s} (8x)^{\sum_{i=3}^4\frac{u_i}{2}}x^{\frac{-\epsilon_1\a_1 - \epsilon_2\a_2}{2}}\, dx\\
&\quad \times 
Z_4(\tfrac{1}{2}+\epsilon_1\a_1-s,\tfrac{1}{2}-\epsilon_1\a_1-s,\tfrac{1}{2}+ \epsilon_3\a_3+u_3-s,\tfrac{1}{2}+\epsilon_4\a_4+u_4-s,s)
\end{align*}
The identity (ii) now follows by a similar argument as in the proof of (i).  
\end{proof}

Note that 
\begin{align}
&\res{u_2=-\epsilon_1\a_1-\epsilon_2\a_2}\ \res{u_1=0} 
\prod_{i=1}^4\frac{1}{u_i}     
\prod_{1 \leq i\leq j \leq 4} \zeta_2(1+\epsilon_i\a_i+\epsilon_j\a_j+u_i+u_j) \nonumber\\
&= -\frac{1}{2}\frac{1}{(\epsilon_1\a_1+\epsilon_2\a_2)u_3u_4} 
\prod_{3 \leq j \leq 4 }\zeta_2(1+\epsilon_i\a_i+\epsilon_j\a_j+u_i+u_j)
\nonumber\\
&\quad \times 
\zeta_2(1+2\epsilon_1\a_1) \zeta_2(1-2\epsilon_1\a_1)\prod_{3 \leq j \leq 4 } \zeta_2(1+\epsilon_1\a_1+\epsilon_j\a_j+u_j)\zeta_2(1-\epsilon_1\a_1+\epsilon_j\a_j+u_j).
\label{equ:cancel-u2u1}
\end{align}
Combining this with Lemma \ref{lem:cross-second-general}(i) with $r=1, r'=2$, it follows that 
\begin{align*}
&\sum_{\epsilon_1, \epsilon_2 = \pm 1}f_1 (\epsilon_1, \epsilon_2) \res{u_2=-\epsilon_1\a_1-\epsilon_2\a_2}\ \res{u_1=0} 
\prod_{i=1}^4\frac{1}{u_i}     
\prod_{1 \leq i\leq j \leq 4} \zeta_2(1+\epsilon_i\a_i+\epsilon_j\a_j+u_i+u_j) = 0.
\end{align*}
In fact,  note that the zeta factors in \eqref{equ:cancel-u2u1} do not change with $\epsilon_1$ replaced by $-\epsilon_1$, and $\epsilon_2$ does not appear in these zeta factors, so summands above with $(\epsilon_1, \epsilon_2) = (1,1), (-1,-1)$ cancel out, so are $(\epsilon_1, \epsilon_2) = (1,-1), (-1,1)$. In other words, we have obtained  
\begin{align}
&\sum_{\epsilon_1, \epsilon_2 = \pm 1} 
(8X)^{-\frac{1-\epsilon_1}{2}\a_1 -\frac{1-\epsilon_2}{2}\a_2} 
\lambda_1^{\frac{1-\epsilon_1}{2}} \lambda_2^{\frac{1-\epsilon_2}{2}} \cdot I_{1,3} = 0.
\label{equ:cross-similar-21}
\end{align}
Similarly, by Lemma \ref{lem:cross-second-general}(ii) with $r=1,r'=2$, we have 
\begin{align*}
& \sum_{\epsilon_1, \epsilon_2 = \pm 1}f_2 (\epsilon_1, \epsilon_2)\res{u_2=-\epsilon_1\a_1-\epsilon_2\a_2} \res{u_1=0} \prod_{i=1}^4 \frac{1}{u_i}  
\zeta_2(1-2s) \prod_{1\leq i \leq j \leq 4} \zeta(1+ \epsilon_i\a_i  + \epsilon_j\a_j +u_i + u_j)  \\
&\quad \times \prod_{1 \leq i \leq 4} \frac{\zeta_2(1+ \epsilon_i\a_i+u_i-s)}{\zeta_2(1+s+\epsilon_i\a_i+u_i )}\\
& = -\frac{1}{2}\sum_{\epsilon_1, \epsilon_2 = \pm 1}f_2 (\epsilon_1, \epsilon_2) \frac{1}{(\epsilon_1\a_1+ \epsilon_2\a_2)u_3u_4}  
\zeta_2(1-2s) \zeta_2(1+2\epsilon_1\a_1) \zeta_2(1-2\epsilon_1\a_1)\\
&\quad \times 
\prod_{3 \leq j \leq 4 }\zeta_2(1+\epsilon_1\a_1+\epsilon_j\a_j+u_j) \zeta_2(1-\epsilon_1\a_1+\epsilon_j\a_j+u_j) 
\\
&\quad \times \prod_{3 \leq i\leq j \leq 4} \zeta_2(1+\epsilon_i\a_i+\epsilon_j\a_j+u_i+u_j) \\
&\quad \times
\frac{\zeta_2(1+ \epsilon_1\a_1-s)}{\zeta_2(1+s+\epsilon_1\a_1 )}
\frac{\zeta_2(1 -\epsilon_1\a_1-s)}{\zeta_2(1+s-\epsilon_1\a_1 )}
\prod_{3 \leq i \leq 4} 
\frac{\zeta_2(1+ \epsilon_i\a_i+u_i-s)}{\zeta_2(1+s+\epsilon_i\a_i+u_i )}\\
&=
0,
\end{align*}
which means 
\begin{align}
&\sum_{\epsilon_1, \epsilon_2 = \pm 1}
(8X)^{-\frac{1-\epsilon_1}{2}\a_1 -\frac{1-\epsilon_2}{2}\a_2} 
\lambda_1^{\frac{1-\epsilon_1}{2}} \lambda_2^{\frac{1-\epsilon_2}{2}} \cdot J_{1,3} = 0.
\label{equ:cross-similar-22}
\end{align}
By the discussion from \eqref{equ:cross-second-1} to \eqref{equ:cross-similar-22}, we obtain  
\begin{align*}
&    \sum_{\substack{\epsilon_i = \pm 1\\i=1,2,3,4}} \prod_{i=1}^4
(8X)^{-\sum_{i=1}^4\frac{1-\epsilon_i}{2}\a_i} 
\lambda_i^{\frac{1-\epsilon_i}{2}} \cdot  ( \colz_1 +   \colz_2)\\
&=
\sum_{\substack{\epsilon_i = \pm 1\\i=1,2,3,4}} \prod_{i=1}^4
(8X)^{-\sum_{i=1}^4\frac{1-\epsilon_i}{2}\a_i} 
\lambda_i^{\frac{1-\epsilon_i}{2}} \cdot (I_{1,1} + J_{1,1}) +O(X (\log X)^6).
%\label{equ:cross-second-2}
\end{align*}

We move the line of the integration in $I_{1,1}$ to $\Re(u_3) = -\frac{1}{10}+ \frac{10}{L}$, encountering poles at $u_3 = 0, -\epsilon_3\a_3, -\epsilon_1\a_1-\epsilon_3\a_3, -\epsilon_2\a_2-\epsilon_3\a_3$. We write these as  $I_{1,1,1},I_{1,1,2},I_{1,1,3}, I_{1,1,4}$, repectively, and write the integral on $\Re(u_3) = -\frac{1}{10}+ \frac{10}{L}$ as $I_{1,1,5}$.  We move the line of the integration in $J_{1,1}$ to $\Re(u_3) = -\frac{1}{10}+ \frac{10}{L}$, encountering poles at $u_3 = 0, -\epsilon_3\a_3, -\epsilon_1\a_1-\epsilon_3\a_3, -\epsilon_2\a_2-\epsilon_3\a_3, s- \epsilon_3\a_3$, denoted by $J_{1,1,1},J_{1,1,2},J_{1,1,3},J_{1,1,4},J_{1,1,5}$, respectively, and  the  integral on $\Re(u_3) = -\frac{1}{10}+ \frac{10}{L}$ is $\ll X^{1- \frac{1}{11}}$. 

By a similar argument as \eqref{equ:cross-similar-1} and \eqref{equ:cross-similar-2}, using Lemma \ref{lem-canceli-iii}(i) and (ii) with $r=3, u_1 =u_2 =0$, and using Lemma \ref{lem-canceli-iii}(iii) and \eqref{equ:hohd-2} with $r= 3, u_1=u_2 =0$, we find that
\begin{align*}
\sum_{\epsilon_3 = \pm 1} (8X)^{-\frac{1-\epsilon_3}{2}\a_3} 
\lambda_3^{\frac{1-\epsilon_3}{2}} \cdot  I_{1,1,2} &= 0,\\
\sum_{\epsilon_3 = \pm 1} (8X)^{-\frac{1-\epsilon_3}{2}\a_3} 
\lambda_3^{\frac{1-\epsilon_3}{2}} \cdot   J_{1,1,2} &= 0,  \\
\sum_{\epsilon_3 = \pm 1} (8X)^{-\frac{1-\epsilon_3}{2}\a_3} 
\lambda_3^{\frac{1-\epsilon_3}{2}} \cdot   (I_{1,1,5} + J_{1,1,5}) &= 0.
\end{align*}
One may deduce a version of \eqref{equ:cancel-u2u1} by calculating residues at $u_3 = -\epsilon_1\a_1 - \epsilon_3\a_3, u_1 =0$, and we let $u_2 =0 $.
By this and Lemma \ref{lem:cross-second-general} with $r =1 , r'= 3, u_2 = 0$,  proceeding similarly as for \eqref{equ:cross-similar-21} and \eqref{equ:cross-similar-22}, we obtain 
\begin{align*}
\sum_{\epsilon_1, \epsilon_3 = \pm 1} 
(8X)^{-\frac{1-\epsilon_1}{2}\a_1 -\frac{1-\epsilon_3}{2}\a_3} 
\lambda_1^{\frac{1-\epsilon_1}{2}} \lambda_3^{\frac{1-\epsilon_3}{2}} \cdot I_{1,1,3} &= 0,
\end{align*}
and similarly,
\begin{align*}
\sum_{\epsilon_1, \epsilon_3 = \pm 1} 
(8X)^{-\frac{1-\epsilon_1}{2}\a_1 -\frac{1-\epsilon_3}{2}\a_3} 
\lambda_1^{\frac{1-\epsilon_1}{2}} \lambda_3^{\frac{1-\epsilon_3}{2}} \cdot J_{1,1,3} &= 0.
\end{align*}
By the symmetry of the variables $\epsilon_i$, we also have 
\begin{align*}
\sum_{\epsilon_2, \epsilon_3 = \pm 1} 
(8X)^{-\frac{1-\epsilon_2}{2}\a_2 -\frac{1-\epsilon_3}{2}\a_3} 
\lambda_2^{\frac{1-\epsilon_2}{2}} \lambda_3^{\frac{1-\epsilon_3}{2}} \cdot I_{1,1,4} &= 0, \\
\sum_{\epsilon_2, \epsilon_3 = \pm 1} 
(8X)^{-\frac{1-\epsilon_2}{2}\a_2 -\frac{1-\epsilon_3}{2}\a_3} 
\lambda_2^{\frac{1-\epsilon_2}{2}} \lambda_3^{\frac{1-\epsilon_3}{2}} \cdot J_{1,1,4} &= 0.
\end{align*}
In summary, the above argument yields
\begin{align*}
&    \sum_{\substack{\epsilon_i = \pm 1\\i=1,2,3,4}} \prod_{i=1}^4
(8X)^{-\frac{1-\epsilon_i}{2}\a_i} 
\lambda_i^{\frac{1-\epsilon_i}{2}} \cdot  ( \colz_1 +   \colz_2)\\
&=
\sum_{\substack{\epsilon_i = \pm 1\\i=1,2,3,4}} \prod_{i=1}^4
(8X)^{-\frac{1-\epsilon_i}{2}\a_i} 
\lambda_i^{\frac{1-\epsilon_i}{2}} \cdot (I_{1,1,1} + J_{1,1,1}) +O(X (\log X)^3).
%\label{equ:cross-second-3}
\end{align*}

Finally, we move the line of the integration in $I_{1,1,1}$ to $\Re(u_4) = -\frac{1}{10}+ \frac{10}{L}$, passing poles at $u_4 = 0, -\epsilon_4\a_4, -\epsilon_1- \epsilon_4\a_4, -\epsilon_2\a_2- \epsilon_4\a_4, -\epsilon_3\a_3- \epsilon_4\a_4$. In $J_{1,1,1}$, we move the line of the integration in $u_4$ to $\Re(u_4) = -\frac{1}{10}+ \frac{10}{L}$, passing poles at $u_4 = 0, -\epsilon_4\a_4, -\epsilon_1\a_1- \epsilon_4\a_4, -\epsilon_2\a_2- \epsilon_4\a_4, -\epsilon_3\a_3- \epsilon_4\a_4, s- \epsilon_4\a_4$. The integral on the new line is $\ll X^{1- \frac{3}{21}}$. 

By an argument similar to the one above, the contributions of the poles all cancel out except those at $u_4=0$. 
The pole at $u_4=0$ for $J_{1,1,1}$ contributes  $\ll X^{1- \frac{1}{11}}$, and for $I_{1,1,1}$, the pole at $u_4=0$ contributes
\begin{align*}
\res{u_4 =0}\ \res{u_3 =0}\ \res{u_2 =0}\ \res{u_1 =0}\colf_1(\boldsymbol{u}; \boldsymbol{\a};\boldsymbol{\epsilon})
\prod_{i=1}^4\frac{1}{u_i}     
\prod_{1 \leq i\leq j \leq 4} \zeta_2(1+\epsilon_i\a_i+\epsilon_j\a_j+u_i+u_j).
\end{align*}
This completes the proof of Lemma \ref{lem:secU-main}.

\section{Upper Bounds for  $\colz_3, \colz_4$}\label{sec:secU-error}
In this section, we prove Lemma \ref{lem:secU-error}. We be
\subsection{Upper Bound for  $\mathcal{Z}_3$}
Recall from \eqref{equ-defcolz} that 
\begin{align*}
\mathcal{Z}_3 &= \frac{4X}{\pi^2} \frac{1}{(2\pi i)^4}  \int\limits_{(\frac{1}{10})} \cdots \int\limits_{(\frac{1}{10})} 
(8 X)^{\sum_{i=1}^4\frac{u_i}{2}}\prod_{i=1}^4\frac{g_{\epsilon_i\a_i}(u_i)}{u_i}  \prod_{i=1}^4 \pth{U^{-\frac{1-\epsilon_i}{2}\a_i+\frac{u_i}{2}}-1}  \nonumber\\
&\quad\times 
\int\limits_{-\infty}^\infty  \Phi_{\{\epsilon_1, \epsilon_2,\epsilon_3,\epsilon_4\}}(x)
x^{\sum_{i=1}^4\frac{u_i}{2} } \, dx
\prod_{1 \leq i\leq j \leq 4} \zeta_2(1+\epsilon_i\a_i+\epsilon_j\a_j+u_i+u_j)\\
&\quad \times \mathcal{H}_2(\tfrac{1}{2}+\epsilon_1\a_1+u_1,\tfrac{1}{2}+\epsilon_2\a_2+u_2,\tfrac{1}{2}+ \epsilon_3\a_3+u_3, \tfrac{1}{2}+\epsilon_4\a_4+u_4) \, d\boldu
\end{align*}
As in Section \ref{sec:Asymptotic-Shortening}, we only estimate the case $\epsilon_1= \epsilon_2 =1, \epsilon_3=\epsilon_4 =-1$, as other cases are similar. In this case, we have
\begin{align*}
\mathcal{Z}_3 &= \frac{4X}{\pi^2} \frac{1}{(2\pi i)^4}  \int\limits_{(\frac{1}{10})} \cdots \int\limits_{(\frac{1}{10})} 
(8 X)^{\sum_{i=1}^4\frac{u_i}{2}}g_{\a_1}(u_1)g_{\a_2}(u_2)g_{\a_3}(-u_3)g_{\a_4}(-u_4)\\
&\quad\times \int\limits_{-\infty}^\infty  \Phi_{\{1,1,-1,-1\}}(x)
x^{\sum_{i=1}^4\frac{u_i}{2} } \, dx  \frac{U^{\frac{u_1}{2}}-1}{u_1}\frac{U^{\frac{u_2}{2}}-1}{u_2}\frac{U^{\frac{u_3}{2}-\a_3}-1}{u_3}\frac{U^{\frac{u_4}{2}-\a_4}-1}{u_4} \\
&\quad\times 
\prod_{1 \leq i\leq j \leq 4} \zeta_2(1+\epsilon_i\a_i+\epsilon_j\a_j+u_i+u_j)\\
&\quad \times \mathcal{H}_2(\tfrac{1}{2}+\a_1+u_1,\tfrac{1}{2}+\a_2+u_2,\tfrac{1}{2}-\a_3+u_3, \tfrac{1}{2}-\a_4+u_4) \, d\boldu
\end{align*}
Here we have preserved the $\epsilon_i$ notation in the product of zeta factors for brevity, but it is to be understood that we have made the specific choices defined above for these variables. As in the proof of Lemma \ref{lem:4therror}, we write
\[
\frac{U^{\frac{u_3}{2}-\a_3}-1}{u_3} = U^{-\a_3} \fracp{U^{\frac{u_3}{2}}-1}{u_3} + \frac{U^{-\a_3}-1}{u_3}
\]
and similarly for the factor involving $u_4$ and $\a_4$. The first term above has no pole at $u_3 = 0$, and the second factor is $O(\abs{u_3}^{-1}{(\log X)^{-1}}\log\log X)$. Rewriting in this way yields four integrals. Of these, we evaluate only 
\begin{align*}
\mathcal{Z}_3^* &:= \frac{4X}{\pi^2} \frac{1}{(2\pi i)^4}  \int\limits_{(\frac{1}{10})} \cdots \int\limits_{(\frac{1}{10})} 
(8 X)^{\sum_{i=1}^4\frac{u_i}{2}}g_{\a_1}(u_1)g_{\a_2}(u_2)g_{\a_3}(-u_3)g_{\a_4}(-u_4) \\
&\quad\times \int\limits_{-\infty}^\infty  \Phi_{\{1,1,-1,-1\}}(x)
x^{\sum_{i=1}^4\frac{u_i}{2} } \, dx \frac{U^{\frac{u_1}{2}}-1}{u_1}\frac{U^{\frac{u_2}{2}}-1}{u_2}\frac{U^{-\a_3}-1}{u_3}\frac{U^{-\a_4}-1}{u_4} \\
&\quad\times 
\prod_{1 \leq i\leq j \leq 4} \zeta_2(1+\epsilon_i\a_i+\epsilon_j\a_j+u_i+u_j)\\
&\quad \times \mathcal{H}_2(\tfrac{1}{2}+\a_1+u_1,\tfrac{1}{2}+\a_2+u_2,\tfrac{1}{2}-\a_3+u_3, \tfrac{1}{2}-\a_4+u_4) \, d\boldu,
\end{align*}
as the other three integrals are slightly easier. We evaluate this integral in a similar manner as the integral $\coli$ in Section \ref{sec:LargeSieve-Diagonal}. Since our arguments are very similar, we will be somewhat brief. As in the previous section, we let
\begin{align*}
\colf(\boldu,\boldalpha) &= \frac{4X}{\pi^2} g_{\a_1}(u_1)g_{\a_2}(u_2)g_{\a_3}(-u_3)g_{\a_4}(-u_4)\int\limits_{-\infty}^\infty  \Phi_{\{1,1,-1,-1\}}(x)
x^{\sum_{i=1}^4\frac{u_i}{2} } \, dx \\
&\quad\times \mathcal{H}_2(\tfrac{1}{2}+\a_1+u_1,\tfrac{1}{2}+\a_2+u_2,\tfrac{1}{2}-\a_3+u_3, \tfrac{1}{2}-\a_4+u_4)\\
&\quad \times \prod_{1 \leq i\leq j \leq 4} \zeta_2(1+\epsilon_i\a_i+\epsilon_j\a_j+u_i+u_j)(\epsilon_i\a_i+\epsilon_j\a_j+u_i+u_j)
\end{align*}
and note that $\colf(\boldu,\boldalpha)$ is analytic in the region $\Re (u_i) \geq -\frac{1}{4}+ \varepsilon$. Thus
\begin{align*}
\mathcal{Z}_3^* &= \frac{1}{(2\pi i)^4}  \int\limits_{(\frac{1}{10})} \cdots \int\limits_{(\frac{1}{10})} 
(8 X)^{\sum_{i=1}^4\frac{u_i}{2}} \colf(\boldu,\boldalpha) \prod_{1 \leq i\leq j \leq 4} \frac{1}{\epsilon_i\a_i+\epsilon_j\a_j+u_i+u_j} \\
&\quad\times \frac{U^{\frac{u_1}{2}}-1}{u_1}\frac{U^{\frac{u_2}{2}}-1}{u_2}\frac{U^{-\a_3}-1}{u_3}\frac{U^{-\a_4}-1}{u_4} d\boldu.
\end{align*}
We begin by taking the line of integration in $u_1$ to $\Re (u_1) = - \frac{1}{20}$. In doing so, we pass only a simple pole at $u=-\a_1$, and so
\[
\mathcal{Z}_3^* = I_1 + I_2,
\]
where
\begin{align*}
I_1 &= \frac{(8 X)^{-\frac{\a_1}{2}}}{8(2\pi i)^3}  \int\limits_{(\frac{1}{10})} \int\limits_{(\frac{1}{10})} \int\limits_{(\frac{1}{10})} 
(8 X)^{\sum_{i=2}^4\frac{u_i}{2}} \colf(-\a_1,u_2,u_3,u_4,\boldalpha) \\
&\quad\times \frac{1}{((u_2+\a_2)(u_3-\a_3)(u_4-\a_4))^2(u_2+u_3+\a_2-\a_3)(u_2+u_4+\a_2-\a_4)(u_3+u_4-\a_3-\a_4)} \\
&\quad\times \frac{U^{\frac{-\a_1}{2}}-1}{-\a_1}\frac{U^{\frac{u_2}{2}}-1}{u_2}\frac{U^{-\a_3}-1}{u_3}\frac{U^{-\a_4}-1}{u_4}\, du_2\, du_3\, du_4, \\
I_2 &= \frac{1}{(2\pi i)^4}  \int\limits_{(\frac{1}{10})} \cdots \int\limits_{(-\frac{1}{20})} 
(8 X)^{\sum_{i=1}^4\frac{u_i}{2}} \colf(\boldu,\boldalpha) \prod_{1 \leq i\leq j \leq 4} \frac{1}{\epsilon_i\a_i+\epsilon_j\a_j+u_i+u_j} \\
&\quad\times \frac{U^{\frac{u_1}{2}}-1}{u_1}\frac{U^{\frac{u_2}{2}}-1}{u_2}\frac{U^{-\a_3}-1}{u_3}\frac{U^{-\a_4}-1}{u_4} d\boldu.
\end{align*}
These should be compared with the expressions $I_1$ and $I_2$ in Section \ref{sec:LargeSieve-Diagonal}. Of these, we estimate only $I_1$, as $I_2$ can then be evaluated in a similar way as $I_1$. Let $\colg(u_2,u_3,u_4,\boldalpha)$ denote the integrand of $I_1$. We take the line of integration in $u_2$ to $\Re( u_2)=-\frac{1}{20}$. In doing so, we pass a pole of order 2 at $u_2 = -\a_2$, so
\[
I_1 = I_{1,1} + I_{1,2},
\]
where
\begin{align*}
I_{1,1} &= \frac{(8 X)^{-\frac{\a_1}{2}}}{8(2\pi i)^2}  \int\limits_{(\frac{1}{10})} \int\limits_{(\frac{1}{10})} \res{u_2=-\a_2} \colg(u_2,u_3,u_4,\boldalpha) du_3\, du_4, \\
I_{1,2} &= \frac{(8 X)^{-\frac{\a_1}{2}}}{8(2\pi i)^3}  \int\limits_{(\frac{1}{10})} \int\limits_{(\frac{1}{10})} \int\limits_{(-\frac{1}{20})} 
\colg(u_2,u_3,u_4,\boldalpha)\, du_2\, du_3\, du_4, \\
\end{align*}
For $I_{1,1}$, the pole of order 2 gives a sum of integrals involving powers of $\log X$, $\log U$, and the derivatives of $\colf$. To keep our calculations brief, we examine only the representative term in the residue, which is
\begin{align*}
I_{1,1}^* &=\frac{(8 X)^{-\frac{\a_1}{2}-\frac{\a_2}{2}}\log X}{16(2\pi i)^3}  \int\limits_{(\frac{1}{10})} \int\limits_{(\frac{1}{10})} \frac{(8 X)^{\frac{u_3}{2}+\frac{u_4}{2}} \colf(-\a_1,-\a_2,u_3,u_4,\boldalpha)}{((u_3-\a_3)(u_4-\a_4))^3(u_3+u_4-\a_3-\a_4)} \\
&\quad\times \frac{U^{\frac{-\a_1}{2}}-1}{-\a_1}\frac{U^{\frac{-\a_2}{2}}-1}{-\a_2}\frac{U^{-\a_3}-1}{u_3}\frac{U^{-\a_4}-1}{u_4}\, du_3\, du_4.
\end{align*}
We take the line of integration in $u_3$ to $\Re (u_3) = -\frac{1}{20}$. In doing so, we pass a pole of order 3 at $u_3 = \a_3$ and a simple pole at $u_3 = 0$. Abusing notation and letting $\colg(u_3,u_4;\boldalpha)$ denote the integrand of $I_{1,1}^*$, we have
\[
I_{1,1}^* = I_{1,1,1} + I_{1,1,2} + I_{1,1,3},
\]
where
\begin{align*}
I_{1,1,1} &= \frac{(8 X)^{-\frac{\a_1}{2}-\frac{\a_2}{2}}\log X}{16(2\pi i)}  \int\limits_{(\frac{1}{10})} \res{u_3=\a_3}\colg(u_3,u_4,\boldalpha) du_4, \\
I_{1,1,2} &= \frac{(8 X)^{-\frac{\a_1}{2}-\frac{\a_2}{2}}\log X}{16(2\pi i)^3}  \int\limits_{(\frac{1}{10})}  \frac{(8 X)^{\frac{u_4}{2}} \colf(-\a_1,-\a_2,0,u_4,\boldalpha)}{((u_4-\a_4)^3(u_4-\a_3-\a_4)} \\
&\quad\times \frac{U^{\frac{-\a_1}{2}}-1}{-\a_1}\frac{U^{\frac{-\a_2}{2}}-1}{-\a_2}\frac{U^{-\a_3}-1}{(-\a_3)^3}\frac{U^{-\a_4}-1}{u_4}\, du_4 \\
I_{1,1,3} &= \frac{(8 X)^{-\frac{\a_1}{2}-\frac{\a_2}{2}}\log X}{16(2\pi i)^3}  \int\limits_{(\frac{1}{10})} \int\limits_{(-\frac{1}{20})}  \colg(u_3,u_4,\boldalpha)\, du_3\, du_4
\end{align*}
For $I_{1,1,3}$ take the line of integratio in $u_4$ to $\Re (u_4) = \frac{1}{20} + \frac{5}{L}$, passing no poles in the process. Using the convexity estimate for $\zeta$ and Stirling's formula for $\Gamma$ to estimate $\colf$, we find that $I_{1,1,3} = O(X(\log X)^2(\log\log X)^2)$. Here we have used that
\begin{equation}\label{eq:Uzbound}
\frac{U^v-1}{u} \ll \abs{\frac{v\log\log X}{u}}
\end{equation}
when $\abs{v} \asymp \frac{1}{\log X}$.

For $I_{1,1,2}$, we take the line of integration in $u_4$ to $\Re (u_4) = -\frac{1}{20}$. In doing so, we pass a pole of order 3 at $u_4 = \a_4$ and simple poles at $u_4 = \a_3+\a_4$ and $u_4=0$. The integral on the new line is $O(X)$. Abusing notation once more and letting $\colg(u_4,\boldalpha)$ denote the integrand of $I_{1,1,2}$, we have
\[
I_{1,1,2} = \sumpth{\res{u_4 = \a_4} + \res{u_4 = \a_4+\a_3} + \res{u_4 = 0} }\colg(u_4,\boldalpha) + O(X).
\]
A straightforward residue calculation and the estimate \eqref{eq:Uzbound} shows that the residues contribute $O(X(\log X)^6(\log\log X)^4)$. 

For $I_{1,1,1}$, as before, we consider only the representative term from the residue, which is
\begin{align*}
I_{1,1,1}^* &=C\frac{(8 X)^{-\frac{\a_1}{2}-\frac{\a_2}{2}-\frac{\a_3}{2}}(\log X)^3}{(2\pi i)^3}  \int\limits_{(\frac{1}{10})} \frac{(8 X)^{\frac{u_4}{2}} \colf(-\a_1,-\a_2, \a_3,u_4,\boldalpha)}{(u_4-\a_4)^4} \\
&\quad\times \frac{U^{\frac{-\a_1}{2}}-1}{-\a_1}\frac{U^{\frac{-\a_2}{2}}-1}{-\a_2}\frac{U^{-\a_3}-1}{\a_3}\frac{U^{-\a_4}-1}{u_4}\, du_4
\end{align*}
for an appropriate constant $C$. Taking the line of integration in $u_4$ to $\Re (u_4) = -\frac{1}{20}$, we pass a pole of order 4 at $u_4 = \a_4$  and a simple pole at $u_4=0$. The integral on the new line of integration is $O(X)$, and the residues contribute $O(X(\log X)^6(\log\log X)^4)$. Combining our estimates above, we have
\[
I_1 = I_{1,2} + O(X(\log X)^6(\log\log X)^4).
\]
To estimate $I_{1,2}$ we proceed as in Section \ref{sec:LargeSieve-Diagonal}. We take the line of integration in $u_3$ to $\Re(u_3) = \frac{5}{L}$. In doing so, we pass only a simple pole at $u_3 = -u_2 -\a_2 + \a_3$, so
\[
I_{1,2} = I_{1,2,1} + I_{1,2,2},
\]
where
\begin{align*}
I_{1,2,1} &= \frac{(8 X)^{-\frac{\a_1}{2}}}{8(2\pi i)^2}  \int\limits_{(\frac{1}{10})} \int\limits_{(-\frac{1}{20})} 
(8 X)^{\frac{ -\a_2+\a_3}{2}+\frac{u_4}{2}} \colf(-\a_1,u_2,-u_2-\a_2+\a_3,u_4,\boldalpha) \\
&\quad\times \frac{1}{(u_2+\a_2)^4(u_4-\a_4)^2(u_2+u_4+\a_2-\a_4)(u_4-u_2-\a_2-\a_4)} \\
&\quad\times \frac{U^{\frac{-\a_1}{2}}-1}{-\a_1}\frac{U^{\frac{u_2}{2}}-1}{u_2}\frac{U^{-\a_3}-1}{-u_2-\a_2+\a_3}\frac{U^{-\a_4}-1}{u_4}\, du_2\, du_4, \\
I_{1,2,2} &= \frac{(8 X)^{-\frac{\a_1}{2}}}{8(2\pi i)^3}  \int\limits_{(\frac{1}{10})} \int\limits_{(\frac{5}{L})} \int\limits_{(-\frac{1}{20})} 
\colg(u_2,u_3,u_4,\boldalpha)\, du_2\, du_3\, du_4.
\end{align*}
In $I_{1,2,2}$, we take the line of integration in $u_4$ to $\Re (u_4) = \frac{1}{20} + \frac{5}{L}$, passing no poles in the process. The integral on the new line of integration is then $O(X(\log X)^4(\log\log X)^3)$. For $I_{1,2,1}$, we take the line of integration in $u_4$ to $\Re (u_4) = \frac{5}{L}$, passing a simple pole at $u_4 = -u_2 - \a_2 + \a_4$. The integral on the new line of integration is $O(X(\log X)^5(\log\log X)^3)$, and so
\begin{align*}
I_{1,2,1} &= -\frac{(8 X)^{-\frac{\a_1}{2}-\a_2-\frac{\a_3}{2}+\frac{\a_4}{2}}}{32(2\pi i)}  \int\limits_{(-\frac{1}{20})} 
(8 X)^{\frac{-u_2}{2}} \colf(-\a_1,u_2,-u_2-\a_2+\a_3,-u_2-\a_2+\a_4,\boldalpha) \\
&\quad\times \frac{1}{(u_2+\a_2)^7} \frac{U^{\frac{-\a_1}{2}}-1}{-\a_1}\frac{U^{\frac{u_2}{2}}-1}{u_2}\frac{U^{-\a_3}-1}{-u_2-\a_2+\a_3}\frac{U^{-\a_4}-1}{-u_2-\a_2+\a_4}\, du_2 \\
&\quad+ O(X(\log X)^5(\log\log X)^3)
\end{align*}
Finally, we take the line of integration in $u_2$ to $\Re(u_2) = \frac{1}{20}$. In doing so, we pass a pole of order 7 at $u_2=-\alpha_2$ as well as simple poles at $u_2 = 0,-\alpha_2+\alpha_3,-\alpha_2+\alpha_4$. The integral on the new line of integration is $O(X)$, and the contribution of the residues is $O(X(\log X)^6(\log\log X)^4)$.  We remark that there may be an occasion to bound the factor $(U^{-\frac{u_1}{2}} -1)(U^{\frac{u_1}{2}} -1)$ with $\Re(u_1) = -\frac{1}{20}$. In this case, we move the line to $\Re(u_1) = \eta>0$. Here $\eta$ is a small but fixed constant, say $\eta = \frac{1}{10Q}$, where  $Q$ is defined in \eqref{defU}. Then this factor is bounded by $(\log X)^{0.2}$, which is niligible.  Combining our calculations, we find that
\[
I_{1,2} \ll X(\log X)^6(\log\log X)^4,
\]
and therefore
\[
I_1 \ll X(\log X)^6(\log\log X)^4.
\]

\subsection {Upper Bound for $\mathcal{Z}_4$}
Recall from \eqref{equ-defcolz} that
\begin{align*}
\colz_4 &= \frac{X}{(2\pi i)^5}  \int\limits_{(\frac{1}{10})}  \int\limits_{(\frac{1}{10})} \cdots \int\limits_{(\frac{1}{10})}X^{\sum_{i=1}^4\frac{u_i+s}{2}}   X^{-s} \prod_{i=1}^4 \pth{1-U^{-\frac{1-\epsilon_i}{2}\a_i+\frac{u_i+s}{2}}} \nonumber\\
&\quad\times
\prod_{i=1}^4 \frac{1}{u_i+s} g_{\epsilon_i\a_i}(u_i+s) 
\left(\frac{8}{\pi}\right)^{-s} \frac{\Gamma(\frac{1}{4}-\frac{s}{2})}{\Gamma(\frac{1}{4}+ \frac{s}{2})} \int\limits_0^\infty  \Phi_{\{\epsilon_1, \epsilon_2,\epsilon_3,\epsilon_4\}}(x) x^{-s} (8x)^{\sum_{i=1}^4\frac{u_i+s}{2}}\, dx\nonumber\\
&\quad\times  \zeta_2(1-2s)  \prod_{1 \leq i \leq 4} \frac{\zeta_2(1+ \epsilon_i\a_i+u_i)}{\zeta_2(1+2s+\epsilon_i\a_i+u_i )}\prod_{1\leq i \leq j \leq 4}\frac{\zeta_2(1+2s+ \epsilon_i\a_i +u_i + \epsilon_j\a_j+u_j)}{\zeta_2(2+u_i+u_j+\epsilon_i\a_i +\epsilon_j\a_j)} \nonumber \\ 
&\quad\times  Z_3(\tfrac{1}{2}+\epsilon_1\a_1+u_1,\tfrac{1}{2}+\epsilon_2\a_2+u_2,\tfrac{1}{2}+ \epsilon_3\a_3+u_3,\tfrac{1}{2}+\epsilon_4\a_4+u_4,s)  \, d\boldu \, ds.
\end{align*}
As in the previous section, we consider only the case $\epsilon_1= \epsilon_2 =1, \epsilon_3=\epsilon_4 =-1$ and also define
\begin{align*}
\colf(\boldu,s,\boldalpha) &= \prod_{i=1}^4 \frac{1}{u_i+s} g_{\epsilon_i\a_i}(u_i+s) 
\left(\frac{8}{\pi}\right)^{-s} \frac{\Gamma(\frac{1}{4}-\frac{s}{2})}{\Gamma(\frac{1}{4}+ \frac{s}{2})} \int\limits_0^\infty  \Phi_{\{\epsilon_1, \epsilon_2,\epsilon_3,\epsilon_4\}}(x) x^{-s} (8x)^{\sum_{i=1}^4\frac{u_i+s}{2}}\, dx\nonumber\\
&\quad\times  \zeta_2(1-2s)  \prod_{1 \leq i \leq 4} \frac{\zeta_2(1+ \epsilon_i\a_i+u_i)(\epsilon_i\a_i+u_i)}{\zeta_2(1+2s+\epsilon_i\a_i+u_i )(2s+\epsilon_i\a_i+u_i)} \\
&\quad\times\prod_{1\leq i \leq j \leq 4}\frac{\zeta_2(1+2s+ \epsilon_i\a_i +u_i + \epsilon_j\a_j+u_j)(2s+ \epsilon_i\a_i +u_i + \epsilon_j\a_j+u_j)}{\zeta_2(2+u_i+u_j+\epsilon_i\a_i +\epsilon_j\a_j)} \nonumber \\ 
&\quad\times  Z_3(\tfrac{1}{2}+\epsilon_1\a_1+u_1,\tfrac{1}{2}+\epsilon_2\a_2+u_2,\tfrac{1}{2}+ \epsilon_3\a_3+u_3,\tfrac{1}{2}+\epsilon_4\a_4+u_4,s)  \, d\boldu \, ds.
\end{align*}
Thus we need to estimate
\begin{align*}
&\frac{X}{(2\pi i)^5}  \int\limits_{(\eta+\frac{5}{L})}  \int\limits_{(\frac{1}{10})} \cdots \int\limits_{(\frac{1}{10})}X^{\sum_{i=1}^4\frac{u_i+s}{2}} \colf(\boldu,s,\boldalpha) X^{-s} \prod_{i=1}^4 \frac{U^{-\frac{1-\epsilon_i}{2}\a_i+\frac{u_i+s}{2}}-1}{u_i+s} \nonumber\\
&\quad\times \prod_{1\leq i \leq j \leq 4}\frac{1}{u_i +u_j+ \epsilon_i\a_i+ \epsilon_j\a_j+2s}  \prod_{i=1}^4 \frac{\epsilon_j\a_j+u_j+2s}{\epsilon_i\a_i+u_i}\, d\boldu \, \frac{ds}{s}.
\end{align*}
after shifting the line of integration in $s$ to $\Re (s) = \eta+\frac{5}{L}$ for some small but fixed $\eta > 0$ that we will choose later. The estimation of this integral is now very similar to Section \ref{sec:LargeSieve-OffDiagonal}. We apply the same trick of separating the factors
\[
\frac{U^{-\frac{1-\epsilon_i}{2}\a_i+\frac{u_i+s}{2}}-1}{u_i+s}
\]
as we did in the previous section. We then shift contours in a similar manner as in Section \ref{sec:LargeSieve-OffDiagonal}. In doing so, we have more poles that we pass due to the presence of the shifts $\a_i$. However, these result in residues that are larger by, at worse, powers of $\log\log X$, which are negligible for us. We leave the full details of the calculation to the interested reader. This concludes the proof of Lemma \ref{lem:secU-error}.

\appendix
\renewcommand{\theequation}{\thesection.\arabic{equation}}

\section{Appendix: Dirichlet series} \label{prfDiri}

In this appendix, we prove Lemmas \ref{lem:DirichletSeriesDiagonal}, \ref{lem:DirichletSeriesOffDiagonal}, \ref{lem:DirichletSeriesSuma}, and \ref{lem:iden}. As with most calculations involving multiple Dirichlet series, these calculations are somewhat long and technical. We first state some preliminaries that we will need for our calculations. 

Let
\begin{equation}\label{eq:Bdef}
B_{p,k}(\boldu;a) = \sum_{\substack{\boldn,\ n_i\geq 0\\  n_1+\cdots+n_{k} \equiv a \mod{2}}} \frac{1}{p^{n_1u_1+\cdots +n_{k}u_{k}}}.
\end{equation}
In the definition of $B_{p,k}(\boldu;a)$, we are mainly interested in the case $k=4$, and we write $B_p(\boldu;a) := B_{p,4}(\boldu;a)$ and $B_p(\boldu):= B_{p,4}(\boldu;0)$. Since some of our calculations work in slightly more generality, we include the more general case for completeness.

The following simple estimate will be used when determining where our Dirichlet series are absolutely convergent. Its simple proof is left as an exercise to the reader.
\begin{lem}\label{lem:EulerFactorBound}
Let $x_1,\ldots, x_n$ be complex numbers and let $X = \max(\abs{x_i})$. Suppose that $X \leq \theta$ for some $\theta < 1$. Then for $\epsilon = \pm 1$, we have
\[
\sumpth{1+\epsilon\sum_{i=1}^n x_i + O_n\pth{X^2}}\prod_{i=1}^{n} (1-x_i)^{\epsilon} = 1+O_n\pth{X^2}.
\]
\end{lem}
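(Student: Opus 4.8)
The plan is to reduce the claim to the single auxiliary expansion
\[
\prod_{i=1}^{n} (1-x_i)^{\epsilon} = 1 - \epsilon\sum_{i=1}^{n} x_i + O_n(X^2),
\]
valid for both $\epsilon = +1$ and $\epsilon = -1$, and then to multiply this by the factor $1 + \epsilon\sum_i x_i + O_n(X^2)$ appearing in the statement, checking that every cross term that arises is $O_n(X^2)$. The hypothesis $X \le \theta < 1$ with $\theta$ fixed is what makes all the implied constants depend only on $n$.

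To prove the displayed expansion when $\epsilon = 1$, I would simply expand $\prod_i (1-x_i) = \sum_{S \subseteq \{1,\dots,n\}} (-1)^{|S|} \prod_{i\in S} x_i$: the empty set gives $1$, the singletons give $-\sum_i x_i$, and every remaining term has $|S| \ge 2$, hence absolute value at most $X^{|S|} \le X^2$ since $X \le \theta < 1$; as there are at most $2^n$ such terms, they contribute $O_n(X^2)$. When $\epsilon = -1$, each factor is invertible (since $|x_i| \le \theta < 1$) and admits the geometric expansion $(1-x_i)^{-1} = 1 + x_i + r_i$ with $|r_i| \le X^2/(1-\theta)$. Multiplying out $\prod_i (1 + x_i + r_i)$, the all-ones term gives $1$, the terms selecting exactly one $x_i$ give $\sum_i x_i$, and every other term either contains at least two factors of the form $x_i$ (hence is $\le X^2$) or at least one factor $r_i$ (hence is bounded by $X^2/(1-\theta)$ times a product of $n-1$ quantities each bounded in terms of $\theta$); bounding the number of such terms crudely by $3^n$ gives a total of $O_n(X^2)$, which yields the expansion with $\epsilon = -1$.

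With the expansion in hand, I would write $A = \epsilon\sum_{i=1}^{n} x_i$, so that $|A| \le nX \le n\theta$, and compute
\[
\left(1 + A + O_n(X^2)\right)\left(1 - A + O_n(X^2)\right) = 1 - A^2 + O_n(X^2);
\]
since $|A^2| \le n^2 X^2 = O_n(X^2)$ and every mixed term ($A \cdot O_n(X^2)$, $O_n(X^2)\cdot O_n(X^2)$, etc.) is likewise $O_n(X^2)$ using $|A| \le n$ and $X < 1$, the lemma follows. There is essentially no substantive obstacle here; the only thing requiring a modicum of care is keeping the error constants uniform in the $x_i$, which is handled by bounding the number of terms in each expansion by $2^n$ or $3^n$ and by repeatedly using $X \le \theta < 1$ to absorb every higher power $X^k$ with $k \ge 2$ into $X^2$.
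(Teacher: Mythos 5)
Your proof is correct. The paper itself does not supply a proof of this lemma --- it explicitly states ``Its simple proof is left as an exercise to the reader'' --- so there is no argument to compare against, but your two-step reduction (expand $\prod_i (1-x_i)^{\epsilon}$ to first order, then multiply and absorb all cross terms into $O_n(X^2)$ using $X \le \theta < 1$) is exactly the natural one, and all the bounds check out. The only cosmetic remark is that the implied constant in your argument also depends on $\theta$ through the factor $1/(1-\theta)$, but the paper's own notation $O_n(\cdot)$ already suppresses this dependence, so you are consistent with the statement as written.
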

Next, we require a combinatorial evaluation of the sum $B_{p,k}$.

\begin{lem}\label{lem:BsumEval}
For any $k\geq 1$, $\boldu$, and $\theta > 0$ such that $\Re(u_i) \geq \theta$, we have
\[
B_{p,2k}(\boldu;0) =\prod_{1\leq i \leq j \leq 2k}  \pth{1-\frac{1}{p^{u_i+u_j}}}^{-1} C_{p,2k}(\boldu),
\]
where $C_{p,2k}$ satisfies
\begin{align}
C_{p,2k}(\boldu) = \prod_{1\leq i < j \leq 2k} \pth{1-\frac{1}{p^{u_i+u_j}}} \sum_{\substack{A\subset\set{1,\ldots,2k}\\ \abs{A} \equiv 0 \mod{2}}} \prod_{i\in A} \frac{1}{p^{u_i}} = 1 + O\fracp{1}{p^{4\theta}}.
\label{def:Cpu-2}
\end{align}
\end{lem}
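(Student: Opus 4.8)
\textbf{Proof proposal for Lemma \ref{lem:BsumEval}.}

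The plan is to compute $B_{p,2k}(\boldu;0)$ directly from its definition \eqref{eq:Bdef} by treating the parity constraint via a character sum over the two-element group $\{\pm 1\}$. Writing $x_i = p^{-u_i}$ for brevity, I would use the identity
\[
\delta(n_1+\cdots+n_{2k}\equiv 0\bmod 2) = \tfrac{1}{2}\bigl(1 + (-1)^{n_1+\cdots+n_{2k}}\bigr),
\]
so that
\[
B_{p,2k}(\boldu;0) = \frac{1}{2}\prod_{i=1}^{2k}\sum_{n_i\geq 0} x_i^{n_i} + \frac{1}{2}\prod_{i=1}^{2k}\sum_{n_i\geq 0} (-x_i)^{n_i}
= \frac{1}{2}\prod_{i=1}^{2k}\frac{1}{1-x_i} + \frac{1}{2}\prod_{i=1}^{2k}\frac{1}{1+x_i}.
\]
Since $\Re(u_i)\geq\theta>0$ forces $|x_i|<1$, both geometric series converge and this step is rigorous. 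Then I would put the two terms over the common denominator $\prod_{i=1}^{2k}(1-x_i^2) = \prod_{i=1}^{2k}(1-x_i)(1+x_i)$, obtaining
\[
B_{p,2k}(\boldu;0) = \frac{\tfrac12\prod_i(1+x_i) + \tfrac12\prod_i(1-x_i)}{\prod_i(1-x_i^2)}
= \frac{\sum_{|A|\ \mathrm{even}}\prod_{i\in A}x_i}{\prod_{i=1}^{2k}(1-x_i^2)},
\]
where the numerator identity is the standard fact that averaging $\prod(1+x_i)$ and $\prod(1-x_i)$ kills the odd-cardinality terms. Recognizing $1-x_i^2 = 1 - p^{-2u_i} = 1 - p^{-(u_i+u_i)}$, I can fold these diagonal factors in with the off-diagonal ones: the product $\prod_{1\leq i\leq j\leq 2k}(1-p^{-u_i-u_j})$ is precisely $\prod_{i}(1-x_i^2)\cdot\prod_{i<j}(1-x_ix_j)$. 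This yields
\[
B_{p,2k}(\boldu;0) = \prod_{1\leq i\leq j\leq 2k}\bigl(1-p^{-u_i-u_j}\bigr)^{-1}\cdot \prod_{1\leq i<j\leq 2k}\bigl(1-p^{-u_i-u_j}\bigr)\sum_{\substack{A\subset\{1,\ldots,2k\}\\|A|\equiv 0\bmod 2}}\prod_{i\in A}p^{-u_i},
\]
which is exactly the claimed factorization with $C_{p,2k}(\boldu)$ defined by \eqref{def:Cpu-2}.

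It remains to verify the estimate $C_{p,2k}(\boldu) = 1 + O(p^{-4\theta})$. Here $X := \max_i|x_i| \leq p^{-\theta} < 1$ (I would first dispense with small $p$, or simply note the bound is vacuous unless $p^{\theta}$ is large, so we may assume $X\leq\tfrac12$, say). The sum over even-cardinality subsets is $1 + \sum_{|A|\ \mathrm{even},\,|A|\geq 2}\prod_{i\in A}x_i = 1 + \sum_{i<j}x_ix_j + O(X^4)$, since there are $\binom{2k}{2}$ pairs each contributing $O(X^2)$ and all higher even terms are $O(X^4)$. Meanwhile $\prod_{i<j}(1-x_ix_j) = 1 - \sum_{i<j}x_ix_j + O(X^4)$. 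Multiplying these two expansions, the linear-in-($x_ix_j$) terms cancel exactly and the remainder is $O(X^4) = O(p^{-4\theta})$. Alternatively, this last multiplication is precisely an instance of Lemma \ref{lem:EulerFactorBound} applied with the $\binom{2k}{2}$ variables $\{x_ix_j\}_{i<j}$ (each of size $O(X^2)$, so their max is $O(X^2)$, and $\epsilon=+1$ on the product side versus the sum $\sum x_ix_j$): that lemma gives directly $C_{p,2k}(\boldu) = 1 + O(X^4)$.

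There is no serious obstacle here; the only point requiring a little care is the bookkeeping in folding the diagonal factors $1-x_i^2$ into the product $\prod_{i\leq j}$, and making sure the convergence hypothesis $|x_i|<1$ (hence the need for $\theta>0$) is invoked before manipulating the geometric series. The main "work," such as it is, is simply organizing the subset-sum identity and confirming the cancellation of the first-order terms in the error estimate, both of which are elementary.
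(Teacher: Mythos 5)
Your proposal is correct, and the proof is sound. You reach the same intermediate identity as the paper, namely
\[
B_{p,2k}(\boldu;0) = \prod_{i=1}^{2k}\bigl(1-p^{-2u_i}\bigr)^{-1}\sum_{\substack{A\subset\{1,\ldots,2k\}\\ |A|\equiv 0\bmod 2}}\prod_{i\in A}p^{-u_i},
\]
but by a genuinely different route. The paper decomposes $B_{p,2k}(\boldu;0)$ directly by the parity pattern of the $n_i$: for each even-cardinality subset $A$ it restricts to $n_i$ odd for $i\in A$ and $n_i$ even for $i\notin A$, then sums the resulting geometric series (using $\sum_{n\text{ odd}}p^{-nu}=\tfrac{p^{-u}}{1-p^{-2u}}$ and $\sum_{n\text{ even}}p^{-nu}=\tfrac{1}{1-p^{-2u}}$), which produces the subset sum immediately. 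You instead detect the parity constraint with the orthogonality relation $\delta(n_1+\cdots+n_{2k}\equiv 0)=\tfrac12(1+(-1)^{\sum n_i})$, sum $2k$ unconstrained geometric series twice, and then expand $\tfrac12[\prod(1+x_i)+\prod(1-x_i)]$ into the even-subset sum. The two routes are of comparable length; yours avoids the small bookkeeping step of computing the odd-index geometric series, while the paper's avoids the numerator expansion. Both are elementary. The folding of the diagonal factors $1-x_i^2=1-p^{-(u_i+u_i)}$ into $\prod_{i\leq j}$, and the final estimate (cancellation of the $\sum_{i<j}x_ix_j$ terms at first order, leaving $O(p^{-4\theta})$, either by hand or via Lemma \ref{lem:EulerFactorBound}), match the paper's argument. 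Your remark about dispensing with small $p$ before invoking Lemma \ref{lem:EulerFactorBound} is the right instinct and is harmless, since the implied constant may absorb finitely many primes.
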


\begin{proof}
Since $\Re(u_i) \geq \theta > 0$, the sum $B_{p,2k}(\boldu;0)$ converges absolutely, so we may freely rearrange its terms. For a set $A \subseteq \set{1,\ldots, 2k}$, let $\bar{A} = \set{1,\ldots,2k} \setminus A$ and denote $\boldn_A = (n_i)_{i\in A}$.

Note that the condition $n_1 + \cdots + n_{2k} \equiv 0\mod{2}$ is satisfied if and only if an even number of the $n_i$ are odd. Thus we may write
\[
B_{p,2k}(\boldu;0) =  \sum_{\substack{A\subset\set{1,\ldots,2k}\\ \abs{A} \equiv 0 \mod{2}}} \sum_{\substack{\boldn,\ n_i\geq 0\\ \boldn_A \equiv 1 \mod{2}\\ \boldn_{\bar{A}} \equiv 0 \mod{2}}} \frac{1}{p^{n_1u_1 + \cdots + n_{2k}u_{2k}}}.
\]
Since
\begin{align*}
\sum_{\substack{n\geq 0\\ n\equiv 0 \mod{2}}} \frac{1}{p^{nu}} &= \pth{1-\frac{1}{p^{2u}}}^{-1}, \\
\sum_{\substack{n\geq 0\\ n\equiv 1 \mod{2}}} \frac{1}{p^{nu}} &= \pth{1-\frac{1}{p^u}}^{-1} - \pth{1-\frac{1}{p^{2u}}}^{-1},
\end{align*}
the sums over $\boldn$ can be written as products of such terms, and so we have
\begin{align*}
B_{p,2k}(\boldu;0) &= \sum_{\substack{A\subset\set{1,\ldots,2k}\\ \abs{A} \equiv 0 \mod{2}}} \prod_{i\in \bar{A}} \pth{1-\frac{1}{p^{2u_i}}}^{-1} \prod_{i\in A} \pth{\pth{1-\frac{1}{p^{u_i}}}^{-1} - \pth{1-\frac{1}{p^{2u_i}}}^{-1} } \\
&= \prod_{i=1}^{2k} \pth{1-\frac{1}{p^{2u_i}}}^{-1} \sum_{\substack{A\subset\set{1,\ldots,2k}\\ \abs{A} \equiv 0 \mod{2}}}  \prod_{i\in A} \frac{1}{p^{u_i}} \\
&= \prod_{1\leq i \leq j \leq 2k}\pth{1-\frac{1}{p^{u_i+u_j}}}^{-1} \bigg[\prod_{1\leq i < j \leq 2k}\pth{1-\frac{1}{p^{u_i+u_j}}}\sum_{\substack{A\subset\set{1,\ldots,2k}\\ \abs{A} \equiv 0 \mod{2}}}  \prod_{i\in A} \frac{1}{p^{u_i}}\bigg]\\
&= \prod_{1\leq i \leq j \leq 2k}\pth{1-\frac{1}{p^{u_i+u_j}}}^{-1} C_{p,2k}(\boldu).
\end{align*}
Finally, note that 
\[
\sum_{\substack{A\subset\set{1,\ldots,2k}\\ \abs{A} \equiv 0 \mod{2}}}  \prod_{i\in A} \frac{1}{p^{u_i}} = 1+\sum_{1\leq i< j \leq 2k} \frac{1}{p^{u_i+u_j}} + O\fracp{1}{p^{4\theta}}
\]
upon isolating the terms with $\abs{A} = 0$ and $\abs{A} = 2$. 
\end{proof}

\subsection{Proof of Lemma \ref{lem:DirichletSeriesDiagonal}}

Recall $B_p(\boldu)=B_p(\boldu;0) = B_{p,4}(\boldu;0)$  in  \eqref{eq:Bdef} and 
 $C_p(\boldu) = C_{p,4}(\boldu)$ by \eqref{def:Cpu} and \eqref{def:Cpu-2}. By Lemma \ref{lem:BsumEval}, we have
\begin{align}
\cold_{\ell}(\boldu) &= \sum_{\substack{(n_1n_2n_3n_4,2)=1\\ n_1n_2n_3n_4=\square}} \frac{1}{n_1^{u_1}n_2^{u_2}n_3^{u_3}n_4^{u_4}} \prod_{p \mid n_1n_2n_3n_4} g_{\ell}(p) \nonumber\\
&= \prod_{p\neq 2} \sumpth{1+g_\ell(p)\sum_{\substack{n_1,n_2,n_3,n_4\geq0\\ n_1+n_2+n_3+n_4\equiv 0 \mod{2}\\ n_1+n_2+n_3+n_4>0}} \frac{1}{p^{n_1u_1+n_2u_2+n_3u_3+n_4u_4}}} \label{equ:D1-111}\\
&= \prod_{p\neq 2} \Big(1-g_\ell(p) + g_\ell(p) B_p(\boldu)\Big) \label{equ:D1-111++}\\
&= \sumpth{\prod_{1\leq i\leq j\leq {4}} \zeta_2(u_i+u_j)} \prod_{p\neq 2}  \sumpth{(1-g_\ell(p)) \prod_{1\leq i \leq j \leq 4} \pth{1-\frac{1}{p^{u_i+u_j}}}+ g_\ell(p)C_p(\boldu)}\nonumber \\
&= \sumpth{\prod_{1\leq i\leq j\leq {4}} \zeta_2(u_i+u_j)} \prod_{p\neq 2} \colh_{\ell,p}(\boldu),\label{equ:D1-112}
\end{align}
say. For $\Re(u_i) \geq \theta \geq \frac{1}{4}+\ep$, note that since $1-g_\ell(p) \ll \frac{1}{p}$ and $g_\ell(p) \asymp 1$, we have
\[
\colh_{\ell,p}(\boldu) = 1-g_\ell(p) + O\fracp{1}{p^{1+2\theta}} + g_\ell(p)\pth{1+O\fracp{1}{p^{4\theta}}} = 1 + O\fracp{1}{p^{1+4\ep}}.
\]
Thus $\colh_\ell(\boldu) = \prod_{p\neq 2} \colh_{\ell,p}(\boldu)$ is analytic and bounded uniformly for $\Re(u_i) \geq \frac{1}{4}+\ep$.

To prove the identities for $\colh_\ell(\half,\half,\half,\half)$, note that
\begin{equation}\label{eq:Chalf}
C_p\pth{\half,\half,\half,\half} = \pth{1-\frac{1}{p}}^{6}\pth{1+\frac{6}{p}+\frac{1}{p^2}},
\end{equation}
and so 
\begin{align*}
\colh_\ell\pth{\half,\half,\half,\half} &= \prod_{p\neq 2} \pth{(1-g_\ell(p))\pth{1-\frac{1}{p}}^{10} +g_\ell(p)\pth{1-\frac{1}{p}}^{6}\pth{1+\frac{6}{p}+\frac{1}{p^2}}}\\
&= \prod_{p\neq 2} \pth{1-\frac{1}{p}}^{10}\pth{1-g_\ell(p) +g_\ell(p)\pth{1-\frac{1}{p}}^{-4}\pth{1+\frac{6}{p}+\frac{1}{p^2}}}.
\end{align*}
In particular, a straightforward calculation gives
\begin{equation}\label{eq:Colh1Half}
\colh_1\pth{\half,\half,\half,\half} = \prod_{p\neq 2} \pth{1-\frac{1}{p}}^{7}\pth{1+\frac{7}{p}-\frac{2}{p^2}+\frac{3}{p^3}-\frac{1}{p^4}}
\end{equation}
and
\[
\colh_2\pth{\half,\half,\half,\half} = \prod_{p\neq 2} \frac{\pth{1-\frac{1}{p}}^{10}}{1+\frac{1}{p}}\sumpth{\frac{1}{p} +\pth{1-\frac{1}{p}}^{-4}\pth{1+\frac{6}{p}+\frac{1}{p^2}}}.
\]
The identity \eqref{equ:h2} now follows from
\[
\pth{1-\frac{1}{p}}^{-4}\pth{1+\frac{6}{p}+\frac{1}{p^2}} = \frac{1}{2} \sumpth{\pth{1+\frac{1}{\sqrt{p}}}^{-4}+ \pth{1-\frac{1}{\sqrt{p}}}^{-4}},
\]
which is easily seen by 
\begin{align*}
\pth{1+\frac{1}{\sqrt{p}}}^{-4}+ \pth{1-\frac{1}{\sqrt{p}}}^{-4} &= \pth{1-\frac{1}{p}}^{-4}\sumpth{\pth{1+\frac{1}{\sqrt{p}}}^{4}+ \pth{1-\frac{1}{\sqrt{p}}}^{4}} \\
&= \pth{1-\frac{1}{p}}^{-4} \pth{2+\frac{12}{p}+\frac{2}{p^2}}.
\end{align*}

\subsection{Proof of Lemma \ref{lem:DirichletSeriesOffDiagonal}}
Throughout the proof, bold symbols indicate the associated 4-tuple of variables. Thus $\boldu = (u_1,u_2,u_3,u_4)$ and $\boldn = (n_1,n_2,n_3,n_4)$, and the variables $n_i$ are non-negative integers. Write $ \eta = \underset{i=1,2,3,4}{\min }\{\Re(u_i),\tfrac{1}{2}\}$ and $\theta = \min \{\Re(s),\tfrac{1}{2}\}$. Note  that in the regions of $u_i$ and $s$ considered in the lemma, we have $\eta+\theta > \frac{1}{4}$ and $\Re(u_i) \geq -\frac{1}{20}$. Let us first show that it suffices to handle $Z$. Let
\[
J(k_2) := \frac{1}{k_2^{2s}} \summany_{(n_1n_2n_3n_4,2a)=1} \frac{1}{n_1^{u_1}n_2^{u_2}n_3^{u_3}n_4^{u_4}} \frac{G_{k_1k_2^2}(n_1n_2n_3n_4)}{n_1n_2n_3n_4}.
\]
Then
\[
Z^*(\boldu,s;k_1,a) = \sum_{\substack{k_2 \geq 1 \\ k_2\ \text{even}}} J(k_2) - \sum_{\substack{k_2 \geq 1 \\ k_2\ \text{odd}}} J(k_2) = 2 \sum_{\substack{k_2 \geq 1 \\ k_2\ \text{even}}} J(k_2) - \sum_{k_2 \geq 1} J(k_2).
\]
Since $G_{4k}(n) = G_k(n)$ for any odd $n$ and any $k\neq 0$, we have $J(2k_2) = 2^{-2s}J(k_2)$, and so
\[
Z^*(\boldu,s;k_1,a) = \pth{2^{1-2s} - 1} Z(\boldu,s;k_1,a).
\]

We now investigate the function $Z$. By multiplicativity,
\[
Z(\boldu,s;k_1,a) = \prod_p \colf_p(\boldu,s;k_1,a),
\]
where for $p\nmid 2a$
\[
\colf_p(\boldu,s;k_1,a) := \sum_{k_2 \geq 0} \frac{1}{p^{2k_2s}}\sum_{\boldn} \frac{1}{p^{n_1u_1+n_2u_2+n_3u_3+n_4u_4}} \frac{G_{k_1p^{2k_2}}(p^{n_1+n_2+n_3+n_4})}{p^{n_1+n_2+n_3+n_4}},
\]
and for $p\mid 2a$
\[
\colf_p(\boldu,s;k_1,a) = \pth{1-\frac{1}{p^{2s}}}^{-1}.
\]
When $p\mid a$, we write $\colf_p$ as
\begin{align*}
\colf_p(\boldu,s;k_1,a) &= \Bigg[\pth{1-\frac{1}{p^{2s}}} Z_{2,p}(\boldu,s;k_1,a)\Bigg]^{-1}Z_{2,p}(\boldu,s;k_1,a)
\end{align*}
where
\begin{align}
Z_{2,p}(\boldu,s;k_1,a) &= \prod_{i=1}^4 \sumpth{1-\frac{\chi_\mathfrak{m}(p)}{p^{\frac{1}{2}+u_i}}}\sumpth{1-\frac{\chi_\mathfrak{m}(p)}{p^{\frac{1}{2}+u_i+2s}}}^{-1} \nonumber\\
&\quad\times\prod_{1\leq i\leq j\leq 4} \pth{1-\frac{1}{p^{u_i+u_j+2s}}}\pth{1-\frac{1}{p^{u_i+u_j+1}}}^{-1},
\label{eq:Z2,p divides 2a}
\end{align}
For primes not dividing $2a$, we divide into two cases depending as $p \mid k_1$ or not. \\

\noindent\textbf{Case 1:} $p\nmid 2a$ and $p \mid k_1$. In this case
\begin{align}
\colf_p(\boldu,s;k_1,a) &= \sum_{k_2\geq 0} \frac{1}{p^{2k_2s}} \bigg(\sum_{h=0}^{k_2} \frac{\phi(p^{2h})}{p^{2h}} \sum_{\substack{\boldn\\ n_1+n_2+n_3+n_4=2h}} \frac{1}{p^{n_1u_1+n_2u_2+n_3u_3+n_4u_4}} \nonumber\\
&\quad- \frac{1}{p} \sum_{\substack{\boldn\\ n_1+n_2+n_3+n_4=2k_2+2}} \frac{1}{p^{n_1u_1+n_2u_2+n_3u_3+n_4u_4}}\bigg),\nonumber \\
&=: \colf_p^{(1)}(\boldu,s) - \colf_p^{(2)}(\boldu,s),
\label{eq:F,k1}
\end{align}
say. The first term is
\begin{align*}
\colf_p^{(1)}(\boldu,s)& = \sum_{k_2\geq 0} \frac{1}{p^{2k_2s}} \sum_{h=0}^{k_2} \frac{\phi(p^{2h})}{p^{2h}} \sum_{\substack{\boldn\\ n_1+n_2+n_3+n_4=2h}} \frac{1}{p^{n_1u_1+n_2u_2+n_3u_3+n_4u_4}}
\\
&= \sum_{h\geq 0}\frac{\phi(p^{2h})}{p^{2h}} \sum_{\substack{\boldn\\ n_1+n_2+n_3+n_4=2h}} \frac{1}{p^{n_1u_1+n_2u_2+n_3u_3+n_4u_4}} \sum_{k_2\geq h} \frac{1}{p^{2k_2s}}  \\ 
&=\pth{1-\frac{1}{p^{2s}}}^{-1} \sum_{h\geq 0}\frac{\phi(p^{2h})}{p^{2h+2hs}}\sum_{\substack{\boldn\\ n_1+n_2+n_3+n_4=2h}} \frac{1}{p^{n_1u_1+n_2u_2+n_3u_3+n_4u_4}}. 
\end{align*}
Isolating the $h=0$ term and inverting the order of summation in the remaining sum, we find that the above is
\begin{equation*}%\label{eq:F,k1,first}
\colf_p^{(1)}(\boldu,s) = \pth{1-\frac{1}{p^{2s}}}^{-1}\sumpth{\frac{1}{p}+ \pth{1-\frac{1}{p}} B_p(\boldu+s)},
\end{equation*}
where $B_p(\boldu)$ is defined by \eqref{eq:Bdef}. By a similar calculation, the second term of \eqref{eq:F,k1} is
\begin{align*}
\colf_p^{(2)}(\boldu,s) &=-\frac{1}{p^{1-2s}} \sum_{k_2\geq 0} \frac{1}{p^{2k_2s}} \sum_{\substack{\boldn\\ n_1+n_2+n_3+n_4=2k_2+2}} \frac{1}{p^{n_1u_1+n_2u_2+n_3u_3+n_4u_4}}  \\
&= \frac{1}{p^{1-2s}}\sumpth{1- B_p(\boldu+s)},
\end{align*}
and so
\[
\colf_p(\boldu,s;k_1,a) = \pth{1-\frac{1}{p^{2s}}}^{-1}  Y_p(\boldu,s;k_1,a),
\]
where
\begin{align*}
&Y_p(\boldu,s;k_1,a)\\
&:= \frac{1}{p} + \pth{1-\frac{1}{p}} B_p(\boldu+s) + \pth{1-\frac{1}{p^{2s}}} \frac{1}{p^{1-2s}}(1-B_p(\boldu+s)) \\
&= \frac{1}{p^{1-2s}} +\pth{1-\frac{1}{p^{1-2s}}}B_p(\boldu+s) \\
&=\prod_{1\leq i \leq j \leq 4} \pth{1-\frac{1}{p^{u_i+u_j+2s}}}^{-1} \Bigg[\frac{1}{p^{1-2s}}\prod_{1\leq i \leq j \leq 4} \pth{1-\frac{1}{p^{u_i+u_j+2s}}} + \pth{1-\frac{1}{p^{1-2s}}} C_p(\boldu+s)\Bigg]
\end{align*}
by Lemma \ref{lem:BsumEval}. We then write this as
\[
Y_p(\boldu,s;k_1,a) = \prod_{1\leq i \leq j \leq 4} \pth{1-\frac{1}{p^{u_i+u_j+2s}}}^{-1} \prod_{1\leq i \leq j \leq 4} \pth{1-\frac{1}{p^{u_i+u_j+1}}} Z_{2,p}(\boldu,s;k_1,a),
\]
where
%\label{eq:Z2,p divides k1}
\begin{align*}
Z_{2,p}(\boldu,s;k_1,a) &:= \prod_{1\leq i \leq j \leq 4} \pth{1-\frac{1}{p^{u_i+u_j+1}}}^{-1} \\
&\quad\times\Bigg[\frac{1}{p^{1-2s}}\prod_{1\leq i \leq j \leq 4} \pth{1-\frac{1}{p^{u_i+u_j+2s}}} + \pth{1-\frac{1}{p^{1-2s}}} C_p(\boldu+s)\Bigg]
\end{align*}
for $p\nmid 2a$ and $p|k_1$.
By Lemmas \ref{lem:EulerFactorBound} and \ref{lem:BsumEval}, we have
\begin{align}
Z_{2,p}(\boldu,s;k_1,a) &= \prod_{1\leq i \leq j \leq 4} \pth{1-\frac{1}{p^{u_i+u_j+1}}}^{-1}  \sumpth{1 - \sum_{1\leq i \leq j \leq 4} \frac{1}{p^{u_i+u_j+1}} + O\fracp{1}{p^{4\eta+2\Re(s) + 2\theta}}}\nonumber\\
&= 1 + O\pth{\frac{1}{p^{4\eta+2\Re(s) + 2\theta}} + \frac{1}{p^{4\eta + 2}}}
\label{eq:Z2bound,p divides k1}
\end{align}
 Combining the above estimates, we have that when $p\nmid 2a$, $p \mid k_1$, 
\[
\colf_p(\boldu,s;k_1,a) = \pth{1-\frac{1}{p^{2s}}}^{-1}\prod_{1\leq i \leq j \leq 4} \pth{1-\frac{1}{p^{u_i+u_j+2s}}}^{-1} \pth{1-\frac{1}{p^{u_i+u_j+1}}} Z_{2,p}(\boldu,s;k_1,a).
\]
Note that since $p \mid k_1$, we have $\chi_\mathfrak{m}(p) = 0$, and so the above is exactly the Euler factor at $p$ in the expression for $Z$ given in the lemma.\\

\noindent\textbf{Case 2:} $p\nmid 2ak_1$. In this case
\begin{align*}
\colf_p(\boldu,s;k_1,a) &= \sum_{k_2\geq 0} \frac{1}{p^{2k_2s}} \bigg(\sum_{h=0}^{k_2} \frac{\phi(p^{2h})}{p^{2h}} \sum_{\substack{\boldn\\ n_1+n_2+n_3+n_4=2h}} \frac{1}{p^{n_1u_1+n_2u_2+n_3u_3+n_4u_4}} \\
&\quad + \frac{\chi_\mathfrak{m}(p)}{\sqrt{p}} \sum_{\substack{\boldn\\ n_1+n_2+n_3+n_4=2k_2+1}} \frac{1}{p^{n_1u_1+n_2u_2+n_3u_3+n_4u_4}}\bigg), \\
&= \colf_p^{(1)}(\boldu,s) + \colf_p^{(2)}(\boldu,s),
\end{align*}
As before, the first sum is
\[
\colf_p^{(1)}(\boldu,s) = \pth{1-\frac{1}{p^{2s}}}^{-1}\sumpth{\frac{1}{p}+ \pth{1-\frac{1}{p}} B_p(\boldu+s)},
\]
and the second sum is now
\[
\colf_p^{(2)}(\boldu,s) = \frac{\chi_\mathfrak{m}(p)}{\sqrt{p}} \sum_{k_2\geq 0} \frac{1}{p^{2k_2s}} \sum_{\substack{\boldn\\ n_1+n_2+n_3+n_4=2k_2+1}} \frac{1}{p^{n_1u_1+n_2u_2+n_3u_3+n_4u_4}} =\frac{\chi_\mathfrak{m}(p)}{p^{\frac{1}{2}-s}} B_p(\boldu+s;1).
\]
Thus
\begin{equation}\label{eq:colfY-no-k1}
\colf_p(\boldu,s;k_1,a) = \pth{1-\frac{1}{p^{2s}}}^{-1} \prod_{i=1}^4  \sumpth{1-\frac{\chi_\mathfrak{m}(p)}{p^{\frac{1}{2}+u_i}}}^{-1}Y_p(\boldu,s;k_1,a),
\end{equation}
where now
\begin{align}
Y_p(\boldu,s;k_1,a) &= \prod_{i=1}^4  \sumpth{1-\frac{\chi_\mathfrak{m}(p)}{p^{\frac{1}{2}+u_i}}} \Bigg[\frac{1}{p} + \pth{1-\frac{1}{p}}B_p(\boldu+s;0) + \pth{1-\frac{1}{p^{2s}}}\frac{\chi_\mathfrak{m}(p)}{p^{\frac{1}{2}-s}}B_p(\boldu+s,1)\Bigg]\nonumber \\
&= \prod_{i=1}^4  \sumpth{1-\frac{\chi_\mathfrak{m}(p)}{p^{\frac{1}{2}+u_i}}} \Bigg[\frac{1}{p} + \pth{1-\frac{1}{p}}B_p(\boldu+s;0)\nonumber \\
&\quad+ \pth{1-\frac{1}{p^{2s}}}\frac{\chi_\mathfrak{m}(p)}{p^{\frac{1}{2}-s}} \sumpth{\prod_{i=1}^4  \pth{1-\frac{1}{p^{u_i+s}}}^{-1} - B_p(\boldu+s;0)}\Bigg].
\label{eq:Y-no-k1}
\end{align}
At this point, we note that
\[
Y_p\pth{\boldu,\half;1,1} = 1-\frac{1}{p} + \frac{1}{p}\prod_{i=1}^4  \sumpth{1-\frac{1}{p^{\frac{1}{2}+u_i}}},
\]
which proves \eqref{eq:resZathalf}, and also
\[
Y_p\pth{\boldu,0;1,1} =  \pth{\frac{1}{p} + \pth{1-\frac{1}{p}}B_p(\boldu;0)} \prod_{i=1}^4  \sumpth{1-\frac{1}{p^{\frac{1}{2}+u_i}}}
\]
which proves \eqref{eq:resZat0}. 

We now evaluate the factors $Y_p$ more explicitly for arbitrary $s$ in the case when $p\nmid 2ak_1$. The expression in brackets in \eqref{eq:Y-no-k1} is
%\label{eq:YpBracketsExpr}
\begin{align*}
\frac{1}{p} + B_p(\boldu+s;0) \pth{1-\frac{1}{p} - \frac{\chi_\mathfrak{m}(p)}{p^{\frac{1}{2}-s}} + \frac{\chi_\mathfrak{m}(p)}{p^{\frac{1}{2}+s}}} + \pth{1-\frac{1}{p^{2s}}}\frac{\chi_\mathfrak{m}(p)}{p^{\frac{1}{2}-s}} \prod_{i=1}^4  \pth{1-\frac{1}{p^{u_i+s}}}^{-1},
\end{align*}
and by Lemma \ref{lem:BsumEval}, this equals
\begin{align*}
&\prod_{1\leq i \leq j \leq 4} \pth{1-\frac{1}{p^{u_i+u_j+2s}}}^{-1}\Bigg[\frac{1}{p}\prod_{1\leq i \leq j \leq 4} \pth{1-\frac{1}{p^{u_i+u_j+2s}}} + C_p(\boldu+s) \pth{1-\frac{1}{p} - \frac{\chi_\mathfrak{m}(p)}{p^{\frac{1}{2}-s}} + \frac{\chi_\mathfrak{m}(p)}{p^{\frac{1}{2}+s}}} \\
&\quad+ \pth{1-\frac{1}{p^{2s}}}\frac{\chi_\mathfrak{m}(p)}{p^{\frac{1}{2}-s}} \prod_{i=1}^4  \pth{1+\frac{1}{p^{u_i+s}}}\prod_{1\leq i < j \leq 4}  \pth{1-\frac{1}{p^{u_i+u_j+2s}}}\Bigg].
\end{align*}
Inserting this into \eqref{eq:Y-no-k1} and rewriting, we have
\begin{align*}
Y_p(\boldu,s;k_1,a) &= \prod_{1\leq i \leq j \leq 4} \pth{1-\frac{1}{p^{u_i+u_j+2s}}}^{-1}  \prod_{1\leq i \leq j \leq 4} \pth{1-\frac{1}{p^{u_i+u_j+1}}} \prod_{i=1}^4 \sumpth{1-\frac{\chi_\mathfrak{m}(p)}{p^{\frac{1}{2}+u_i+2s}}} \\
&\quad\times Z_{2,p}(\boldu,s;k_1,a),
\end{align*}
where
\begin{align}
Z_{2,p}(\boldu,s;k_1,a) &= \prod_{i=1}^4 \sumpth{1-\frac{\chi_\mathfrak{m}(p)}{p^{\frac{1}{2}+u_i}}}\sumpth{1-\frac{\chi_\mathfrak{m}(p)}{p^{\frac{1}{2}+u_i+2s}}}^{-1} \prod_{1\leq i\leq j\leq 4}\pth{1-\frac{1}{p^{u_i+u_j+1}}}^{-1}\nonumber\\
&\quad\times\Bigg[\frac{1}{p}\prod_{1\leq i \leq j \leq 4} \pth{1-\frac{1}{p^{u_i+u_j+2s}}} + C_p(\boldu+s) \sumpth{1-\frac{1}{p} - \frac{\chi_\mathfrak{m}(p)}{p^{\frac{1}{2}-s}} + \frac{\chi_\mathfrak{m}(p)}{p^{\frac{1}{2}+s}}}\nonumber \\
&\quad+ \pth{1-\frac{1}{p^{2s}}}\frac{\chi_\mathfrak{m}(p)}{p^{\frac{1}{2}-s}} \prod_{i=1}^4  \pth{1+\frac{1}{p^{u_i+s}}}\prod_{1\leq i < j \leq 4}  \pth{1-\frac{1}{p^{u_i+u_j+2s}}}\Bigg].
\label{eq:Z2, p not dividing k1}
\end{align}
The expression in brackets can now be expanded into a finite sum. The first summand in brackets is
\[
\frac{1}{p}\prod_{1\leq i \leq j \leq 4} \pth{1-\frac{1}{p^{u_i+u_j+2s}}} = \frac{1}{p} + O\fracp{1}{p^{2\eta+2\Re(s)+1}},
\]
the second is
\begin{align*}
&C_p(\boldu+s) \pth{1-\frac{1}{p} - \frac{\chi_\mathfrak{m}(p)}{p^{\frac{1}{2}-s}} + \frac{\chi_\mathfrak{m}(p)}{p^{\frac{1}{2}+s}}} \\
&= 1-\frac{1}{p} - \frac{\chi_\mathfrak{m}(p)}{p^{\frac{1}{2}-s}} + \frac{\chi_\mathfrak{m}(p)}{p^{\frac{1}{2}+s}} + O\pth{\frac{1}{p^{4\eta+4\Re(s)}} + \frac{1}{p^{4\eta+3\Re(s) + \frac{1}{2}}}  +  \frac{1}{p^{4\eta + 5\Re(s) + \frac{1}{2}}}},
\end{align*}
and the third is
\begin{align*}
&\pth{1-\frac{1}{p^{2s}}}\frac{\chi_\mathfrak{m}(p)}{p^{\frac{1}{2}-s}}\prod_{i=1}^4  \pth{1+\frac{1}{p^{u_i+s}}}\prod_{1\leq i < j \leq 4}  \pth{1-\frac{1}{p^{u_i+u_j+2s}}} \\
&= \frac{\chi_\mathfrak{m}(p)}{p^{\frac{1}{2}-s}} - \frac{\chi_\mathfrak{m}(p)}{p^{\frac{1}{2}+s}} + \sum_{i=1}^4 \frac{\chi_\mathfrak{m}(p)}{p^{\frac{1}{2}+u_i}} - \sum_{i=1}^4 \frac{\chi_\mathfrak{m}(p)}{p^{\frac{1}{2}+u_i+2s}} + O\pth{\frac{1}{p^{3\eta + 2\Re(s)+\frac{1}{2}}} + \frac{1}{p^{3\eta + 4\Re(s)+\frac{1}{2}}}}.
\end{align*}
Thus the expression in brackets in \eqref{eq:Z2, p not dividing k1} is 
\begin{align*}
1 + \sum_{i=1}^4 \frac{\chi_\mathfrak{m}(p)}{p^{\frac{1}{2}+u_i}} - \sum_{i=1}^4 \frac{\chi_\mathfrak{m}(p)}{p^{\frac{1}{2}+u_i+2s}} + O\pth{\frac{1}{p^{2\eta + 2\Re(s) + 1}} + \frac{1}{p^{4\eta+4\Re(s)}} + \frac{1}{p^{3\eta + 2\Re(s)+\frac{1}{2}}} + \frac{1}{p^{3\eta + 4\Re(s)+\frac{1}{2}}}},
\end{align*}
where we have used the fact $\eta+ \Re(s) \geq \eta+\theta>1/4$.
As well, we have
\[
\prod_{i=1}^4 \pth{1-\frac{\chi_\mathfrak{m}(p)}{p^{\frac{1}{2}+u_i}}} = 1 - \sum_{i=1}^4 \frac{\chi_\mathfrak{m}(p)}{p^{\frac{1}{2}+u_i}} + \sum_{1\leq i < j \leq 4} \frac{1}{p^{1+u_i+u_j}} +  O\fracp{1}{p^{3\eta+\frac{3}{2}}}.
\]
Multiplying the expressions in the previous two displays, we find that
\begin{align*}
&Z_{2,p}(\boldu,s;k_1,a) \\
&= \prod_{i=1}^4 \sumpth{1-\frac{\chi_\mathfrak{m}(p)}{p^{\frac{1}{2}+u_i+2s}}}^{-1} \prod_{1\leq i\leq j\leq 4}\pth{1-\frac{1}{p^{u_i+u_j+1}}}^{-1} \\
&\quad \times \sumpth{1 - \sum_{1\leq i \leq j\leq 4} \frac{1}{p^{u_i+u_j+1}} - \sum_{i=1}^4 \frac{\chi_\mathfrak{m}(p)}{p^{\frac{1}{2}+u_i+2s}} + O\pth{\frac{1}{p^{3\eta+2\theta+\frac{1}{2}}} + \frac{1}{p^{4\eta+4\theta}} + \frac{1}{p^{3\eta + 4\Re(s)+\frac{1}{2}}}}},
\end{align*}
where we have used the fact $3\eta+\frac{3}{2}, 1+2\eta + 2\Re(s) \geq 1+ 2\theta +3\eta$.
By Lemma \ref{lem:EulerFactorBound} we find that
\begin{equation}\label{eq:Z2bound,p not dividing k1}
Z_{2,p}(\boldu,s;k_1,a) = 1 + O\pth{\frac{1}{p^{3\eta+2\theta+\frac{1}{2}}} + \frac{1}{p^{4\eta+4\theta}} + \frac{1}{p^{3\eta + 4\Re(s)+\frac{1}{2}}}}.
\end{equation}
Combining this with \eqref{eq:colfY-no-k1}, we have
\begin{align*}
\colf_p(\boldu,s;k_1,a) &= \pth{1-\frac{1}{p^{2s}}}^{-1} \prod_{i=1}^4  \pth{1-\frac{\chi_\mathfrak{m}(p)}{p^{\frac{1}{2}+u_i}}}^{-1} \prod_{1\leq i\leq j \leq 4} \pth{1-\frac{1}{p^{u_i+u_j+2s}}}^{-1} \pth{1-\frac{1}{p^{u_i+u_j+1}}} \\
&\quad \times \prod_{i=1}^4 \pth{1-\frac{\chi_\mathfrak{m}(p)}{p^{\frac{1}{2}+u_i+2s}}}^{-1} Z_{2,p}(\boldu,s;k_1,a).
\end{align*}
As in Case 1, this is exactly the Euler factor at $p$ in the expression for $Z$ given in the lemma.

We now prove that in the regions (i) --- (iii) of the lemma, we have the bound
\[
Z_2(\boldu,s;k_1,a) \ll \tau(a).
\]
If $p\nmid 2a$, $p \mid k_1$, then 
\[
Z_{2,p}(\boldu,s;k_1,a) = 1 + O\fracp{1}{p^{4\eta+2\Re(s) + 2\theta}}
\]
by \eqref{eq:Z2bound,p divides k1}, and if $p\nmid 2ak_1$, then
\[
Z_{2,p}(\boldu,s;k_1,a) = 1 + O\sumpth{\frac{1}{p^{3\eta+2\theta+\frac{1}{2}}} + \frac{1}{p^{4\eta+4\theta}} + \frac{1}{p^{3\eta + 4\Re(s)+\frac{1}{2}}}}
\]
by \eqref{eq:Z2bound,p not dividing k1}. We note that in any of the regions (i) -- (iii), we have that all of
\[
4\eta + 2\Re(s) + 2\theta,\ 3\eta+2\theta+\half,\ 4\eta + 4\theta,\ 3\eta+4\Re(s) + \half
\]
are bounded below by $1+\theta'$ for some absolute $\theta' > 0$. It follows that we need only consider the ``correction'' factors for $p \mid a$ given in \eqref{eq:Z2,p divides 2a}. These are given by
\begin{align*}
&Z_{2,p}(\boldu,s;k_1,a) \\
&= \prod_{i=1}^4 \pth{1-\frac{\chi_\mathfrak{m}(p)}{p^{\frac{1}{2}+u_i}}}\pth{1-\frac{\chi_\mathfrak{m}(p)}{p^{\frac{1}{2}+u_i+2s}}}^{-1} \prod_{1\leq i \leq j \leq 4} \pth{1-\frac{1}{p^{u_i+u_j+2s}}}\pth{1-\frac{1}{p^{u_i+u_j+1}}}^{-1}.
\end{align*}
and thus
\begin{align*}
&\abs{Z_{2,p}(\boldu,s;k_1,a)} \\
&\leq \prod_{i=1}^4 \sumpth{1+\frac{1}{p^{\frac{1}{2}+\eta}}}\sumpth{1-\frac{1}{p^{\frac{1}{2}+\eta+2\Re(s)}}}^{-1} \prod_{1\leq i \leq j \leq 4} \pth{1+\frac{1}{p^{2\eta+2\Re(s)}}}\pth{1-\frac{1}{p^{2\eta+1}}}^{-1} \\
&=\prod_{i=1}^4 \sumpth{1+\frac{1}{p^{\eta+\frac{1}{2}+2 \min(0,\Re(s))}} + O\fracp{1}{p^{2\eta+4\Re(s) + 1}}} \prod_{1\leq i \leq j \leq 4} \pth{1+ O\fracp{1}{p^{2\eta+2\theta}}}.
\end{align*}
The worst case occurs in the region given by (ii), and we have
\[
\abs{Z_{2,p}(\boldu,s;k_1,a)} \leq \prod_{i=1}^4 \sumpth{1+O\sumpth{\frac{1}{p^\frac{1}{2}}}}^{14} \leq \prod_{i=1}^4 \sumpth{1+\frac{C}{p^\frac{1}{2}}}
\]
for some absolute positive constant $C$. Thus
\begin{align*}
\prod_{p \mid a} \abs{Z_{2,p}(\boldu,s;k_1,a)} &= \prod_{\substack{p \mid a\\ p\leq C^2}} \abs{Z_{2,p}(\boldu,s;k_1,a)} \prod_{\substack{p \mid a\\ p>C^2}} \abs{Z_{2,p}(\boldu,s;k_1,a)} \\
&\leq \prod_{\substack{p \mid a\\ p\leq C^2}} \sumpth{1+\frac{C}{p^\frac{1}{2}}} \prod_{\substack{p \mid a\\ p>C^2}} 2 \\
&\leq 2^{\omega(a)}\prod_{p\leq C^2} \sumpth{1+\frac{C}{p^\frac{1}{2}}} \ll \tau(a).
\end{align*}
\subsection{Proof of Lemma \ref{lem:DirichletSeriesSuma}}
Since $k_1 = 1$, note that if $p \mid a$, then by \eqref{eq:Z2,p divides 2a},
\begin{align*}
Z_{2,p}(\boldu,s;1,a) &= \prod_{i=1}^4 \sumpth{1-\frac{1}{p^{\frac{1}{2}+u_i}}}\sumpth{1-\frac{1}{p^{\frac{1}{2}+u_i+2s}}}^{-1} \prod_{1\leq i \leq j \leq 4} \pth{1-\frac{1}{p^{u_i+u_j+2s}}}\pth{1-\frac{1}{p^{u_i+u_j+1}}}^{-1}, \\
&= Z^\flat(p),
\end{align*}
say. Likewise, if $p\nmid 2a$, then by \eqref{eq:Y-no-k1} and the definition of $Y$ given in the statement of Lemma \ref{lem:DirichletSeriesOffDiagonal}, we have
\begin{align*}
Z_{2,p}(\boldu,s;1,a) &= Z^\flat(p)\Bigg[\frac{1}{p} + \pth{1-\frac{1}{p}}B_p(\boldu+s;0) + \pth{1-\frac{1}{p^{2s}}}\frac{1}{p^{\frac{1}{2}-s}}B_p(\boldu+s,1)\Bigg]\\
&= Z^\flat(p)Z^\sharp(p).
\end{align*}
say. Thus
\[
Z_2(\boldu,s;1,a) = \prod_{p\neq 2} Z^\flat(p) \prod_{p\nmid 2a} Z^\sharp(p) = \prod_{p\neq 2} Z^\flat(p)Z^\sharp(p) \sumpth{\prod_{p\mid a} Z^\sharp(p)}^{-1}.
\]
Since $Z_2(\boldu,s,1,a)$ is multiplicative in $a$, we have
\begin{align*}
Z_3(\boldu,s) &= \sumpth{\prod_{p\neq 2} Z^\flat(p)Z^\sharp(p)} \sum_{(a,2)=1} \frac{\mu(a)}{a^{2-2s}} \prod_{p\mid a} Z^\sharp(p)^{-1} \\
&=\sumpth{\prod_{p\neq 2} Z^\flat(p)}\prod_{p\neq 2} \pth{Z^\sharp(p)-\frac{1}{p^{2-2s}}}.
\end{align*}
Since
\begin{equation}\label{eq:Z3flat}
\prod_{p\neq 2} Z^\flat(p) = \prod_{1\leq i\leq j\leq 4}\frac{\zeta_2(u_i+u_j+1)}{\zeta_2(u_i+u_j+2s)} \prod_{i=1}^4 \frac{\zeta_2(\frac{1}{2}+u_i+2s)}{\zeta_2(\frac{1}{2}+u_i)},
\end{equation}
this gives \eqref{eq:Z3}. It remains to see that $Z_3(\boldu,s)$ is analytic and bounded uniformly in the region given by \eqref{eq:Z3Region}. First recall that
% \[
% \begin{aligned}
% B_q(\boldu+s,1) &= \prod_{i=1}^4 \pth{1-\frac{1}{q^{u_i+s}}}^{-1} - B_q(\boldu+s,0) \\
% &= \prod_{i=1}^4 \pth{1-\frac{1}{q^{u_i+s}}}^{-1} - \prod_{1\leq i \leq j \leq 2k}  \pth{1-\frac{1}{q^{u_i+u_j+2s}}}^{-1} C_q(\boldu+s) \\
% &= \prod_{i=1}^4 \pth{1-\frac{1}{q^{2u_i+2s}}}^{-1}\sumpth{\prod_{i=1}^4\pth{1+\frac{1}{q^{u_i+s}}} - \prod_{1\leq i < j \leq 2k}  \pth{1-\frac{1}{q^{u_i+u_j+2s}}}^{-1} C_q(\boldu+s)}
% \end{aligned}
% \]
\[
Z^\sharp(p)= \frac{1}{p} + \pth{1-\frac{1}{p}}B_p(\boldu+s;0) + \sumpth{\frac{1}{p^{\frac{1}{2}-s}}-\frac{1}{p^{\frac{1}{2}+s}}}B_p(\boldu+s,1),
\]
and also
\[
B_p(\boldu+s,1) = \prod_{i=1}^4 \pth{1-\frac{1}{p^{u_i+s}}}^{-1} - B_p(\boldu+s,0).
\]
Thus
\[
Z^\sharp(p) = \frac{1}{p} + \sumpth{1-\frac{1}{p}-\frac{1}{p^{\frac{1}{2}-s}}+\frac{1}{p^{\frac{1}{2}+s}}}B_q(\boldu+s;0) + \prod_{i=1}^4 \pth{1-\frac{1}{p^{u_i+s}}}^{-1} \sumpth{\frac{1}{p^{\frac{1}{2}-s}}-\frac{1}{p^{\frac{1}{2}+s}}}.
\]
Note that in the region \eqref{eq:Z3Region}, we have
\[
1-\frac{1}{p}-\frac{1}{p^{\frac{1}{2}-s}}+\frac{1}{p^{\frac{1}{2}+s}} \asymp 1
\]
and also the inequalities
\begin{equation}\label{eq:Z3ineqs}
\frac{1}{2} + \Re(s) \geq \frac{9}{20}, \quad \Re(u_i) + \Re(s) \geq \frac{17}{60}.
\end{equation}
Since
\[
\prod_{1\leq i\leq j\leq 4} \pth{1-\frac{1}{p^{u_i+u_j+2s}}} B_p(\boldu+s;0) = C_p(\boldu+s) = 1 + O\fracp{1}{p^{4\theta+4\Re(s)}}
\]
by Lemma \ref{lem:BsumEval}, we have
\begin{align*}
&\prod_{1\leq i\leq j\leq 4} \pth{1-\frac{1}{p^{u_i+u_j+2s}}} \sumpth{Z^\sharp(p) - \frac{1}{p^{2-2s}}} \\
&= \prod_{1\leq i\leq j\leq 4} \pth{1-\frac{1}{p^{u_i+u_j+2s}}}\sumpth{\frac{1}{p}+\prod_{i=1}^4 \pth{1-\frac{1}{p^{u_i+s}}}^{-1} \sumpth{\frac{1}{p^{\frac{1}{2}-s}}-\frac{1}{p^{\frac{1}{2}+s}}}} \\
&\quad  + 1-\frac{1}{p}-\frac{1}{p^{\frac{1}{2}-s}}+\frac{1}{p^{\frac{1}{2}+s}}+ O\sumpth{\frac{1}{p^{4\theta+4\Re(s)}} +\frac{1}{p^{2-2\Re(s)}}} \\
&= \prod_{1\leq i\leq j\leq 4} \pth{1-\frac{1}{p^{u_i+u_j+2s}}}\sumpth{\prod_{i=1}^4 \pth{1-\frac{1}{p^{u_i+s}}}^{-1} \sumpth{\frac{1}{p^{\frac{1}{2}-s}}-\frac{1}{p^{\frac{1}{2}+s}}}} \\
&\quad + 1-\frac{1}{p^{\frac{1}{2}-s}}+\frac{1}{p^{\frac{1}{2}+s}} + O\sumpth{\frac{1}{p^{4\theta+4\Re(s)}} + \frac{1}{p^{1+2\theta+2\Re(s)}} + \frac{1}{p^{2-2\Re(s)}}}.
\end{align*}
Now
\begin{align*}
&\prod_{i=1}^4 \pth{1-\frac{1}{p^{u_i+s}}}^{-1} \sumpth{\frac{1}{p^{\frac{1}{2}-s}}-\frac{1}{p^{\frac{1}{2}+s}}} \\
&= \sumpth{\frac{1}{p^{\frac{1}{2}-s}}-\frac{1}{p^{\frac{1}{2}+s}}}\sumpth{1+\sum_{i=1}^4 \frac{1}{p^{u_i+s}} + O\fracp{1}{p^{2\theta+2\Re(s)}}}\\
&=\frac{1}{p^{\frac{1}{2}-s}}-\frac{1}{p^{\frac{1}{2}+s}} + \sum_{i=1}^4 \sumpth{\frac{1}{p^{u_i+\frac{1}{2}}} - \frac{1}{p^{u_i+2s+\frac{1}{2}}}} + O\sumpth{\frac{1}{p^{2\theta+\frac{1}{2}+\Re(s)}} + \frac{1}{p^{2\theta+\frac{1}{2}+3\Re(s)}}},
\end{align*}
and also
\[
\prod_{1\leq i\leq j\leq 4} \pth{1-\frac{1}{p^{u_i+u_j+2s}}} = 1 + O\fracp{1}{p^{2\theta+2\Re(s)}}.
\]
Thus
\begin{align*}
&\prod_{1\leq i\leq j\leq 4} \pth{1-\frac{1}{p^{u_i+u_j+2s}}}\sumpth{\prod_{i=1}^4 \pth{1-\frac{1}{p^{u_i+s}}}^{-1} \sumpth{\frac{1}{p^{\frac{1}{2}-s}}-\frac{1}{p^{\frac{1}{2}+s}}}} \\
&= \frac{1}{p^{\frac{1}{2}-s}}-\frac{1}{p^{\frac{1}{2}+s}} + \sum_{i=1}^4 \sumpth{\frac{1}{p^{u_i+\frac{1}{2}}} - \frac{1}{p^{u_i+2s+\frac{1}{2}}}}+ O\sumpth{\frac{1}{p^{2\theta+\frac{1}{2}+\Re(s)}} + \frac{1}{p^{2\theta+\frac{1}{2}+3\Re(s)}}}
\end{align*}
Combining these last few estimates and using the inequalities \eqref{eq:Z3ineqs}, we have
\begin{align*}
&\prod_{1\leq i\leq j\leq 4} \pth{1-\frac{1}{p^{u_i+u_j+2s}}} \sumpth{Z^\sharp(p) - \frac{1}{p^{2-2s}}} \\
&= 1 + \sum_{i=1}^4 \sumpth{\frac{1}{p^{u_i+\frac{1}{2}}} - \frac{1}{p^{u_i+2s+\frac{1}{2}}}} + O\sumpth{\frac{1}{p^{1+\frac{1}{60}}}} \\
&= \prod_{i=1}^4 \sumpth{1-\frac{1}{p^{u_i+\frac{1}{2}}}}^{-1} \sumpth{1-\frac{1}{p^{u_i+2s+\frac{1}{2}}}}\sumpth{1 + O\sumpth{\frac{1}{p^{1+\frac{1}{60}}}}}.
\end{align*}
Combining this with \eqref{eq:Z3flat}, we find that in the region \ref{eq:Z3Region}, we have
\begin{align*}
Z_3(\boldu,s) &=  \sumpth{\prod_{1\leq i\leq j\leq 4}\zeta(u_i+u_j+1)} \prod_{p\neq 2}\sumpth{1 + O\sumpth{\frac{1}{p^{1+\frac{1}{60}}}}}.
\end{align*}
This is clearly analytic and uniformly bounded in the region \eqref{eq:Z3Region}, as claimed.

\subsection{Proof of Lemma \ref{lem:iden}}
In this section, we prove Lemma \ref{lem:iden}. Write 
\begin{align}   
E_0^{\#}:
& =\sum_{\substack{n_1, n_2,n_3,n_4 \geq 0 \\ n_1 + n_2 + n_3 + n_4 \equiv 0 \, (\operatorname{mod} 2)}} \frac{1}{p^{(\frac{1}{2}+z_1)n_1+(\frac{1}{2}+z_2)n_2 +(\frac{1}{2}+z_3)n_3 +(\frac{1}{2}+z_4)n_4 }}\nonumber\\
& =\sumpth{\sum_{\substack{n_2 + n_3 + n_4 \, \operatorname{even} \\ n_1 \, \operatorname{even}}}
+
\sum_{\substack{n_2 + n_3 + n_4 \, \operatorname{odd} \\ n_1 \, \operatorname{odd}}} }
\frac{1}{p^{(\frac{1}{2}+z_1)n_1+(\frac{1}{2}+z_2)n_2 +(\frac{1}{2}+z_3)n_3 +(\frac{1}{2}+z_4)n_4 }}\nonumber\\
& =\left(1-\frac{1}{p^{1+2z_1}} \right)^{-1}\sum_{n_2 + n_3 + n_4 \, \operatorname{even}}
\frac{1}{p^{(\frac{1}{2}+z_2)n_2 +(\frac{1}{2}+z_3)n_3 +(\frac{1}{2}+z_4)n_4 }}\nonumber\\
&\quad+
\left[ \left(1-\frac{1}{p^{\frac{1}{2}+ z_1}} \right)^{-1}-\left(1-\frac{1}{p^{1+2z_1}} \right)^{-1} \right] \sum_{n_2 + n_3 + n_4 \, \operatorname{odd}}
\frac{1}{p^{(\frac{1}{2}+z_2)n_2 +(\frac{1}{2}+z_3)n_3 +(\frac{1}{2}+z_4)n_4 }}  \nonumber\\
& =\left(1-\frac{1}{p^{1+2z_1}} \right)^{-1}\sum_{n_2 + n_3 + n_4 \, \operatorname{even}}
\frac{1}{p^{(\frac{1}{2}+z_2)n_2 +(\frac{1}{2}+z_3)n_3 +(\frac{1}{2}+z_4)n_4 }}r\nonumber\\
&\quad+
\frac{1}{p^{\frac{1}{2}+ z_1}}  \left(1-\frac{1}{p^{1+2z_1}} \right)^{-1} \sum_{n_2 + n_3 + n_4 \, \operatorname{odd}}
\frac{1}{p^{(\frac{1}{2}+z_2)n_2 +(\frac{1}{2}+z_3)n_3 +(\frac{1}{2}+z_4)n_4 }} .
\label{E0New}
\end{align}
Similarly,  write 
\begin{align}   
E_1^{\#}
& =\sum_{\substack{n_1, n_2,n_3,n_4 \geq 0 \\ n_1 + n_2 + n_3 + n_4 \equiv 1 \, (\operatorname{mod} 2)}} \frac{1}{p^{(\frac{1}{2}+z_1)n_1+(\frac{1}{2}+z_2)n_2 +(\frac{1}{2}+z_3)n_3 +(\frac{1}{2}+z_4)n_4 }}\nonumber\\
& =\sumpth{\sum_{\substack{n_2 + n_3 + n_4 \, \operatorname{odd} \\ n_1 \, \operatorname{even}}}
+
\sum_{\substack{n_2 + n_3 + n_4 \, \operatorname{even} \\ n_1 \, \operatorname{odd}}} }
\frac{1}{p^{(\frac{1}{2}+z_1)n_1+(\frac{1}{2}+z_2)n_2 +(\frac{1}{2}+z_3)n_3 +(\frac{1}{2}+z_4)n_4 }}\nonumber\\
& =\left(1-\frac{1}{p^{1+2z_1}} \right)^{-1}\sum_{n_2 + n_3 + n_4 \, \operatorname{odd}}
\frac{1}{p^{(\frac{1}{2}+z_2)n_2 +(\frac{1}{2}+z_3)n_3 +(\frac{1}{2}+z_4)n_4 }}\nonumber\\
&\quad+
\frac{1}{p^{\frac{1}{2}+ z_1}}  \left(1-\frac{1}{p^{1+2z_1}} \right)^{-1} \sum_{n_2 + n_3 + n_4 \, \operatorname{even}}
\frac{1}{p^{(\frac{1}{2}+z_2)n_2 +(\frac{1}{2}+z_3)n_3 +(\frac{1}{2}+z_4)n_4 }}  .
 \label{E1New}
 \end{align}
It follows from \eqref{E0New} and \eqref{E1New} that
\begin{align}
&\frac{1}{p} + \left(1 - \frac{1}{p} \right)  E_0^{\#} + \frac{1}{p^{\frac{1}{2}-z_1}} 
\left(1 - \frac{1}{p^{2z_1}} \right)E_1^{\#} -\frac{1}{p^{2-2z_1}}\nonumber \\
&=
K_1 +  K_2 \sum_{n_2 + n_3 + n_4 \, \operatorname{even}}
\frac{1}{p^{(\frac{1}{2}+z_2)n_2 +(\frac{1}{2}+z_3)n_3 +(\frac{1}{2}+z_4)n_4 }}\nonumber\\
&\quad+ K_3  \sum_{n_2 + n_3 + n_4 \, \operatorname{odd}}
\frac{1}{p^{(\frac{1}{2}+z_2)n_2 +(\frac{1}{2}+z_3)n_3 +(\frac{1}{2}+z_4)n_4 }} ,
\label{Iden1}
\end{align}
where 
\begin{align*}
K_1 &:= \frac{1}{p} \left(1-\frac{1}{p^{1-2z_1}}\right) , \nonumber\\
K_2 &:= \left(1-\frac{1}{p}\right)\left( 1 - \frac{1}{p^{1+ 2z_1}}\right)^{-1}+ \frac{1}{p} \left( 1- \frac{1}{p^{2z_1}}\right)\left(1- \frac{1}{p^{1+ 2z_1}} \right)^{-1},\nonumber\\
K_3&:=\frac{1}{p^{\frac{1}{2}+ z_1}}  \left(1-\frac{1}{p}\right)\left( 1- \frac{1}{p^{1+ 2z_1}}\right)^{-1} + \frac{1}{p^{\frac{1}{2}-z_1}}
\left(1-\frac{1}{p^{2z_1}} \right)\left(1- \frac{1}{p^{1+2z_1}} \right)^{-1}.
\end{align*}
We simplify $K_2, K_3$ by
\begin{align*}
K_2 &= \left(1- \frac{1}{p^{1+2z_1}} \right) ^{-1} \left[1-\frac{1}{p}+ \frac{1}{p}\left(1- \frac{1}{p^{2z_1}} \right) \right]= 1,\nonumber\\
K_3&= \left(1- \frac{1}{p^{1+2z_1}} \right) ^{-1} \left(-\frac{1}{p^{\frac{3}{2}+ z_1}} + \frac{1}{p^{\frac{1}{2}-z_1}}\right) = \frac{1}{p^{\frac{1}{2}-z_1}}.
\end{align*}
Together with \eqref{Iden1}, this gives 
\begin{align}  
&\frac{1}{p} + \left(1 - \frac{1}{p} \right)  E_0^{\#} + \frac{1}{p^{\frac{1}{2}-z_1}} 
\left(1 - \frac{1}{p^{2z_1}} \right)E_1^{\#} -\frac{1}{p^{2-2z_1}}\nonumber\\
&=\frac{1}{p} \left(1 - \frac{1}{p^{1-2z_1}} \right) + \sum_{n_2 + n_3 + n_4 \, \operatorname{even}}
\frac{1}{p^{(\frac{1}{2}+z_2)n_2 +(\frac{1}{2}+z_3)n_3 +(\frac{1}{2}+z_4)n_4 }}
\nonumber \\
&\quad+ \frac{1}{p^{\frac{1}{2}-z_1}}     \sum_{n_2 + n_3 + n_4 \, \operatorname{odd}}
\frac{1}{p^{(\frac{1}{2}+z_2)n_2 +(\frac{1}{2}+z_3)n_3 +(\frac{1}{2}+z_4)n_4 }} .
 \label{Iden2}
 \end{align}
By \eqref{E0New} with  replacing $z_1$ by $-z_1$, we see 
\begin{align}   
& \sum_{\substack{n_1, n_2,n_3,n_4 \geq 0 \\ n_1 + n_2 + n_3 + n_4 \equiv 0 \, (\operatorname{mod} 2)}} \frac{1}{p^{(\frac{1}{2}-z_1)n_1+(\frac{1}{2}+z_2)n_2 +(\frac{1}{2}+z_3)n_3 +(\frac{1}{2}+z_4)n_4 }}\nonumber\\
&= \left(1-\frac{1}{p^{1-2z_1}} \right)^{-1}  \Bigg( \sum_{n_2 + n_3 + n_4 \, \operatorname{even}}
\frac{1}{p^{(\frac{1}{2}+z_2)n_2 +(\frac{1}{2}+z_3)n_3 +(\frac{1}{2}+z_4)n_4 }}\nonumber \\
&\quad+ \frac{1}{p^{\frac{1}{2}-z_1}}\sum_{n_2 + n_3 + n_4 \, \operatorname{odd}}
\frac{1}{p^{(\frac{1}{2}+z_2)n_2 +(\frac{1}{2}+z_3)n_3 +(\frac{1}{2}+z_4)n_4 }}\Bigg).
 \label{Iden2.5}
 \end{align}
The last two terms of \eqref{Iden2} are exactly identical to the last two terms in the parentheses of \eqref{Iden2.5}. Inserting  \eqref{Iden2.5} into \eqref{Iden2} we get 
\begin{align*}
&\frac{1}{p} + \left(1 - \frac{1}{p} \right)  E_0^{\#} + \frac{1}{p^{\frac{1}{2}-z_1}} 
\left(1 - \frac{1}{p^{2z_1}} \right)E_1^{\#} -\frac{1}{p^{2-2z_1}}\nonumber \\
&=\frac{1}{p} \left(1 - \frac{1}{p^{1-2z_1}} \right) 
+ \left(1-\frac{1}{p^{1-2z_1}} \right)
\sum_{\substack{n_1, n_2,n_3,n_4 \geq 0 \\ n_1 + n_2 + n_3 + n_4 \equiv 0 \, (\operatorname{mod} 2)}} \frac{1}{p^{(\frac{1}{2}-z_1)n_1+(\frac{1}{2}+z_2)n_2 +(\frac{1}{2}+z_3)n_3 +(\frac{1}{2}+z_4)n_4 }}\nonumber \\
&=  \left(1-\frac{1}{p^{1-2z_1}} \right) \sumpth{\frac{1}{p}+   \sum_{\substack{n_1, n_2,n_3,n_4 \geq 0 \\ n_1 + n_2 + n_3 + n_4 \equiv 0 \, (\operatorname{mod} 2)}} \frac{1}{p^{(\frac{1}{2}-z_1)n_1+(\frac{1}{2}+z_2)n_2 +(\frac{1}{2}+z_3)n_3 +(\frac{1}{2}+z_4)n_4 }} }.
%\label{Iden3}
\end{align*}
This completes the proof of Lemma \ref{lem:iden}(i). 

By \eqref{eq:Z3} and Lemma \ref{lem:iden}(i), the Euler factors of $Z_4(\tfrac{1}{2},\tfrac{1}{2}+z_2-z_1,\tfrac{1}{2}+z_3-z_1,\tfrac{1}{2}+z_4-z_1,z_1)$ are 
\begin{align} 
&\left(1-\frac{1}{p} \right)\left( 1- \frac{1}{p^{1-2z_1}}\right)^{-1}  \prod_{\beta,\gamma \in \{-z_1,z_2,z_3,z_4\}} \left(1-\frac{1}{p^{1+ \beta+ \gamma}} \right)\nonumber \\
&\quad \times \left( \frac{1}{p}+\left(1-\frac{1}{p}\right)B_p(\boldsymbol{z}+\tfrac{1}{2};0) +\left(1-\frac{1}{p^{2z_1}}\right)\frac{1}{p^{\frac{1}{2}-z_1}}B_p(\boldsymbol{z}+\tfrac{1}{2};1)-\frac{1}{p^{2-2z_1}}\right)\nonumber\\
&=\left(1-\frac{1}{p} \right)\left( 1- \frac{1}{p^{1-2z_1}}\right)^{-1}  \prod_{\beta,\gamma \in \{-z_1,z_2,z_3,z_4\}} \left(1-\frac{1}{p^{1+ \beta+ \gamma}} \right)\nonumber\\
&\quad\times 
\left(1-\frac{1}{p^{1-2z_1}} \right)\sumpth{\frac{1}{p}+   \sum_{\substack{n_1, n_2,n_3,n_4 \geq 0 \\ n_1 + n_2 + n_3 + n_4 \equiv 0 \, (\operatorname{mod} 2)}} \frac{1}{p^{(\frac{1}{2}-z_1)n_1+(\frac{1}{2}+z_2)n_2 +(\frac{1}{2}+z_3)n_3 +(\frac{1}{2}+z_4)n_4 }} }\nonumber\\
&=\left(1- \frac{1}{p^2} \right) \frac{p}{p+1}  \prod_{\beta,\gamma  \in \{-z_1,z_2,z_3,z_4\}} \left(1-\frac{1}{p^{1+ \beta+ \gamma}} \right)\nonumber\\
&\quad\times 
\sumpth{\frac{1}{p}+   \sum_{\substack{n_1, n_2,n_3,n_4 \geq 0 \\ n_1 + n_2 + n_3 + n_4 \equiv 0 \, (\operatorname{mod} 2)}} \frac{1}{p^{(\frac{1}{2}-z_1)n_1+(\frac{1}{2}+z_2)n_2 +(\frac{1}{2}+z_3)n_3 +(\frac{1}{2}+z_4)n_4 }} }.
 \label{Iden4}
 \end{align}
 On the other hand, it follows from \eqref{equ:D1-111} and  $\eqref{equ:D1-112}$ that 
\begin{align*}
&\colh_{2,p}(u_1, u_2, u_3, u_4)\\
&= \prod_{1\leq i\leq j\leq {4}} \left(1- \frac{1}{p^{u_i +u_j}} \right)\sumpth{1+g_2(p)\sum_{\substack{n_1,n_2,n_3,n_4\geq0\\ n_1+n_2+n_3+n_4\equiv 0 \mod{2}\\ n_1+n_2+n_3+n_4>0}} \frac{1}{p^{n_1u_1+n_2u_2+n_3u_3+n_4u_4}}} .
\end{align*}
Then  
\begin{align*}
&\colh_{2,p}(\tfrac{1}{2}-z_1, \tfrac{1}{2}+ z_2, \tfrac{1}{2}+z_3, \tfrac{1}{2}+z_4)\\
&= \prod_{\beta,\gamma  \in \{-z_1,z_2,z_3,z_4\}} \left(1-\frac{1}{p^{1+ \beta+ \gamma}} \right)\\
&\quad \times \sumpth{1+\frac{p}{p+1}\sum_{\substack{n_1,n_2,n_3,n_4\geq0\\ n_1+n_2+n_3+n_4\equiv 0 \mod{2}\\ n_1+n_2+n_3+n_4>0}} \frac{1}{p^{(\frac{1}{2}-z_1)n_1+(\frac{1}{2}+ z_2)n_2+(\frac{1}{2}+ z_3)n_3+(\frac{1}{2}+ z_4)n_4}}} \\
&=\frac{p}{p+1} \prod_{\beta,\gamma  \in \{-z_1,z_2,z_3,z_4\}} \left(1-\frac{1}{p^{1+ \beta+ \gamma}} \right)\\
&\quad \times \sumpth{\frac{1}{p}+\sum_{\substack{n_1,n_2,n_3,n_4\geq0\\ n_1+n_2+n_3+n_4\equiv 0 \mod{2}}} \frac{1}{p^{(\frac{1}{2}-z_1)n_1+(\frac{1}{2}+ z_2)n_2+(\frac{1}{2}+ z_3)n_3+(\frac{1}{2}+ z_4)n_4}}},
\end{align*}
which together with \eqref{Iden4} completes the proof.

\bibliography{references}

\begin{thebibliography}{10}

\bibitem{AndradeKeating}
J.~C. Andrade and J.~P. Keating.
\newblock Conjectures for the integral moments and ratios of {$L$}-functions
  over function fields.
\newblock {\em J. Number Theory}, 142:102--148, 2014.

\bibitem{Bettin}
S.~Bettin, H.~M. Bui, X.~Li, and M.~Radziwi\l\l.
\newblock A quadratic divisor problem and moments of the {R}iemann
  zeta-function.
\newblock {\em J. Eur. Math. Soc. (JEMS)}, 22(12):3953--3980, 2020.

\bibitem{Conrey-Farmer-Keating-Rubinstein-Snaith}
J.~B. Conrey, D.~W. Farmer, J.~P. Keating, M.~O. Rubinstein, and N.~C. Snaith.
\newblock Integral moments of {$L$}-functions.
\newblock {\em Proc. London Math. Soc. (3)}, 91(1):33--104, 2005.

\bibitem{Diaconu-Goldfeld-Hoffstein}
A.~Diaconu, D.~Goldfeld, and J.~Hoffstein.
\newblock Multiple {D}irichlet series and moments of zeta and {$L$}-functions.
\newblock {\em Compositio Math.}, 139(3):297--360, 2003.

\bibitem{Diaconu-4th}
A.~Diaconu, V.~Paşol, and A.~A. Popa.
\newblock {Quadratic Weyl group multiple Dirichlet series of Type
  $D_{\scriptscriptstyle 4}^{\scriptscriptstyle (1)}$}.
\newblock {\em To appear in Amer. J. Math., arXiv:111.11062.}

\bibitem{Diaconu-Twiss}
A.~Diaconu and H.~Twiss.
\newblock Secondary terms in the asymptotics of moments of {$L$}-functions.
\newblock {\em J. Number Theory}, 252:243--297, 2023.

\bibitem{Diaconu-Whitehead}
A.~Diaconu and I.~Whitehead.
\newblock On the third moment of {$L(\frac{1}{2}, \chi_d)$} {II}: the number
  field case.
\newblock {\em J. Eur. Math. Soc. (JEMS)}, 23(6):2051--2070, 2021.

\bibitem{Florea}
A.~Florea.
\newblock The fourth moment of quadratic {D}irichlet {$L$}-functions over
  function fields.
\newblock {\em Geom. Funct. Anal.}, 27(3):541--595, 2017.

\bibitem{Goldfeld-Hoffstein}
D.~Goldfeld and J.~Hoffstein.
\newblock Eisenstein series of {${1\over 2}$}-integral weight and the mean
  value of real {D}irichlet {$L$}-series.
\newblock {\em Invent. Math.}, 80(2):185--208, 1985.

\bibitem{Goulden-etc}
I.~P. Goulden, D.~K. Huynh, Rishikesh, and M.~O. Rubinstein.
\newblock Lower order terms for the moments of symplectic and orthogonal
  families of {$L$}-functions.
\newblock {\em J. Number Theory}, 133(2):639--674, 2013.

\bibitem{IwaniecKowalski}
H.~Iwaniec and E.~Kowalski.
\newblock {\em Analytic number theory}, volume~53 of {\em American Mathematical
  Society Colloquium Publications}.
\newblock American Mathematical Society, Providence, RI, 2004.

\bibitem{Jutila1&2}
M.~Jutila.
\newblock On the mean value of {$L({1\over 2},\,\chi )$} for real characters.
\newblock {\em Analysis}, 1(2):149--161, 1981.

\bibitem{K-S}
N.~M. Katz and P.~Sarnak.
\newblock Zeroes of zeta functions and symmetry.
\newblock {\em Bull. Amer. Math. Soc. (N.S.)}, 36(1):1--26, 1999.

\bibitem{KeatingSnaithRMT}
J.~P. Keating and N.~C. Snaith.
\newblock Random matrix theory and {$L$}-functions at {$s=1/2$}.
\newblock {\em Comm. Math. Phys.}, 214(1):91--110, 2000.

\bibitem{XiannanFourth}
X.~Li.
\newblock Moments of quadratic twists of modular {$L$}-functions.
\newblock {\em arXiv:2208.07343}.

\bibitem{NSW}
N.~Ng, Q.~Shen, and P.-J. Wong.
\newblock {The eighth moment of the Riemann zeta function}.
\newblock {\em arXiv:2204.13891}.

\bibitem{Petrow}
I.~Petrow.
\newblock Moments of {$L'(\frac12)$} in the family of quadratic twists.
\newblock {\em Int. Math. Res. Not. IMRN}, (6):1576--1612, 2014.

\bibitem{ShenFourth}
Q.~Shen.
\newblock The fourth moment of quadratic {D}irichlet {$L$}-functions.
\newblock {\em Math. Z.}, 298(1-2):713--745, 2021.

\bibitem{Sono}
K.~Sono.
\newblock The second moment of quadratic {D}irichlet {$L$}-functions.
\newblock {\em J. Number Theory}, 206:194--230, 2020.

\bibitem{SoundNonvanishing}
K.~Soundararajan.
\newblock Nonvanishing of quadratic {D}irichlet {$L$}-functions at
  {$s=\frac12$}.
\newblock {\em Ann. of Math. (2)}, 152(2):447--488, 2000.

\bibitem{SoundYoungSecond}
K.~Soundararajan and M.~P. Young.
\newblock The second moment of quadratic twists of modular {$L$}-functions.
\newblock {\em J. Eur. Math. Soc. (JEMS)}, 12(5):1097--1116, 2010.

\bibitem{Titchmarsh}
E.~C. Titchmarsh.
\newblock {\em The theory of the {R}iemann zeta-function}.
\newblock The Clarendon Press, Oxford University Press, New York, second
  edition, 1986.
\newblock Edited and with a preface by D. R. Heath-Brown.

\bibitem{YoungActa}
M.~P. Young.
\newblock The first moment of quadratic {D}irichlet {$L$}-functions.
\newblock {\em Acta Arith.}, 138(1):73--99, 2009.

\bibitem{Young}
M.~P. Young.
\newblock The third moment of quadratic {D}irichlet {$L$}-functions.
\newblock {\em Selecta Math. (N.S.)}, 19(2):509--543, 2013.

\bibitem{Zhangqiao}
Q.~Zhang.
\newblock On the cubic moment of quadratic {D}irichlet {$L$}-functions.
\newblock {\em Math. Res. Lett.}, 12(2-3):413--424, 2005.

\end{thebibliography}
\vspace{5em}

\noindent {\scshape SDU-ANU Joint Science College, Shandong University, Weihai 264209, China}\\
\textit{Email address:} {\UrlFont{qlshen@sdu.edu.cn}}\\

\noindent {\scshape Mathematics Department, University of Georgia, Athens, GA 30605}\\
\textit{Email address:} {\UrlFont{joshua.stucky@uga.edu}}

\end{document}